\title{Rectification of a deep water model for surface gravity waves}
\author{Vincent Duch\^ene\thanks{Univ Rennes, CNRS, IRMAR - UMR 6625
F-35000 Rennes, France. E-mail: \url{vincent.duchene@univ-rennes1.fr}}
\and
Benjamin Melinand\thanks{ CEREMADE, CNRS, Universit\'e Paris-Dauphine, Universit\'e PSL, 75016 Paris, France; Email \url{melinand@ceremade.dauphine.fr}}
}
\date{\today}
\numberwithin{equation}{section}
\let\Title\@title
\let\Author\@author
\newcommand{\RR}{\mathbb{R}}
\newcommand{\ZZ}{\mathbb{Z}}
\newcommand{\NN}{\mathbb{N}}
\newcommand{\TT}{\mathbb{T}}
\newcommand{\Hdot}{\mathring{H}}
\newcommand{\dotH}{\mathring{H}}
\newcommand{\psia}{\psi_{(\alpha)}}
\newcommand{\psib}{\psi_{(\beta)}}
\newcommand{\zetaa}{\zeta_{(\alpha)}}
\newcommand{\fPi}{{\sf \Pi}}
\DeclareMathOperator{\dd}{d\!}
\DeclareMathOperator*\esssup{ess\,sup}
\renewcommand{\i}{{\rm i}}
\DeclareMathOperator{\Id}{Id}
\renewcommand{\r}{m}
\renewcommand{\a}{\ell}
\newcommand{\br}{{\bm{r}}}
\newcommand{\bx}{{\bm{x}}}
\newcommand{\bxi}{{\bm{\xi}}}
\newcommand{\bta}{{\bm{\eta}}}
\newcommand{\bk}{{\bm{k}}}
\newcommand{\bz}{{\bm{0}}}
\newcommand{\eqdef}{\stackrel{\rm def}{=}}
\newcommand{\cC}{\mathcal{C}}
\newcommand{\cE}{\mathcal{E}}
\newcommand{\cH}{\mathcal{H}}
\newcommand{\cO}{\mathcal{O}}
\newcommand{\cX}{\mathcal{X}}
\newcommand{\N}[2]{\norm{\langle\cdot\rangle^{#1} #2}_{L^\infty}} 
\newcommand{\dotN}[2]{\norm{\lvert\cdot\rvert^{#1} #2}_{L^\infty}}
\newcommand{\No}[1]{\norm{#1}_{L^\infty}} 
\newcommand{\Ni}[1]{\max\big(\big\{\norm{#1}_{L^\infty},\norm{\langle\cdot\rangle\nabla #1}_{L^\infty}\big\}\big)} 
\newcommand{\dotNi}[1]{\max\big(\big\{\norm{#1}_{L^\infty},\norm{\lvert\cdot\rvert\nabla #1}_{L^\infty}\big\}\big)} 
\renewcommand{\L}{{\sf L}}
\newcommand{\J}{{\sf J}}
\newcommand{\T}{{\bf{\sf T}}}
\newcommand{\mfP}{(G^\mu_0)^{1/2}} 
\newcommand{\mfa}{\mathfrak{a}^\mu}
\newcommand{\ie}{{\em i.e.}~}
\newcommand{\eg}{{\em e.g.}~}
\newcommand{\id}[1]{\left\vert_{_{#1}}\right.}
\DeclarePairedDelimiter\norm{\big\lvert}{\big\rvert}
\DeclarePairedDelimiter\Norm{\big\lVert}{\big\rVert}
\newtheorem{Theorem}{Theorem}[section]
\newtheorem{Definition}[Theorem]{Definition}
\newtheorem{Proposition}[Theorem]{Proposition}
\newtheorem{Corollary}[Theorem]{Corollary}
\newtheorem{Lemma}[Theorem]{Lemma}
\newtheorem{Remark}[Theorem]{Remark}
\Crefname{Theorem}{Theorem}{Theorems}
\Crefname{Proposition}{Proposition}{Propositions}
\Crefname{Lemma}{Lemma}{Lemmas}
\begin{document}

\maketitle

\begin{abstract}
In this work we discuss an approximate model for the propagation of deep irrotational water waves, specifically the model obtained by keeping only quadratic nonlinearities in the water waves system under the Zakharov/Craig--Sulem formulation. We argue that the initial-value problem associated with this system is most likely ill-posed in finite regularity spaces, and that it explains the observation of spurious amplification of high-wavenumber modes in numerical simulations that were reported in the literature. 
This hypothesis has already been proposed by Ambrose, Bona, and Nicholls~\cite{bona_ill} but we identify a different instability mechanism. On the basis of this analysis, 
we show that the system can be ``rectified''. Indeed, by introducing appropriate regularizing operators, we can restore the well-posedness without sacrificing other desirable features such as a canonical Hamiltonian structure, cubic accuracy as an asymptotic model, and efficient numerical integration. 
This provides a first rigorous justification for the common practice of applying filters in high-order spectral methods for the numerical approximation of surface gravity waves. While our study is restricted to a quadratic model, we believe it can be generalized to any order and paves the way towards the rigorous justification of a
robust and efficient strategy to approximate water waves with arbitrary accuracy. 
Our study is supported by detailed and reproducible numerical simulations.
\end{abstract}

\section{Introduction}

\subsection{Motivation }

It is well known \cite{Zakharov,Craig_Sulem_1} that the propagation of surface gravity waves ---that is the motion of inviscid, incompressible, homogeneous and potential flows with a free surface under the influence of gravity--- can be described through two scalar evolution equations for unknowns describing the surface deformation and the trace of the velocity potential at the surface; see~\eqref{WW} in Appendix~A. 
These equations involve the so-called Dirichlet-to-Neumann operator, which consists of solving the Laplace problem for the velocity potential in the fluid domain with Dirichlet condition at the surface and Neumann condition at the bottom, and returning the (rescaled) normal component of the velocity at the surface. The Dirichlet-to-Neumann operator turns out to be shape-analytic (see ref.~\cite{Lannes_ww} and references therein), so that it enjoys a converging expansion with respect to sufficiently small and regular surface deformation variables. Since each term of the expansion can be computed efficiently by Fourier pseudo-spectral methods, this provides an attractive strategy for the numerical integration of the equations, which roughly speaking consists in replacing the Dirichlet-to-Neumann operator by the expansion truncated at a sufficiently large order to obtain the desired accuracy, and then using Fourier spectral methods for the discretization in space and a high-order time integrator (typically fourth-order Runge-Kutta). This strategy was proposed independently by several authors in~\cite{DommermuthYue87,WestBruecknerJandaEtAl87,Craig_Sulem_2} and successfully applied in various situations (see \eg \cite{LeTouze03,Schaeffer08,WilkeningVasan15,Nicholls16} for a detailed account and comparisons).

However, it was noted for example in \cite{DommermuthYue87,numeric_Guyenne_moving_bott} that, in situations where a large number of Fourier modes are numerically computed, the highest wavenumber modes suffer from a rapid amplification
of computational errors. These high-frequency instabilities are typically presented as numerical artifacts and are often dealt with by applying low-pass filters, or wavenumber-dependent damping terms such as parabolic regularization. In~\cite{bona_ill}, Ambrose, Bona, and Nicholls suggest that these instabilities are not spurious results of the numerical discretization but rather take roots in instabilities at the level of the continuous evolution equations. They support their hypothesis through tailored numerical simulations and the analysis of toy models associated with the quadratic-order and cubic-order Dirichlet-to-Neumann expansion.

In this work, we support the general statement of Ambrose, Bona, and Nicholls while diagnosing a different instability mechanism. The difference between the two proposed instability mechanisms is easily seen when comparing the toy model introduced in~\cite[(2.3)]{bona_ill} and our toy model~\eqref{RWW2-toy}, introduced and studied in \Cref{S.toy}. They however share common aspects: in particular the proposed instability mechanism is nonlinear and triggered by low-high frequency interactions, in the sense that it is the presence of a substantial low-frequency component of the flow which causes the rapid amplification of the high-frequency component. The main difference between the two proposed instability mechanisms is that ours emanates from cubic contributions, while the one of in~\cite[(2.3)]{bona_ill} is of quadratic nature. 
 {\em The cubic nature of our diagnosed instability mechanism concerning a system of equations which contains only quadratic nonlinearities is one of our key observations and is essential for the ``rectification'' procedure that we expound below. 
 By the terminology ``rectification'' we mean that it is possible to consider a slightly modified system that does not suffer from unwanted instabilities while maintaining the accuracy of the original system as a model for water waves. }

 \subsection{The equations and their rectification}
 
 Let us now introduce the systems of equations we consider in this work. Starting from the Zakharov/Craig--Sulem formulation of the water waves system~\eqref{WW} and discarding all cubic and higher order contributions stemming from the Dirichlet-to-Neumann expansion, we obtain, after suitable rescaling and borrowing notations from \cite[\S8]{Lannes_ww}, the following system
 \begin{equation}\label{WW2-intro}\tag{WW2}\left\{\begin{array}{l}
 		\partial_t\zeta- G^\mu_0\psi+ \epsilon G^\mu_0 (\zeta G^\mu_0 \psi)+\epsilon\nabla\cdot( \zeta\nabla\psi)=0,\\[1ex]
 		\partial_t\psi+\zeta+\frac\epsilon{2 }  \left( |\nabla\psi|^2- (G^\mu_0 \psi )^2 \right)=0.
 	\end{array}\right.\end{equation} 
Here, $\zeta$ (resp. $\psi$) represents the surface deformation (resp. the trace of the velocity potential at the surface) at time $t\in\RR$ and horizontal location $\bx\in\RR^d$ (with $d\in\{1,2\}$),  and $G^\mu_0=|D|\tanh(\sqrt\mu|D|)$ is the Dirichlet-to-Neumann operator linearized about the rest state, \ie the Fourier multiplier defined by (see forthcoming \Cref{S.main} for the definition of functional spaces)
\[ \forall f\in \Hdot^{1}(\RR^d) , \qquad \widehat{G^\mu_0f}(\bxi) = |\bxi|\tanh(\sqrt\mu|\bxi|) \widehat f(\bxi) \in L^2(\RR^d).\]
The dimensionless parameter $\mu$, defined as the square of the ratio of the depth of the layer at rest to a characteristic horizontal length, is the shallowness parameter. It can take arbitrarily large values greater than one in this work. The dimensionless parameter $\epsilon$, defined as the ratio of the amplitude of the wave to the characteristic horizontal length, represents the steepness of the waves. It is typically small. Indeed, the quadratic system~\eqref{WW2-intro} can be justified as an approximation to the fully nonlinear water waves system~\eqref{WW} with precision $\cO(\epsilon^2)$; see details in \Cref{S.WW}.

One asset of the quadratic system~\eqref{WW2-intro} with respect to other models for the propagation of deep water waves 
(see \eg \cite{Stokes47,BenneyLuke64,matsuno_flat,Choi_2D,smith,AkersMilewski08,bonneton_lannes,Alvarez_Lannes,saut_xu,shkoller_etal2019}) 
is that it retains the canonical Hamiltonian structure of the fully nonlinear equations. Indeed~\eqref{WW2-intro} reads
 \begin{equation}\label{eq.Hamilton}
\left\{\begin{array}{l}
\partial_t\zeta-\delta_\psi \cH^\mu=0,\\[1ex]
\partial_t\psi+\delta_\zeta \cH^\mu=0,
\end{array}\right.
\end{equation}
with the functional
\begin{equation}\label{eq.Hamiltonian}
\cH^\mu(\zeta,\psi)\eqdef\frac12\int_{\RR^d} \zeta^2+ \psi G^\mu_0 \psi + \epsilon \zeta \left( |\nabla\psi|^2-( G^\mu_0  \psi)^2\right)\dd \bx.
\end{equation}

As already mentioned, we strongly believe that~\eqref{WW2-intro} suffers from strong high-frequency instabilities which in particular prevent the well-posedness of the initial-value problem in spaces of finite regularity. We propose to consider the following modified system
 \begin{equation}\label{RWW2-intro}\tag{RWW2}\left\{\begin{array}{l}
\partial_t\zeta- G^\mu_0 \psi+ \epsilon G^\mu_0 ((\J^\delta \zeta) G^\mu_0 \psi)+\epsilon\nabla\cdot((\J^\delta \zeta)\nabla\psi)=0,\\[1ex]
\partial_t\psi+\zeta+\frac\epsilon{2 } \J^\delta \left( |\nabla\psi|^2- (G^\mu_0 \psi )^2 \right)=0,
\end{array}\right.\end{equation}
where we have introduced the Fourier multiplier $\J^\delta=J(\delta D)$ which can be freely chosen in a space of ``admissible rectifiers'' and where $\delta>0$ is a scaling parameter measuring the ``strength'' of the rectifier and can be adjusted according to needs (see below for a detailed discussion).
\begin{Definition}[ Admissible rectifiers ]\label{D.J-intro}
	Let $\J^\delta=J(\delta D)$ with $J\in L^{\infty}(\RR^d)$,  real-valued and even. 
	\begin{itemize}
		\item We say that $\J^\delta$ is {\em regularizing} of order $\r\leq0$ if $\langle\cdot\rangle^{-\r} J \in L^\infty(\RR^d)$, with $\langle\cdot\rangle\eqdef(1+|\cdot|^2)^{1/2}$.
		\item We say that $\J^\delta$ is  {\em regular} if $\J^\delta$ is regularizing of order $-1$ and, additionally,  $\langle\cdot\rangle \nabla J \in L^\infty(\RR^d)$.
		\item We say that $\J^\delta$ is {\em near-identity} of order $\a\geq0$ if $|\cdot|^{-\a}(1-J) \in L^\infty(\RR^d)$.
	\end{itemize}
	{\em Admissible rectifiers} $\J^\delta$ are regular and near-identity of order $\a>0$.
\end{Definition}
We do not see any physical motivation that would dictate a specific choice for the symbol profile, $J$. In fact, the strength $\delta$ is the main object of interest. Examples of admissible rectifiers, which we use in our numerical simulations, are
\begin{equation}\label{Rectifier_example}
\J^\delta = \min(\{1,|\delta D|^{\r}\}),\qquad  \r\leq -1.
\end{equation}
They are regularizing of order $\r$ and near-identity of order $\a$ for any $\a>0$.

Notice that introducing Fourier multipliers $\J^\delta$ is essentially costless from the point of view of the numerical integration through Fourier pseudo-spectral methods. 
Moreover, since admissible rectifiers $\J^\delta$ are symmetric for the $L^2(\RR^d)$ inner product,~\eqref{RWW2-intro} preserves the canonical Hamiltonian structure~\eqref{eq.Hamilton} with the modified functional
\begin{equation}\label{eq.Hamiltonian-mod}
	\cH^\mu_{\J^\delta}(\zeta,\psi)\eqdef\frac12\int_{\RR^d} \zeta^2+ \psi G^\mu_0 \psi + \epsilon(\J^\delta \zeta)\left( |\nabla\psi|^2-( G^\mu_0 \psi)^2\right)\dd \bx.
\end{equation}
In particular by Noether's theorem or direct inspection, conserved quantities (invariants) of~\eqref{RWW2-intro} include the Hamiltonian (representing the total energy) $\cH^\mu_{\J^\delta}(\zeta,\psi)$, the excess of mass, $\int_{\RR^d}\zeta\dd\bx$, and the horizontal impulse, $\int_{\RR^d}\zeta\nabla\psi\dd\bx$.

\subsection{Description of the results and recommendations}

Let us now describe and quantify gains and losses associated with the introduction of rectifiers. 

First of all, let us acknowledge that
it is obvious that embedding regularizing operators in system~\eqref{WW2-intro} allows to provide the local-in-time well-posedness of the initial-value problem in suitable finite regularity spaces, by means of the Picard–Lindelöf (or Cauchy–Lipschitz) theorem in Banach spaces. In fact it is fairly easy to check ---as we do in this work--- that~\eqref{RWW2-intro} is indeed of semilinear nature. However, the time of existence of solutions is expected to vanish as $\delta\searrow 0$, since then $\J^\delta$ approaches the identity. Following the approach described above, we obtain a lower bound on the existence time scaling as $\delta/\epsilon$.  Our first main result provides a {\em large time} existence, \ie an existence time scaling as $1/\epsilon$ under the assumption that $\delta \gtrsim \epsilon$. In order to reach this timescale, we use delicate energy estimates, relying in particular on the equivalent of Alinhac's good unknowns which are crucial in the analysis of the water waves systems; see~\cite[\S4]{Lannes_ww}. This allows to pinpoint contributions which crucially require regularization in order to avoid high-frequency instabilities. The fact that these contributions scale as cubic terms is, as already mentioned, one of the key observations of our analysis. Indeed, after applying rectifiers, these terms will lead to a restriction on the time of existence scaling as $\delta/\epsilon^2$, thus motivating the aforementioned restriction $\delta \gtrsim \epsilon$. Incidentally, our analysis also uncovers a stability criterion for~\eqref{RWW2-intro} of Rayleigh-Taylor type, which is automatically satisfied provided that $\epsilon$ and $\delta^{-1}\epsilon^2$ are sufficiently small. 

The second effect of introducing rectifiers is that it may deteriorates the accuracy of the produced solutions with respect to solutions of the water waves system. In fact, we prove that introducing rectifiers near-identity of order $\a$ will induce a loss of precision of order $\cO(\epsilon \delta^\a)$, associated with a loss of $\a+p$ derivatives (with $p$ some constant) between the control of the data and the control of the error. Recalling that the quadratic model~\eqref{RWW2-intro} already generates errors of size $\cO(\epsilon^2)$, one immediately realizes that the errors induced by rectifiers can be made asymptotically negligible by imposing an upper bound on $\delta$ depending on $\epsilon$.

As clarified in the above discussion, we observe that taking advantage of the regularizing effect of rectifiers in one hand, and accommodating their cost in terms of precision on the other hand, drive competing demands on the scaling parameter $\delta$. Yet the aftermath is that {\em it is possible to choose $\J^\delta$ and especially $\delta$ as a function of $\epsilon$ so that considering~\eqref{RWW2-intro} rather than~\eqref{WW2-intro} does not deteriorate the precision of the system as an asymptotic model for the water waves system as $\epsilon\searrow 0$, while allowing to control solutions on a relevant time interval and eventually fully justify~\eqref{RWW2-intro}.}

Specifically, we recommend the use of $\J^\delta$ defined by~\eqref{Rectifier_example} with $\r=-1$, 
and the scalings expressed in our theoretical results argue for a choice of $\delta$ which is proportional to $\epsilon$.
Yet in practice one can simply set $\delta$ by trial and error, choosing $\delta$ sufficiently large so that the numerically computed high-wavenumber modes do not suffer from spurious amplification, yet sufficiently small so that the outcome of the numerical simulation does not depend on $\delta$ up to the desired accuracy.

\subsection{Discussion and prospects}\label{S.discussion}

\paragraph{The instability mechanism} 

Our diagnosis of the instability mechanism and the motivation for introducing rectifiers in~\eqref{RWW2-intro} stems from the forthcoming \Cref{P.quasi}, where we extract a ``quasilinear structure'' of the system, of the form
\[\left\{\begin{array}{l}
	\partial_t \dot\zeta-G^\mu_0 \dot\psi  + \epsilon \nabla\cdot(\nabla\psi(\J^\delta \dot\zeta)) =\text{lower order terms} ,\\[1ex]
	\partial_{t} \dot\psi + \mfa_{\J^\delta}[\epsilon  \zeta, \epsilon\psi] \dot\zeta + \epsilon\J^\delta \big( \nabla \psi \cdot \nabla \dot\psi \big) = \text{lower order terms},
\end{array}\right.
\]
where the ``Rayleigh--Taylor'' (see forthcoming \Cref{R.RT}) operator is defined as
\[
\mfa_{\J^\delta}[\epsilon \zeta, \epsilon\psi] f \eqdef f - \epsilon (G^\mu_0 \J^\delta \zeta) \J^\delta  f - \epsilon^2\J^\delta \Big((G^\mu_0 \psi) |D| \big\{ (G^\mu_0 \psi) \J^\delta f \big\}\Big).
\]
In the absence of rectifiers (that is setting $\J^\delta=\Id$), this quasilinear structure is of elliptic type (with the same nature as Cauchy--Riemann equations) because $(\mfa_{\Id}[\epsilon\zeta,\epsilon\nabla\psi] f,f)_{L^2}$ can take arbitrarily large negative values for some smooth $f$ with $\Norm{f}_{L^2}=1$, as soon as $\epsilon G^\mu_0\psi\neq 0$. As such, the instability mechanism we exhibit is strikingly similar to the well-studied Kelvin--Helmholtz instabilities (see \eg~\cite[\S4]{Lannes13} and references therein). Notice however that in our case the instability mechanism does not relate to any physically relevant phenomenon, and that it is fully nonlinear in the sense that it cannot be revealed through linearization about equilibria. Our strategy for taming these instabilities also differs from the works we are aware of on Kelvin--Helmholtz instabilities, and in particular~\cite{Lannes13}: while we could (artificially) add surface tension contributions, it appears less invasive and closer to standard numerical approaches to incorporate rectifiers $\J^\delta$ as we do. In both approaches, the goal is to turn the nature of the quasilinear system from elliptic to hyperbolic (at least for sufficiently regular and small data) by ensuring the Rayleigh--Taylor condition:
\[
\exists\mfa_\star>0, \ \forall f\in L^2(\RR^d) ,\quad \left( f , \mfa_{\J^\delta}[\epsilon  \zeta,\epsilon \psi]  f\right)_{L^2} \geq \mfa_\star \norm{ f }_{L^2}^{2}.
\]
As mentioned previously, for sufficiently regular and bounded functions $\zeta$ and $\psi$, the above holds provided that $\J^\delta$ is regularizing of order $\r=-1/2$ and that $\epsilon$ and $\delta^{-1}\epsilon^2$ are sufficiently small.

\paragraph{A toy model}
We investigate our proposed instability mechanism in more details through a toy model in \Cref{S.toy}. There we prove local well-posedness results when rectifiers are sufficiently regularizing, as well as a strong ill-posedness result when rectifiers are not sufficiently regularizing. The proof of ill-posedness exhibits three successive stages in the instability mechanism: 
\begin{enumerate}
	\item first the amplification of the high-frequency component is triggered by the presence of a substantial low-frequency component;
	\item then the high-frequency component reaches a threshold so that it is able to fuel its own amplification;
	\item finally nonlinear effects come into play and achieve the finite-time blowup by means of a Riccati inequality.
\end{enumerate} 
Numerical experiments confirm that, despite its simplicity, our toy model is able to describe at least the first stage of the instability mechanism for~\eqref{WW2-intro} not only qualitatively, but also correctly predicts the different scales involved and in particular blowup times. While we do not present all these numerical experiments  in this manuscript for the sake of brevity, a detailed report is available in the preliminary version \cite{DucheneMelinand}. A discussion comparing our results on the toy model and corresponding results for~\eqref{WW2-intro} is presented in \Cref{R.toy-for-the-win}.

\paragraph{Other models with quadratic precision}

A key ingredient in our analysis is the observation that the instability mechanism stems from cubic terms 
while~\eqref{WW2-intro} is a quadratic model. This is consistent with the fact that the fully nonlinear system~\eqref{WW} is well-posed, from which one can expect that any system based on truncated expansions should be in some sense ``stable up to higher order terms'', and thus likely to be amenable to our rectification procedure. However it is natural to ask whether it is possible to exhibit models ---either using different sets of unknowns or adding additional terms--- for which the initial-value problem is well-posed, without relying on artificial regularizing operators. This was in fact the main motive that triggered this work. As discussed in the introduction, several quadratic models for deep water waves have been proposed in the literature, but only the one studied in~\cite{saut_xu} has been shown to be well-posed in finite regularity spaces. Note, however, that the latter model uses a change of variables for the unknown describing the surface deformation, which can be considered undesirable. In Appendix C of the preliminary version of this manuscript \cite{DucheneMelinand} (again withdrawn for the sake of brevity), we propose another model with a relatable set of unknowns which has the same precision as~\eqref{WW2-intro}, and which we argue is well-posed in finite regularity spaces. However, we acknowledge that this system involves fully nonlinear operators, and is therefore less suitable than~\eqref{RWW2-intro} for numerical simulations. Moreover, it appears that the algebra used to produce this model cannot be extended to cubic or higher order models, while we do hope that the ``rectification'' method introduced in this work can be extended to a general framework; see below.

\paragraph{The infinite-depth situation }

We restrict our study to the deep-water framework, in the sense that our theoretical results are shown to hold uniformly with respect to $\mu\in[1,+\infty)$, {\em finite}. A difficulty arises in the infinite-depth case (\ie setting $\mu=\infty$, $G^\infty_0 =|D|$) due to the fact that the operator $\mfP:\Hdot^{s}(\RR^d)\to H^{s-1/2}(\RR^d)$ is not bounded uniformly with respect to $\mu\geq 1$ (see \Cref{L.control_P}), and hence Beppo Levi spaces are no longer suitable as functional spaces for the unknown describing the trace of the velocity potential, $\psi$. A first remark is that our results continue to hold and extend straightforwardly to the infinite-depth case if we restrict the framework to $\psi\in H^{s}(\RR^d)$ instead of $\psi\in \dotH^{s}(\RR^d)$ since in that case $\mfP:H^{s}(\RR^d)\to H^{s-1/2}(\RR^d)$ is bounded uniformly with respect to $\mu\geq 1$. This was the choice made in~\cite{Lannes_ww} (see Remark 2.50 therein), but it can be considered as too restrictive as it imposes some decay at infinity through the assumption $\psi\in L^2(\RR^d)$. A bigger space would consist in using instead $\psi \in \breve{H}^s(\RR^d)=\dot{H}^{1/2}(\RR^d)\cap \Hdot^s(\RR^d)$ (with $s\geq 1/2$) which is the natural energy space defined by the Hamiltonian \eqref{eq.Hamiltonian} when $\mu=\infty$. Here, the homogeneous Sobolev space $\dot{H}^{1/2}(\RR^d)$ is defined as $\dot{H}^{1/2}(\RR) = \{ f \in BMO(\RR) \text{ , } |D|^{\frac12} f \in L^{2}(\RR) \}$ when $d=1$ and by $\dot{H}^{1/2}(\RR^2) = \{ f \in L^{4}(\RR^2) \text{ , } |D|^{\frac12} f \in L^{2}(\RR^2) \}$ when $d=2$. Therein, we define $|D|^{\frac12} f$ for any tempered distribution $f$ though the Littlewood--Paley decomposition, namely as $\sum_{j \in \mathbb{Z}} |D|^{\frac12} (\phi(2^{-j-1} |D|) - \phi(2^{-j} |D|)) f$ where $\phi$ is a given real-valued smooth even function supported in the ball $[-2,2]$ and that is equal to $1$ on $[-1,1]$, realizing a dyadic partition of unity. Note that  $\dot{H}^{1/2}(\RR)/\RR$ and $\dot{H}^{1/2}(\RR^2)$ are complete and can be identified to the completion of Schwartz class functions for the norm $\| |D|^{\frac12} \cdot \|_{L^{2}}$; see~\cite{MonguzziPelosoSalvatori20,BGV21}. 

\paragraph{The shallow-water situation } In the opposite direction, let us discuss the shallow-water situation, namely $\mu\in(0,1]$.  Firstly, a rescaling must be performed so that~\eqref{WW2-intro} remains non-trivial as $\mu\searrow 0$ (see~\cite[Appendix~A]{Alvarez_Lannes} for the water waves system). This amounts in replacing~\eqref{WW2-intro}
with
\begin{equation}\label{WW2-SW}\tag{WW2'}\left\{\begin{array}{l}
		\partial_t\zeta- \frac1{\sqrt\mu}G^\mu_0\psi+ \varepsilon G^\mu_0 (\zeta G^\mu_0 \psi)+\varepsilon\nabla\cdot( \zeta\nabla\psi)=0,\\[1ex]
		\partial_t\psi+\zeta+\frac\varepsilon{2 }  \left( |\nabla\psi|^2- (G^\mu_0 \psi )^2 \right)=0,
	\end{array}\right.
\end{equation}
where the dimensionless parameter $\varepsilon\eqdef\epsilon/\sqrt\mu$ represents the ratio of the amplitude of the wave to the depth of the layer at rest.  
One of the key ingredients in our analysis is the forthcoming \Cref{L.commutator-tanh}, exhibiting ``commutator'' estimates on the operator $\psi\mapsto G^\mu_0 (\zeta G^\mu_0 \psi)+\nabla\cdot( \zeta\nabla\psi)$. These estimates do not hold uniformly with respect to $\mu\in(0,1]$, since we have (for smooth data) $ G^\mu_0 (\zeta G^\mu_0 \psi)=\cO(\mu)$ as $\mu\searrow 0$.
However, one should notice that formally setting $\mu=0$ in~\eqref{WW2-SW} yields 
\[\left\{\begin{array}{l}
	\partial_t\zeta+ \nabla\cdot((1+\varepsilon \zeta)\nabla\psi)=0,\\[1ex]
	\partial_t\psi+\zeta+\frac\varepsilon{2 }  \left( |\nabla\psi|^2\right)=0.
\end{array}\right.
\]
Taking the gradient of the second equation, we obtain as expected the shallow-water system, which is a well-known example of symmetrizable hyperbolic systems. As such, its initial-value problem is well-posed for $(\zeta,\nabla\psi)\in H^s(\RR^d)^{1+d}$ for any $s>1+d/2$ under the non-cavitation condition $\inf_{\RR^d}(1+\varepsilon\zeta)>0$; see \eg~\cite{Lannes_ww}. Hence we believe that our results can be adapted to the shallow-water situation, although such a study should take into account the aforementioned commutator estimate in combination with the symmetric structure appearing in the shallow-water equation.

\paragraph{Higher order spectral method} The climax of our analysis, displayed in forthcoming \Cref{T.Convergence}, is that the ``rectified'' model,~\eqref{RWW2-intro}, is able to approximate solutions to the water waves system,~\eqref{WW}, with the desired accuracy for well-chosen choices of rectifiers, and suitable values for the strength $\delta$. Introducing rectifiers is essentially costless from the point of view of numerical integration when pseudo-spectral schemes are employed. Our rectification strategy is in some sense related to standard numerical strategies which consist in applying appropriate low-pass filters. The difference (in addition to provide a rigorous justification) is that we are able to point out  precisely where regularization should be introduced, and to assess (i) its order as a regularizing operator, and (ii) its strength in order to provide sufficient regularization while at the same time not deteriorate the accuracy of the computed solution. 

We believe that our strategy can be adapted to the whole hierarchy of systems with arbitrary order put forward by Craig and Sulem in~\cite{Craig_Sulem_1}. Specifically, if we denote $G_N^\mu$ the truncated expansion at order $N$ of the Dirichlet-to-Neuman operator, 
\[  G[\epsilon\zeta]\psi= G_N^\mu[\epsilon \zeta] \psi +\cO(\epsilon^{N+1}),\]
and consider the system obtained by Hamilton's equations
 \begin{equation}\label{eq.Hamilton-N}
	\left\{\begin{array}{l}
		\partial_t\zeta-\delta_\psi \cH^\mu_{N,\J^\delta}=0,\\[1ex]
		\partial_t\psi+\delta_\zeta \cH^\mu_{N,\J^\delta}=0,
	\end{array}\right.
\end{equation}
associated with the functional
\begin{equation}\label{eq.Hamiltonian-N}
	\cH^\mu_{N,\J^\delta}(\zeta,\psi)\eqdef\frac12\int_{\RR^d} \zeta^2+ \psi G_N^\mu[\epsilon \J^\delta \zeta] \psi \dd \bx,
\end{equation}
then we conjecture that for any fixed $N\in\NN$, the initial-value problem for system~\eqref{eq.Hamilton-N}--\eqref{eq.Hamiltonian-N} is well-posed on a time interval of size $1/\epsilon$ and produces approximate solutions with precision $\cO(\epsilon^{N+1})$, as soon as rectifiers $\J^\delta$ are admissible and $\delta\approx \epsilon^N$.

 Moreover, we also conjecture the stronger result that fixing $\epsilon$ sufficiently small and (say) $\delta=\epsilon^N$, then denoting $(\zeta_N,\psi_N)$ the solutions to~\eqref{eq.Hamilton-N}--\eqref{eq.Hamiltonian-N} emerging from sufficiently regular initial data and $(\zeta,\psi)$ the corresponding solution the water waves system~\eqref{WW}, we have $(\zeta_N,\psi_N)\to (\zeta,\psi)$ as $N\to\infty$  on a time interval determined by $(\zeta,\psi)$ and hence independent of $N$, with a convergence rate of order $\cO((C\epsilon)^{N+1})$ with $C\epsilon<1$.

Such a result would rigorously justify a robust and efficient strategy to approximate solutions to the water waves system with arbitrary accuracy. 
We leave this topic for a future study.

\subsection{Outline}

Let us now describe the remaining content of this paper.

In \Cref{S.numerics} we report on an in-depth numerical study in view of validating the proposed instability mechanism as well as our rectifying strategy.

In \Cref{S.main} we collect notations used in this work and state our main analytical results, that is 
\begin{enumerate}
	\item \Cref{T.WP-delta} on the large-time well-posedness of the initial-value problem for~\eqref{RWW2-intro};
	\item \Cref{T.Consistency} on the consistency of~\eqref{RWW2-intro} with respect to the water-waves system~\eqref{WW};
	\item \Cref{T.Convergence} measuring the difference between solutions to~\eqref{WW} and solutions to~\eqref{RWW2-intro} emerging from the same initial data.
\end{enumerate} 

In \Cref{S.technical} we introduce some important technical tools: product and commutator estimates and key results on the  operator $G^\mu_0$ and its infinite-depth counterpart $G^\infty_0=|D|$.

In \Cref{S.consistency} we prove \Cref{T.Consistency,T.Convergence} that provide the full justification of~\eqref{RWW2-intro} as a deep-water model for the water wave system~\eqref{WW}.

In \Cref{S.WP} we prove \Cref{T.WP-delta} as the consequence of two results: 
\begin{itemize}
	\item an unconditional ``short-time'' well-posedness result, \Cref{P.WP-short-time}, proved in \Cref{S.WP-short-time};
	\item a conditional ``large-time'' well-posedness result, \Cref{P.WP-large-time}, which is proved in \Cref{S.WP-large-time}.
\end{itemize}
The latter may be considered as the main and most technical result of this work. Finally, we prove a global-in-time well-posedness result for sufficiently small data, \Cref{P.WP-global-in-time}, in \Cref{S.WP-global}.

We recall some information on the water waves system and the Dirichlet-to-Neumann expansion in \Cref{S.WW}.

A toy model for the proposed instability mechanism is introduced and studied in details in \Cref{S.toy}.

\section{Numerical study}\label{S.numerics}

In this section we perform numerical experiments in views of illustrating our results and evaluating their sharpness. In order to do that, we shall numerically investigate different parameters at stake in our rectification procedure. First we examine how, in the absence of any regularization, the numerical discretization of system~\eqref{WW2-intro} generates dramatic high-wavenumber amplification. Then we validate that theses instabilities are absent when using the proposed rectified system~\eqref{RWW2-intro}, provided that the rectifier, $\J^\delta$, is regularizing of sufficiently negative order and its strength, $\delta$, is sufficiently large. In both aspects, the observed threshold on the order of the rectifier and its strength is shown to fully comply with our diagnosis of the instability mechanism. Finally, we examine the loss of accuracy caused by introducing the rectifiers. Once again we find that the numerical results reproduce the different features of our consistency and convergence estimates. Finally we confirm that, at least for the initial data and values of dimensionless parameters we considered, it is possible to choose appropriately rectifiers and their strength so as to suppress undesirable high-frequency instabilities while being harmless from the point view of the accuracy of the model.

\medskip

Let us first describe the numerical method we employ in order to produce approximate solutions to systems~\eqref{WW2-intro} and~\eqref{RWW2-intro} (restricted here to horizontal dimension $d=1$). The figures are obtained using a code written in the Julia programming language~\cite{Julia}, and specifically the package written by the first author and P. Navaro \cite{DucheneNavaro22}. They are fully reproducible using the scripts available at \texttt{WaterWaves1D.jl/examples/StudyRectifiedWW2.jl}.

\paragraph{Numerical scheme}
We use Fourier (pseudo-)spectral methods for the spatial discretization. We shortly describe the principles thereafter, and let the reader refer to \eg~\cite{Trefethen00} for more details. A periodic\footnote{ \label{f1}In the foregoing numerical experiments we use rapidly decaying initial data and $L$ large, so that the periodic problem, $x\in L\TT$, provides a close approximation of the real-line problem, $x\in\RR$, at least for sufficiently small times.}  function with period $P=2L$ is approximated as the superposition of an even number, $N$, of monochromatic waves: 
\[ f(x) \ \approx \ \sum_{k=-N/2}^{N/2-1} \widehat f_k e^{\i\frac{\pi}{L}k x}\]
where we refer to $(\widehat f_k)_{k=-N/2,\dots,N/2-1}$ as the discrete Fourier coefficients. 
In practice, we can efficiently compute the discrete Fourier coefficients from the values of the function at regularly spaced collocation points, $(f(x_n))_{n=0,\dots,N-1}$ where $x_n=-L+2\frac{n}{N} L$ through the Fast Fourier Transform (FFT), and conversely recover values at collocation points from discrete Fourier coefficients through the inverse Fast Fourier Transform (IFFT). This allows to perform efficiently (and without introducing approximations except for rounding errors) the action of Fourier multiplication operators through pointwise multiplication on discrete Fourier coefficients, and the action of multiplication in the physical space at collocation point. However the latter operation cannot be performed exactly due to aliasing effects, stemming from the fact that $x\mapsto 1$ and $x\mapsto e^{\i\frac{\pi}{L}Nx}$ are indistinguishable in our collocation grid. Aliasing effects are known to possibly generate spurious numerical instabilities. In this case one customarily compute a Galerkin approximation (since the error of the approximation is orthogonal all considered monochromatic waves) of products by setting first a sufficient number (since our problem includes only quadratic nonlinearities, we use Orszag's 3/2 rule~\cite{Orszag72}) of discrete Fourier coefficients to zero. Hence in practice we solve, unless otherwise stated,
\begin{equation}\label{RWW2-dealias}\left\{\begin{array}{l}
		\partial_t\zeta- G^\mu_0\psi+ \epsilon \fPi G^\mu_0 ((\J^\delta \zeta) G^\mu_0 \psi)+ \epsilon \fPi\nabla\cdot((\J^\delta \zeta)\nabla\psi)=0,\\[1ex]
		\partial_t\psi+\zeta+\frac\epsilon{2 } \fPi \J^\delta \left( |\nabla\psi|^2- (G^\mu_0 \psi )^2 \right)=0,
	\end{array}\right.\end{equation}
where $\fPi$ is the ``dealiasing'' projection operator onto $(2L)$-periodic functions with all but the first $\lfloor 2N/3\rfloor$ discrete Fourier coefficients equal to zero, initial data $\zeta_0=\zeta(t=0,\cdot)$, $\psi_0=\psi(t=0,\cdot)$ such that $\fPi\zeta_0=\zeta_0$ and $\fPi\psi_0=\psi_0$  and where either $\J^\delta=\Id$ or
	\begin{equation}\label{num-rectifier}
		\J^\delta= J(\delta D) \quad \text{ with } J(\bk)=\min(\{1,|\bk|^{\r}\}),
	\end{equation}
for some $\r\leq 0$. 
As for the time integration, we use the standard explicit fourth order Runge-Kutta method. Because our problem is stiff (since it involves unbounded operators before discretization), a stability condition on the time step must be secured to avoid spurious numerical instabilities. By Dahlquist's stability theory (see~\cite[Chapter 10]{Trefethen00}), and given the nature and order of the operators at stake, we expect that it is sufficient to enforce $\triangle\! t\leq C {\triangle\! x}^{1/2} $ with $\triangle\! t$ the time step, $\triangle\! x=L/N$, and $C$ a sufficiently small constant. In practice we validate that the time-discretization induces no spurious result by checking that the observations are unchanged when varying the time step. Unless otherwise noted, we use $\triangle\! t=0.001$ in reported numerical experiments. 
Let us report that in the experiments corresponding to \Cref{F.Exp7d01}, the total energy (that is $\cH^\mu$) is relatively preserved up to $1.56\, 10^{-6}$ if $\triangle\! t=0.1$, $1.93\, 10^{-11}$ if $\triangle\! t =0.01$, and $9.13\, 10^{-16}$  if $\triangle\! t =0.001$; consistently with the expected accumulated error of order $\cO({\triangle\! t}^4)$.
This gives us a good confidence in the validity of the results of the following numerical experiments.

\paragraph{Instabilities without rectification}
First we exhibit the high-wavenumber amplification developed by numerical solutions when the equations are not regularized, that is considering~\eqref{RWW2-dealias} with $\J^\delta=\Id$, either with or without the dealiasing operator, $\fPi$. The upshot of these numerical experiments is that instabilities arise as soon as a sufficient number of Fourier modes are included, with or without dealiasing. 
We use for initial data 
\begin{equation}\label{num-init}
	\zeta_0(x)=\zeta(t=0,x)=\exp(-|x|^2)  \quad \text{ and } \quad \psi_0'(x)=\psi'(t=0,x)=0\ \quad (x\in(-L,L)) ,
\end{equation}
and set $\mu=1$, $\epsilon=0.1$, and $L=20$.

\begin{figure}[!htb]
	\begin{subfigure}{.5\textwidth}
		\includegraphics[width=\textwidth]{./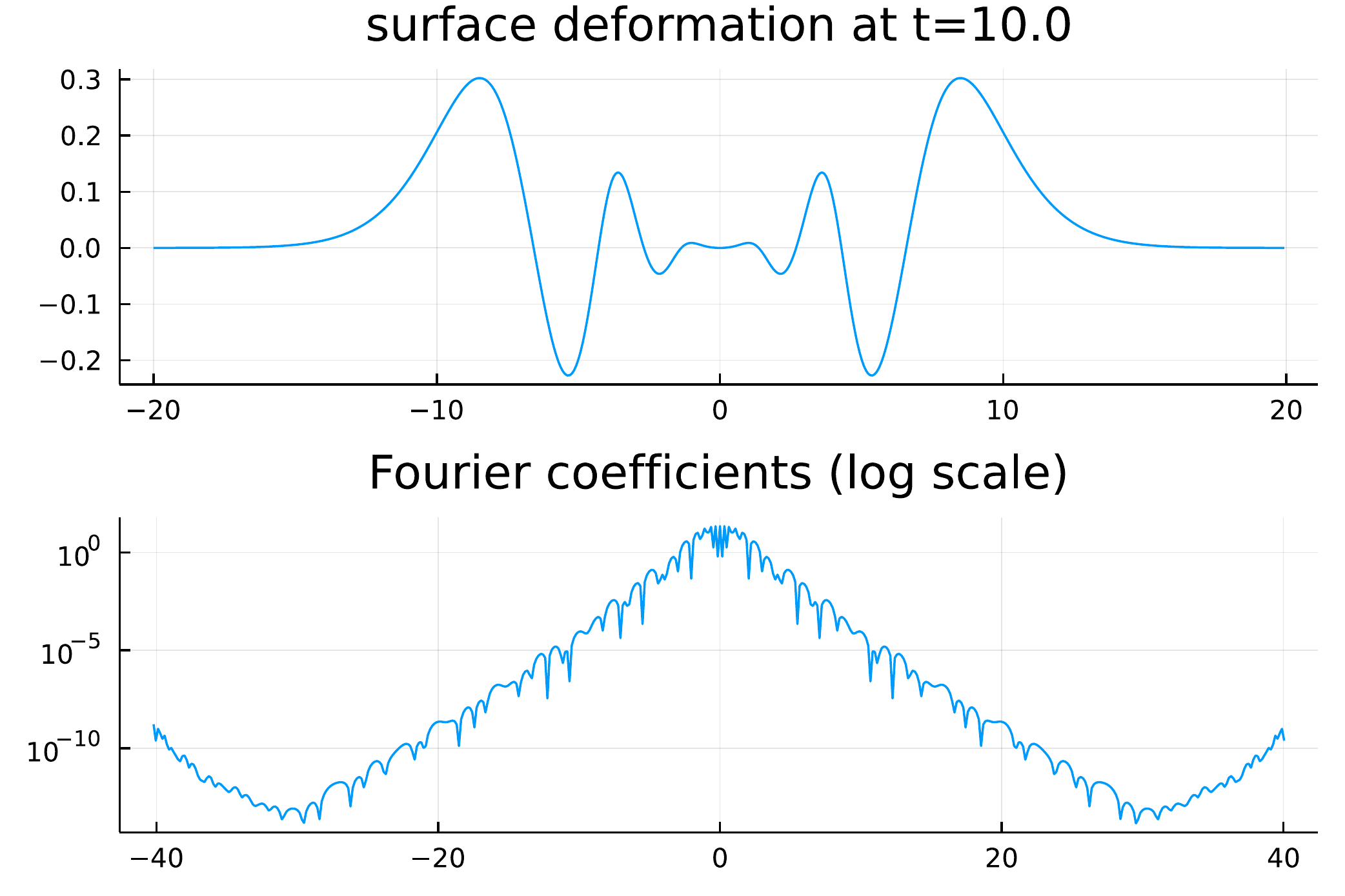}
		\caption{$N=2^9$ modes, at time $t=10$.}
		\label{F.Exp1}
	\end{subfigure}%
	\begin{subfigure}{.5\textwidth}
		\includegraphics[width=\textwidth]{./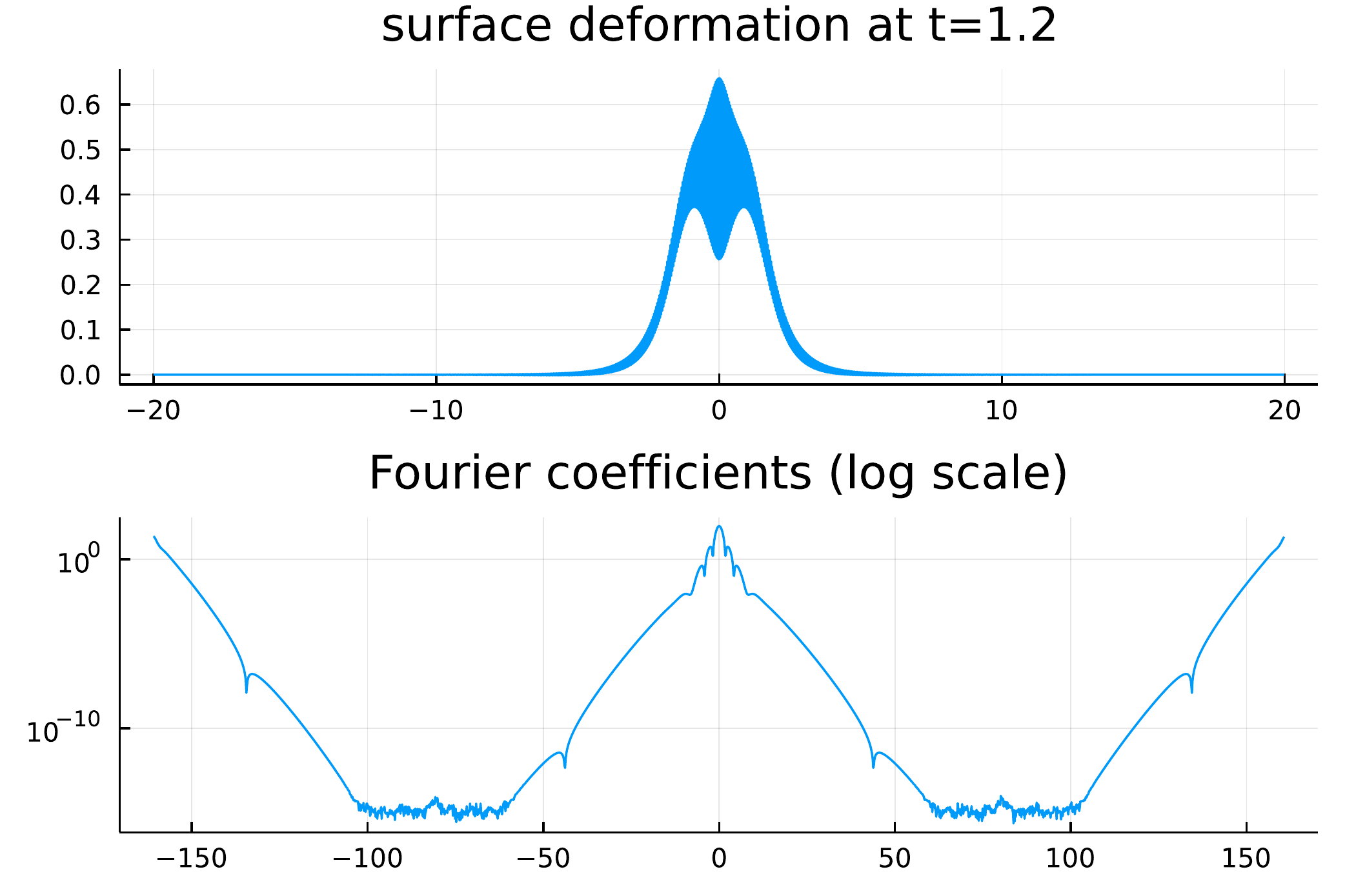}
		\caption{$N=2^{11}$ modes, at time $t=1.2$.}
		\label{F.Exp2}
	\end{subfigure}
	\caption{Time evolution of smooth initial data~\eqref{num-init}, without dealiasing and rectifiers. }
	\label{F.Exp1-2}
\end{figure}
\begin{figure}[htb]
	\begin{subfigure}{.5\textwidth}
		\includegraphics[width=\textwidth]{./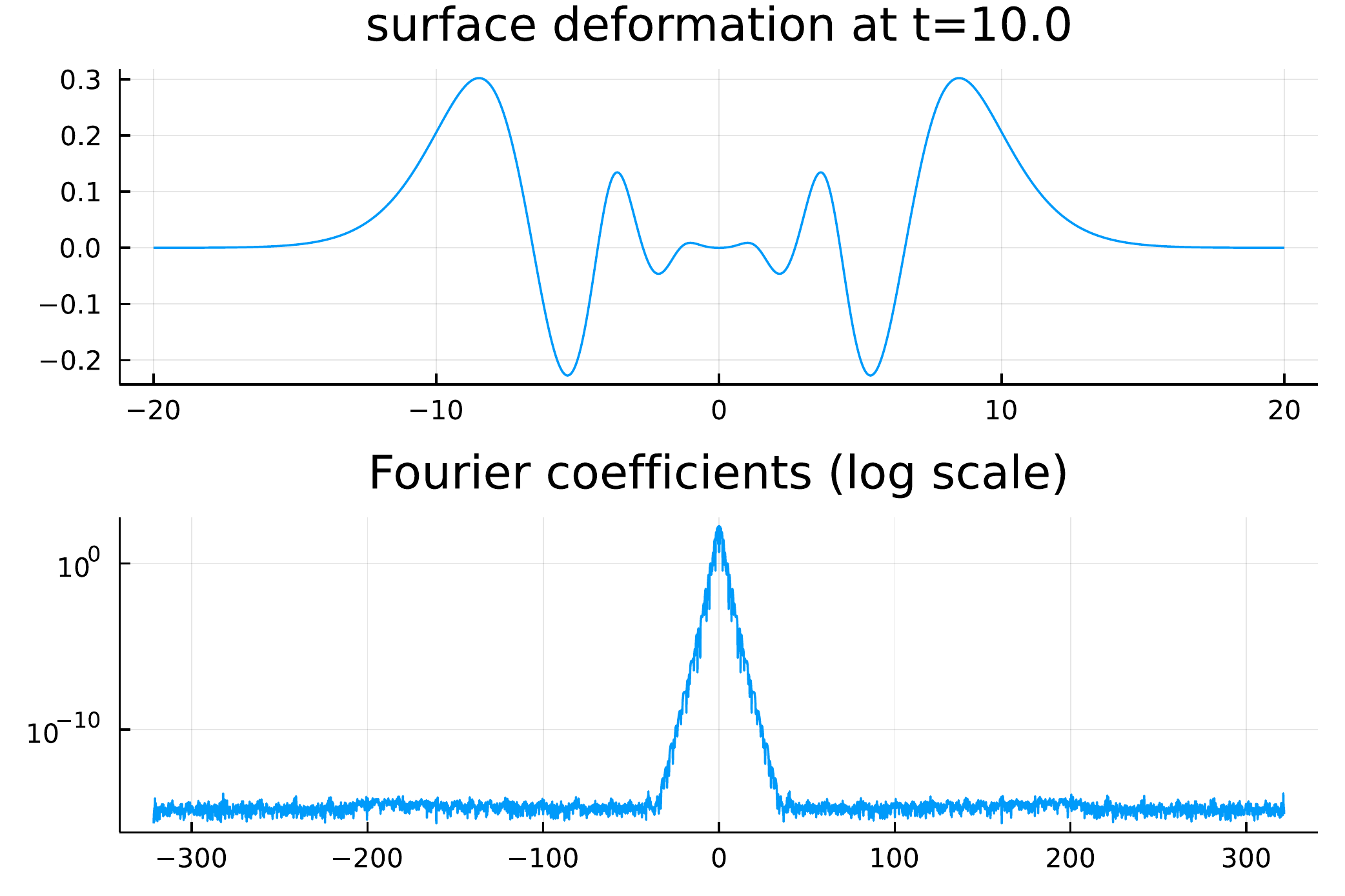}
		\caption{$N=2^{12}$ modes, at time $t=10$.}
		\label{F.Exp3}
	\end{subfigure}%
	\begin{subfigure}{.5\textwidth}
		\includegraphics[width=\textwidth]{./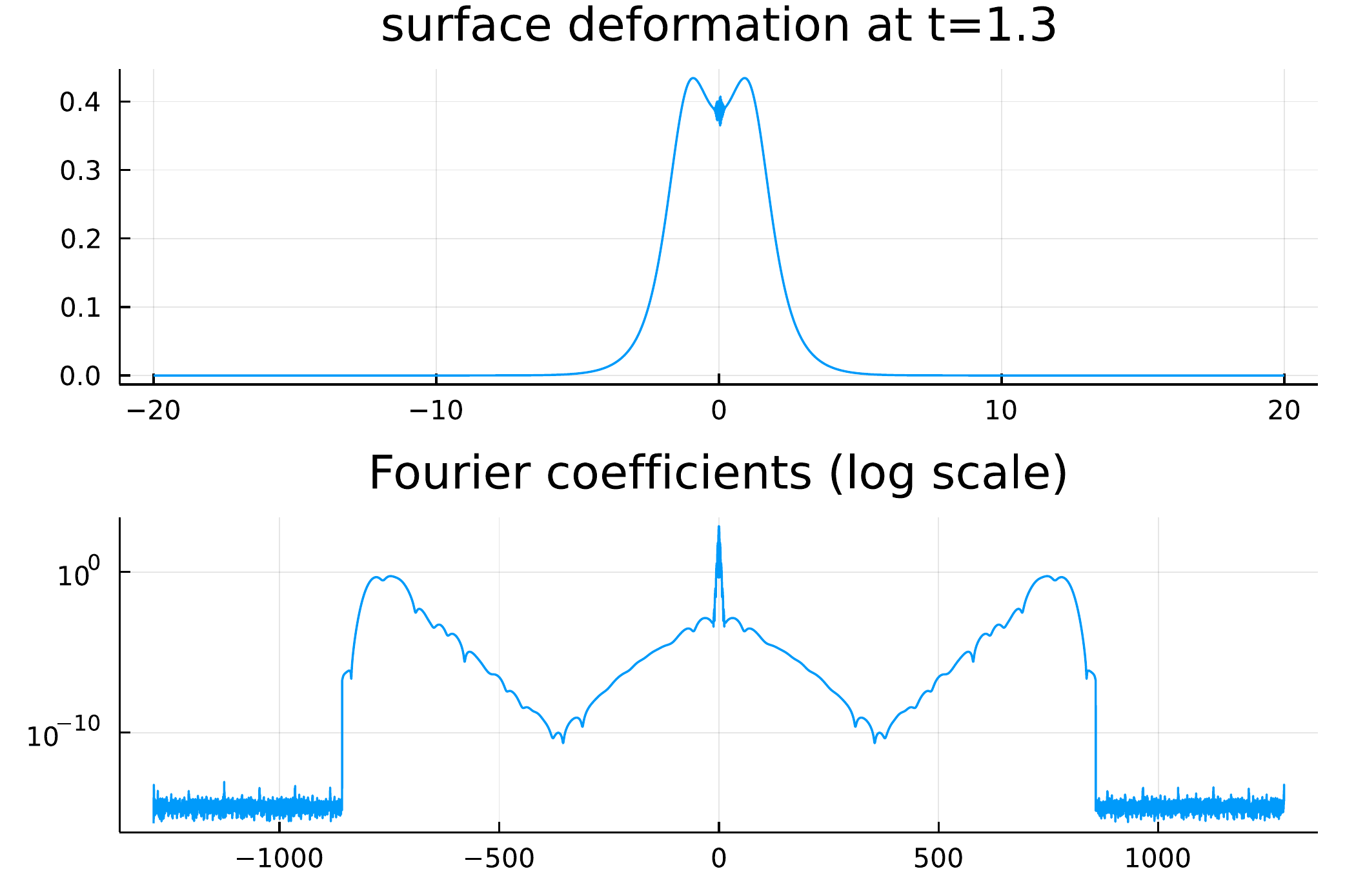}
		\caption{$N=2^{14}$ modes, at time $t=1.3$.}
		\label{F.Exp4}
	\end{subfigure}
	\caption{Time evolution of smooth initial data~\eqref{num-init}, with dealiasing and without regularization. }
	\label{F.Exp3-4}
\end{figure}

In \Cref{F.Exp1-2} (without dealiasing) and \Cref{F.Exp3-4} (with dealiasing) we plot the surface deformation, $(\zeta(t,x_n))_{n=0,\dots,N-1}$ at the final time of computation and at collocation points in the top panels, as well as the associated semi-log graph of the (modulus of) discrete Fourier coefficients, $ (\widehat \zeta_k(t))_{k=-N/2,\dots,N/2-1}$ in the bottom panels. In the left panels we report situations where the number of Fourier modes is too small to notice the rise of discrete Fourier coefficients with large wavenumbers, despite an inflection about extreme values, at least up to the final computation time $t=10$. In the right panel, we add more modes (with all other parameters kept identical) and observe that the large-wavenumbers component quickly grows to plotting accuracy and, as a consequence, is clearly visible on the top-right panel. The solution breaks down after just a few more time steps. The instability occurs more rapidly as more modes are added and does not depend on sufficiently small values of the time step. The presence of the dealiasing operator, $\fPi$, only augments the threshold on the number of modes above which the amplification of the high-wavenumber component of the numerical solution becomes visible. These observations are consistent with the ones already reported in~\cite{DommermuthYue87,numeric_Guyenne_moving_bott} for instance, and with the conjecture that the initial-value problem associated with the continous (\ie before discretization) system is ill-posed in finite-regularity spaces. Incidentally, let us clarify that the initial-value problem associated with the continuous problem is well-posed in the analytic framework (see~\cite{shkoller_etal2019} in the infinite-layer case) covering our initial data, and that what we observe in \Cref{F.Exp1-2} and \Cref{F.Exp3-4} is the growth of the spurious large-wavenumbers component generated by machine-precision rounding errors.

\paragraph{Stabilization through rectifiers}
Next we experiment the effect of introducing the operator $\J^\delta$ in~\eqref{RWW2-dealias}.
We set $\J^\delta$ as in~\eqref{num-rectifier} with $\delta=0.01$ for now, and $\r$ is a parameter which will vary. 
Through the parameter $\r$ we want to study the properties that rectifiers should satisfy in order to suppress instabilities. 
Following the terminology introduced in  \Cref{D.J-intro}, $\J^\delta$ is a regularizing operator of order $\r$ and is regular when $\r\in(-\infty,-1]$. Hence our well-posedness analysis is valid for all $\r\in(-\infty,-1]$, although we expect that setting to $\r\in(-\infty,-1/2]$ is sufficient to secure a well-posedness analogous to \Cref{T.WP-delta} (see \Cref{R.order-J}). 
As a matter of fact, we observe no sign of instabilities in our numerical experiments as long as $\r\leq -1/2$, while high-wavenumber amplification is clearly visible when $\r=-1/4$. 
Let us comment also that we have {\em not}  witnessed any undesirable high-wavenumber amplification when setting $\J^\delta$ as an ideal low-pass filter, \ie setting $\r=-\infty$, despite the lack of regularity of the corresponding symbol (the results of these numerical experiments are not shown). 

\begin{figure}[htb]
	\begin{subfigure}{.5\textwidth}
		\includegraphics[width=\textwidth]{./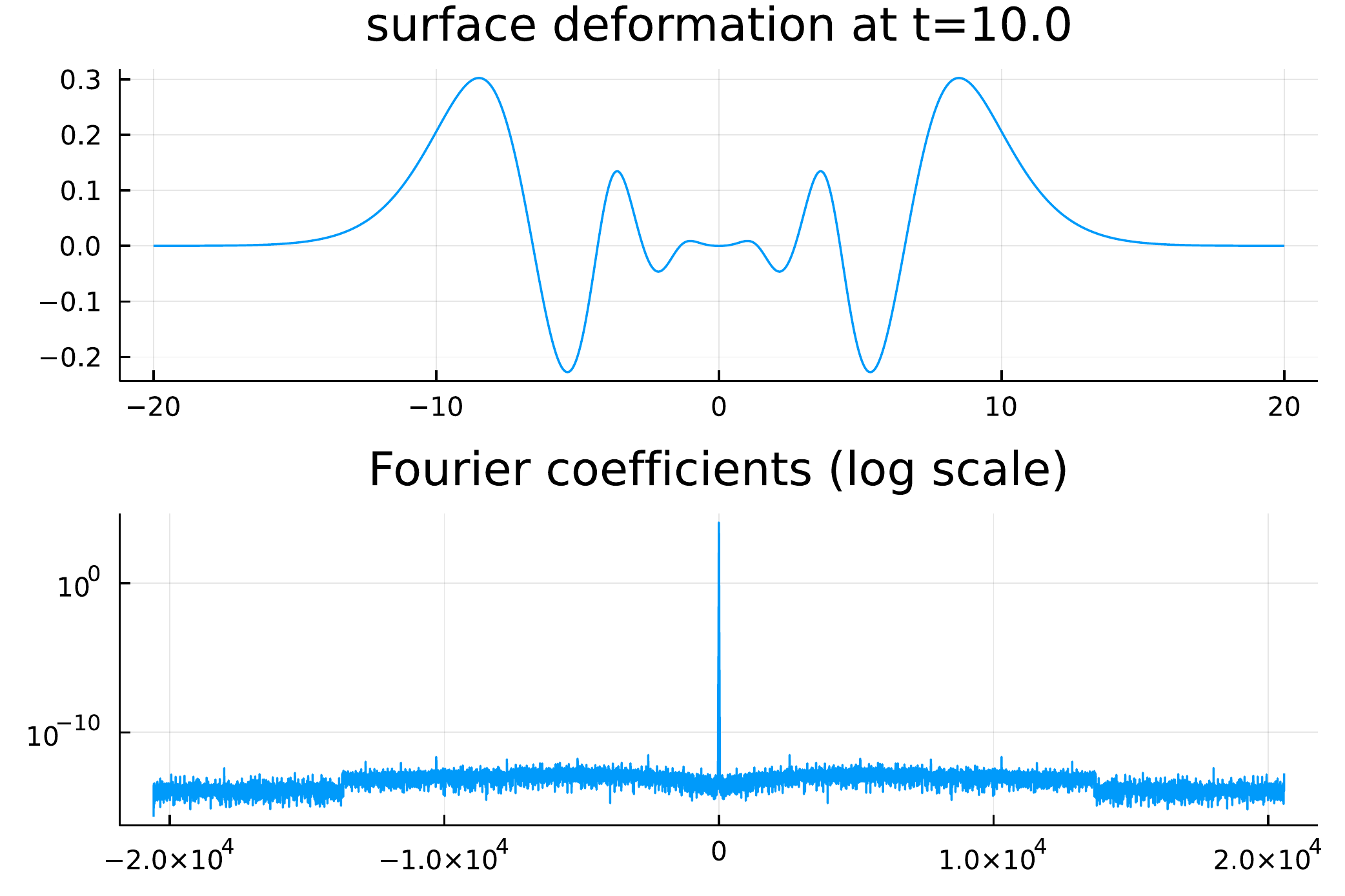}
		\caption{$\J^\delta$ of order $-1$.}
		\label{F.J1}
	\end{subfigure}%
	\begin{subfigure}{.5\textwidth}
		\includegraphics[width=\textwidth]{./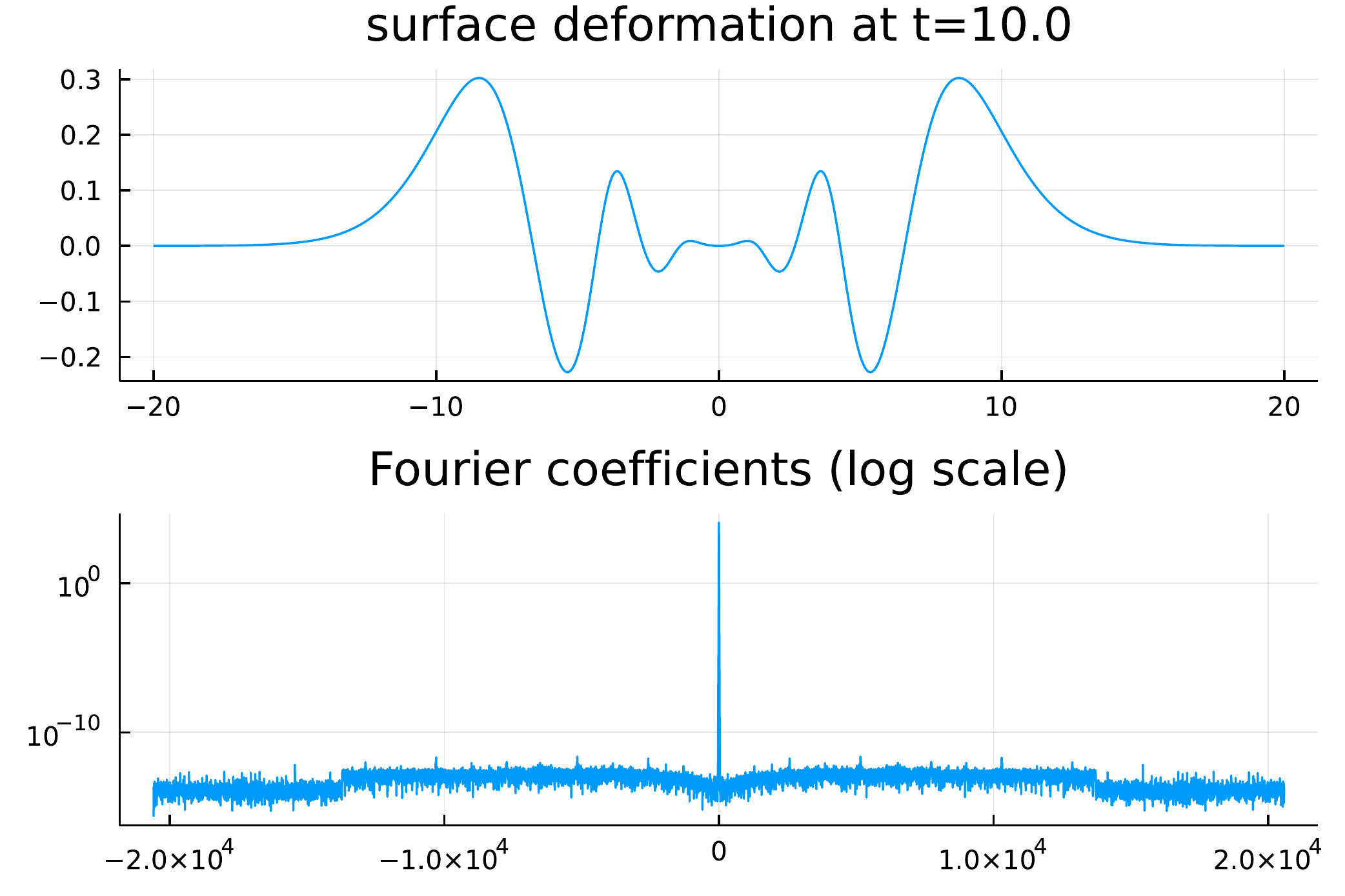}
		\caption{$\J^\delta$ of order $-1/2$.}
		\label{F.J1/2}
	\end{subfigure}
	\caption{Time evolution of smooth initial data~\eqref{num-init}, with a rectifier of order $\r\in\{-1,-1/2\}$ . }
	\label{F.J}
\end{figure}
\begin{figure}[htb]
	\begin{subfigure}{.5\textwidth}
		\includegraphics[width=\textwidth]{./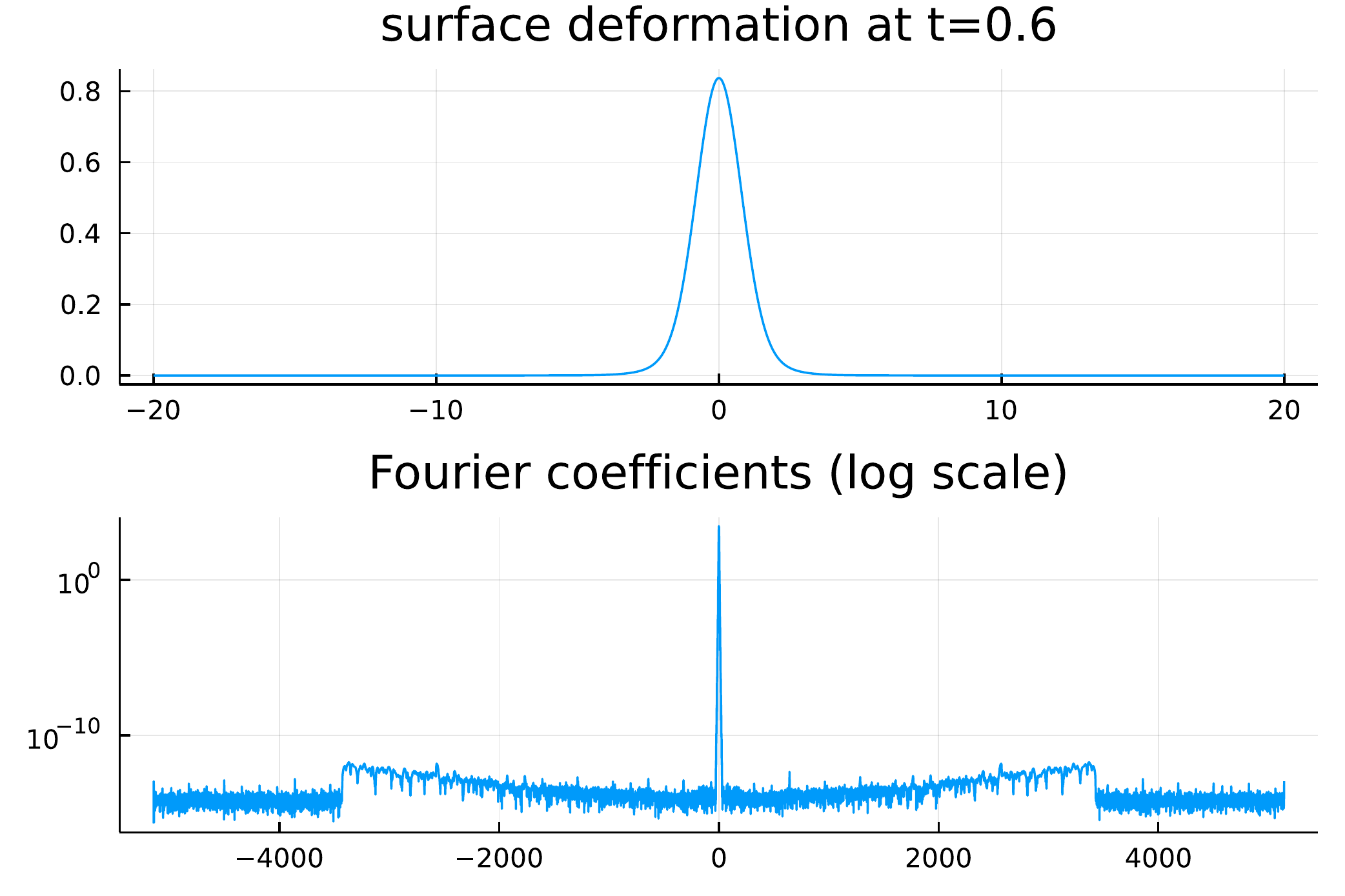}
		\caption{$\J^\delta$ of order $-1/4$, $N=2^{16}$ modes.}
		\label{F.J1/4N16}
	\end{subfigure}%
	\begin{subfigure}{.5\textwidth}
		\includegraphics[width=\textwidth]{./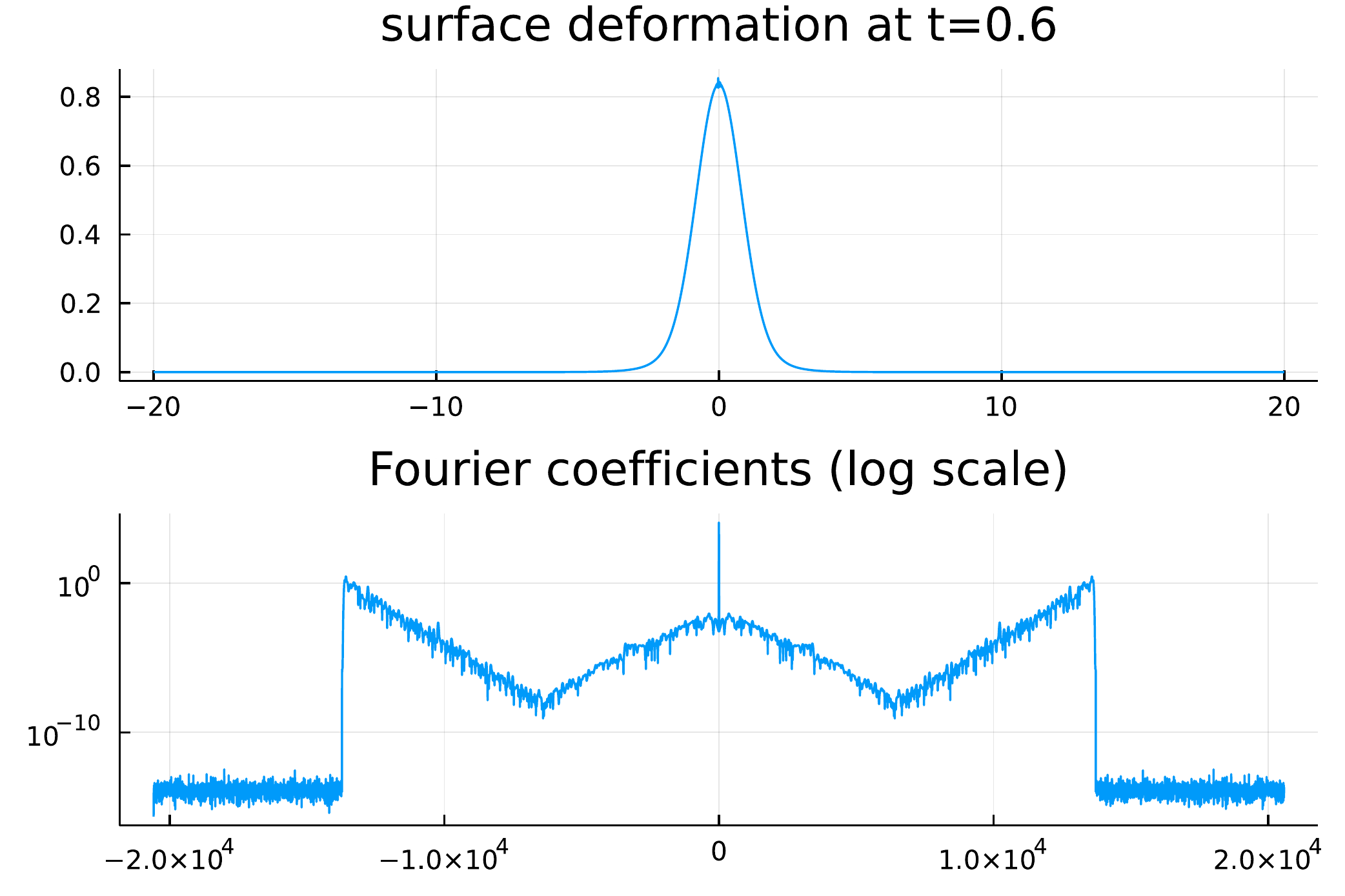}
		\caption{$\J^\delta$ of order $-1/4$, $N=2^{18}$ modes.}
		\label{F.J1/4N18}
	\end{subfigure}
	\caption{Time evolution of smooth initial data~\eqref{num-init}, with a rectifier of order $\r=-1/4$.}
	\label{F.J1/4}
\end{figure}

In \Cref{F.J,F.J1/4} we reproduce the experiment of \Cref{F.Exp3-4} ---that is setting initial data as in~\eqref{num-init}, $\mu=1$, $\epsilon=0.1$, and half-length $L=20$--- but this time with $\J^\delta$ as in~\eqref{num-rectifier}, $\delta=0.01$, and $\r\in\{-1,-1/2,-1/4\}$. We have seen in \Cref{F.Exp3-4} that setting $\r=0$, the time-evolution amplifies large-wavenumbers rounding errors. In \Cref{F.J} we plot the surface deformation in the top panels and the semi-log graph of the modulus of discrete Fourier coefficients in the bottom panel at time $t=10$,
with $\r=-1$ (left) and $\r=-1/2$ (right), and using $N=2^{18}$ modes (since the experiments are computationally more demanding, we use the time step $\triangle\! t=0.01$). There is no sign of instability. In \Cref{F.J1/4} we show the same results for $\r=-1/4$. On the right panel we show the result with $N=2^{18}$ modes, and clear signs of large-wavenumber amplification is visible at time $t=0.6$ (the solutions breaks after a few more time steps). We use only $N=2^{16}$ modes on the left panel and instabilities are tamed (yet still present and more easily witnessed at time $t=1$, not shown). This shows that the phenomenon strongly depends on the number of computed modes, $N$, and suggests that~\eqref{RWW2-dealias} suffers from high-frequency instabilities when $\J^\delta$ is regularizing operator of order $\r=-1/4$.
Hence these numerical experiments fully support our analysis as far as the order of the rectifier $\J^\delta$ as a regularizing operator is concerned.

\paragraph{The role of the strength of rectifiers}
Now we shall study the effect of the strength of rectifiers, by considering $\J^\delta$ as in~\eqref{num-rectifier} with $\r=-1$
and varying the parameter $\delta>0$. We observe a threshold value for $\delta$, above which the solution appears stable for all times, and below which the high-wavenumber amplification occurs rapidly. We later elucidate this threshold value in views of our theoretical results, and find that the latter are not only qualitatively but also quantitatively consistent with the numerical experiments.
	
	\begin{figure}[htb]
		\begin{subfigure}{.5\textwidth}
			\includegraphics[width=\textwidth]{./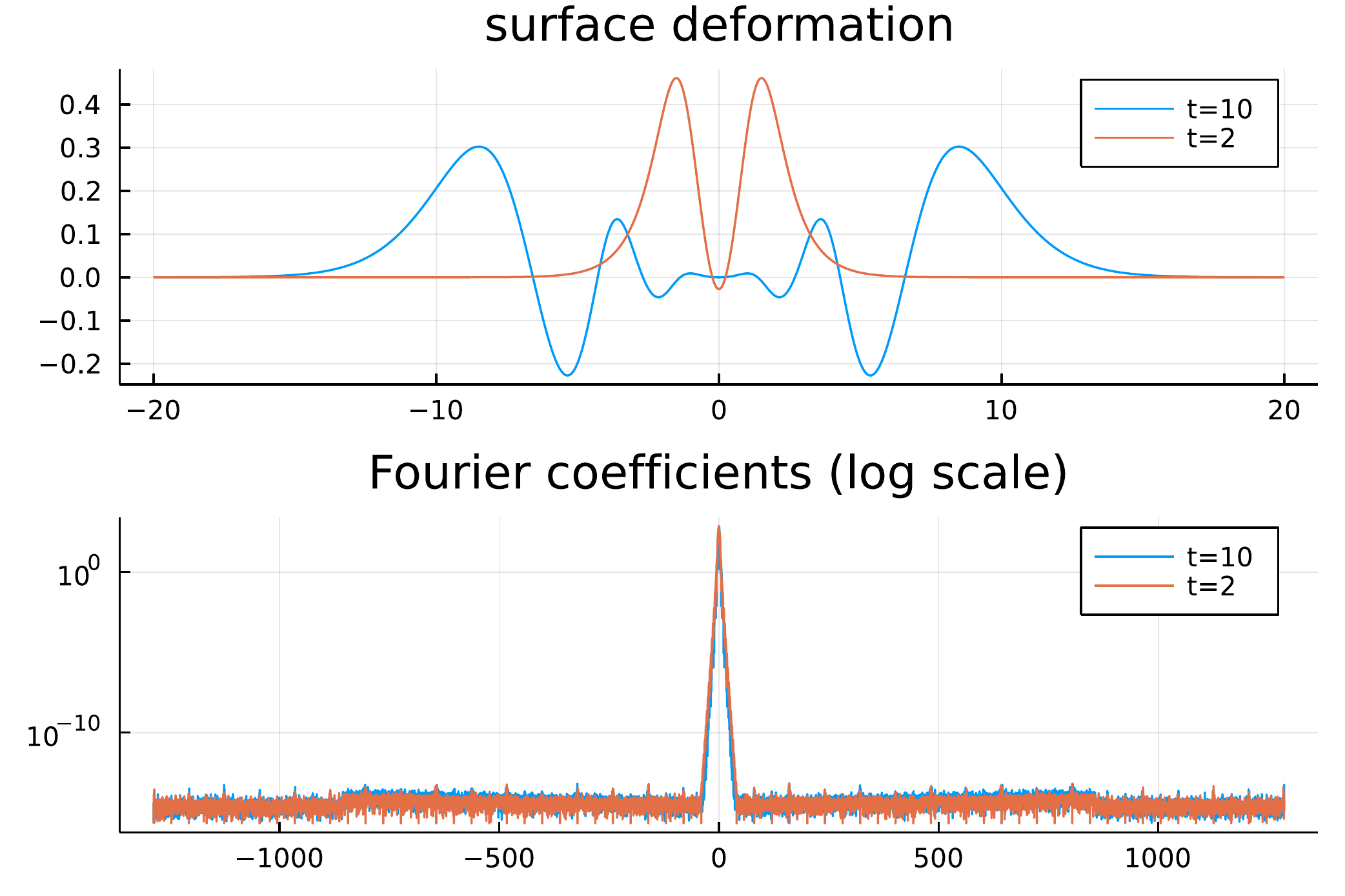}
			\caption{$N=2^{14}$ modes, $\delta = 0.01$.}
			\label{F.Exp7d01}
		\end{subfigure}%
		\begin{subfigure}{.5\textwidth}
			\includegraphics[width=\textwidth]{./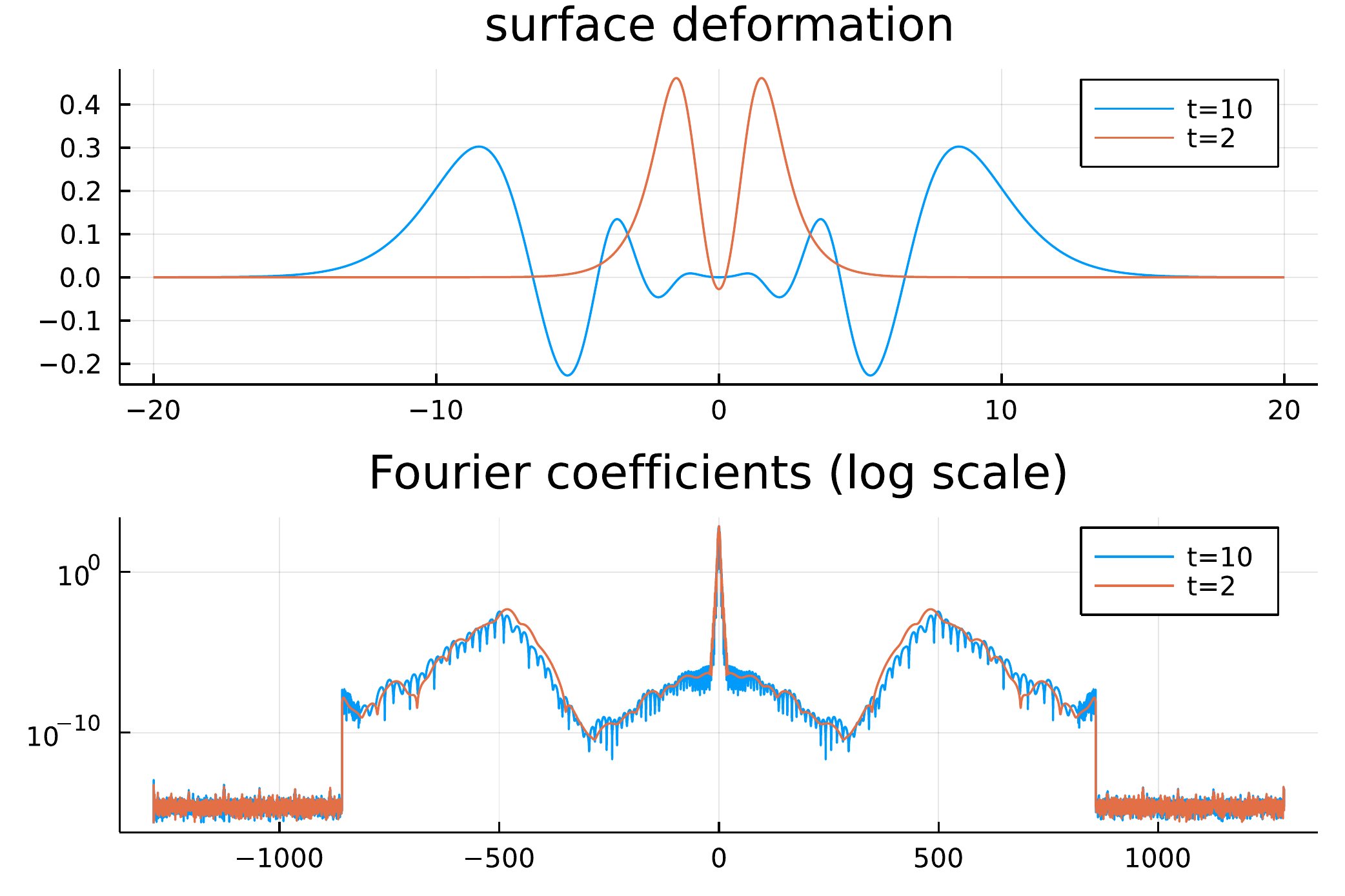}
			\caption{$N=2^{14}$ modes, $\delta = 0.002$.}
			\label{F.Exp7d002}
		\end{subfigure}
		\caption{Time evolution of smooth initial data 
			varying the strength of an admissible rectifier.}
		\label{F.Exp7}
	\end{figure}

In \Cref{F.Exp7} we use again the initial data~\eqref{num-init}
and set $\mu=1$ and $\epsilon=0.1$, half-length $L=20$ and $N=2^{14}$ modes. We plot again the surface deformation in the top panels and the semi-log graph of the modulus of discrete Fourier coefficients in the bottom panel, at times $t=2$ and $t=10$. The left panel corresponds to the case $\delta=0.01$ and the right panel to $\delta=0.002$. The former shows no sign of instability, despite a minor amplification of machine epsilon rounding errors. Adding more modes does not change the picture. In the latter we see a clear amplification of large-wavenumber modes, with a maximum about the wavenumber $ \tfrac{2\pi}{2L}k \approx 1/\delta$. Yet the amplification arises at early times, and apparently remains stable for all times. Notice the amplification of intermediate wavenumbers is very sensitive to the parameter $\delta$: for $\delta=0.0025$ the amplification is barely noticeable (not shown). It is, however, stable with respect to parameters of the numerical scheme: augmenting the number of modes up to at least $N=2^{20}$ does not generate additional instabilities. For values below $\delta=0.001$, the amplification does not reach a stable regime, and the solution breaks before $t=2$. 
We reproduced the phenomenon using other values of $\mu$, namely $\mu=10$ and $\mu=\infty$ as well as with less regular initial data, specifically~\eqref{num-init-p} with  $p=1$ and $p=3$. 
We do not display the results as they are very similar.

The outcome of these numerical experiments is again consistent with our analysis, and especially \Cref{P.WP-large-time}. It can be elucidated as follows: for sufficiently small values of $\delta$, the solution quickly violates the Rayleigh--Taylor condition~\eqref{RTpositive} and enters a regime where its large-wavenumbers component is amplified. We study further on the threshold value for $\delta$ and its dependence with respect to $\epsilon$ in the following paragraph.

\paragraph{Critical strength of rectifiers}
In the foregoing numerical experiments, we observed mainly two scenarios depending on values of the strength $\delta>0$ when the rectifier $\J^\delta$ is chosen to be sufficiently regularizing. For large values of $\delta$ the numerical solution seems to exist and remain regular for all times, while for small values the numerical solution rapidly breaks. We now investigate the transition between these two scenarios, conjecturing that for fixed initial data and time $T>0$,  there exists a unique ``critical value'' $\delta_{\rm c}(T)\geq 0$ such that for any $\delta>\delta_{\rm c}$ (resp. $\delta<\delta_{\rm c}$), the maximal time of existence of the solution is greater than $T$ (resp. smaller than $T$).
Computing numerical blowup times as the last computed time for which the produced solution does not involve NaN (as Not a Number) values, and using the dichotomy method, one may provide a range for this critical value, $\delta_{\rm c}$. 

\begin{figure}[!htb]
	\begin{center}\includegraphics[width=0.8\textwidth]{./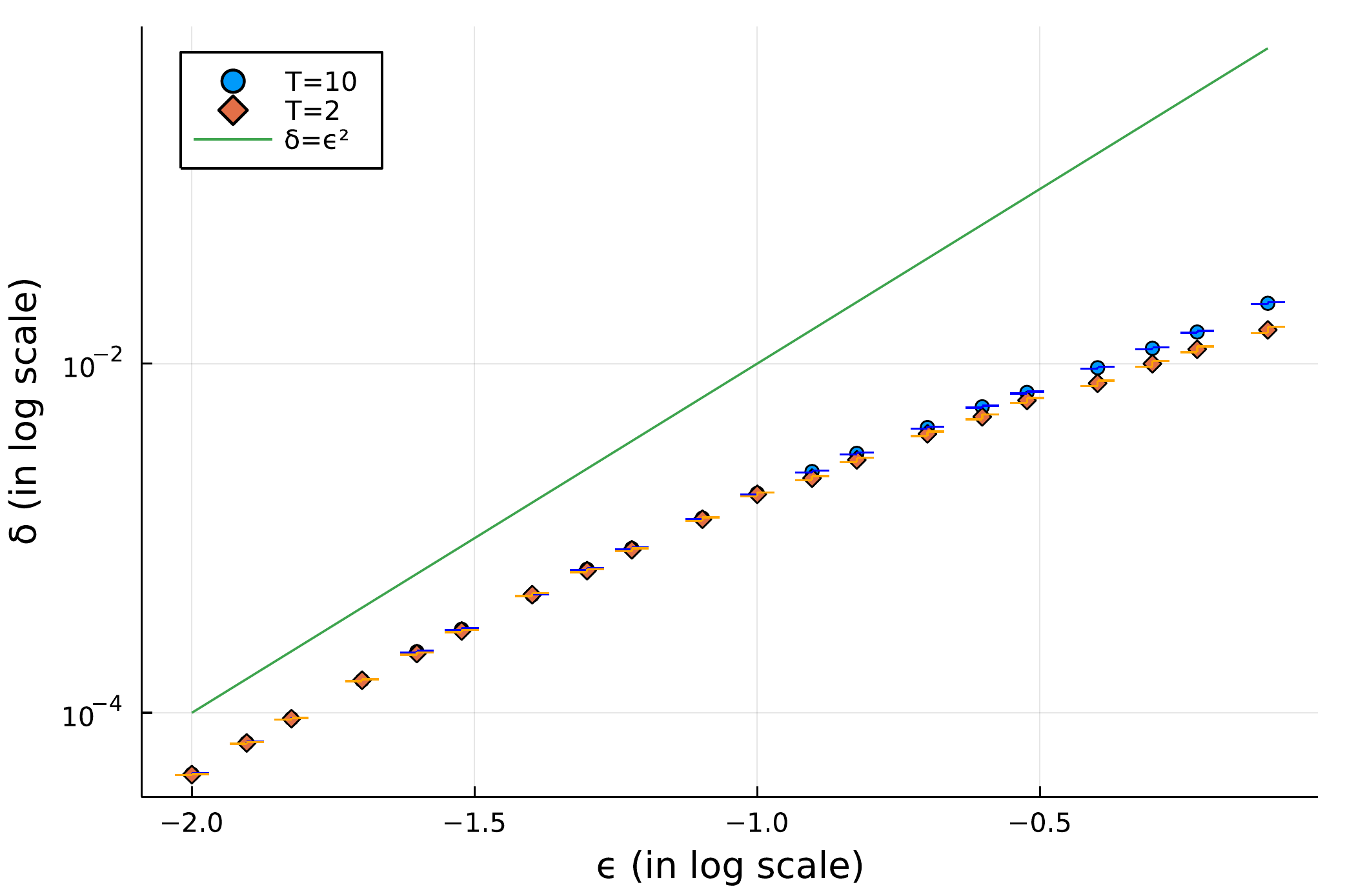}\end{center}
	\caption{   Critical value $\delta_{\rm c}(T)$ ($T\in\{2,10\}$) as a function of $\epsilon$. \\
		Horizontal bars frame the range, while markers locate the geometric mean. }
	\label{F.delta_critic}
\end{figure}

The result of such experiments for $T=10$ and $T=2$, using different values of $\epsilon$ (while $\mu=1$) is reproduced in \Cref{F.delta_critic}. We used $N= 2^{20}$ modes and $\triangle\! t=0.005$. We observe that the transition zone is extremely narrow, in particular for small values of $\epsilon$, and that the critical value behaves asymptotically proportionally to $\epsilon^2$. This behavior is fully consistent with our result in \Cref{P.WP-large-time} and the instability mechanism described by our toy model in \Cref{S.toy}, noticing that the Rayleigh--Taylor condition~\eqref{RTpositive} is satisfied 
for sufficiently regular data when both $\epsilon$ and $\epsilon^2\delta^{-1}$ are sufficiently small; see~\eqref{eq.sufficient-condition-a-positive}. In our opinion, the behavior $\delta_c \approx \epsilon^2$ for small values of $\epsilon$ displayed in \Cref{F.delta_critic} strongly supports our diagnosis of the instability mechanism.

\paragraph{The cost of rectifiers} Lastly we turn to the study of the accuracy of our rectified model, depending on the strength of the involved rectifier, $\delta$. To this aim, we compare the solutions to~\eqref{RWW2-dealias} with the solution to the water waves system, for initial data chosen as
\begin{equation}\label{num-init-p}
	\zeta_0(x)=\zeta(t=0,x)=\exp(-|x|^p)  \quad \text{ and } \quad \psi_0'(x)=\psi'(t=0,x)=0\ \quad (x\in(-L,L)) ,
\end{equation}
with $p\in\{1,2,3\}$, and rectifier $\J^\delta$ set as~\eqref{num-rectifier} with $\r=-1$ and varying $\delta\in(0.01,1)$ as well as $\epsilon\in\{0.05,0.1,0.2\}$.
We set $\mu=1$. 
	As discussed below, the outcome of these numerical experiments fully agree with our convergence result, \Cref{T.Convergence}, both in terms of convergence rate and regularity aspects.

\begin{figure}[!htb]
	\begin{subfigure}{.5\textwidth}
		\includegraphics[width=\textwidth]{./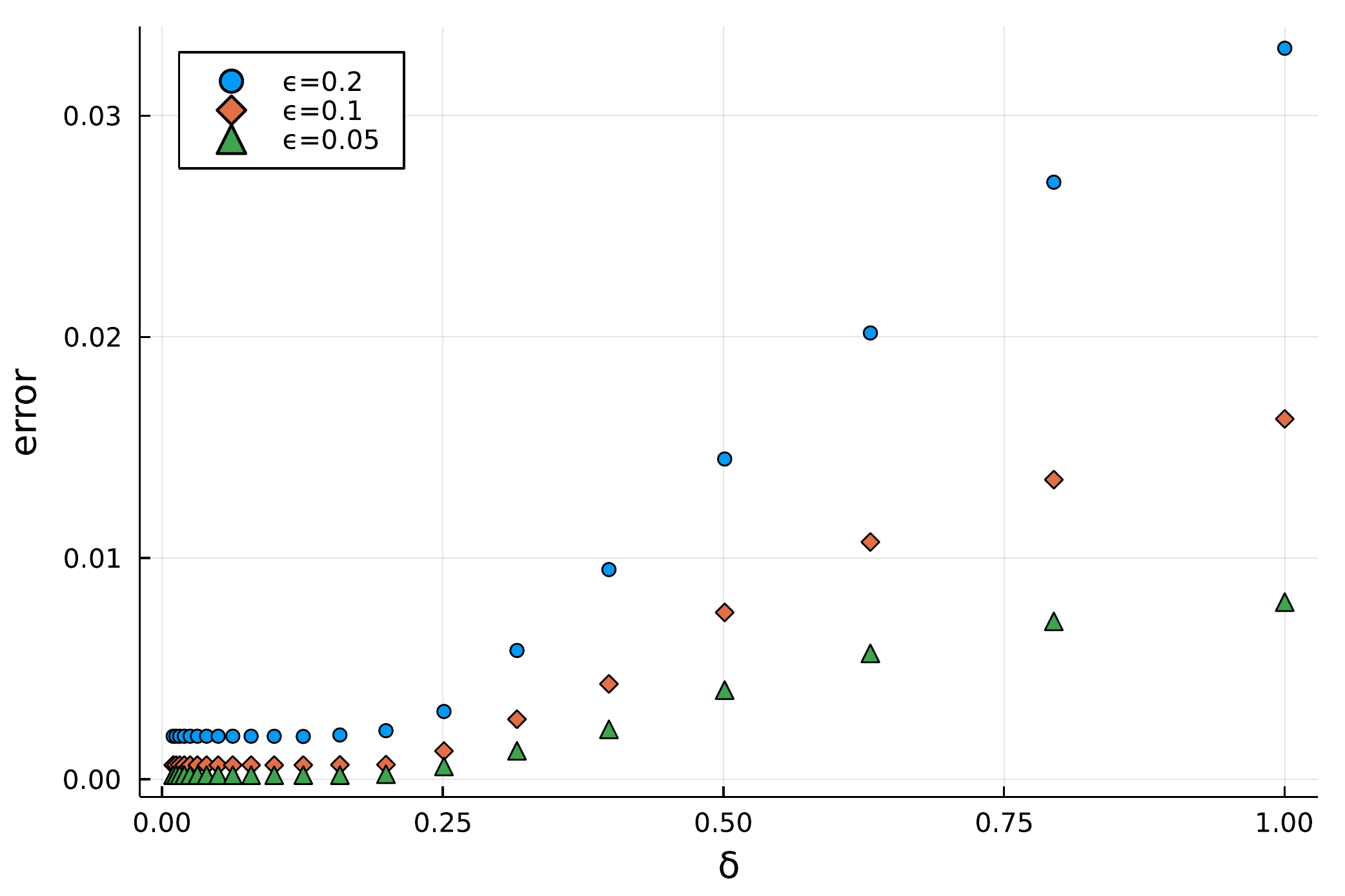}
		\caption{Linear scale.}
	\end{subfigure}%
	\begin{subfigure}{.5\textwidth}
		\includegraphics[width=\textwidth]{./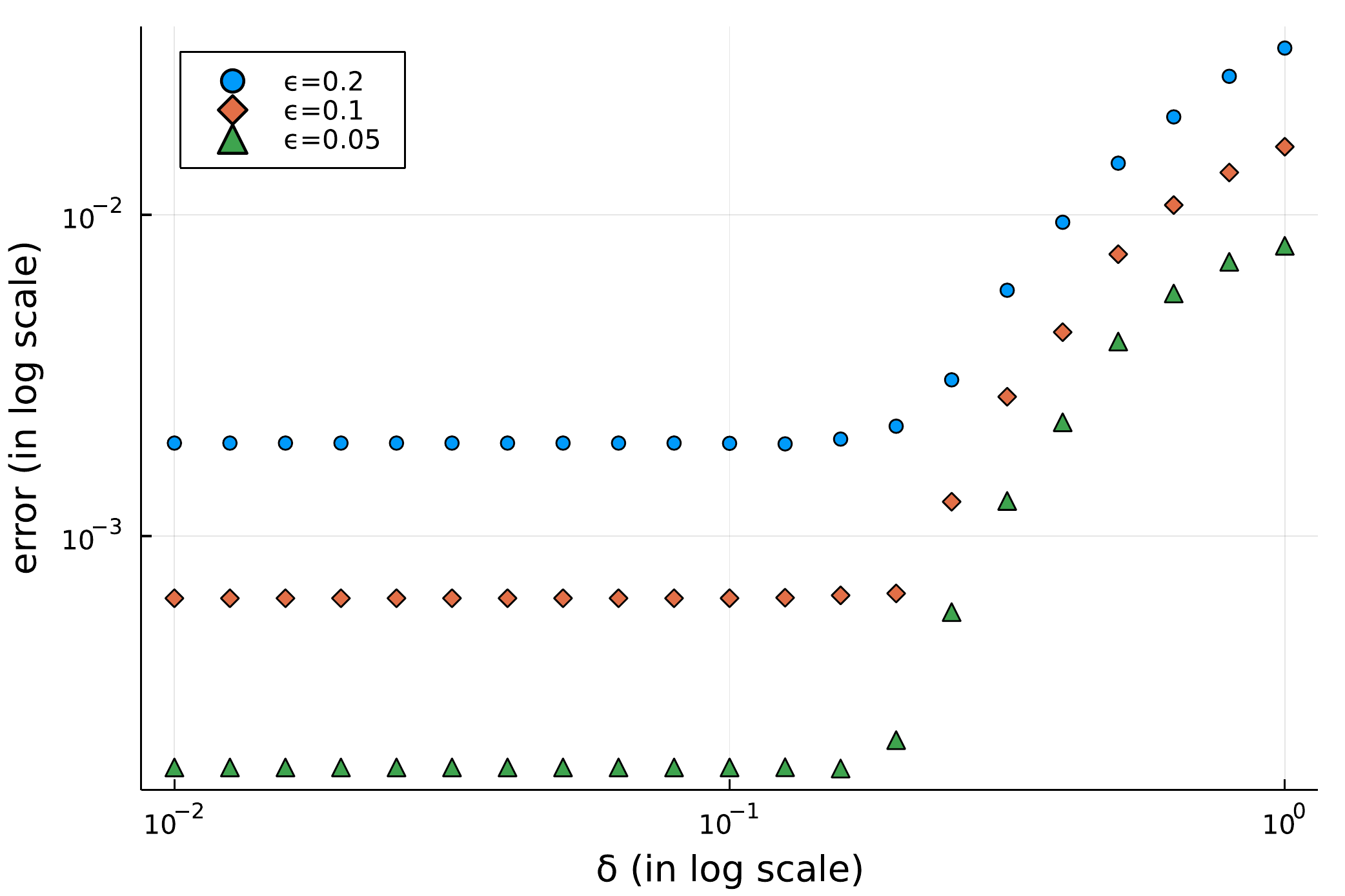}
		\caption{Log scale.}
	\end{subfigure}
	\caption{Error of the model as a function of $\delta$ for smooth initial data,~\eqref{num-init}. }
	\label{F.precision2}
\end{figure}

\begin{figure}[!htb]
	\begin{subfigure}{.5\textwidth}
		\includegraphics[width=\textwidth]{./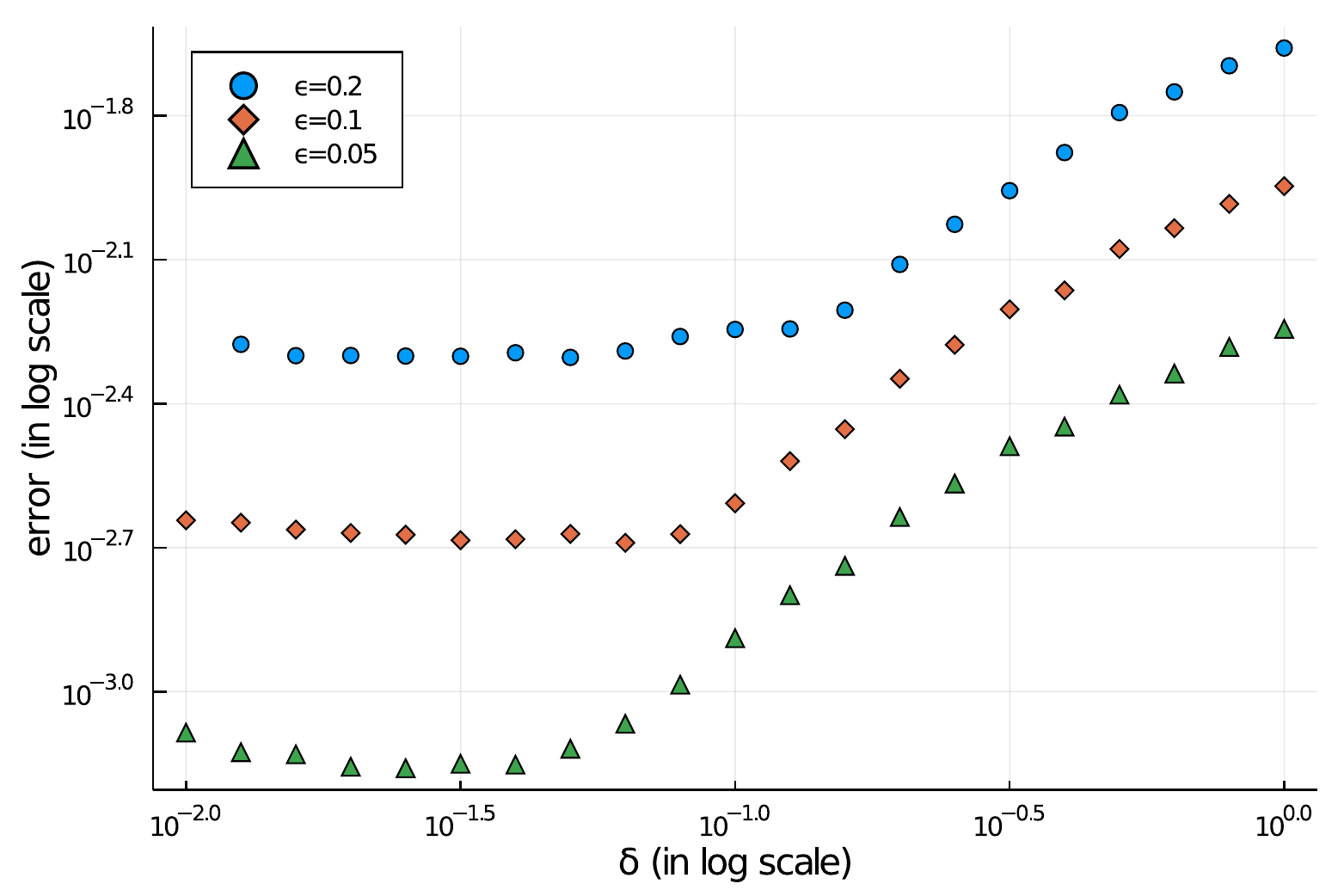}
		\caption{$p=1$.}
	\end{subfigure}%
	\begin{subfigure}{.5\textwidth}
		\includegraphics[width=\textwidth]{./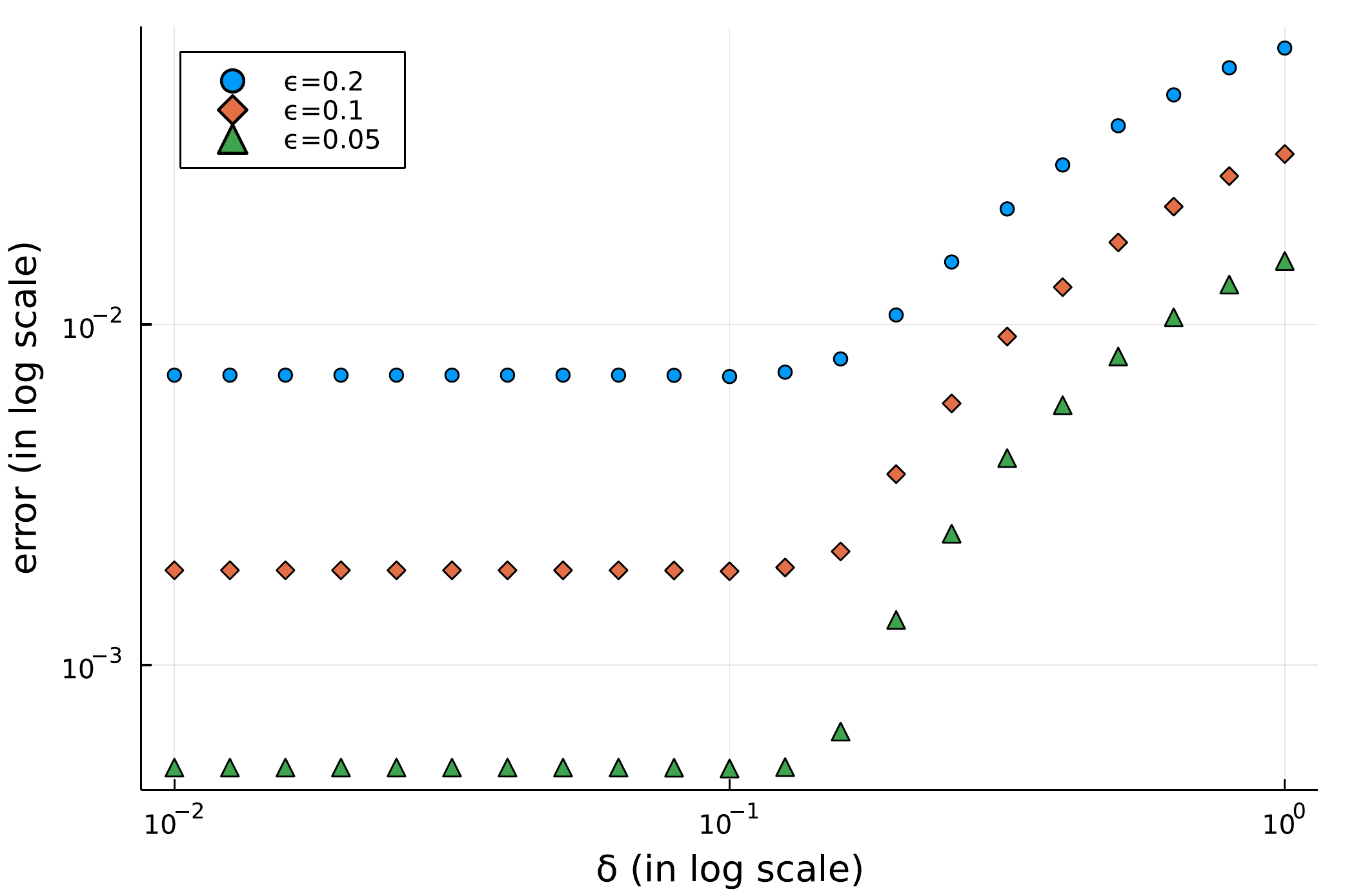}
		\caption{$p=3$.}
	\end{subfigure}
	\caption{Error of the model as a function of $\delta$ for non-smooth initial data,~\eqref{num-init-p}. }
	\label{F.precision13}
\end{figure}

In \Cref{F.precision2} (concerning the situation $p=2$) and \Cref{F.precision13} (concerning the situations $p=1$ and $p=3$) we plot the ``error'' defined as the $\ell^\infty$ norm of the difference between the produced solution (more precisely the surface elevation) to the model and the corresponding numerical solution to the water-waves system, at time $t=10$. The solution to the water waves system is computed following the strategy based on conformal mapping described in~\cite{DyachenkoKuznetsovSpectorEtAl96,ChoiCamassa99} among others, and implemented in the aforementioned Julia package. We use $N=2^{12}$ modes, so that the numerical scheme for~\eqref{RWW2-dealias} would converge (when $p=2$) even without regularizing. However, augmenting the number of modes modifies the error only after several significant digits, showing that the error originates from the model, and not from the spatial discretization. Similarly, diminishing the time step (which is set to $\triangle\! t=0.01$) does not modify the first significant digits.

We observe that for $\delta$ sufficiently small, and in fact way above the ``critical value'' determined in the preceding paragraph, the error stagnates at a value which scales proportionally to $\epsilon^2$. This means that the source of the error, as predicted in \Cref{T.Consistency} and \Cref{T.Convergence}, originates mostly from the consistency of the original model~\eqref{WW2-intro} (without rectification) with respect to the water-waves system. On the contrary, for larger values of $\delta$, the main error originates from the presence of the rectifier $\J^\delta$. Consistently, for fixed value of $\delta$ in this regime, the error scales proportionally to $\epsilon$. 

It is interesting to notice that the behavior with respect to $\delta$ depends strongly on the regularity of the initial data (and hence of the solutions): one can observe that for smooth data ($p=2$) the threshold above which the contribution of the rectifier $\J^\delta$ becomes dominant is almost independent of $\epsilon$, while it strongly depends on $\epsilon$ for less regular data, when $p=1$. This is again consistent with our results: since $\J^\delta$ set as~\eqref{num-rectifier}  is near-identity of arbitrarily large order $\a\geq0$ (recall the terminology in \Cref{D.J-intro}), the smallness of the remainder terms in \Cref{T.Consistency} (and consequently the error terms in \Cref{T.Convergence}) caused by the rectifier $\J^\delta$ is driven by the regularity of the data, compared with the regularity involved when measuring the error.
\medskip

As a general conclusion for this section let us observe that in all these numerical experiments, and even in situations where non-regular data are involved,  it is possible to choose $\delta>0$ sufficiently large so that~\eqref{RWW2-dealias} provides a stable solution for all computed times and at the same time sufficiently small so that the presence of $\J^\delta$ does not induce any noticeable additional errors. {\em In other words, our rectification strategy is fully validated in that introducing appropriate rectifiers is able to suitably regularize the system while being harmless from the point view of the accuracy of the model.}

\section{Main results}\label{S.main}

Before stating our main results, let us introduce a few notations used throughout this work.
\begin{itemize}
	\item We denote $C(\lambda_1,\dots,\lambda_N)$ a positive ``constant'' depending non-decreasingly on its variables. We write $a\lesssim b$ and sometimes $a=\cO(b)$ if $a\leq C b$ where $C>0$ is a universal constant or its dependencies are obvious from the context. We write $a\approx b$ when $a\lesssim b$ and $b\lesssim a$.
	\item We denote the ceiling function as $\lceil x \rceil$ and the floor function as $\lfloor x \rfloor$ for any $x \in \RR$.
	\item  The space $L^\infty(\RR^d)$ consists of all real-valued, essentially bounded, Lebesgue-measurable functions. We denote
	\[
	\norm{f}_{L^\infty}\eqdef {\rm ess\,sup}_{\bx\in\RR^d} | f(\bx)|.
	\]
	\item $L^2(\RR^d)$ denotes the real-valued square-integrable functions, endowed with the topology associated with the inner product
	\[ \forall f,g\in L^2(\RR^d), \quad \big(f,g\big)_{L^2}\eqdef \int_{\RR^d} fg\dd\bx .\]
	\item For any $s\in \RR$, the Sobolev space $H^s(\RR^d)$ is the space of tempered distributions such that  
	\[\norm{f}_{H^s}\eqdef \norm{\langle\cdot\rangle^s \widehat f}_{L^2} <\infty \]
	where $\langle\bxi\rangle\eqdef (1+|\bxi|^2)^{1/2}$ and $\widehat f$ is the Fourier transform of $f$. Of course $H^s(\RR^d)$ is endowed with the norm $|\cdot|_{H^s}$. We use without clarification that when $s=n\in\NN$,
	\[ \norm{f}_{H^n}^2\approx \sum_{\alpha\in\NN^d,\ |\alpha|\leq n} \norm{\partial^\alpha f}_{L^2}^2,\]
	where for multi-indices $\alpha=(\alpha_1,\dots,\alpha_d)\in\NN^d$, we denote $|\alpha|=\sum_{i=1}^d \alpha_i$ and $\partial^\alpha=\partial_{x_1}^{\alpha_1}\cdots \partial_{x_d}^{\alpha_d}$. 
	\item For any $s\in\RR$, the Beppo Levi space $\Hdot^{s}(\RR^d) $ denotes the tempered distributions such that  $\nabla f \in H^{s-1}(\RR^d)^d$, endowed with the semi-norm $\norm{f}_{\Hdot^{s}}\eqdef \norm{\nabla f}_{H^{s-1}}$.
	\item We use the notation $G(D)$ with $D=\frac1{\i}\partial_x$ for the Fourier multiplier operator defined by
	\[ \widehat{G(D)f}(\bxi) = G(\bxi)\widehat f(\bxi).\]
	\item For any $\mu>0$, we denote $G^\mu_0=|D|\tanh(\sqrt{\mu} |D|)$.
\end{itemize}

Let us recall the terminology concerning the operators $\J^\delta$ in~\eqref{RWW2-intro} considered in this work. 
\begin{Definition}[ Rectifiers ]\label{D.J}
	Let $\J^\delta=J(\delta D)$ with $J\in L^{\infty}(\RR^d)$, real-valued and even, and $\delta>0$.
	\begin{itemize}
		\item We say that $\J^\delta$ is {\em regularizing} of order $\r\leq0$ if $\langle\cdot\rangle^{-\r} J \in L^\infty(\RR^d)$, recalling $\langle\cdot\rangle\eqdef(1+|\cdot|^2)^{1/2}$.
		\item We say that $\J^\delta$ is  {\em regular} if $\J^\delta$ is regularizing of order $-1$ and, additionally,  $\langle\cdot\rangle \nabla J \in L^\infty(\RR^d)$.
		\item We say that $\J^\delta$ is {\em near-identity} of order $\a\geq0$ if $|\cdot|^{-\a}(1-J) \in L^\infty(\RR^d)$.
	\end{itemize}
\end{Definition}

We can now state our main results. Our first main result is the local-in-time well-posedness on a relevant timescale for the regularized system~\eqref{RWW2-intro}, under the assumption that rectifiers $\J^\delta$ are sufficiently regularizing and their strength $\delta$ sufficiently large.

\begin{Theorem}[Well-posedness]\label{T.WP-delta}
	Let $d\in\{1,2\}$, $t_{0}> \frac{d}{2}$, $N \in \NN$ with $N \geq t_{0}+2$, $C>1$ and $M>0$. 
	Set $J\in L^\infty(\RR^d)$ such that it defines regular rectifiers. 
	There exists $T_0>0$ such that for any $\mu\geq 1$ and $\epsilon>0$, for any $(\zeta_{0},\psi_{0})\in H^N(\RR^d)\times \Hdot^{N+\frac12}(\RR^d)$ such that
	\begin{equation*}
		0<\epsilon M_0 \eqdef \epsilon  \big( \norm{ \zeta_0 }_{H^{ \lceil t_{0} \rceil  +2}} + \norm{  \mfP  \psi_0 }_{H^{ \lceil t_{0} \rceil +2  }} \big) \leq M,
	\end{equation*}
	and for any $\delta \geq  \epsilon M_0$, the following holds.
	
	There exists a unique $(\zeta,\psi)\in\cC([0,T_0/(\epsilon M_0)];H^N(\RR^d)\times \Hdot^{N+\frac12}(\RR^d))$ solution to~\eqref{RWW2-intro} with $\J^\delta=J(\delta D)$ and initial data $(\zeta,\psi)\id{t=0}=(\zeta_{0},\psi_{0})$, and it satisfies 
	\[ \underset{t \in [0,T_0/(\epsilon M_0)]}{\sup} \big(\norm{\zeta(t,\cdot)}_{H^N}^{2}+\norm{\mfP\psi(t,\cdot)}_{H^N}^{2} \big)\leq C \, \big( \norm{\zeta_0}_{H^N}^{2}+\norm{\mfP\psi_0}_{H^N}^{2} \big) \, . \]
\end{Theorem}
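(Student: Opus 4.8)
The plan is to establish \Cref{T.WP-delta} in two stages, matching the decomposition announced in the outline: an unconditional short-time result (\Cref{P.WP-short-time}) giving existence on a time interval of length $\sim\delta/\epsilon$ by a contraction-mapping argument, followed by a conditional continuation argument (\Cref{P.WP-large-time}) promoting this to the timescale $1/(\epsilon M_0)$ under the assumption $\delta\gtrsim\epsilon M_0$. First I would set up the appropriate functional framework, working with the Alinhac-type ``good unknowns'' adapted to~\eqref{RWW2-intro}: one rewrites the system in the variables $(\zeta,\fPi\psi)$ (with $\fPi=\mfP$), observes that the rectifier $\J^\delta$ makes every nonlinear term a bounded operator on $H^N\times\dotH^{N+1/2}$ (since $\J^\delta$ is regularizing of order $-1$ and $G^\mu_0$ gains half a derivative uniformly in $\mu\ge1$ by \Cref{L.control_P}), and hence the right-hand side defines a locally Lipschitz vector field on the Banach space. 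Picard–Lindelöf then yields a unique local solution; tracking the Lipschitz constant, which is $\sim\epsilon\|(\zeta_0,\psi_0)\|/\delta$ because of the factor $\delta^{-1}$ produced when $\J^\delta$ hits a high frequency, gives the existence time $T\sim\delta/(\epsilon M_0)$, which under $\delta\ge\epsilon M_0$ is at worst $\sim 1/(\epsilon M_0)^{?}$ — but this is not yet the claimed timescale, hence the need for the second stage.

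The heart of the argument is the energy estimate underlying \Cref{P.WP-large-time}. Here I would differentiate the good unknowns $N$ times, symmetrize using the ``Rayleigh–Taylor'' operator $\mfa_{\J^\delta}[\epsilon\zeta,\epsilon\psi]$ appearing in the quasilinear structure of \Cref{P.quasi}, and form the natural energy
\[
E^N(t)\eqdef \tfrac12\big(\partial^\alpha\dot\zeta,\mfa_{\J^\delta}[\epsilon\zeta,\epsilon\psi]\partial^\alpha\dot\zeta\big)_{L^2}+\tfrac12\big(\partial^\alpha\dot\psi,G^\mu_0\partial^\alpha\dot\psi\big)_{L^2}+\cdots,
\]
summed over $|\alpha|\le N$. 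Differentiating in time, the principal-order skew-symmetric terms $-(G^\mu_0\dot\psi,\cdot)$ and $+(\mfa_{\J^\delta}\dot\zeta,\cdot)$ cancel, and what remains is controlled by commutator estimates: the commutators between $\partial^\alpha$ and the variable-coefficient operators $\nabla\psi\cdot\nabla(\J^\delta\cdot)$, $\J^\delta(\nabla\psi\cdot\nabla\cdot)$, and the $G^\mu_0(\zeta G^\mu_0\cdot)+\nabla\cdot(\zeta\nabla\cdot)$ operator — precisely the estimates of \Cref{L.commutator-tanh} (uniform in $\mu\ge1$) and the product/commutator estimates of \Cref{S.technical}. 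The key bookkeeping is that the worst contributions, after using the rectifier, scale as $\epsilon^2\delta^{-1}$ times the energy, so that under $\delta\ge\epsilon M_0$ one gets $\frac{d}{dt}E^N\le C(\epsilon M_0)\,E^N$ (plus lower-order $\epsilon M_0\,E^N$ terms), whence $E^N$ is controlled by a constant multiple of its initial value on $[0,T_0/(\epsilon M_0)]$ for $T_0$ depending only on $C$, $M$, $t_0$, $N$. One must also verify the coercivity $\mfa_\star\|f\|_{L^2}^2\le (f,\mfa_{\J^\delta}f)_{L^2}$, which holds as long as $\epsilon M_0$ and $\delta^{-1}(\epsilon M_0)^2$ are small — automatic under $\delta\ge\epsilon M_0$ and $\epsilon M_0\le M$ with $M$ absorbed into $T_0$; this is the promised Rayleigh–Taylor-type condition.

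The main obstacle, and the most delicate point, is obtaining the energy estimate \emph{with constants uniform in $\mu\in[1,\infty)$ and with the correct $\epsilon^2\delta^{-1}$ scaling} on the genuinely problematic terms. This requires (i) choosing the good unknowns so that the dangerous quadratic-order contributions are shifted into cubic-order ones — this is the observation, emphasized in the introduction, that the instability is of cubic nature — and (ii) handling the operator $G^\mu_0=|D|\tanh(\sqrt\mu|D|)$ carefully, since $\tanh(\sqrt\mu|D|)$ is close to $|D|$ only at low frequency when $\mu$ is large, so commutators with $\zeta$ do not behave like classical pseudodifferential commutators; this is exactly what \Cref{L.commutator-tanh} is designed to circumvent. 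Once the a priori estimate is in hand, the short-time existence from \Cref{P.WP-short-time} together with a standard continuation/bootstrap argument (and uniqueness by an energy estimate on the difference of two solutions, at one derivative lower, using that the system is semilinear after rectification) closes the proof, and the final inequality in the statement is just the resulting bound $E^N(t)\le C\,E^N(0)$ transcribed in the $H^N\times\dotH^{N+1/2}$ norms via the equivalence $E^N\approx \|\zeta\|_{H^N}^2+\|\mfP\psi\|_{H^N}^2$ guaranteed by coercivity of $\mfa_{\J^\delta}$ and by \Cref{L.control_P}.
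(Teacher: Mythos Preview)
Your outline matches the paper's strategy closely: the two-stage structure (\Cref{P.WP-short-time} via Picard--Lindel\"of, then \Cref{P.WP-large-time} via energy estimates on the good unknowns $\psia=\partial^\alpha\psi-\epsilon(G^\mu_0\psi)(\J^\delta\partial^\alpha\zeta)$ against the Rayleigh--Taylor operator $\mfa_{\J^\delta}$), the identification of the $\epsilon^2\delta^{-1}$ scaling on the worst terms, the reliance on \Cref{L.commutator-tanh} for $\mu$-uniform commutator bounds, and the bootstrap to close---all of this is correct and is essentially what the paper does.

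There is, however, a genuine gap in how you glue the two stages together. You assert that the Rayleigh--Taylor coercivity $\mfa_\star\norm{f}_{L^2}^2\le (f,\mfa_{\J^\delta}f)_{L^2}$ is ``automatic under $\delta\ge\epsilon M_0$ and $\epsilon M_0\le M$ with $M$ absorbed into $T_0$''. This is not right: $M$ is an arbitrary given upper bound, and the sufficient condition for coercivity (roughly $\epsilon M_0 + \epsilon^2\delta^{-1}M_0^2\lesssim 1$, which under $\delta\ge\epsilon M_0$ reduces to $\epsilon M_0\lesssim 1$) fails when $\epsilon M_0$ is large. No choice of $T_0$ can repair this, since coercivity is needed at $t=0$ for the energy $\cE^N$ to even be equivalent to the Sobolev norm. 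The paper resolves this by a dichotomy that you have implicitly dismissed: when $\epsilon M_0$ exceeds a fixed threshold (depending only on $t_0$, $J$, $C$), the hypothesis $\delta\ge\epsilon M_0$ forces $\delta$ to be bounded below by a fixed constant $\delta_0>0$, and then \Cref{P.WP-short-time} \emph{alone} already gives existence time $\gtrsim \min(1,\delta_0)/(\epsilon M_0)\approx 1/(\epsilon M_0)$---contrary to your claim that the short-time result is ``not yet the claimed timescale''. Only in the complementary case $\epsilon M_0$ small does one invoke \Cref{P.WP-large-time}, and there the Rayleigh--Taylor condition genuinely holds. Your proof would go through once you insert this case split.
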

\Cref{T.WP-delta} is the corollary of two more precise and complementary results, namely
\begin{itemize}
	\item \Cref{P.WP-short-time}, an unconditional well-posedness result on ``small times'', \ie up to a maximal time $T\approx \min(1/\epsilon,\delta/\epsilon)$, which we use for large values of $\epsilon$;
	\item \Cref{P.WP-large-time}, a conditional (fulfilled when $\epsilon$ is sufficiently small) result on ``large times'', \ie up to a maximal time $T\approx \min(1/\epsilon,\delta/\epsilon^2)$.
\end{itemize}   

Additionally, we prove in this work that if the rectifier $\J^\delta$ is regularizing of order $\r$ with ${\r<-(\frac32+\frac d 2)}$, then the energy preservation provides the global-in-time well-posedness for sufficiently small initial; see the precise statement in \Cref{P.WP-global-in-time}.  
\medskip

Our second main result describes the precision in the sense of consistency of~\eqref{RWW2-intro} as an asymptotic model for the fully nonlinear water waves system, that is~\eqref{WW}  displayed in \Cref{S.WW}.

\begin{Theorem}[Consistency]\label{T.Consistency}
	Let $d\in\{1,2\}$, $t_0>d/2$, $s \geq 0$, $h_\star>0$ and $M>0$. There exists ${C>0}$ such that for any $\epsilon>0$, $\mu\geq 1$,  $\delta\geq 0$ and any $\J^\delta=J(\delta D)$ near-identity rectifier of order $\a\geq0$ and for any $(\zeta,\psi)\in \cC([0,T];H^{\max(s+\a+1,s+2,t_{0}+2)}(\RR^d)\times \Hdot^{\max(s+\a+\frac32,t_{0}+1)}(\RR^d))$ solution to~\eqref{RWW2-intro} with the property that for all $t\in[0,T]$
	\[  1+\tfrac{\epsilon}{\sqrt\mu} \zeta(t,\cdot) \geq h_\star, \qquad \epsilon M_0\eqdef \epsilon\big(\norm{\zeta(t,\cdot)}_{H^{t_0+2}}+\norm{\mfP\psi(t,\cdot)}_{H^{t_0+\frac12}} \big)\leq M, \] 
	one has
	\[
	\left\{\begin{array}{l}
		\partial_t\zeta-G^\mu[\epsilon\zeta]\psi= \epsilon \delta^\a R_1+ \epsilon^2 \widetilde R_1,\\[1ex]
		\partial_t\psi+\zeta+\frac\epsilon{2} |\nabla\psi|^2-\frac{\epsilon}{2} \frac{ \left(G^\mu[\epsilon\zeta]\psi+\epsilon \nabla\zeta\cdot\nabla\psi \right)^2}{1+\epsilon^2|\nabla\zeta|^2}= \epsilon\delta^\a R_2+ \epsilon^2 \widetilde R_2,
	\end{array}\right.
	\]
	with $G^\mu$ the Dirichlet-to-Neumann operator defined and discussed in \Cref{S.WW}, and
	\begin{align*} \norm{R_1}_{H^s}+ \norm{R_2}_{H^{s+\frac12}} &\leq  C \, M_0\, \dotN{-\a}{(1-J)} \,  \,\big( \norm{\zeta}_{H^{s+\a+1}}+\norm{ \nabla\psi}_{H^{s+\a+\frac12}}   \big),\\
		\norm{\widetilde R_1}_{H^s} + \norm{\widetilde R_2}_{H^{s+\frac12}}   &\leq   C\,  M_0^2\,  \big(  \norm{ \zeta}_{H^{s+2}} + \norm{ \mfP \psi }_{H^{s+1}} \big).
	\end{align*}
\end{Theorem}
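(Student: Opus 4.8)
The plan is to compare the right-hand sides of \eqref{RWW2-intro} with the right-hand sides of the fully nonlinear water waves system \eqref{WW}, splitting the discrepancy into two parts: the part due to truncating the Dirichlet-to-Neumann expansion at quadratic order (which is $\cO(\epsilon^2)$ and independent of $\J^\delta$), and the part due to the presence of $\J^\delta$ (which is $\cO(\epsilon\delta^\a)$ since $\J^\delta$ is near-identity of order $\a$). First I would recall from \Cref{S.WW} the Dirichlet-to-Neumann expansion $G^\mu[\epsilon\zeta]\psi = G^\mu_0\psi - \epsilon\big( G^\mu_0(\zeta G^\mu_0\psi) + \nabla\cdot(\zeta\nabla\psi)\big) + \epsilon^2 G^\mu_{(2)}[\zeta]\psi + \cdots$, together with the corresponding Bernoulli identity rewriting the nonlinear term $\frac{(G^\mu[\epsilon\zeta]\psi + \epsilon\nabla\zeta\cdot\nabla\psi)^2}{1+\epsilon^2|\nabla\zeta|^2}$; the relevant shape-analyticity and tame estimates on the Taylor remainder of the expansion are standard (see \cite[\S8]{Lannes_ww}) and I would cite them, in the form $\norm{G^\mu[\epsilon\zeta]\psi - G^\mu_0\psi + \epsilon(G^\mu_0(\zeta G^\mu_0\psi)+\nabla\cdot(\zeta\nabla\psi))}_{H^s}\leq \epsilon^2 C(M_0)\,(\norm{\zeta}_{H^{s+2}}+\norm{\mfP\psi}_{H^{s+1}})$, uniformly for $\mu\geq1$. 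The uniformity in $\mu$ is where I would invoke \Cref{L.control_P} and the mapping properties of $\mfP=(G^\mu_0)^{1/2}$ from \Cref{S.technical}.

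The second step is the algebraic rearrangement. Writing $\J^\delta\zeta = \zeta - (1-\J^\delta)\zeta$ in each occurrence in \eqref{RWW2-intro}, one obtains exactly \eqref{WW2-intro} plus remainder terms each carrying a factor $(1-\J^\delta)$ acting on $\zeta$ (or on a quadratic expression). Since $\J^\delta$ is near-identity of order $\a$, the multiplier $1-J$ satisfies $|\xi|^{-\a}(1-J)\in L^\infty$, so $\norm{(1-\J^\delta)\zeta}_{H^{s}}\lesssim \delta^\a \dotN{-\a}{(1-J)}\,\norm{\zeta}_{H^{s+\a}}$; these are the terms that produce $\epsilon\delta^\a R_1,\epsilon\delta^\a R_2$, and the $R_i$ estimates follow from this multiplier bound combined with the product estimates of \Cref{S.technical} applied to the quadratic structure $G^\mu_0(\cdot\,G^\mu_0\psi)+\nabla\cdot(\cdot\,\nabla\psi)$ and to $\J^\delta(|\nabla\psi|^2-(G^\mu_0\psi)^2)$ (here using that $\J^\delta$ is bounded on every $H^s$). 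Finally, subtracting \eqref{WW2-intro} from the exact system gives the $\epsilon^2\widetilde R_i$ terms, which are precisely the truncation remainders from the first step, possibly plus the genuinely quadratic-in-$\epsilon$ pieces of the Bernoulli reformulation; these are bounded using the cited tame estimates on the expansion remainder and the non-cavitation hypothesis $1+\tfrac{\epsilon}{\sqrt\mu}\zeta\geq h_\star$ to control the denominator $1+\epsilon^2|\nabla\zeta|^2$ from below (in fact that denominator is automatically $\geq1$, but $h_\star$ enters when relating $G^\mu[\epsilon\zeta]$ mapping bounds to fixed Sobolev indices). Tracking the derivative count carefully gives the regularity indices $s+\a+1$, $s+2$, $t_0+2$ etc.\ as stated.

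I expect the main obstacle to be purely bookkeeping rather than conceptual: making the derivative losses sharp and, above all, ensuring that every constant is uniform with respect to $\mu\in[1,\infty)$. The operator $G^\mu_0=|D|\tanh(\sqrt\mu|D|)$ behaves like $|D|$ for $\sqrt\mu|D|\gtrsim1$ but like $\sqrt\mu|D|^2$ for $\sqrt\mu|D|\lesssim1$, so naive estimates on $G^\mu_0(\zeta G^\mu_0\psi)$ lose a derivative that one cannot afford uniformly in $\mu$; this is exactly the role of the commutator-type estimate \Cref{L.commutator-tanh}, and I would route all the delicate terms through it. A secondary subtlety is that $\psi$ is only controlled in a Beppo Levi (homogeneous) space, so all estimates must be phrased in terms of $\mfP\psi$ or $\nabla\psi$ rather than $\psi$ itself; this is handled by systematically using $\mfP$ and the identity $G^\mu_0 = \mfP^2$, never letting a bare $\psi$ appear. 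Once these two points are respected, the remaining computations are routine applications of the product and commutator lemmas of \Cref{S.technical}.
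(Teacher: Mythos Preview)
Your proposal is correct and follows essentially the same route as the paper: the same two-part decomposition into rectifier terms $R_i$ (handled via the near-identity bound $\norm{(1-\J^\delta)f}_{H^\sigma}\le \delta^\a\dotN{-\a}{(1-J)}\norm{f}_{H^{\sigma+\a}}$ together with \Cref{L.commutator-tanh} and tame product estimates) and truncation terms $\widetilde R_i$ (handled via \Cref{asymptotic_DN}). The only detail you leave implicit is that the paper controls $\widetilde R_2$ by factoring the difference of squares and applying Moser-type tame estimates to the fraction $1/(1+\epsilon^2|\nabla\zeta|^2)$, and that the tame form of the $R_1$ bound requires a small interpolation-plus-Young step; both fall squarely within the ``bookkeeping'' you anticipated.
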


This result distinguishes two contributions: the remainders caused by introducing the rectifiers $\J^\delta$ in~\eqref{RWW2-intro} are of magnitude $\epsilon \delta^\a$, while the remainders originating from the truncation of the expansion of the Dirichlet-to-Neumann operator are of magnitude $\epsilon^2$. Only the latter remain if we consider solutions to~\eqref{WW2-intro} instead of~\eqref{RWW2-intro}, or equivalently set $\delta=0$ or $\J^\delta=\Id$. The main observation is that the remainders due to rectifiers can be made asymptotically negligible in front of the remainders associated with the original system~\eqref{WW2-intro} by choosing $\delta$ sufficiently small (depending on $\epsilon$). Moreover, if $\J^\delta$ is near-identity of order $\a\ge 1$, such smallness assumption on $\delta$ is compatible with the constraint $\delta\gtrsim \epsilon$ in \Cref{T.WP-delta}.

Combining the above well-posedness and consistency results and making use of the well-posedness and stability results on the water waves system obtained in~\cite{Alvarez_Lannes}, we infer the full justification of~\eqref{RWW2-intro}. Specifically, a consequence of the result below is that the regularized system~\eqref{RWW2-intro} produces $\cO(\epsilon^2)$ approximations to exact solutions of the water waves system~\eqref{WW} in the relevant timescale, provided that the rectifiers $\J^\delta$ and in particular their strength $\delta$ are appropriately chosen.

\begin{Theorem}[Convergence]\label{T.Convergence}
	There exists $p\in\NN$ such that the following holds.
	
	Let $d\in\{1,2\}$, $t_0>d/2$, $s \geq 0$, $h_\star>0$, $M>0$, and  $J\in L^\infty(\RR^d)$ be such that it defines regular rectifiers near-identity of order $\a\geq0$. There exists ${C>0}$ and $T>0$ such that for any $\epsilon>0$ and $\mu\geq 1$, any $(\zeta_0,\psi_0)\in H^{s+\a+p}(\RR^d)\times \Hdot^{s+\a+p+\frac12}(\RR^d)$ such that
	\[ 1+\tfrac{\epsilon}{\sqrt\mu} \zeta \geq h_\star, \qquad \epsilon M_0\eqdef \epsilon \big( \norm{\zeta_0}_{H^{s+\a+p}}+\norm{\mfP\psi_0}_{H^{s+\a+p}}\big)\leq M, \] 
	and for any $\delta \geq  \epsilon M_0$, there exists
	\begin{itemize}
		\item  a unique $(\zeta,\psi)\in\cC([0,T/(\epsilon M_0)];H^{\lfloor s+\a+p\rfloor }(\RR^d)\times \Hdot^{\lfloor s+\a+p\rfloor+\frac12 }(\RR^d))$ solution to~\eqref{RWW2-intro} where ${\J^\delta= J(\delta D)}$  with initial data $(\zeta,\psi)\id{t=0}=(\zeta_{0},\psi_{0})$,
		\item   a unique $(\zeta_{\rm ww},\psi_{\rm ww})\in\cC([0,T/(\epsilon M_0)];H^{s}(\RR^d)\times \Hdot^{s+\frac12}(\RR^d))$ solution to the water waves system~\eqref{WW} with initial data $(\zeta_{\rm ww},\psi_{\rm ww})\id{t=0}=(\zeta_{0},\psi_{0})$;
	\end{itemize}
	and moreover one has 
	\[ \underset{t \in [0,T/(\epsilon M_0)]}{\sup} \big(\norm{(\zeta-\zeta_{\rm ww})(t,\cdot) }_{H^s}+\norm{(\mfP\psi-\mfP\psi_{\rm ww})(t,\cdot)}_{H^s} \big)\leq C \, M_0\,  t\, \big(  \delta^\a(\epsilon M_0) +(\epsilon M_0)^2\big)\, . \]
\end{Theorem}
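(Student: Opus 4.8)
\textbf{Proof proposal for \Cref{T.Convergence}.}

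The plan is to decompose the error estimate into two ingredients which are by now classical in the analysis of asymptotic models for water waves: a consistency estimate (already available to us as \Cref{T.Consistency}) and a stability estimate for the water waves system, which we borrow from~\cite{Alvarez_Lannes}. First I would invoke \Cref{T.WP-delta} to produce the solution $(\zeta,\psi)$ to~\eqref{RWW2-intro} on the timescale $[0,T_0/(\epsilon M_0)]$, with the regularity index bumped up by a fixed number $p$ of derivatives; the constant $p$ will be fixed at the end so as to feed both \Cref{T.Consistency} (which requires control of $(\zeta,\psi)$ in $H^{\max(s+\a+1,s+2,t_0+2)}\times\dotH^{\max(s+\a+3/2,t_0+1)}$, so $p$ must be at least $\lceil t_0\rceil+2$) and the stability estimate (which typically loses half a derivative and requires the data to be controlled at a level $t_0+1$ or so above the level at which the error is measured). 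Simultaneously I would invoke the well-posedness theory for~\eqref{WW} from~\cite{Alvarez_Lannes} to produce $(\zeta_{\rm ww},\psi_{\rm ww})$ on a time interval of size $1/(\epsilon M_0)$; here one must check the non-cavitation condition $1+\tfrac{\epsilon}{\sqrt\mu}\zeta_0\ge h_\star$ and the smallness $\epsilon M_0\le M$ are exactly what that theory needs, uniformly in $\mu\ge 1$, and that the Rayleigh--Taylor condition at $t=0$ is guaranteed for $\epsilon$ small. One then takes $T$ to be the minimum of the two existence times and shrinks it if necessary.

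The core of the argument is then the following: by \Cref{T.Consistency}, $(\zeta,\psi)$ solves the water waves system~\eqref{WW} up to a right-hand side $(\epsilon\delta^\a R_1+\epsilon^2\widetilde R_1,\ \epsilon\delta^\a R_2+\epsilon^2\widetilde R_2)$ whose $H^s\times H^{s+1/2}$ norm is bounded by $C M_0\big(\delta^\a(\epsilon M_0)+(\epsilon M_0)^2\big)$, using the uniform-in-time bounds on $(\zeta,\psi)$ coming from \Cref{T.WP-delta} together with the near-identity estimate $\dotN{-\a}{(1-J)}\le C$. In other words $(\zeta,\psi)$ is an approximate solution of~\eqref{WW} in the sense of consistency, with the same initial data as the exact solution $(\zeta_{\rm ww},\psi_{\rm ww})$. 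Then I would apply the stability/uniqueness estimate for the water waves system, in the form proved in~\cite{Alvarez_Lannes} (stability with respect to perturbations of the equation in the ``good unknowns'' variables, measured in $H^s\times\dotH^{s+1/2}$ at the cost of controlling the two solutions at a higher regularity level), which yields a Gronwall-type inequality
\[
\Norm{(\zeta-\zeta_{\rm ww})(t,\cdot)}_{H^s}+\Norm{\mfP\psi(t,\cdot)-\mfP\psi_{\rm ww}(t,\cdot)}_{H^s}\ \le\ C\,\epsilon M_0\,t\ \sup_{[0,t]}\Big(\Norm{R^{\rm tot}_1}_{H^s}+\Norm{R^{\rm tot}_2}_{H^{s+1/2}}\Big),
\]
since the two solutions agree at $t=0$ so the error is driven purely by the residual. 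Substituting the consistency bound gives exactly the claimed estimate $C\,M_0\,t\,\big(\delta^\a(\epsilon M_0)+(\epsilon M_0)^2\big)$, and on the interval $[0,T/(\epsilon M_0)]$ the prefactor $\epsilon M_0\,t$ stays bounded by $T$, which is what keeps the constant $C$ uniform.

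The main obstacle, and the step requiring the most care, is the bookkeeping of regularity indices and the uniformity in $\mu\ge 1$. On the regularity side, one must thread a single constant $p$ through three distinct requirements: \Cref{T.WP-delta} needs $N\ge t_0+2$ and propagates $H^N\times\dotH^{N+1/2}$; \Cref{T.Consistency} needs the solution controlled in $H^{\max(s+\a+1,s+2,t_0+2)}\times\dotH^{\max(s+\a+3/2,t_0+1)}$; and the stability estimate from~\cite{Alvarez_Lannes} needs both $(\zeta,\psi)$ and $(\zeta_{\rm ww},\psi_{\rm ww})$ bounded at a level strictly above the level $s$ at which the difference is measured (hence the loss captured by $p$). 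One also needs to verify that the water waves solution $(\zeta_{\rm ww},\psi_{\rm ww})$, which only starts in $H^{s}\times\dotH^{s+1/2}$, actually inherits enough regularity from the data $(\zeta_0,\psi_0)\in H^{s+\a+p}\times\dotH^{s+\a+p+1/2}$ to make the stability estimate applicable — this is where propagation of regularity for~\eqref{WW} is used. On the uniformity side, the delicate point is that~\cite{Alvarez_Lannes} states results uniformly with respect to both shallowness and steepness parameters, and one must check that the particular combination $\epsilon,\mu$ at play here (with $\mu\ge1$ finite, $\epsilon M_0\le M$) falls within their admissible regime, and that the operator $\mfP=(G^\mu_0)^{1/2}$ used to measure $\psi$ is comparable, uniformly in $\mu\ge1$, to the norm used there — a fact supplied by \Cref{L.control_P}. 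Once these points are settled, no further estimate is needed: the theorem is a direct composition of \Cref{T.WP-delta}, \Cref{T.Consistency}, and the cited stability result.
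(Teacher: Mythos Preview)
Your proposal is correct and follows essentially the same route as the paper: the authors also deduce \Cref{T.Convergence} directly from \Cref{T.WP-delta}, \Cref{T.Consistency}, and the well-posedness and stability results for~\eqref{WW} in~\cite{Alvarez_Lannes} (specifically Theorem~5.1, Proposition~5.1, and the stability estimate of Remark~5.4 therein, itself referring to~\cite[Corollary~3.13]{Alvarez_Lannes_green_naghdi}), with the bookkeeping of indices absorbed into the constant $p$. One cosmetic slip: your Gronwall-type line carries an extra factor $\epsilon M_0$ in front of $t$ --- the stability estimate yields $\norm{\text{diff}(t)}\lesssim t\sup\norm{R^{\rm tot}}$ (the $\epsilon M_0$ appears only in the exponential, which is bounded on $[0,T/(\epsilon M_0)]$), and this is what gives exactly $C\,M_0\,t\,(\delta^\a(\epsilon M_0)+(\epsilon M_0)^2)$.
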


The proof of \Cref{T.Consistency,T.Convergence}, assuming that \Cref{T.WP-delta} holds, is provided in \Cref{S.consistency}. The proof of \Cref{T.WP-delta} is postponed to \Cref{S.WP}. 
The upcoming \Cref{S.technical} collects some important tools used in the latter.

\section{Technical ingredients}\label{S.technical}

Recall the notation $G^\mu_0\eqdef |D|\tanh(\sqrt\mu|D|)$.
\begin{Lemma}\label{L.control_P}
Let $s\in\RR$. For any $\mu \geq 1$ and any functions $f\in \Hdot^{s+1/2}(\RR^d)$, 
\begin{equation*}
  \norm{\nabla f}_{H^{s-1/2}} \leq \norm{ \mfP f}_{H^s} .
\end{equation*}
Conversely, $\mfP:\Hdot^{s+1/2}(\RR^d)\to H^s(\RR^d)$ is well-defined and bounded, yet not uniformly with respect to $\mu\geq 1$:
\[ \norm{\mfP f}_{H^{s}} \leq (2\mu)^{\frac14} \norm{ \nabla f }_{H^{s-1/2}}. \]
Moreover, for any $f\in H^{s+1/2}(\RR^d)$, one has the uniform bound
\begin{equation*}
 \norm{\mfP f}_{H^{s}} \leq \norm{  f}_{H^{s+1/2}} .
\end{equation*}
For any $f\in \Hdot^{s+1}(\RR^d)$
\[\norm{G^\mu_0 f}_{H^s}\leq \norm{\nabla f}_{H^s}.\]
The operators $\mfP$ and  $G^\mu_0$ are symmetric
 for the $L^2(\RR^d)$ inner product, in particular for  any $f\in H^{1/2}(\RR^d)$, $g\in \Hdot^1(\RR^d)$,
\[ \big( G^\mu_0 g , f \big)_{L^2} =\big( \mfP g , \mfP f \big)_{L^2}  .\]
\begin{proof}
Results follow from Parseval's theorem and the fact that  for any $\xi\geq 0$ and $\mu\geq 1$ 
\[\xi/\sqrt{1+\xi^2} \leq \tanh(\xi) \leq \tanh(\sqrt{\mu} \xi)\leq \min(1,\sqrt\mu\xi).\]
Only the first inequality requires explanation. It follows from $\tanh(\xi)=\tfrac{\sinh(\xi)}{\sqrt{1+\sinh(\xi)^2}}$, $\sinh(\xi)\geq \xi$ and the fact that $\xi\mapsto \xi/\sqrt{1+\xi^2} $ is increasing.
\end{proof}
\end{Lemma}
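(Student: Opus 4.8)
The plan is to prove \Cref{L.control_P} as a sequence of elementary Fourier-multiplier estimates, all reduced to pointwise inequalities on the symbols evaluated at $|\bxi|$, then invoke Parseval's theorem. First I would establish the chain of scalar inequalities
\[ \frac{\xi}{\sqrt{1+\xi^2}} \ \leq\ \tanh(\xi)\ \leq\ \tanh(\sqrt\mu\,\xi)\ \leq\ \min(1,\sqrt\mu\,\xi)\qquad (\xi\geq 0,\ \mu\geq 1),\]
since every claimed bound will follow by applying one link of this chain to $\xi=|\bxi|$. The middle inequality is monotonicity of $\tanh$ together with $\sqrt\mu\geq 1$; the last is $\tanh\leq 1$ and $\tanh\leq\mathrm{id}$ on $[0,\infty)$; and the first, as indicated in the statement, comes from writing $\tanh(\xi)=\sinh(\xi)/\sqrt{1+\sinh(\xi)^2}$, using $\sinh(\xi)\geq\xi$ for $\xi\geq 0$, and noting $t\mapsto t/\sqrt{1+t^2}$ is increasing so the substitution $t=\sinh(\xi)\mapsto t=\xi$ only decreases the value.

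Next I would translate each operator bound into its symbol. The symbol of $\mfP=(G^\mu_0)^{1/2}$ is $\bxi\mapsto\big(|\bxi|\tanh(\sqrt\mu|\bxi|)\big)^{1/2}$. For the first inequality $\norm{\nabla f}_{H^{s-1/2}}\leq\norm{\mfP f}_{H^s}$, by Parseval it suffices that $\langle\bxi\rangle^{s-1/2}|\bxi|\leq\langle\bxi\rangle^{s}\big(|\bxi|\tanh(\sqrt\mu|\bxi|)\big)^{1/2}$, i.e. $|\bxi|^{1/2}\leq\langle\bxi\rangle^{1/2}\tanh(\sqrt\mu|\bxi|)^{1/2}$, i.e. $|\bxi|/\langle\bxi\rangle\leq\tanh(\sqrt\mu|\bxi|)$, which is exactly the first link of the chain (since $\tanh(\sqrt\mu|\bxi|)\geq\tanh(|\bxi|)\geq|\bxi|/\langle\bxi\rangle$). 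For the $\mu$-dependent converse, I need $\langle\bxi\rangle^s\big(|\bxi|\tanh(\sqrt\mu|\bxi|)\big)^{1/2}\leq(2\mu)^{1/4}\langle\bxi\rangle^{s-1/2}|\bxi|$, i.e. $\langle\bxi\rangle\tanh(\sqrt\mu|\bxi|)\leq\sqrt{2\mu}\,|\bxi|$; using $\tanh(\sqrt\mu|\bxi|)\leq\min(1,\sqrt\mu|\bxi|)$ one splits $|\bxi|\leq 1$ (where $\langle\bxi\rangle\leq\sqrt2$ and $\tanh(\sqrt\mu|\bxi|)\leq\sqrt\mu|\bxi|$, giving $\langle\bxi\rangle\tanh\leq\sqrt{2\mu}|\bxi|$) from $|\bxi|\geq 1$ (where $\langle\bxi\rangle\leq\sqrt2\,|\bxi|\leq\sqrt{2\mu}\,|\bxi|$ and $\tanh\leq 1$). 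The uniform bound $\norm{\mfP f}_{H^s}\leq\norm{f}_{H^{s+1/2}}$ needs $|\bxi|\tanh(\sqrt\mu|\bxi|)\leq\langle\bxi\rangle$, which follows from $|\bxi|\tanh(\sqrt\mu|\bxi|)\leq|\bxi|\cdot\min(1,\sqrt\mu|\bxi|)\leq\max(|\bxi|,|\bxi|^2)\leq\langle\bxi\rangle^2$... more simply $|\bxi|\tanh(\sqrt\mu|\bxi|)\leq |\bxi|\cdot 1$ when $|\bxi|\leq\langle\bxi\rangle$ already isn't quite it, so I use $|\bxi|\tanh(\sqrt\mu|\bxi|)\leq\min(|\bxi|,\sqrt\mu|\bxi|^2)$, hmm — instead just note $|\bxi|\tanh(\sqrt\mu|\bxi|)\leq|\bxi|\leq\langle\bxi\rangle$ when $|\bxi|\leq 1$ is false; the clean route is $\tanh(\sqrt\mu|\bxi|)^2\leq\tanh(\sqrt\mu|\bxi|)\leq 1$ and also $\leq\sqrt\mu|\bxi|$, but here $\mu$-uniformity forces using only $\tanh\leq 1$, so $|\bxi|\tanh(\sqrt\mu|\bxi|)\leq|\bxi|$, and this is $\leq\langle\bxi\rangle$ trivially — that is all that is needed. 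For $\norm{G^\mu_0 f}_{H^s}\leq\norm{\nabla f}_{H^s}$ one needs $|\bxi|\tanh(\sqrt\mu|\bxi|)\leq|\bxi|$, immediate from $\tanh\leq 1$.

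Finally, for the symmetry statements: $\mfP$ and $G^\mu_0$ are real even Fourier multipliers, hence symmetric on $L^2(\RR^d)$ by Plancherel (the multiplier being real-valued makes the operator self-adjoint on the relevant domain); and since $G^\mu_0=\mfP\circ\mfP$ with $\mfP$ self-adjoint, $(G^\mu_0 g,f)_{L^2}=(\mfP g,\mfP f)_{L^2}$ for $f\in H^{1/2}$, $g\in\Hdot^1$, the pairing being finite because $\mfP g\in L^2$ (by the $\mu$-dependent bound with $s=1/2$, using $g\in\Hdot^1$) and $\mfP f\in L^2$ (by the uniform bound with $s=0$). I do not anticipate any genuine obstacle here: the entire proof is a careful bookkeeping of symbol inequalities, and the only mild subtlety is tracking which bounds can be taken uniformly in $\mu$ and which necessarily degenerate (the factor $(2\mu)^{1/4}$), along with checking that the Beppo Levi spaces $\Hdot^{s}$ are the correct functional setting so that the pairings and norms are well-defined — this is exactly why the lemma separates the uniform estimate on $H^{s+1/2}$ from the non-uniform one on $\Hdot^{s+1/2}$.
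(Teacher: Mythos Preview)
Your proposal is correct and follows essentially the same route as the paper: both reduce everything to the scalar chain $\xi/\sqrt{1+\xi^2}\leq\tanh(\xi)\leq\tanh(\sqrt\mu\,\xi)\leq\min(1,\sqrt\mu\,\xi)$ and then read off each operator bound via Parseval. Your write-up is more explicit than the paper's terse proof (and contains some unnecessary back-and-forth on the uniform bound before landing on the obvious $|\bxi|\tanh(\sqrt\mu|\bxi|)\leq|\bxi|\leq\langle\bxi\rangle$), but the substance is identical.
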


The following product estimate is proved for instance in~\cite[Th.~8.3.1]{Hormander97}. 
\begin{Lemma}\label{L.product}
Let $d \in \NN^\star$. Let $s,s_{1},s_{2} \in \RR$ such that $s \leq s_{1}$, $s \leq s_{2}$, $s_{1}+s_{2}\geq 0$ and $ s_{1} + s_{2} >s+ \frac{d}{2}$. Then, there exists a constant $C > 0$ such that for all $f \in H^{s_{1}}(\RR^{d})$ and for all $g \in H^{s_{2}}(\RR^{d})$, we have $fg \in H^{s}(\RR^{d})$  and 
\footnote{
When $s<0$ the product $fg$ is well-defined as a tempered distribution as soon as $s_1+s_2\geq 0$. Indeed, we have
\[ \forall \varphi\in\mathcal{S}(\RR^d), \qquad \langle u_1 u_2,\varphi\rangle := \langle u_1,u_2\varphi\rangle,  \quad \text{ and } \quad \big|\langle u_1 u_2,\varphi\rangle| \lesssim \norm{u_1}_{H^{s_1}} \norm{u_2}_{H^{s_2}}  \norm{\varphi} ,\]
for $H^{s_1}(\RR^d)'=H^{-s_1}(\RR^d)\supset H^{s_2}(\RR^d)$ and $\norm{u_2\varphi}_{H^{s_2}} \lesssim \norm{u_2}_{H^{s_2}}  \norm{\varphi}$ where the norm on $\varphi$ involves a finite number (depending on $s_2$) of semi-norms. Another statement of \Cref{L.product} is that the pointwise multiplication, which is well-defined from $L^2(\RR^d)\times L^2(\RR^d)$ to $L^1(\RR^d)$ (say), extends as a continuous bilinear map from $H^{s_{1}}(\RR^{d})\times H^{s_{2}}(\RR^{d})$ to $H^s(\RR^d)$. Henceforth, we will use without mention the standard identification between functions and (tempered) distributions.
}
\begin{equation*}
\norm{fg}_{H^{s}} \leq C \norm{f}_{H^{s_{1}}} \norm{g}_{H^{s_{2}}}.
\end{equation*}
In particular, for any $t_0>d/2$, and $s\in[ -t_0,t_0]$, there exists $C>0$ such that for all $f\in  H^{s}(\RR^{d})$ and $g\in H^{t_0}(\RR^{d})$, $fg\in H^s(\RR^d)$ and
\begin{equation*}
\norm{fg}_{H^{s}} \leq C \, \norm{f}_{H^{s}} \norm{g}_{H^{t_0}}.
\end{equation*}
\end{Lemma}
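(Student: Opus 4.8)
The plan is to prove the estimate by frequency decomposition, in the spirit of the reference \cite{Hormander97}. By density it suffices to treat $f,g$ in the Schwartz class, the general statement then following by continuous extension of the bilinear map $(f,g)\mapsto fg$ (with the identification recalled in the footnote when $s<0$). Fix a Littlewood--Paley partition of unity and write $f=\sum_{j\ge0}f_j$, $g=\sum_{k\ge0}g_k$, where $f_j$ (resp.\ $g_k$) has Fourier support in $\{\,\abs{\bxi}\approx2^j\,\}$ for $j\ge1$ and in a fixed ball for $j=0$, and denote by $S_m f=\sum_{j\le m}f_j$ the associated low-frequency projections. Set $a_j\eqdef2^{js_1}\norm{f_j}_{L^2}$ and $b_k\eqdef2^{ks_2}\norm{g_k}_{L^2}$, so that $\norm{(a_j)}_{\ell^2}\approx\norm{f}_{H^{s_1}}$ and $\norm{(b_k)}_{\ell^2}\approx\norm{g}_{H^{s_2}}$, and expand $fg=\sum_{j,k}f_jg_k$.

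Following Bony's paraproduct decomposition, I would split this double sum into three pieces according to whether $j\le k-N_0$, $k\le j-N_0$, or $\abs{j-k}<N_0$, for a fixed integer $N_0$ chosen large enough that $f_jg_k$ is then spectrally supported in an annulus $\{\,\abs{\bxi}\approx2^{\max(j,k)}\,\}$ in the first two (``paraproduct'') regimes, and in a ball $\{\,\abs{\bxi}\lesssim2^{\max(j,k)}\,\}$ in the third (``resonant'') regime. For the low--high paraproduct $\sum_{j\le k-N_0}f_jg_k=\sum_k(S_{k-N_0}f)g_k$, almost-orthogonality of the output annuli gives a bound of the $H^s$ norm squared by $\sum_k2^{2ks}\norm{S_{k-N_0}f}_{L^\infty}^2\norm{g_k}_{L^2}^2$; estimating $\norm{S_{k-N_0}f}_{L^\infty}\lesssim\sum_{j\le k}2^{j(d/2-s_1)}a_j$ and using $s\le s_2$ (so that $2^{ks}\norm{g_k}_{L^2}\lesssim b_k$), the gain $2^{-(s_1+s_2-s-d/2)k}\le1$ coming from $s_1+s_2>s+d/2$ together with a discrete Young/Cauchy--Schwarz argument yields the desired bound $\lesssim\norm{f}_{H^{s_1}}\norm{g}_{H^{s_2}}$. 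The high--low paraproduct is treated symmetrically, this time invoking $s\le s_1$.

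For the resonant part $\sum_{\abs{j-k}<N_0}f_jg_k=\sum_j\widetilde h_j$ with $\widetilde h_j\eqdef\sum_{\abs{k-j}<N_0}f_jg_k$ spectrally supported in a ball of radius $\approx2^j$, I would use $\norm{\widehat{\widetilde h_j}}_{L^\infty}\lesssim\norm{f_j}_{L^2}\norm{g_j}_{L^2}\approx2^{-j(s_1+s_2)}a_jb_j$ together with $\norm{\langle\cdot\rangle^s}_{L^2(\abs{\bxi}\lesssim2^j)}\lesssim2^{j(s+d/2)}$ when $s>-d/2$ (and $\lesssim\langle j\rangle^{1/2}$ otherwise) to get $\norm{\widetilde h_j}_{H^s}\lesssim2^{j(s+d/2-s_1-s_2)}a_jb_j$ when $s>-d/2$, and $\lesssim\langle j\rangle^{1/2}2^{-j(s_1+s_2)}a_jb_j\lesssim a_jb_j$ otherwise (since then $s_1+s_2\ge0$, and either $s<-d/2$ strictly or $s_1+s_2>0$, so the weight is bounded). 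Summing in $j$ and applying Cauchy--Schwarz gives $\lesssim\norm{(a_j)}_{\ell^2}\norm{(b_j)}_{\ell^2}\approx\norm{f}_{H^{s_1}}\norm{g}_{H^{s_2}}$, the convergence of the geometric series being precisely ensured by $s_1+s_2>s+d/2$. Collecting the three estimates proves the main inequality, and the ``in particular'' statement follows by taking $(s_1,s_2)=(s,t_0)$: the hypotheses $s\le s_1$, $s\le s_2$ and $s_1+s_2\ge0$ hold because $s\in[-t_0,t_0]$, while $s_1+s_2>s+d/2$ reduces to $t_0>d/2$.

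The main obstacle is the bookkeeping in the low-regularity range. When $s<0$, or when $s_i\le d/2$, one cannot afford a crude $L^\infty$ control of $S_{k-N_0}f$ nor a crude $H^s$ bound of the resonant piece, and the dyadic weights must be tracked carefully; the strict inequality $s_1+s_2>s+d/2$ is exactly what closes the (otherwise borderline) resonant sum and the low-regularity paraproduct sums, the genuine endpoint configurations (such as $s_i=d/2$ or $s=s_i$) producing only logarithmic factors that are absorbed thanks to that strict inequality.
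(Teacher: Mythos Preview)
Your argument is correct. Note that the paper does not actually prove this lemma: it simply cites \cite[Th.~8.3.1]{Hormander97} and moves on. You supply a full proof via Littlewood--Paley decomposition and Bony's paraproduct splitting, which is one of the standard routes to this product estimate and is in the same spirit as the cited reference. Your handling of the three regimes is accurate: the two paraproduct pieces use $s\le s_2$ (resp.\ $s\le s_1$) together with the gain $2^{-(s_1+s_2-s-d/2)k}$ when the relevant $s_i\le d/2$ (the convolution kernel $2^{m(d/2-s_i)}\mathbf{1}_{m\le0}$ lies in $\ell^1\cap\ell^2$, so the discrete Young/Cauchy--Schwarz step indeed closes), and the resonant piece sums precisely thanks to the strict inequality $s_1+s_2>s+\tfrac d2$. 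The borderline bookkeeping you flag in the last paragraph---the logarithmic loss at $s_i=d/2$, and the case $s\le-d/2$ in the diagonal sum where $s_1+s_2\ge0$ saves the day---is also correctly identified and absorbed. The ``in particular'' statement follows exactly as you say.
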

We infer the following useful ``tame'' product estimates.
\begin{Lemma}\label{L.product-tame}
Let $d \in \NN^\star$. Let $s,s_1,s_2,s_1',s_2' \in \RR$ be such that \[s_1+s_2=s_1'+s_2'>s+d/2,\quad s_1+s_2\geq 0 , \quad s\leq s_1, \ s\leq s_2'.\]
Then there exists a constant $C > 0$ such that for all $f  \in H^{s_1}(\RR^{d})\cap H^{s_1'}(\RR^{d})$, $g\in  H^{s_2}(\RR^d) \cap H^{s_2'}(\RR^d)$, we have
 $fg \in H^{s}(\RR^{d})$ and 
 \begin{equation*}
 \norm{ fg }_{H^{s}} \leq C \big( \norm{ f }_{H^{s_1}} \norm{  g }_{H^{s_2}} + \norm{  f }_{H^{s_1'}} \norm{ g }_{H^{s_2'}} \big).
 \end{equation*}
 In particular, for any $t_0>d/2$, $s\geq -t_0$, there exists $C>0$ such that for all $f,g\in  H^{s}(\RR^{d})\cap H^{t_0}(\RR^{d})$, $fg\in H^s(\RR^d)$ and
\begin{equation*}
\norm{fg}_{H^{s}} \leq C \big( \norm{f}_{H^{s}} \norm{g}_{H^{t_0}} + \norm{f}_{H^{t_0}} \norm{g}_{H^{s}}\big).
\end{equation*}
\end{Lemma}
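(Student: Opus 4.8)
The plan is to deduce \Cref{L.product-tame} from the more standard bilinear estimate of \Cref{L.product} by splitting the product into its two Littlewood--Paley ``paraproduct'' halves. Write $fg = \Pi_f(g) + \Pi_g(f)$, where $\Pi_h(w)$ collects the dyadic blocks of the product in which the frequency of $w$ is the larger (up to a fixed finite overlap), so that $\Pi_f(g)$ is the piece where $g$ carries the high frequency and $\Pi_g(f)$ the piece where $f$ does. Concretely one may take $\Pi_f(g)=\sum_j (S_{j-1}f)\,\Delta_j g$ with the usual Littlewood--Paley projectors $\Delta_j$ and partial sums $S_j$, plus the diagonal term handled in whichever half one prefers. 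The point of the decomposition is that each half is, morally, a product of a low-frequency factor with a high-frequency factor, and for such products the ``tame'' gain is immediate.

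For the first half $\Pi_f(g)$, where $g$ is at high frequency, I would apply \Cref{L.product} with the high-frequency factor placed in $H^{s_2}$ and the low-frequency factor in whatever space makes the indices legal; because only the large frequency of $g$ is seen, one effectively controls $\norm{\Pi_f(g)}_{H^s}$ by $\norm{f}_{H^{s_1}}\norm{g}_{H^{s_2}}$, using $s\le s_1$, $s_1+s_2\ge 0$ and $s_1+s_2>s+d/2$. Symmetrically, for the second half $\Pi_g(f)$, where $f$ is at high frequency, I would use the \emph{primed} exponents: place $f$ in $H^{s_1'}$ and $g$ in $H^{s_2'}$, and the hypothesis $s\le s_2'$ together with $s_1'+s_2'=s_1+s_2>s+d/2$ and $s_1'+s_2'\ge 0$ (which follows from $s_1+s_2\ge 0$ since the sums are equal) gives $\norm{\Pi_g(f)}_{H^s}\lesssim \norm{f}_{H^{s_1'}}\norm{g}_{H^{s_2'}}$. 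Adding the two contributions yields the stated estimate, with the tempered-distribution interpretation of $fg$ in the range $s<0$ exactly as in the footnote to \Cref{L.product} (the condition $s_1+s_2\ge 0$ is what makes the product meaningful). Alternatively, and perhaps more cleanly for a short proof, one can avoid an explicit paraproduct decomposition and instead work directly on the Fourier side: writing $\widehat{fg}=\widehat f * \widehat g$ and splitting the convolution integral according to whether $|\eta|\le |\xi-\eta|$ or not, the first region is estimated with the unprimed triple of exponents and the second with the primed triple, each reducing to the bilinear Sobolev estimate already in hand.

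The ``In particular'' statement then follows by taking $s_1=s_2'=t_0$ and $s_2=s_1'=s$ (so $s_1+s_2=s_1'+s_2'=s+t_0>s+d/2$, $s_1+s_2=s+t_0\ge 0$ since $s\ge -t_0$, and trivially $s\le s_1=t_0$, $s\le s_2'=t_0$); this recovers $\norm{fg}_{H^s}\lesssim \norm{f}_{H^{t_0}}\norm{g}_{H^s}+\norm{f}_{H^s}\norm{g}_{H^{t_0}}$. I do not expect a genuine obstacle here: the content is entirely the bookkeeping of admissible Sobolev exponents in the paraproduct split, and the only mild subtlety is making sure the diagonal/high-high block is assigned to whichever half keeps the exponents in the legal range — which it always is, because summing $s_1+s_2\ge 0$ and $s_1+s_2>s+d/2$ controls it exactly as in the proof of \Cref{L.product}. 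So the main ``work'' is simply to state the decomposition precisely and invoke \Cref{L.product} twice.
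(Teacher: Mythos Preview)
Your paraproduct strategy is a legitimate alternative to the paper's argument, which avoids Littlewood--Paley theory entirely: there one first disposes of the cases $s\le s_2$ or $s\le s_1'$ by a single application of \Cref{L.product}; in the remaining case ($s_1'<s\le s_1$, $s_2<s\le s_2'$, and first assuming $s\le d/2$) one applies \Cref{L.product} once with the interpolated pair $(s_1^\star,s_2^\star)=(s,\,s_1+s_2-s)$ and recovers the two-term bound by Sobolev interpolation and Young's inequality; the range $s>d/2$ is then reduced to $s\le d/2$ via the Leibniz rule.

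However, your assignment of exponent pairs to the two halves is backwards, and as written the step fails. In $\Pi_f(g)=T_f g$ the output frequency matches the \emph{high}-frequency factor $g$, so the bound $\norm{T_f g}_{H^s}\lesssim\norm{f}_{H^{a}}\norm{g}_{H^{b}}$ requires $s\le b$ (or, when $a\le d/2$, the sum condition $a+b\ge s+d/2$), not $s\le a$. A concrete counterexample: with $d=1$, $s=0$, $s_1=10$, $s_2=-9$ all your hypotheses hold, yet taking $f$ Schwartz and $\widehat g$ a unit bump at frequency $N$ gives $\norm{T_f g}_{L^2}\sim 1$ while $\norm{f}_{H^{10}}\norm{g}_{H^{-9}}\sim N^{-9}$. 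The fix is simply to swap: bound $\Pi_f(g)$ with the primed pair (this is where $s\le s_2'$ enters) and $\Pi_g(f)$ with the unprimed pair (this is where $s\le s_1$ enters). The same swap repairs your ``In particular'' specialization: one must take $s_1=s_2'=s$ and $s_2=s_1'=t_0$ (as in the paper), not the reverse --- your choice $s_1=s_2'=t_0$ would force $s\le t_0$, which is nowhere assumed. Finally, the high--high block cannot literally be ``assigned to whichever half'' since its output frequency can be much smaller than the inputs; but it is harmless here because $s_1+s_2\ge 0$ and $s<s_1+s_2-d/2$ already yield $\norm{R(f,g)}_{H^s}\lesssim\norm{f}_{H^{s_1}}\norm{g}_{H^{s_2}}$ by the standard Bony remainder estimate.
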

\begin{proof}
We consider several cases. If $s\leq s_2$, then \Cref{L.product} yields immediately
\[\norm{ fg}_{H^{s}} \lesssim  \norm{  f }_{H^{s_1}} \norm{  g }_{H^{s_2}} .\]
Symmetrically, if $s\leq s_1'$, then 
\[\norm{ fg}_{H^{s}} \lesssim  \norm{  f }_{H^{s_1'}} \norm{  g }_{H^{s_2'}} .\]
Otherwise $s_1'< s\leq s_1 $ and  $s_2<s\leq s_2'$. Assume moreover that $ s\leq d/2$. Then denoting $s_1^\star=s$ and $s_2^\star=s_1+s_2-s=s_1'+s_2'-s$, we have  $s_1'<s_1^\star=s\leq s_1$, hence $s_2\leq s_2^\star<s_2'$, and $s_2^\star>d/2\geq s$.  \Cref{L.product} yields
\[\norm{ fg }_{H^{s}} \lesssim \norm{  f }_{H^{s_1^\star}} \norm{ g }_{H^{s_2^\star}} .\]
We conclude by the Sobolev interpolation 
\[ \norm{  f }_{H^{s_1^\star}} \norm{  g }_{H^{s_2^\star}} \leq \norm{  f }_{H^{s_1}}^{\theta}\norm{  f }_{H^{s_1'}}^{1-\theta}\norm{  g }_{H^{s_2}}^{\theta}\norm{  g }_{H^{s_2'}}^{1-\theta} \]
with $\theta=\frac{s_1^\star-s_1'}{s_1-s_1'}=\frac{s_2'-s_2^\star}{s_2'-s_2}$, and then by Young's inequality.
The first statement is proved when $ s\leq d/2$. When $s>d/2$, we set $n\in\NN$ such that $-d/2\leq s-n\leq d/2$, and use that
\[\norm{ fg }_{H^{s}} \lesssim  \norm{ fg }_{L^2} + \sum_{\alpha\in\NN^d,\,|\alpha|=n} \norm{ \partial^\alpha (fg) }_{H^{s-n}} .\]
From the previously proved estimate we have
\[ \norm{ fg }_{L^2} \lesssim   \norm{  f }_{H^{s_1}} \norm{  g }_{H^{s_2}} + \norm{  f }_{H^{s_1'}} \norm{  g }_{H^{s_2'}}  \]
and for any $\beta,\gamma\in\NN^d$ such that $\beta+\gamma=\alpha$,
\[\norm{( \partial^\beta f)(\partial^\gamma g) }_{H^{s-n}} \lesssim \norm{ \partial^\beta f }_{H^{s_1-|\beta|}} \norm{ \partial^\gamma g }_{H^{s_2-|\gamma|}} + \norm{ \partial^\beta f }_{H^{s_1'-|\beta|}} \norm{ \partial^\gamma g }_{H^{s_2'-|\gamma|}} .\]
The first statement then follows by Leibniz rule and triangular inequality. The second one is a particular case with $s_1=s_2'=s$ and $s_2=s_1'=t_0$.
\end{proof}

We now turn to commutator estimates involving operators of order $1$. 

\begin{Lemma}\label{L.commut_order1}
Let $d\in\NN^\star$, $t_{0}>\frac{d}{2}$ and $-t_0\leq s\leq t_0$. There exists a constant $C>0$ such that for any $\sigma\in  L^\infty_{\rm loc}(\RR^d)$ such that $\nabla\sigma\in L^{\infty}(\RR^d)$, then for any $f\in \Hdot^{t_0+1}(\RR^d)$ and $g\in H^s(\RR^d)$, one has 
\[\norm{ \big[ \sigma(D),f\big] g}_{H^s} \leq C \norm{\nabla\sigma}_{L^\infty}\norm{\nabla f}_{H^{t_0}}\norm{g}_{H^{s}}.\]
\end{Lemma}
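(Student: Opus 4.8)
The plan is to reduce the commutator estimate to a Coifman--Meyer / Kato--Ponce type bound by working on the Fourier side. First I would write the commutator explicitly as a bilinear Fourier multiplier: for Schwartz data,
\[
\widehat{\big[\sigma(D),f\big]g}(\bxi) = \frac1{(2\pi)^d}\int_{\RR^d} \big(\sigma(\bxi)-\sigma(\bxi-\bta)\big)\,\widehat f(\bta)\,\widehat g(\bxi-\bta)\dd\bta,
\]
and exploit the elementary bound $|\sigma(\bxi)-\sigma(\bxi-\bta)|\leq \norm{\nabla\sigma}_{L^\infty}|\bta|$, which replaces one derivative falling on $\sigma$ by a derivative falling on $f$. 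Thus the problem is controlled by the bilinear operator with symbol $|\bta|$, i.e.\ up to harmless factors the estimate
\[
\norm{\big[\sigma(D),f\big]g}_{H^s}\lesssim \norm{\nabla\sigma}_{L^\infty}\,\Norm{\,\Lambda^s\!\big((|D|f)\ast g\big)\text{-type terms}\,}_{L^2},
\]
which is exactly the shape handled by the Kato--Ponce commutator/product estimates.

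The key steps, in order, are: (i) perform a Littlewood--Paley decomposition of $f$ and $g$ and split the sum into low--high, high--low, and high--high paracom\-ponents; (ii) in the low-frequency-$f$, high-frequency-$g$ piece, the output frequency is comparable to that of $g$, and the gain $|\bta|\lesssim 2^j$ from $\nabla\sigma$ is absorbed by $\nabla f$ at dyadic scale $2^j$, producing $\norm{\nabla\sigma}_{L^\infty}\No{\nabla f}\No{g}$-type contributions which one sums using that $s\in[-t_0,t_0]$ and $t_0>d/2$ guarantees summability after putting $\nabla f$ in $L^\infty$ via $H^{t_0}\hookrightarrow L^\infty$; (iii) in the high-frequency-$f$, low-frequency-$g$ piece, the output is at the frequency of $f$, so one bounds $\Lambda^s$ of that piece by $\norm{\nabla f}_{H^{t_0}}$ (using $s\le t_0$ so no derivative loss on $f$ beyond $t_0$) times $\No{g}\lesssim\norm{g}_{H^s}$ when $s\ge 0$, or by a duality/interpolation argument when $s<0$; (iv) the high--high (diagonal) piece requires $s\ge -t_0$ and is handled by moving $\Lambda^s$ onto the product and using Bernstein to convert the frequency loss into the $H^{t_0}$-norm of $\nabla f$. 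Alternatively, rather than redoing the paraproduct bookkeeping, I would phrase the argument so that it reduces to \Cref{L.product} and \Cref{L.product-tame} applied to $\Lambda^{s}$ of dyadic pieces, keeping the presentation short.

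An important technical point is that $\sigma$ is only assumed locally bounded with $\nabla\sigma\in L^\infty$, so $\sigma(D)$ is \emph{a priori} an operator of order $1$ with a symbol that need not be smooth nor a classical Coifman--Meyer multiplier. The difference-bound $|\sigma(\bxi)-\sigma(\bxi-\bta)|\le\norm{\nabla\sigma}_{L^\infty}|\bta|$ is exactly what saves us: it says the commutator behaves no worse than $[\,|D|,f\,]g$, whose symbol $|\bxi|-|\bxi-\bta|$ \emph{is} a nice bilinear multiplier away from low frequencies, and near low frequencies $|\bxi|,|\bxi-\bta|\lesssim|\bta|$ so everything is controlled by $|\bta|$ directly. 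Concretely I would split $\sigma(\bxi)-\sigma(\bxi-\bta)=\big(\sigma(\bxi)-\sigma(\bxi-\bta)\big)\chi_{\{|\bta|\le|\bxi|/2\}}+\cdots$ and treat the two regions separately, the first by a smoother multiplier argument (Taylor expansion is not available since $\sigma$ is not $C^1$ in a strong sense, so I instead use the mean-value-type bound pointwise and keep $\bta$-derivatives off $\sigma$), the second crudely.

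The main obstacle I anticipate is handling \emph{negative} values of $s$ (down to $s=-t_0$) cleanly, since there the output must be estimated in a negative Sobolev space and the naive ``put $\nabla f$ in $L^\infty$'' estimate is not directly of the right form in the high--high regime; I expect to resolve this either by duality (pairing with a test function in $H^{-s}\subset H^{t_0}$ and using \Cref{L.product-tame}) or by the standard trick of writing $\big[\sigma(D),f\big]g$ as a sum of paraproducts and using that $s+t_0\ge 0$ ensures the high--high term lands in $H^s$. A secondary, purely bookkeeping difficulty is verifying the dyadic sums converge with the stated constant structure $\norm{\nabla\sigma}_{L^\infty}\norm{\nabla f}_{H^{t_0}}\norm{g}_{H^s}$ uniformly, which boils down to the constraint $t_0>d/2$ providing room for a summable geometric factor.
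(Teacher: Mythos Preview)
Your starting point is exactly right: write the commutator on the Fourier side and use $|\sigma(\bxi)-\sigma(\bxi-\bta)|\le \norm{\nabla\sigma}_{L^\infty}|\bta|$ to convert the symbol difference into a factor of $|\bta|$ that lands on $\widehat f$. But from that point on you are working much harder than necessary. The paper does not perform any Littlewood--Paley decomposition, paraproduct split, or duality argument for negative $s$. After the pointwise bound on the symbol difference, one has
\[
\norm{[\sigma(D),f]g}_{H^s}\;\le\;\norm{\nabla\sigma}_{L^\infty}\,\Norm{\langle\cdot\rangle^s\!\int_{\RR^d}|\bta|\,|\widehat f(\bta)|\,|\widehat g(\cdot-\bta)|\,\dd\bta}_{L^2},
\]
and the right-hand side is literally the $H^s$ norm of a product of two functions whose Fourier transforms are (in absolute value) $|\cdot||\widehat f|$ and $|\widehat g|$. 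Since Sobolev norms only see $|\widehat{\,\cdot\,}|$, this is bounded directly by \Cref{L.product} with $s_1=t_0$ and $s_2=s$, giving $C\norm{\nabla f}_{H^{t_0}}\norm{g}_{H^s}$. The hypothesis $-t_0\le s\le t_0$ is exactly the range in which \Cref{L.product} applies with those indices, so the negative-$s$ case you flag as the ``main obstacle'' is already absorbed by that lemma; no separate duality or high--high analysis is needed.

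In short: your plan would work, but the paraproduct machinery, the region splitting $\{|\bta|\le|\bxi|/2\}$ versus its complement, and the anticipated difficulties are all superfluous here. The entire proof is three lines once you recognize that \Cref{L.product} already encodes the full bilinear estimate you need. Your ``alternatively'' remark about reducing to \Cref{L.product} is the whole proof---drop the dyadic pieces from that sentence and you are done.
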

\begin{proof}
We have 
\[ \norm{ \big[ \sigma(D),f\big] g}_{H^s} \lesssim \norm{\int_{\RR^d} \langle \cdot\rangle^s  | \sigma(\cdot)-\sigma(\bta)|  |\widehat f(\cdot-\bta)||\widehat g(\bta)| \dd\bta}_{L^2}.\]
The result follows immediately from the inequality
\[  | \sigma(\bxi)-\sigma(\bta)| \leq \norm{\nabla\sigma}_{L^\infty} |\bxi-\bta| \]
valid for all $\bxi,\bta\in\RR^d$, and an application of \Cref{L.product}, with $s_1=t_0$ and $s_2=s$.
\end{proof}

We now consider commutator estimates involving operators of order $0$.
\begin{Lemma}\label{L.commut_order0}
Let $d\in\NN^\star$, $t_{0}>\frac{d}{2}$ and $-t_0\leq s\leq t_0+1$. There exists a constant $C>0$ such that the following holds. For any $\sigma\in L^{\infty}(\RR^d)$ such that
\[  C_\sigma \eqdef \dotNi{\sigma} 
<\infty \]
and for any $f\in H^{t_0+1}(\RR^d)$ and $g\in H^{s-1}(\RR^d)$, one has $\big[ \sigma(D),f\big] g \in H^s(\RR^d)$ and
\[\norm{ \big[ \sigma(D),f\big] g}_{H^s} \leq C  C_\sigma\norm{ f}_{H^{t_0+1}}  \norm{g}_{H^{s-1}}.\]
If moreover $\nabla\sigma\in L^{\infty}(\RR^d)$ then 
\[\norm{ \big[ \sigma(D),f\big] g}_{H^s} \leq C  C_\sigma'\norm{\nabla f}_{H^{t_0}}  \norm{g}_{H^{s-1}}\]
with $C_\sigma'\eqdef\Ni{\sigma}$.
\end{Lemma}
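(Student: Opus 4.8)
The strategy is to reduce to the product estimates of \Cref{L.product,L.product-tame} by writing the commutator in Fourier variables and exploiting the regularity of the symbol $\sigma$. Writing $\widehat{[\sigma(D),f]g}(\bxi)=\int_{\RR^d}(\sigma(\bxi)-\sigma(\bta))\widehat f(\bxi-\bta)\widehat g(\bta)\dd\bta$, the whole point is to extract from the factor $\sigma(\bxi)-\sigma(\bta)$ enough decay in the frequency difference $\bxi-\bta$ to gain one derivative on $f$ (equivalently, to compensate the loss of one derivative we allow on $g$, since $g$ is only taken in $H^{s-1}$ while the output is in $H^s$). Unlike the order-$1$ case of \Cref{L.commut_order1}, here $\sigma$ is bounded but not Lipschitz, so we cannot bound $|\sigma(\bxi)-\sigma(\bta)|$ by $|\bxi-\bta|$ globally; instead we use the weaker pointwise bound coming from $C_\sigma=\dotNi{\sigma}<\infty$.

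\textbf{Key steps.} First I would establish the elementary symbol inequality: for all $\bxi,\bta\in\RR^d$,
\[ |\sigma(\bxi)-\sigma(\bta)| \leq C\, C_\sigma\, \frac{|\bxi-\bta|}{\max(\{1,|\bxi|,|\bta|\})}. \]
Indeed, if $|\bxi-\bta|\geq \tfrac12\max(\{1,|\bxi|,|\bta|\})$ this follows from $|\sigma(\bxi)-\sigma(\bta)|\leq 2\No{\sigma}\leq 2C_\sigma$; if $|\bxi-\bta|< \tfrac12\max(\{1,|\bxi|,|\bta|\})$ then $\bxi$ and $\bta$ lie in a region where $|\bzeta|\gtrsim \max(\{1,|\bxi|,|\bta|\})$ along the segment joining them, and the mean value theorem together with $|\bzeta|\,|\nabla\sigma(\bzeta)|\leq C_\sigma$ gives the bound. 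Second, inserting this into the Fourier-side expression and using $\langle\bxi\rangle^s\lesssim \langle\bxi-\bta\rangle^{|s|}\langle\bta\rangle^s$ (since $-t_0\leq s\leq t_0+1$, so $|s|$ is controlled), I would bound
\[ \norm{[\sigma(D),f]g}_{H^s} \lesssim C_\sigma \Norm{\int_{\RR^d} \langle\cdot\rangle^s\, \frac{|\cdot-\bta|}{\max(\{1,|\cdot|,|\bta|\})}\, |\widehat f(\cdot-\bta)|\,|\widehat g(\bta)|\dd\bta}_{L^2}. \]
The factor $|\bxi-\bta|/\max(\{1,|\bxi|,|\bta|\})$ is then distributed: one power of $\langle\bxi-\bta\rangle$ is absorbed to turn $\widehat f$ into (the Fourier transform of) a function controlled by $\norm{f}_{H^{t_0+1}}$, while $1/\max(\{1,|\bxi|,|\bta|\})\lesssim \langle\bta\rangle^{-1}$ on the relevant region turns $\widehat g$ into a function controlled by $\norm{g}_{H^{s-1}}$; after these manipulations the remaining integral is exactly of the convolution type controlled by \Cref{L.product} with exponents $s_1=t_0$, $s_2=s-1$ (note $s_1+s_2=t_0+s-1\geq -1+d/2>\ldots$ needs $s\geq -t_0+$ a bit, which holds up to treating the endpoint by \Cref{L.product-tame} or a harmless enlargement). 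For the second, sharper estimate, when additionally $\nabla\sigma\in L^\infty$ one uses $|\sigma(\bxi)-\sigma(\bta)|\leq C_\sigma'\,|\bxi-\bta|\,\min(\{1,\max(\{1,|\bxi|,|\bta|\})^{-1}\})$ with $C_\sigma'=\Ni{\sigma}$ --- i.e. the same argument but now the Lipschitz bound $|\sigma(\bxi)-\sigma(\bta)|\leq \No{\nabla\sigma}\,|\bxi-\bta|$ is available everywhere, which lets one replace $\norm{f}_{H^{t_0+1}}$ by $\norm{\nabla f}_{H^{t_0}}$.

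\textbf{Main obstacle.} The delicate point is the bookkeeping of frequency weights so that exactly one derivative is gained on $f$ (or compensated on $g$) \emph{uniformly} in the three frequency regimes ($|\bxi-\bta|$ dominant, $|\bxi|$ dominant, $|\bta|$ dominant), and checking that the resulting exponents satisfy the hypotheses of \Cref{L.product}/\Cref{L.product-tame} across the full allowed range $-t_0\leq s\leq t_0+1$ --- in particular the endpoint $s=t_0+1$, where one of the Sobolev indices lands at $d/2$ and the sharp product estimate (rather than the plain one) must be invoked. Everything else is routine Fourier analysis once the symbol inequality above is in hand.
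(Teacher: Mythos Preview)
Your approach is essentially the right one, and your second estimate (when $\nabla\sigma\in L^\infty$) goes through as written. But the key symbol inequality you propose for the first estimate,
\[
|\sigma(\bxi)-\sigma(\bta)|\ \leq\ C\,C_\sigma\,\frac{|\bxi-\bta|}{\max(\{1,|\bxi|,|\bta|\})},
\]
is \emph{false} in the low-frequency region. Take $d=1$ and $\sigma(\xi)=\sin(\log|\xi|)$ for $|\xi|\leq 1$ (smoothly cut off to be bounded outside): then $\sigma\in L^\infty$ and $|\xi\,\sigma'(\xi)|=|\cos(\log|\xi|)|\leq 1$, so $C_\sigma<\infty$, yet for $\xi_n=e^{-(2n\pi-\pi/2)}$ and $\eta_n=e^{-(2n\pi+\pi/2)}$ one has $|\sigma(\xi_n)-\sigma(\eta_n)|=2$ while $|\xi_n-\eta_n|\to 0$ and $\max(\{1,|\xi_n|,|\eta_n|\})=1$. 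The flaw in your mean-value argument is the claim that ``$|\bzeta|\gtrsim\max(\{1,|\bxi|,|\bta|\})$ along the segment'': when both $|\bxi|,|\bta|<1$ the max equals $1$ but the segment lies entirely near the origin, where $|\nabla\sigma|$ is uncontrolled.

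This is not a cosmetic issue: it is exactly why the two estimates in the statement differ. At low frequencies ($|\bta|\leq 2$, say) the only available bound is $\langle\bta\rangle|\sigma(\bxi)-\sigma(\bta)|\leq 2\sqrt5\,\No{\sigma}$, with no factor of $|\bxi-\bta|$. Feeding this into the convolution forces you to absorb a full $\langle\bxi-\bta\rangle$ into $f$, producing $\norm{f}_{H^{t_0+1}}$ rather than $\norm{\nabla f}_{H^{t_0}}$. The paper's proof therefore splits into the regions $|\bta|\geq 2$ (where your path-integral idea works, along a path avoiding the origin) and $|\bta|\leq 2$ (where the crude $L^\infty$ bound is used), obtaining $\langle\bta\rangle|\sigma(\bxi)-\sigma(\bta)|\lesssim C_\sigma\langle\bxi-\bta\rangle$ globally; then \Cref{L.product} with $(s_1,s_2)=(t_0,s)$ covers $-t_0\leq s\leq t_0$, and the symmetric bound $\langle\bxi\rangle|\sigma(\bxi)-\sigma(\bta)|\lesssim C_\sigma\langle\bxi-\bta\rangle$ with $(s_1,s_2)=(t_0,s-1)$ covers $1-t_0\leq s\leq t_0+1$. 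Only when $\nabla\sigma\in L^\infty$ can the low-frequency bound be upgraded to $\lesssim C_\sigma'\,|\bxi-\bta|$, yielding the sharper second estimate.
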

\begin{proof}
We have 
\[ \norm{ \big[ \sigma(D),f\big] g}_{H^s} \lesssim \norm{ \langle \cdot\rangle^s \int_{\RR^d}  \langle\bta\rangle | \sigma(\cdot)-\sigma(\bta)|  |\widehat f(\cdot-\bta)| \langle \bta\rangle^{-1} |\widehat g(\bta)| \dd\bta}_{L^2}.\]
\begin{itemize}
\item If $|\bta|\geq 2|\bxi|$ and $|\bta|\geq 2$, then $\langle\bta\rangle\leq 2|\bta|\leq 4|\bxi-\bta|$ and hence
\[ \langle\bta\rangle  | \sigma(\bxi)-\sigma(\bta)|  \leq 8\norm{\sigma}_{L^\infty} | \bxi-\bta| .\] 
\item If $ |\bxi|\geq |\bta|/2$ and $|\bta|\geq 2$, suitably selecting an integration path $\gamma$ (with endpoints $\bxi$ and $\bta$)  taking values in $\{\zeta\in\RR^d, \ |\zeta|\geq |\bta|/2\}$ we find that 
\[  \langle\bta\rangle | \sigma(\bxi)-\sigma(\bta)| = \langle\bta\rangle\left|  \int_\gamma   \nabla\sigma \cdot \dd \br \right |   \leq  \langle\bta\rangle |\gamma|\esssup_{\zeta\in\gamma}|\nabla\sigma(\zeta)|\leq 2\pi|\bxi-\bta|\,  \esssup_{|\zeta|\geq 1}|\zeta||\nabla\sigma(\zeta)|. \]
\item Finally, if $|\bta|\leq 2$, then we have immediately (almost everywhere)
\[ \langle\bta\rangle  | \sigma(\bxi)-\sigma(\bta)|  \leq 2\sqrt5\norm{\sigma}_{L^\infty}  \] 
in the first case and
\[ \langle\bta\rangle  | \sigma(\bxi)-\sigma(\bta)|  \leq \sqrt5\norm{\nabla\sigma}_{L^\infty} | \bxi-\bta| \] 
in the second case.
 \end{itemize}
The desired result when $-t_0\leq s\leq t_0$ follows from an application of \Cref{L.product}, with $s_1=t_0$ and $s_2=s$.
Moreover, by symmetry considerations, we have (almost everywhere)
\[ \langle\bxi\rangle  | \sigma(\bxi)-\sigma(\bta)| \lesssim \min(\langle\bxi-\bta\rangle C_\sigma ,|\bxi-\bta| C_\sigma')\]
 which yields the desired result when $-t_0\leq s-1\leq t_0$, using \Cref{L.product} with $s_1=t_0$ and $s_2=s-1$. Since $t_0\geq1/2$, the proof is complete.
\end{proof}
The following Lemma is a direct application of \Cref{L.commut_order1,L.commut_order0}. 
\begin{Lemma}\label{L.commut_Tmu}
Let $d\in\NN^\star$ and $t_{0}>\frac{d}{2}$. There exists a constant $C>0$ such that for any $\mu \geq 1$, any $g\in H^{-1/2}(\RR^d)$ and any $f  \in \Hdot^{t_0+1}(\RR^d)$,
\begin{align*}
\norm{ [|D| ,f] g }_{H^{-1/2}} &\leq C  \norm{ \nabla f }_{H^{t_{0}}} \norm{ g }_{H^{-1/2}},\\
\norm{ [\T^\mu ,f] g }_{H^{1/2}} &\leq C  \norm{  f }_{H^{t_{0}+1}} \norm{ g }_{H^{-1/2}},
\end{align*}
where we denote $\T^\mu\eqdef -\frac{\tanh(\sqrt\mu|D|)}{|D|}\nabla$.
\end{Lemma}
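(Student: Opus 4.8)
The statement is Lemma~\ref{L.commut_Tmu}, which collects two commutator bounds: one for $[|D|,f]$ in $H^{-1/2}$ and one for $[\T^\mu,f]$ in $H^{1/2}$, where $\T^\mu=-\frac{\tanh(\sqrt\mu|D|)}{|D|}\nabla$. The key is to recognize that each Fourier multiplier in play is covered by one of the two preceding commutator lemmas, and to check the relevant symbol conditions hold uniformly in $\mu\geq 1$. First I would treat $[|D|,f]g$: the multiplier $\sigma(\bxi)=|\bxi|$ is not in $L^\infty$, but it satisfies $\nabla\sigma\in L^\infty$ (indeed $|\nabla\sigma|=1$ a.e.), so this is precisely the setting of \Cref{L.commut_order1} with $s=-1/2$ (legitimate since $-t_0\leq -1/2\leq t_0$ as $t_0>d/2\geq 1/2$). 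Applying that lemma gives
\[ \norm{[|D|,f]g}_{H^{-1/2}} \leq C\norm{\nabla\sigma}_{L^\infty}\norm{\nabla f}_{H^{t_0}}\norm{g}_{H^{-1/2}} = C\norm{\nabla f}_{H^{t_0}}\norm{g}_{H^{-1/2}}, \]
with $C$ independent of $\mu$ because no $\mu$ appears in this term at all.

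For the second bound, I would write $\T^\mu = \sigma^\mu(D)\nabla$ where $\sigma^\mu(\bxi)=\frac{\tanh(\sqrt\mu|\bxi|)}{|\bxi|}$ is a radial, order-$(-1)$ multiplier, but it is cleaner to view $\T^\mu$ componentwise as a Fourier multiplier of order $0$ with symbol $\tau^\mu_j(\bxi)=-\i\,\xi_j\,\frac{\tanh(\sqrt\mu|\bxi|)}{|\bxi|}$ and apply \Cref{L.commut_order0} with $s=1/2$ (again admissible: $-t_0\leq 1/2\leq t_0+1$). The work here is to verify the two quantities controlling $C_\sigma$ and $C_\sigma'$ in \Cref{L.commut_order0} are bounded uniformly in $\mu\geq 1$. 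For $\norm{\tau^\mu_j}_{L^\infty}$: since $0\leq \tanh(\sqrt\mu|\bxi|)\leq 1$ and $|\xi_j|/|\bxi|\leq 1$, we get $\norm{\tau^\mu_j}_{L^\infty}\leq 1$. For the gradient part $\dotN{1}{\nabla\tau^\mu_j}$ (i.e. $\esssup |\bxi|\,|\nabla\tau^\mu_j(\bxi)|$), one computes $\nabla\tau^\mu_j$ and must check that the singularity at $\bxi=0$ and the decay at $\bxi\to\infty$ are both tame. Writing $h^\mu(r)=\tanh(\sqrt\mu r)/r$ for $r=|\bxi|>0$, one has $|\nabla\tau^\mu_j(\bxi)|\lesssim h^\mu(|\bxi|) + |\bxi|\,|(h^\mu)'(|\bxi|)|$; near $0$, $h^\mu(r)=\sqrt\mu + O(\mu^{3/2}r^2)$ but $|\bxi|$ times this still blows up like... no—actually $r\,(h^\mu)'(r)$ near $0$ behaves like $O(\mu^{3/2}r^2)\to 0$, and $r\cdot h^\mu(r)=\tanh(\sqrt\mu r)\leq 1$, so $|\bxi|\,|\nabla\tau^\mu_j|$ is bounded near $0$ uniformly (one should double-check the constant does not grow with $\mu$: near $0$ the bound is $\lesssim \tanh(\sqrt\mu r)\leq 1$ plus a term $\lesssim \mu r^2 \tanh'(\sqrt\mu r)$-type which for $\sqrt\mu r\lesssim 1$ is $\lesssim \mu r^2\leq \sqrt\mu r\leq 1$, and for $\sqrt\mu r\gtrsim 1$ is handled by the large-$r$ analysis). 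For large $|\bxi|$, $\tanh(\sqrt\mu|\bxi|)$ is essentially $1$ and $\tau^\mu_j\approx -\i\xi_j/|\bxi|$, whose gradient is $O(1/|\bxi|)$, so $|\bxi|\,|\nabla\tau^\mu_j|=O(1)$; the correction from $\tanh$ not being exactly $1$ contributes $\mathrm{sech}^2(\sqrt\mu|\bxi|)\sqrt\mu|\bxi|\cdot\frac{1}{|\bxi|}\cdot|\bxi|$ which decays exponentially and is bounded uniformly in $\mu\geq 1$. So $\dotNi{\tau^\mu_j}\leq C$ uniformly, and \Cref{L.commut_order0} with $s=1/2$ and $s-1=-1/2$ yields $\norm{[\T^\mu,f]g}_{H^{1/2}}\leq C\,C_{\sigma}\norm{f}_{H^{t_0+1}}\norm{g}_{H^{-1/2}}$, summing over the finitely many components $j=1,\dots,d$.

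\textbf{Main obstacle.} The only genuinely delicate point is the uniform-in-$\mu$ estimate of $\dotNi{\tau^\mu_j}$, i.e. showing that the bound on $|\bxi|\,|\nabla\tau^\mu_j(\bxi)|$ does not degenerate either at the origin (where the symbol $h^\mu$ has a removable singularity that one must handle by Taylor expansion, being careful that the $\mu$-dependent coefficients are controlled by the vanishing powers of $|\bxi|$) or as $\mu\to\infty$. The clean way to organize this is to split into the regions $\sqrt\mu|\bxi|\leq 1$ and $\sqrt\mu|\bxi|\geq 1$ and use the elementary bounds $0\leq\tanh(x)\leq\min(x,1)$, $0\leq\tanh'(x)=\mathrm{sech}^2(x)\leq\min(1, 4e^{-2x})$, together with $|1-\tanh(x)|\leq 2e^{-2x}$ to absorb all $\mu$-dependence into exponentially decaying factors or into ratios $\sqrt\mu|\bxi|$ that are $\leq 1$ on the relevant region. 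Once this symbolic estimate is in hand, both conclusions follow immediately by invoking \Cref{L.commut_order1} and \Cref{L.commut_order0} as stated, with no further analysis required.
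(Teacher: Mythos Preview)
Your proposal is correct and follows exactly the paper's approach: the paper states only that the lemma ``is a direct application of \Cref{L.commut_order1,L.commut_order0},'' and you have filled in precisely the details one would expect---applying \Cref{L.commut_order1} with $s=-1/2$ to the order-one symbol $|\bxi|$, and \Cref{L.commut_order0} with $s=1/2$ to the order-zero symbols $\tau^\mu_j(\bxi)=-\i\xi_j\tanh(\sqrt\mu|\bxi|)/|\bxi|$, after checking $\dotNi{\tau^\mu_j}$ is bounded uniformly in $\mu\geq 1$. The uniform symbol bound reduces cleanly to the elementary facts that $\tanh(x)\leq 1$ and $x\,\mathrm{sech}^2(x)$ is bounded on $[0,\infty)$, so that $r^2|(h^\mu)'(r)|=|\sqrt\mu r\,\mathrm{sech}^2(\sqrt\mu r)-\tanh(\sqrt\mu r)|\leq 2$; your sketch of this computation is a bit tangled but arrives at the right conclusion.
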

We now provide improved commutator estimates for specific operators.
\begin{Lemma}\label{L.commutator-|D|}
Let $d\in\{1,2\}$ and $t_0>d/2$. If $d=1$, there exists $C>0$ such that  for any $s\geq 0$ and any $r\geq s-t_0$, 
\[\norm{|D| (f |D| g) + \partial_x( f \partial_x g)}_{H^s}\leq C \norm{\partial_x f}_{H^{t_0+r}}\norm{\partial_x g}_{H^{s-r}}.\]
If $d=2$,  for  any $0\leq s\leq t_0+1$ and $s-t_0\leq r\leq 1$, there exists $C>0$ such that
\[\norm{|D| (f |D| g) + \nabla\cdot( f \nabla g)}_{H^s}\leq C\norm{\nabla f}_{H^{t_0+r}}\norm{\nabla g}_{H^{s-r}}.\]
Moreover, for any $s\geq 0$ and $r\leq 1$, there exists $C>0$ such that
\[\norm{|D| (f |D| g) + \nabla\cdot( f \nabla g)}_{H^s}\leq C\norm{\nabla f}_{H^{t_0+r}}\norm{\nabla g}_{H^{s-r}} \, + \, C \norm{\nabla f}_{H^{s}}\norm{\nabla g}_{H^{t_0}}.\]
All the constants $C$ above are independent of $f,g$ such that the right-hand side is finite.
\end{Lemma}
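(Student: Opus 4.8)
The plan is to work entirely on the Fourier side. Writing $\mathfrak m(\bxi,\bta)\eqdef|\bxi||\bta|-\bxi\cdot\bta\ge0$, one has
\[
\widehat{\,|D|(f|D|g)+\nabla\cdot(f\nabla g)\,}(\bxi)=\int_{\RR^d}\widehat f(\bxi-\bta)\,\mathfrak m(\bxi,\bta)\,\widehat g(\bta)\,\dd\bta ,
\]
and the whole point is the cancellation: $\mathfrak m$ vanishes on the diagonal $\bxi=\bta$ and at $\bta=\bz$, so that the operator behaves like a product of two \emph{first}-order derivatives of $f$ and $g$ rather than like $f\,D^2 g$ --- which is also why homogeneous norms appear on the right-hand side. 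Concretely I would first record the elementary symbol bounds $0\le\mathfrak m(\bxi,\bta)\le\tfrac12|\bxi-\bta|^2$ (from the identity $2\mathfrak m=|\bxi-\bta|^2-(|\bxi|-|\bta|)^2$) and $\mathfrak m(\bxi,\bta)\le C|\bxi-\bta|\min(|\bxi|,|\bta|)$ (combining the former with $\max(|\bxi|,|\bta|)\le\min(|\bxi|,|\bta|)+|\bxi-\bta|$); in particular $\mathfrak m\le C|\bxi-\bta||\bta|$ and $\mathfrak m\le C|\bxi-\bta||\bxi|$. When $d=1$ there is the additional observation that $\mathfrak m(\xi,\eta)=0$ unless $\xi$ and $\eta$ have opposite signs, in which case $|\xi-\eta|=|\xi|+|\eta|$, so $\langle\xi\rangle\le\langle\xi-\eta\rangle$ and $\langle\eta\rangle\le\langle\xi-\eta\rangle$ on $\operatorname{supp}\mathfrak m$.

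In one dimension this already closes the argument. Setting $A\eqdef\langle D\rangle^{t_0+r}\nabla f$ and $B\eqdef\langle D\rangle^{s-r}\nabla g$ (so that $\norm{A}_{L^2}=\norm{\nabla f}_{H^{t_0+r}}$ and $\norm{B}_{L^2}=\norm{\nabla g}_{H^{s-r}}$, with no low-frequency singularity since $\mathfrak m$ supplies the cancelling factor $|\xi-\eta|$), one extracts one derivative from each of $f$ and $g$ via $\mathfrak m\le C|\xi-\eta||\eta|$ and is left, after pairing the output with $\langle\cdot\rangle^s$, with the scalar multiplier $\langle\xi\rangle^s\langle\xi-\eta\rangle^{-(t_0+r)}\langle\eta\rangle^{-(s-r)}$; using the frequency comparability this is bounded on $\operatorname{supp}\mathfrak m$ by $\langle\xi-\eta\rangle^{-a}\langle\eta\rangle^{-b}$ with $a,b\ge0$ and $a+b=t_0$ (one distinguishes $r\le s$ and $r>s$). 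The estimate then reduces to the convolution inequality $\norm{(\langle\cdot\rangle^{-a}|\widehat A|)\ast(\langle\cdot\rangle^{-b}|\widehat B|)}_{L^2}\lesssim\norm{A}_{L^2}\norm{B}_{L^2}$, which follows from Young's inequality $\norm{u\ast v}_{L^2}\le\norm{u}_{L^p}\norm{v}_{L^q}$ with $1/p+1/q=3/2$ after a Hölder step; such a choice of $p,q$ exists for \emph{any} $r\ge s-t_0$ precisely because $a+b=t_0>d/2$, which is why no upper bound on $r$ is needed when $d=1$.

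For $d=2$ the symbol no longer vanishes off the anti-diagonal, so one cannot reduce to a single convolution and must instead decompose the $\bta$-integral dyadically (Littlewood--Paley), according to which of $\langle\bxi\rangle$, $\langle\bxi-\bta\rangle$, $\langle\bta\rangle$ dominates. In the high--high region, where $\langle\bxi-\bta\rangle\approx\langle\bta\rangle\gtrsim\langle\bxi\rangle$, one repeats the one-dimensional argument. Where $f$ sits at low frequency one uses the sharpest bound $\mathfrak m\le\tfrac12|\bxi-\bta|^2$, shifting two derivatives onto the low-frequency factor of $f$ (harmless thanks to the Sobolev embedding $H^{t_0}\hookrightarrow L^\infty$, as $t_0>d/2$; the presence of a second derivative is exactly what restricts to $r\le1$ and to recovering the output at regularity at most $t_0+1$); where $g$ sits at low frequency one uses $\mathfrak m\le C|\bxi-\bta||\bta|$ and the constraint $r\ge s-t_0$ surfaces. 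Summing the dyadic pieces, each treated with \Cref{L.product}, yields the first claimed estimate on the stated ranges of $s$ and $r$. The last (``moreover'') estimate is softer: reducing pointwise to $|\widehat{\nabla f}|\ast|\widehat{\nabla g}|$ (using $\mathfrak m\le C|\bxi-\bta||\bta|$ on $\{|\bta|\le|\bxi|\}$ and $\mathfrak m\le C|\bxi-\bta||\bxi|$ on $\{|\bxi|<|\bta|\}$), it follows from the tame product estimates of \Cref{L.product-tame}, the less favorable frequency configurations being absorbed into the additional term $C\norm{\nabla f}_{H^s}\norm{\nabla g}_{H^{t_0}}$.

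The main obstacle is the bookkeeping in the $d=2$ case: organizing the Littlewood--Paley decomposition so that in each frequency regime the appropriate symbol bound, the placement of the low-frequency factor in $L^\infty$, and the distribution of the (at most two) available derivatives among $f$, $g$ and the output are all mutually compatible, and verifying that the resulting dyadic sums converge --- this is precisely where the constraints $0\le s\le t_0+1$ and $s-t_0\le r\le1$ are forced. By contrast the positivity $\mathfrak m\ge0$, although conceptually central to the subsequent Rayleigh--Taylor analysis, plays no role here; only the upper bounds on $\mathfrak m$ are used.
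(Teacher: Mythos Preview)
Your plan for $d=1$ and for the first $d=2$ estimate is sound and close to the paper's argument. The paper also works on the Fourier side with the same symbol, but organizes the $d=2$ case as a two-region split (according to whether $|\bxi-\bta|\ge|\bxi|/2$ or not, in your notation) rather than a three-region Littlewood--Paley decomposition; in the $f$-low region it obtains the quadratic gain via the angle inequality $1-\cos\alpha\lesssim |\bxi-\bta|^2/|\bxi|^2$, which is equivalent there to your cleaner identity $\mathfrak m\le\tfrac12|\bxi-\bta|^2$. Both routes lead to the unified pointwise bound $\langle\bxi\rangle^s|\widehat a(\bxi)|\lesssim\langle\bxi\rangle^{s-1}\int\langle\bxi-\bta\rangle\,|\widehat{\nabla f}(\bxi-\bta)|\,|\widehat{\nabla g}(\bta)|\,\dd\bta$, after which a single application of \Cref{L.product} at regularity $s-1$ closes the argument.

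There is, however, a genuine gap in your treatment of the ``moreover'' estimate. Reducing pointwise to $|\widehat a|\lesssim|\widehat{\nabla f}|\ast|\widehat{\nabla g}|$ and invoking \Cref{L.product-tame} at regularity $s$ does \emph{not} give the claim for the full range $s\ge0$, $r\le1$: the bilinear inequality $\|FG\|_{H^s}\lesssim\|F\|_{H^{t_0+r}}\|G\|_{H^{s-r}}+\|F\|_{H^s}\|G\|_{H^{t_0}}$ you would need is simply false when $s>t_0$ and $0<r\le1$ (take $F$ localized at frequency~$1$ and $G$ at frequency~$N\to\infty$: the left side scales like $N^s$ while the right side like $N^{\max(s-r,t_0)}$). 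The crude bound $\mathfrak m\le C|\bxi-\bta||\bta|$ discards precisely the $\langle\bxi\rangle^{-1}$ gain that makes the lemma true in the $f$-low region. In the paper the ``moreover'' estimate is not softer but is in fact proved \emph{first}, using the same sharp quadratic bound in the $f$-low region and applying \Cref{L.product-tame} at regularity $s-1$ (with $s_1=s-1$, $s_2=t_0$, $s_1'=t_0-1+r$, $s_2'=s-r$, so that the constraint $s-1\le s_2'$ is exactly $r\le1$); the extra term $\|\nabla f\|_{H^s}\|\nabla g\|_{H^{t_0}}$ absorbs only the $f$-high region. The single-term estimate on the restricted range $0\le s\le t_0+1$ is then read off from the unified pointwise bound.
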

\begin{proof}
The case $d=1$ follows from~\cite[Lemma 3.1]{saut_xu} and the identity 
\[ |D|(f |D| g) + \partial_x( f \partial_x g)=[\cH, \partial_xf] (|D| g )+ [\cH,f ](\partial_x|D| g),\]
where $\cH\eqdef-\frac{\partial_{x}}{|D|}$ is the Hilbert transform.
We however provide a short proof for the sake of completeness.
Let us denote $a = |D| (f |D| g) + \partial_x( f \partial_x g)$. One has for almost any $\xi\geq 0$,
\begin{align*}\widehat{a}(\xi)&=\int_\RR   \big(|\xi||\xi-\eta|-\xi(\xi-\eta)\big) \widehat f(\eta)\widehat g(\xi-\eta)\dd\eta\\
&= 2\int_\xi^\infty \xi(\eta-\xi) \widehat f(\eta)\widehat g(\xi-\eta) \dd\eta,
\end{align*}
and hence, using $|\xi|\leq|\eta| $ and $|\eta-\xi|\leq|\eta| $, one has for any $r'\geq 0$
\[\langle \xi\rangle^s |\widehat{a}(\xi)|\leq 2 \int_\RR \langle \eta\rangle^{s+r'} |\eta| |\widehat f(\eta)|\langle \xi- \eta\rangle^{-r'}|\xi-\eta| |\widehat g(\xi-\eta)|\dd\eta.\]
We conclude by Young's inequality and the fact that $\langle \cdot\rangle^{-t_0}\in L^2(\RR)$, setting $r'=r+t_0-s$.

When $d=2$,~\cite[Lemma 3.3]{saut_xu} is not sufficient to our purpose due to the restriction $r+s\leq 1$. However its proof may be adapted as follows.
Let us denote $a = |D| (f |D| g) + \nabla\cdot( f \nabla g)$, and set $s\geq 0$. One has (almost everywhere)
\begin{align*} \widehat{a}(\bxi)&=\int_{\RR^2}   \big(|\bxi||\bxi-\bta|-\bxi\cdot(\bxi-\bta)\big) \widehat f(\bta)\widehat g(\bxi-\bta) \dd\bta\\
&= \int_{|\bta|\geq |\bxi|/2}   \big(|\bxi||\bxi-\bta|-\bxi\cdot(\bxi-\bta)\big) \widehat f(\bta)\widehat g(\bxi-\bta) \dd\bta\\
&\qquad + \int_{|\bta|< |\bxi|/2}   \big(|\bxi||\bxi-\bta|-\bxi\cdot(\bxi-\bta)\big) \widehat f(\bta)\widehat g(\bxi-\bta)\dd\bta\\
& = I_1+I_2.
\end{align*}
For $I_1$, using $|\bxi|\leq 2|\bta| $, there exists $C>0$ depending uniquely on $s\geq 0$ such that
\[\langle \bxi\rangle^s |I_1|  \leq C   \int_{\RR^2} \langle \bta\rangle^s |\bta| |\widehat f|(\bta)|\bxi-\bta||\widehat g|(\bxi-\bta)\dd\bta.\]
and it follows by Young's inequality, and the fact that $\langle \cdot\rangle^{-t_0}\in L^2$, that
\[\norm{\langle \bxi\rangle^s I_1}_{L^2}\leq C \norm{\nabla f}_{H^{s}}\norm{\nabla g}_{H^{t_0}}.\]

We consider now $I_2$. We use that $|\bxi||\bxi-\bta|-\bxi\cdot(\bxi-\bta)=|\bxi||\bxi-\bta|(1-\cos\alpha)$ with $\alpha\to 0$ as $|\bta|/|\bxi|\to 0$. In fact, $|\tan\alpha|\leq  \frac{2}{\sqrt{3}} |\bta|/|\bxi|$ so there exists $c>0$ such that $(1-\cos\alpha)\leq c|\bta|^2/|\bxi|^2$ as long as $|\bta|/|\bxi| \leq 1/2$. 
Hence
\[\langle \bxi\rangle^s |I_2|  \lesssim \langle \bxi\rangle^s  \int_{ |\bta|< |\bxi| /2}  \frac{ |\bta|^2}{|\bxi|} |\widehat f|(\bta)|\bxi-\bta||\widehat g|(\bxi-\bta)\dd\bta.\]
 When $|\bxi| \leq 1$ we have (since $|\bta|^2/|\bxi|^2\leq \frac12 |\bta|/|\bxi|$) 
\[\langle \bxi\rangle^s |I_2|  \lesssim  \int_{ |\bta|< |\bxi| /2}  |\bta| |\widehat f|(\bta)|\bxi-\bta||\widehat g|(\bxi-\bta)\dd\bta,\]
and we have the same estimate as for $I_1$.
When $|\bxi| \geq 1$, $1/|\bxi| \leq 2/\langle\bxi\rangle$, and we have
\[\langle \bxi\rangle^s |I_2|  \lesssim  \langle \bxi\rangle^{s-1}\int_{ |\bta|< |\bxi| /2}  |\bta|^2 |\widehat f|(\bta)|\bxi-\bta||\widehat g|(\bxi-\bta)\dd\bta.\]
Since $s-1 \geq -t_{0}$ ($d=2$ and $s \geq 0$), 
and  using the first estimate of  \Cref{L.product-tame} with $s_1=s-1$, $s_2=t_0$, $s_1'=t_0-1+r$, $s_2'=s-r$, we have, for any $r\leq 1$,
\[\norm{\langle \bxi\rangle^s I_2}_{L^2}  \lesssim \norm{\nabla f}_{H^{s}} \norm{\nabla g}_{H^{t_0}} + \norm{\nabla f}_{H^{t_0+r}} \norm{\nabla g}_{H^{s-r}}.\] 
We have proved the last estimate. 
When $0\leq s\leq t_0+1$, we use that by the above analysis we have (almost everywhere)
\[\langle \bxi\rangle^s |\widehat a|(\bxi)\lesssim  \langle \bxi\rangle^{s-1}\int_\RR \langle \bta\rangle |\bta| |\widehat f|(\bta)|\bxi-\bta||\widehat g|(\bxi-\bta)\dd\bta.\]
The result follows by \Cref{L.product}, since $s_1:=t_0+r-1\geq s-1$, $s_2:=s-r\geq s-1$, and $s_1+s_2=t_0+s-1> 0$ (recall $t_0> d/2=1$).
This concludes the proof.
\end{proof}

\begin{Remark}
The result of \Cref{L.commutator-|D|} exhibits a regularizing effect of order $2$, and in particular improves the naive result obtained when using \Cref{L.commut_order1,L.product} on the identity 
\[|D| (f |D| g) + \nabla\cdot ( f \nabla g)=[|D| ,f] |D| g+ (\nabla f)\cdot (\nabla g).\]
This turns out to be crucial in our analysis.
\end{Remark}
We deduce the following ``finite-depth'' version of  \Cref{L.commutator-|D|}.
\begin{Lemma}\label{L.commutator-tanh}
Let $d\in\{1,2\}$ and $t_{0}>\frac{d}{2}$. Let $s,r \in \RR$ such that $0\leq s \leq t_{0}+1$ and $s-t_0 \leq r \leq 1$. There exists $C>0$ such that for any $\mu\geq 1$, 
\begin{equation*}
\norm{ G^\mu_0 ( f G^\mu_0 g ) + \nabla \cdot (f \nabla g ) }_{H^s} \leq  C\,\big(  \tfrac1{\sqrt\mu}\norm{f}_{L^2}+\norm{ \nabla f }_{H^{t_{0}+r}} \big)\norm{ \nabla g }_{H^{s-r}}.
\end{equation*}
Moreover, for any $s\geq 0$ and  $r\leq 1$, there exists a constant $C>0$ such that
\begin{equation*}
\norm{ G^\mu_0 ( f G^\mu_0 g ) + \nabla \cdot (f \nabla g ) }_{H^s} \leq  C \norm{ \nabla f }_{H^{t_{0}+r}} \norm{ \nabla g }_{H^{s-r}} \, + \, C\, \big(  \tfrac1{\sqrt\mu}\norm{f}_{L^2}+\norm{ \nabla f }_{H^{s}}  \big)\norm{ \nabla g }_{H^{t_{0}}} .
\end{equation*}
All the constants $C$ above are independent of $f,g$ such that the right-hand side is finite.
\end{Lemma}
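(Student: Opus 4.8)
\emph{Strategy.} The plan is to reduce to the infinite-depth estimate of \Cref{L.commutator-|D|} by peeling off the difference between $G_0^\mu$ and $|D|$. Introduce the Fourier multiplier $\mathfrak r^\mu\eqdef |D|-G_0^\mu = |D|\,(1-\tanh(\sqrt\mu|D|))$, so that $G_0^\mu=|D|-\mathfrak r^\mu$. The point is that $\mathfrak r^\mu$ is a harmless smoothing operator, uniformly in $\mu\geq1$: writing its symbol as $\mathfrak r^\mu(\bxi)=|\bxi|(1-\tanh(\sqrt\mu|\bxi|))$ and using $0\le 1-\tanh(t)\le 2e^{-2t}$ together with the boundedness of $t\mapsto t^{k+1}e^{-2t}$ on $[0,\infty)$, one checks that $\Norm{\langle\cdot\rangle^k \mathfrak r^\mu}_{L^\infty}\leq C_k\,\mu^{-1/2}$ for every $k\geq0$ and every $\mu\geq1$. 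Hence $\mathfrak r^\mu:H^a(\RR^d)\to H^b(\RR^d)$ is bounded with operator norm $\leq C(a-b)\,\mu^{-1/2}$ for any $a,b\in\RR$. Moreover, since $\mathfrak r^\mu(\bxi)=|\bxi|\cdot(1-\tanh(\sqrt\mu|\bxi|))$ with $\langle\bxi\rangle^{\sigma-s+r}(1-\tanh(\sqrt\mu|\bxi|))$ bounded uniformly in $\mu\ge1$ (by $1$ near the origin, exponentially small at infinity), one gets for arbitrary $\sigma,\tau\in\RR$, uniformly in $\mu\ge1$,
\[ \Norm{\mathfrak r^\mu g}_{H^\sigma}\leq C(\sigma,s,r)\Norm{\nabla g}_{H^{s-r}}, \qquad \Norm{\mathfrak r^\mu\,\nabla g}_{H^\sigma}\leq C(\sigma)\,\mu^{-1/2}\Norm{\nabla g}_{H^\tau}.\]
The extra derivative landing on $\nabla g$ in the second bound (hence its $\mu^{-1/2}$ factor) is exactly what will allow $f$ to appear undifferentiated.

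\emph{Decomposition.} With $G_0^\mu=|D|-\mathfrak r^\mu$ one expands
\[ G_0^\mu(fG_0^\mu g)+\nabla\cdot(f\nabla g) \;=\; \big(|D|(f|D|g)+\nabla\cdot(f\nabla g)\big)\; -\; \mathfrak r^\mu(f|D|g)\; -\; G_0^\mu\big(f\,\mathfrak r^\mu g\big). \]
The first parenthesis is handled by \Cref{L.commutator-|D|}: its main estimate for the first claim (where $0\leq s\leq t_0+1$ and $s-t_0\leq r\leq 1$), and its last ``two-term'' estimate for the second claim (where only $s\geq0$ and $r\leq1$). For the second term, $\mathfrak r^\mu:H^b\to H^s$ is bounded by $C\mu^{-1/2}$ for $b$ as negative as we like, so $\Norm{\mathfrak r^\mu(f|D|g)}_{H^s}\lesssim \mu^{-1/2}\Norm{f|D|g}_{H^b}$, and \Cref{L.product} applied with such a $b$ gives $\Norm{f|D|g}_{H^b}\lesssim \Norm{f}_{H^{t_0+r}}\Norm{\nabla g}_{H^{s-r}}$ for the first claim (taking $s_1=t_0+r$, $s_2=s-r$), resp. $\Norm{f|D|g}_{H^b}\lesssim \Norm{f}_{L^2}\Norm{\nabla g}_{H^{t_0}}$ for the second claim (taking $s_1=0$, $s_2=t_0$, so that the $\mu^{-1/2}$ is kept alongside $\Norm f_{L^2}$). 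For the third term, $\Norm{G_0^\mu h}_{H^s}\leq\Norm{\nabla h}_{H^s}$ by \Cref{L.control_P}, so it suffices to control $\Norm{(\nabla f)(\mathfrak r^\mu g)}_{H^s}$ and $\Norm{f\,\mathfrak r^\mu(\nabla g)}_{H^s}$. The former is bounded via \Cref{L.product,L.product-tame} and the first auxiliary bound above, keeping the regularity budget on $g$ at most $\max(s,s-r)$ so that it collapses to $\Norm{\nabla g}_{H^{s-r}}$ for the first claim, resp. splits as $\Norm{\nabla f}_{H^{t_0+r}}\Norm{\nabla g}_{H^{s-r}}+\Norm{\nabla f}_{H^s}\Norm{\nabla g}_{H^{t_0}}$ for the second. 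The latter exploits the second auxiliary bound: $\mathfrak r^\mu(\nabla g)$ is arbitrarily smooth \emph{and} of size $\mu^{-1/2}\Norm{\nabla g}_{H^\tau}$, which lets one estimate the product placing no regularity constraint on $f$ beyond $\Norm{f}_{L^2}$, thereby absorbing this term into $\mu^{-1/2}(\Norm{f}_{L^2}+\Norm{\nabla f}_{H^{t_0+r}})\Norm{\nabla g}_{H^{s-r}}$ (resp. $\mu^{-1/2}(\Norm{f}_{L^2}+\Norm{\nabla f}_{H^s})\Norm{\nabla g}_{H^{t_0}}$). One finally uses $\Norm{f}_{H^\varsigma}\lesssim \Norm{f}_{L^2}+\Norm{\nabla f}_{H^\varsigma}$ (valid for $\varsigma\geq0$, applicable since $t_0+r\ge0$ under the first claim's hypotheses, and $s,t_0\ge0$) to rewrite the inhomogeneous norms of $f$ and collects the bounds, noting that $\mu^{-1/2}\Norm{\nabla f}_{H^{\cdot}}\le\Norm{\nabla f}_{H^{\cdot}}$.

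\emph{Main obstacle.} The delicate part is purely bookkeeping: throughout, (i) no more than the available $\nabla g$ regularity may ever be demanded on $g$, and (ii) the $f$-contributions must collapse to exactly $\mu^{-1/2}\Norm{f}_{L^2}+\Norm{\nabla f}_{H^{t_0+r}}$ (resp. $\cdots+\Norm{\nabla f}_{H^s}$) — in particular the undifferentiated norm $\Norm{f}_{L^2}$ must always be paired with the $\mu^{-1/2}$ gain. This is what dictates that $f$ may appear undifferentiated only in the term $f\,\mathfrak r^\mu(\nabla g)$ (and, for the second claim, in the treatment of $\mathfrak r^\mu(f|D|g)$), where $\mathfrak r^\mu$ supplies that gain. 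Verifying that the product estimates close requires distinguishing the cases $r\geq0$ and $r<0$ (and, for the second claim, $s\leq t_0$ versus $s>t_0$) when invoking \Cref{L.product,L.product-tame}, but each such case is routine.
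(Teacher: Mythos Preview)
Your approach is correct and essentially identical to the paper's: both decompose $G_0^\mu(fG_0^\mu g)+\nabla\cdot(f\nabla g)$ into the infinite-depth term $|D|(f|D|g)+\nabla\cdot(f\nabla g)$ (handled by \Cref{L.commutator-|D|}) plus correction terms involving the smoothing multiplier $|D|(1-\tanh(\sqrt\mu|D|))$, whose $\mu^{-1/2}$ decay supplies the $\tfrac1{\sqrt\mu}\|f\|_{L^2}$ contributions. The only difference is cosmetic: the paper writes the corrections as $-\mathfrak r^\mu(fG_0^\mu g)-|D|(f\,\mathfrak r^\mu g)$ whereas you write $-\mathfrak r^\mu(f|D|g)-G_0^\mu(f\,\mathfrak r^\mu g)$, which are algebraically equivalent.

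Two small slips in your write-up are worth flagging, though neither breaks the argument. First, the inequality $\|f\|_{H^\varsigma}\lesssim\|f\|_{L^2}+\|\nabla f\|_{H^\varsigma}$ should read $\|\nabla f\|_{H^{\varsigma-1}}$ on the right; since $s-1\leq t_0+r$ (from $r\geq s-t_0$) this still absorbs correctly. Second, the phrase ``placing no regularity constraint on $f$ beyond $\|f\|_{L^2}$'' is inaccurate when $s>0$: a product estimate in $H^s$ always needs at least $\|f\|_{H^s}$ on one factor. What actually happens is that \Cref{L.product} with $s_1=s$ (and $s_2$ large, since $\mathfrak r^\mu(\nabla g)$ is arbitrarily smooth) gives $\|f\,\mathfrak r^\mu(\nabla g)\|_{H^s}\lesssim\mu^{-1/2}\|f\|_{H^s}\|\nabla g\|_{H^\tau}$, and then $\|f\|_{H^s}\lesssim\|f\|_{L^2}+\|\nabla f\|_{H^{s-1}}$ splits into the desired pieces.
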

\begin{proof} We start with the first estimate. We first note that 
\begin{align*}
|D| \tanh(\sqrt{\mu} |D|) ( f |D| \tanh(\sqrt{\mu} |D|) g ) = &|D|( \tanh(\sqrt{\mu} |D|) - 1) ( f |D| \tanh(\sqrt{\mu} |D|) g )\\
 &+ |D| ( f |D| ( \tanh(\sqrt{\mu} |D|) -1) g ) + |D| ( f |D| g ).
\end{align*}
We have for any $r' \geq 0$ and $\mu\in [1,+\infty)$
\begin{align*}
\norm{\sqrt\mu |D|( \tanh(\sqrt{\mu} |D|) - 1) ( f |D| \tanh(\sqrt{\mu} |D|) g ) }_{H^s} 
&\leq C_{r',s} \norm{ f |D| \tanh(\sqrt{\mu} |D|) g }_{H^{\min(t_{0}-r',0)}}\\
&\leq C_{r',s}' \norm{f }_{H^{r'}} \norm{\nabla g }_{t_{0}-r'},
\end{align*}
where we used \Cref{L.product} and $C_{r',s}$, $C_{r',s}'$ depend uniquely on $t_0$, $r'\geq 0$ and $s$.
Furthermore, one has for any $\mu\in [1,+\infty)$ and any $r'\in\RR $
\begin{align*}
\norm{ \sqrt\mu|D|(f (\tanh(\sqrt{\mu} |D|)-1) |D| g  ) }_{H^s} 
& \leq C_{t_0,s}\sqrt\mu \norm{\nabla f}_{H^{s}} \norm{(\tanh(\sqrt{\mu} |D|)-1) |D| g }_{H^{\max(t_0,s)}} \\
&\quad + C_{t_0,s} \norm{f}_{H^{s}}  \norm{\sqrt\mu|D|(\tanh( \sqrt\mu|D|)-1) |D| g }_{H^{\max(t_0,s)}}\\
&\leq C_{t_0,s} C_{r',s}'' \big( \norm{f}_{H^{s}}  +\sqrt\mu \norm{\nabla f}_{H^{s}}\big)  \norm{\nabla g }_{H^{t_0-r'}} 
\end{align*}
where we used  \Cref{L.product} and $C_{t_0,s},C_{r',s}''$ depend uniquely on $t_0$, $r'$ and $s$. The desired estimates now follow from the triangular inequality, the fact that for any $\theta\in\RR$, $\norm{f}_{H^\theta}\lesssim  \norm{f}_{L^2}+\norm{\nabla f}_{H^{\theta-1}}$, the above estimates and \Cref{L.commutator-|D|}.
\end{proof}

By a similar compensation mechanism as in \Cref{L.commutator-|D|}, we infer the following result  that allows us to define a quantity at low regularity that could not be defined if one only use the product estimate of \Cref{L.product}. 
\begin{Lemma}\label{L.compensation-|D|}
Let $d\in\{1,2\}$ and $t_0>d/2$. Set $\sigma\in[\frac12,1]$.
The bilinear map
\[B:(f,g)\in \Hdot^1(\RR^d)\times \Hdot^1(\RR^d) \mapsto (|D| f)( |D| g) -( \nabla f )\cdot(  \nabla g) \in L^1(\RR^d) \subset H^{-t_0}(\RR^d)\]
extends as a continuous bilinear map from $\Hdot^\sigma(\RR^d)\times \Hdot^\sigma(\RR^d)$ to $H^{2\sigma-2-t_0}(\RR^d)$.
\end{Lemma}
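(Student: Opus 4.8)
The plan is to pass to the Fourier side, extract the algebraic cancellation hidden in $B$, and then run a paraproduct (Littlewood--Paley) decomposition.

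First I would record that $B(f,g)$ depends only on $\nabla f,\nabla g$ (constants lie in the kernel of both $|D|$ and $\nabla$) and that on the Fourier side
\[
  \widehat{B(f,g)}(\bxi)=\int_{\RR^d}\mathfrak m(\bxi,\bta)\,\widehat f(\bta)\,\widehat g(\bxi-\bta)\,\dd\bta ,\qquad
  \mathfrak m(\bxi,\bta)=|\bta|\,|\bxi-\bta|+\bta\cdot(\bxi-\bta),
\]
the crucial point being the sign: since we subtract $(\nabla f)\cdot(\nabla g)$, whose symbol is $-\bta\cdot(\bxi-\bta)$, the two pieces add up to $+\bta\cdot(\bxi-\bta)$. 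Using $|\bxi|^2=|\bta|^2+|\bxi-\bta|^2+2\,\bta\cdot(\bxi-\bta)$ this rewrites as $\mathfrak m(\bxi,\bta)=\tfrac12\big(|\bxi|^2-(|\bta|-|\bxi-\bta|)^2\big)$, whence the two key bounds
\[
  0\le\mathfrak m(\bxi,\bta)\le\min\big\{\,2\,|\bta|\,|\bxi-\bta|\,,\ \tfrac12\,|\bxi|^2\,\big\}.
\]
The first is trivial; the second is the compensation, and is decisive in the ``high--high--to--low'' regime $|\bta|\approx|\bxi-\bta|\gg|\bxi|$ where the trivial bound is useless. It is then convenient to set $u=|D|f$, $v=|D|g$, which satisfy $u,v\in H^{\sigma-1}(\RR^d)$ with $\norm{u}_{H^{\sigma-1}}=\norm{f}_{\Hdot^\sigma}$ and $\norm{v}_{H^{\sigma-1}}=\norm{g}_{\Hdot^\sigma}$, so that (away from the null set $\{\bta=0\}\cup\{\bta=\bxi\}$) $\widehat{B(f,g)}(\bxi)=\int n(\bxi,\bta)\widehat u(\bta)\widehat v(\bxi-\bta)\,\dd\bta$ with $n=\mathfrak m/(|\bta|\,|\bxi-\bta|)$ obeying $0\le n\le\min\{2,\,|\bxi|^2/(2|\bta|\,|\bxi-\bta|)\}$. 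This removes all homogeneous weights, and reduces the claim to the bilinear estimate $\norm{B(f,g)}_{H^{s}}\lesssim\norm{u}_{H^{\sigma-1}}\norm{v}_{H^{\sigma-1}}$ with $s:=2\sigma-2-t_0$.

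Next I would split the $\bta$--integral, via a dyadic partition $\sum_j\Delta_j=\Id$, into the three standard sectors: low--high ($|\bta|\lesssim|\bxi-\bta|$), high--low ($|\bxi-\bta|\lesssim|\bta|$), and high--high ($|\bta|\approx|\bxi-\bta|$). In the first two sectors $|\bxi|$ is comparable to the larger input frequency, so $\langle\bxi\rangle^{s}\lesssim\langle\bxi-\bta\rangle^{s}$ (resp. $\langle\bta\rangle^{s}$) because $s<0$; using only $n\le2$ and writing $|\widehat u(\bta)|=\langle\bta\rangle^{1-\sigma}P(\bta)$, $|\widehat v(\bxi-\bta)|=\langle\bxi-\bta\rangle^{1-\sigma}Q(\bxi-\bta)$ with $P,Q\in L^2$, the low--high integrand is $\lesssim\langle\bxi-\bta\rangle^{s+2-2\sigma}P(\bta)Q(\bxi-\bta)=\langle\bxi-\bta\rangle^{-t_0}P(\bta)Q(\bxi-\bta)$, so that Young's inequality together with $\langle\cdot\rangle^{-t_0}\in L^2$ (valid since $t_0>d/2$) closes it; equivalently this is the classical paraproduct bound $\norm{S_{j-2}u\,\Delta_jv}_{\ell^2_jH^{s}}\lesssim\norm{u}_{H^{\sigma-1}}\norm{v}_{H^{\sigma-1}}$, which holds precisely because $s<2\sigma-2-d/2$. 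For the high--high sector I would write $B=\sum_jB_j$ where $B_j$ collects $|\bta|,|\bxi-\bta|\sim2^j$, so that $B_j$ is Fourier--supported in $\{|\bxi|\lesssim2^{j}\}$ and $n\lesssim|\bxi|^2\,2^{-2j}$ on its support, and then $\norm{B_j}_{H^s}\lesssim 2^{-2j}\,\big\||\widehat{\Delta_ju}|*|\widehat{\widetilde\Delta_jv}|\big\|_{H^s}$. For $j\le0$ the factor $\langle\bxi\rangle^s|\bxi|^2$ is $\lesssim2^{2j}$ on $\{|\bxi|\lesssim2^j\}$, so $\norm{B_j}_{H^s}\lesssim 2^{jd/2}c_jd_j\norm{u}_{H^{\sigma-1}}\norm{v}_{H^{\sigma-1}}$ (Bernstein, with $(c_j),(d_j)\in\ell^2$), and $\sum_{j\le0}2^{jd/2}c_jd_j\lesssim\norm{u}\norm{v}$. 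For $j\ge0$, using $\langle\bxi\rangle^{s}|\bxi|^2\lesssim\max\{1,2^{j(2\sigma-t_0)}\}$ on $\{|\bxi|\lesssim2^j\}$ and $\big\||\widehat{\Delta_ju}|*|\widehat{\widetilde\Delta_jv}|\big\|_{L^2}\lesssim 2^{jd/2}\norm{\Delta_ju}_{L^2}\norm{\widetilde\Delta_jv}_{L^2}\lesssim 2^{jd/2}2^{2j(1-\sigma)}c_jd_j\norm{u}\norm{v}$ gives $\norm{B_j}_{H^s}\lesssim 2^{j\gamma}c_jd_j\norm{u}\norm{v}$ with $\gamma=\max\{d/2-2\sigma,\,d/2-t_0\}\le0$, so again $\sum_{j\ge0}2^{j\gamma}c_jd_j\lesssim\norm{u}\norm{v}$; this closes the estimate. (An equivalent route is to establish the bound only for $\sigma=1$ — where $B(f,g)\in L^1\hookrightarrow H^{-t_0}$ is immediate since $\norm{|D|f}_{L^2}=\norm{\nabla f}_{L^2}$ — and for $\sigma=\tfrac12$, and then interpolate bilinearly.)

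The main obstacle is, as always, the high--high--to--low interaction, and two features are essential. First, the estimate fails without the correct sign: the symbol $n=1+\cos\angle(\bta,\bxi-\bta)$ vanishes exactly where $\bta$ and $\bxi-\bta$ are anti-aligned — i.e. in the high--high--to--low regime — at the quadratic rate $|\bxi|^2/(|\bta|\,|\bxi-\bta|)$, and this two-derivative gain is precisely what offsets the failure of $(\sigma-1)+(\sigma-1)\ge0$ when $\sigma<1$. Second, the gain must be used scale-by-scale: a single application of Young's inequality to the whole high--high sector does not see the separation between the input scale $2^j$ and the (possibly much smaller) output scale, and in fact breaks down at the endpoint $\sigma=\tfrac12,\ d=2$; keeping the dyadic blocks $B_j$ separate and exploiting that $B_j$ is supported at frequencies $\lesssim2^j$ is what makes the $j$--summation converge, the convergence ultimately being guaranteed by $t_0>d/2$.
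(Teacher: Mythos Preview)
Your proof is correct and rests on the same key insight as the paper's: the symbol $\mathfrak m(\bxi,\bta)=|\bta|\,|\bxi-\bta|+\bta\cdot(\bxi-\bta)$ satisfies the cancellation bound $\mathfrak m\le\tfrac12|\bxi|^2$, which is exactly what is needed in the high--high--to--low regime where the naive product estimate fails for $\sigma<1$. The implementation, however, differs. The paper uses a cruder two-region split $\{|\bta|\le 2\langle\bxi\rangle\}$ versus its complement and closes each piece by a direct appeal to the abstract product estimate \Cref{L.product} (with suitable choices $s_1=-s_2=1-\sigma$, $s=-t_0$), after transferring the weight $\langle\bxi\rangle^{-s_0}$ to $\langle\bta\rangle^{-s_0}$ in the appropriate region; this is shorter because it recycles \Cref{L.product} as a black box. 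Your route via a full Littlewood--Paley paraproduct decomposition with explicit Bernstein bounds is more self-contained and makes the endpoint case $d=2$, $\sigma=\tfrac12$ (where your exponent $\gamma=0$ and summability comes purely from $(c_j),(d_j)\in\ell^2$) more transparent, at the cost of a slightly longer argument. Your algebraic identity $\mathfrak m=\tfrac12\big(|\bxi|^2-(|\bta|-|\bxi-\bta|)^2\big)$ is also cleaner than the angle-based argument the paper invokes from the proof of \Cref{L.commutator-|D|}.
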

\begin{proof}
The fact that $B$ is defined follows from \Cref{L.product}. We assume below that $f,g\in \Hdot^1(\RR^d)$, and the result follows by a density argument from the desired estimate. 
One has (almost everywhere)
\begin{align*}\widehat{B(f,g)}(\bxi)&=\int_{\RR^d}   \big(|\bta||\bxi-\bta|+\bta\cdot(\bxi-\bta)\big) \widehat f(\bta)\widehat g(\bxi-\bta) \dd\bta\\
&= \int_{\Omega}   \big(|\bta||\bxi-\bta|+\bta\cdot(\bxi-\bta)\big) \widehat f(\bta)\widehat g(\bxi-\bta) \dd\bta\\
&\qquad + \int_{\RR^d\setminus\Omega}   \big(|\bta||\bxi-\bta|+\bta\cdot(\bxi-\bta)\big) \widehat f(\bta)\widehat g(\bxi-\bta)\dd\bta\\
& = I_1+I_2.
\end{align*}
where we define $\Omega\eqdef \big\{(\bxi,\bta)\in\RR^{d}\times\RR^d,\ |\bta|\leq 2\langle\bxi\rangle\big\}$. Using that on  $\Omega$, $\langle \bxi\rangle^{-s_0} \leq 3^{s_0} \langle \bta\rangle^{-s_0}$ for any $s_0\geq 0$, we have by \Cref{L.product} (with $s=-t_0$)
\[\norm{\langle \bxi\rangle^{-s_0-t_0} I_1}_{L^2}\leq C \norm{\nabla f}_{H^{s_1-s_0}}\norm{\nabla g}_{H^{s_2}}\]
as soon as $s_1+s_2\geq 0$ and $s_1,s_2\geq -t_0$. We choose $s_1=-s_2=s_0/2$ with $s_0=2(1-\sigma)\in [0,2t_0]$.

 On $\RR^d\setminus\Omega$, we have $|\bta|\geq 2 \langle\bxi\rangle\geq 2\max(1,|\bxi|)$. Proceeding as in the proof of \Cref{L.commutator-|D|}, we infer that there exists $c>0$ such that  for any $s_0\in[0,2]$ when $d=2$  (and for any $s_0\geq 0$ when $d=1$)
 \[ \big||\bta||\bxi-\bta|-\bxi\cdot(\bxi-\bta)\big|\lesssim |\bta||\bxi-\bta|\frac{\langle\bxi\rangle^{s_0}}{\langle\bta\rangle^{s_0}}.\]
\Cref{L.product} yields once again 
\[\norm{\langle \bxi\rangle^{-s_0-t_0} I_1}_{L^2}\leq C \norm{\nabla f}_{H^{-s_0/2}}\norm{\nabla g}_{H^{-s_0/2}}  \]
 when $s_0\in [0,2t_0]$ (and $s_0\in[0,2]$ when $d=2$). Setting one again $s_0=2(1-\sigma)$ proves the result. 
\end{proof}

We deduce the following ``finite-depth'' version of  \Cref{L.compensation-|D|}, which allows (among other things) to define the second equations in~\eqref{WW2-intro} and~\eqref{RWW2-intro} when $\psi\in \Hdot^{1/2}(\RR^d)$, corresponding to the natural energy space defined by the corresponding Hamiltonians,~\eqref{eq.Hamiltonian} and~\eqref{eq.Hamiltonian-mod}.
\begin{Lemma}\label{L.compensation-tanh}
Let $d\in\{1,2\}$ and $t_0>d/2$. Set $\sigma\in[1/2,1]$.  For any $\mu\geq 1$, the bilinear map
\[B^\mu:(f,g)\in \Hdot^1(\RR^d)\times \Hdot^1(\RR^d) \mapsto (G^\mu_0 f)( G^\mu_0 g) -( \nabla f )\cdot(  \nabla g) \in L^1(\RR^d)\subset H^{-t_0}(\RR^d)\]
extends as a continuous bilinear map from $\Hdot^\sigma(\RR^d)\times \Hdot^\sigma(\RR^d)$ to $H^{2\sigma-2-t_0}(\RR^d)$. Moreover there exists a constant $C>0$, depending only on $t_0$ and $\sigma$, such that for any $f,g\in \Hdot^\sigma(\RR^d)$,
\[ \norm{B^\mu(f,g)}_{H^{2\sigma-2-t_0}} \leq C  \norm{\nabla f}_{H^{\sigma-1}}\norm{\nabla g}_{H^{\sigma-1}}.\]
\end{Lemma}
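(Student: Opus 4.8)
The plan is to deduce the finite-depth statement from its infinite-depth counterpart, \Cref{L.compensation-|D|}, by isolating the smoothing operator $E^\mu\eqdef G^\mu_0-|D|=|D|\big(\tanh(\sqrt\mu|D|)-1\big)$. The key preliminary observation is that $E^\mu$ maps $\Hdot^\sigma(\RR^d)$ into $H^\theta(\RR^d)$ for \emph{every} $\theta\in\RR$, with operator norm bounded uniformly in $\mu\ge1$. Indeed, from the elementary bound $0\le 1-\tanh\zeta\le 2e^{-2\zeta}$ (valid for $\zeta\ge0$) and $\mu\ge1$ one gets, for any $h$, $|\widehat{E^\mu h}(\bxi)|=|\tanh(\sqrt\mu|\bxi|)-1|\,|\widehat{\nabla h}(\bxi)|\le 2e^{-2|\bxi|}\,|\widehat{\nabla h}(\bxi)|$, and since $\bxi\mapsto\langle\bxi\rangle^{\theta-\sigma+1}e^{-2|\bxi|}$ is bounded on $\RR^d$ this yields $\norm{E^\mu h}_{H^\theta}\le C_\theta\norm{\nabla h}_{H^{\sigma-1}}$, uniformly in $\mu\ge1$. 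I shall also use the trivial identity $\norm{|D| h}_{H^{\sigma-1}}=\norm{\nabla h}_{H^{\sigma-1}}$.

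The core of the argument is then the splitting obtained by writing $G^\mu_0=|D|+E^\mu$ in both arguments:
\[ B^\mu(f,g)=B(f,g)+(|D| f)(E^\mu g)+(E^\mu f)(|D| g)+(E^\mu f)(E^\mu g),\]
where $B$ is the bilinear map of \Cref{L.compensation-|D|}. Working first with $f,g\in\Hdot^1(\RR^d)$, the general case following by density exactly as in the proof of \Cref{L.compensation-|D|}, the first term is controlled directly: $\norm{B(f,g)}_{H^{2\sigma-2-t_0}}\le C\norm{\nabla f}_{H^{\sigma-1}}\norm{\nabla g}_{H^{\sigma-1}}$ by \Cref{L.compensation-|D|}. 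For the second term I would invoke the product estimate \Cref{L.product} in its general form---necessary since $2\sigma-2-t_0$ may be smaller than $-t_0$---with $s=2\sigma-2-t_0$, $s_1=\sigma-1$, $s_2=t_0+1$: the hypotheses $s\le s_1$ (from $\sigma\le1\le t_0+1$), $s\le s_2$, $s_1+s_2=\sigma+t_0>0$ and $s_1+s_2>s+d/2$ (equivalent to $\sigma<2t_0+2-d/2$, which holds because $t_0>d/2$) are all satisfied, giving $\norm{(|D| f)(E^\mu g)}_{H^{2\sigma-2-t_0}}\le C\norm{|D| f}_{H^{\sigma-1}}\norm{E^\mu g}_{H^{t_0+1}}\le C\norm{\nabla f}_{H^{\sigma-1}}\norm{\nabla g}_{H^{\sigma-1}}$ by the smoothing bound above. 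The third term is symmetric. The last is the easiest: both factors belong to $H^{t_0}(\RR^d)$ with the correct norms, and \Cref{L.product} with $s_1=s_2=t_0$ (whose hypotheses are again readily checked using $\sigma\le1$ and $t_0>d/2$) closes the estimate. Summing the four contributions yields the asserted inequality, and the continuity statement then follows; uniformity in $\mu\ge1$ is manifest throughout.

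There is no serious obstacle here: the genuinely delicate compensation---that the symbol $|\bta||\bxi-\bta|-\bta\cdot(\bxi-\bta)$, which is not absolutely summable against $\Hdot^\sigma$ data when $\sigma<1$, nevertheless produces an element of $H^{2\sigma-2-t_0}$---has been entirely absorbed into \Cref{L.compensation-|D|}, which we are entitled to invoke. What remains, namely the uniform-in-$\mu$ smoothing estimate for $E^\mu$ together with routine bookkeeping of Sobolev exponents in \Cref{L.product}, is elementary; the only point requiring a moment's attention is remaining within the admissible range of exponents for \Cref{L.product} when $2\sigma-2-t_0<-t_0$, which is what forces the use of the general rather than the ``in particular'' form of that lemma.
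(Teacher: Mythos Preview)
Your proof is correct and follows essentially the same approach as the paper: both reduce to \Cref{L.compensation-|D|} and handle the remainder $B^\mu-B$ via the uniform-in-$\mu$ smoothing of $E^\mu=|D|(\tanh(\sqrt\mu|D|)-1)$ together with \Cref{L.product}. The only cosmetic difference is that the paper groups the remainder as $(E^\mu f)(G^\mu_0 g)+(|D|f)(E^\mu g)$ (two terms, using $\norm{G^\mu_0 g}_{H^{\sigma-1}}\le\norm{\nabla g}_{H^{\sigma-1}}$ from \Cref{L.control_P}) rather than your three-term expansion, but the estimates are identical.
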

\begin{proof} By \Cref{L.compensation-|D|}, we need to prove the corresponding result on 
\[ B^\mu-B:(f,g)\in \Hdot^1(\RR^d)\times \Hdot^1(\RR^d) \mapsto (G^\mu_0 f)( G^\mu_0 g) -(|D|f)(|D|g).\]
We rewrite
\[(B^\mu-B)(f,g) = (|D| (\tanh(\sqrt{\mu} |D|-1)f)  (G^\mu_0 g ) + (|D|f) (|D| (\tanh(\sqrt{\mu} |D|-1)g)  \]
Yet since for any $s\in \RR$ there exists $C>0$ such that
\[ \norm{ (|D| (\tanh(\sqrt{\mu} |D|-1)f) }_{H^s}\leq  \norm{ (|D| (\tanh(|D|-1)f) }_{H^s}\leq C \norm{\nabla f}_{H^{\sigma-1}}\]
and $\norm{G^\mu_0 g }_{H^{\sigma-1}} \leq \norm{\nabla g}_{H^{\sigma-1}}$ by \Cref{L.control_P},
we infer immediately from \Cref{L.product} that
\[ \norm{ (|D| (\tanh(\sqrt{\mu} |D|-1)f)  (G^\mu_0 g ) }_{H^{2\sigma-2-t_0 }} \lesssim C \norm{\nabla f}_{H^{\sigma-1}}\norm{\nabla g}_{H^{\sigma-1}}.\]
By similar considerations on the second term and triangular inequality, we obtain the desired estimate, and the proof is complete.
\end{proof}

\section{Full justification; proof of \texorpdfstring{\Cref{T.Consistency,T.Convergence}}{Sec}}\label{S.consistency}

In this section we complete the full justification of \eqref{RWW2-intro} as an asymptotic model for water waves by proving \Cref{T.Consistency,T.Convergence}. 

\begin{proof}[Proof of \Cref{T.Consistency}] The remainder terms $R_1$, $R_2$, $\widetilde R_1$ and $\widetilde R_2$ in the statement may be explicitly defined as
	\begin{align*}
	\epsilon\delta^\a	R_1 &=  \epsilon G^\mu_0 ((\zeta-\J^\delta\zeta ) G^\mu_0 \psi)+\epsilon \nabla\cdot((\zeta-\J^\delta \zeta)\nabla\psi),\\
	\epsilon\delta^\a	R_2 &= \frac\epsilon2 (\Id-\J^\delta) \left( |\nabla\psi|^2- (G^\mu_0 \psi )^2 \right),
	\end{align*}
	and
	\begin{align*}
		\epsilon^2\widetilde R_1 &= -\frac1{\sqrt\mu}G^\mu[\epsilon\zeta]\psi + G^\mu_0 \psi  -  \epsilon G^\mu_0 (\zeta G^\mu_0 \psi)-\epsilon\nabla\cdot(\zeta\nabla\psi),\\
		\epsilon^2\widetilde R_2 &= -\frac{\epsilon}{2} \frac{ \left(\frac{1}{\sqrt{\mu}} G^\mu[\epsilon\zeta]\psi+\epsilon \nabla\zeta\cdot\nabla\psi \right)^2}{1+\epsilon^2|\nabla\zeta|^2} + \frac\epsilon{2 } (G^\mu_0 \psi )^2\\
		&= -\frac{\epsilon}{2} \frac{ \left(\frac{1}{\sqrt{\mu}} G^\mu[\epsilon\zeta]\psi+\epsilon \nabla\zeta\cdot\nabla\psi -G^\mu_0 \psi \right)\left(\frac{1}{\sqrt{\mu}} G^\mu[\epsilon\zeta]\psi+\epsilon \nabla\zeta\cdot\nabla\psi +G^\mu_0 \psi \right) }{1+\epsilon^2|\nabla\zeta|^2}\\
		&\hspace{0.4cm}+ \frac{\epsilon^{3}}{2} \frac{|\nabla\zeta|^2(G^\mu_0 \psi )^2 }{1+\epsilon^2|\nabla\zeta|^2}.
	\end{align*}
	We now estimate each of these terms.
	
	By the second estimate of
	\Cref{L.commutator-tanh}  with $r=0$, we find
	\[ \delta^\a\norm{R_1}_{H^s}\lesssim \norm{\zeta-\J^\delta\zeta}_{H^{s+1}} \norm{\nabla \psi }_{H^{t_0}} +   \norm{\zeta-\J^\delta\zeta}_{H^{t_0+1}} \norm{\nabla \psi }_{H^{s}} . \]
	Now, by Parseval's theorem, we have for any $\sigma\in\RR$, $\a\geq0$ and $\delta\geq 0$,
	\[\norm{\zeta-\J^\delta\zeta}_{H^{\sigma}}  \leq \delta^\a \dotN{-\a}{(1-J)}\norm{\zeta }_{H^{\sigma+\a}}.\]
	For any $s>t_0$, since $\a\geq0$, we can put $\theta=\frac{s-t_0}{s+\a+\frac12-t_0}\in(0,1)$ and infer by interpolation and Young's inequalities,
	\begin{align*} \norm{\zeta}_{H^{t_0+\a+1}} \norm{\nabla \psi }_{H^{s}} &\lesssim  \norm{\zeta}_{H^{t_0+\frac12}}^\theta  \norm{\zeta}_{H^{s+\a+1}}^{1-\theta} \norm{\nabla \psi }_{H^{s+\a+\frac12}}^\theta  \norm{\nabla \psi }_{H^{t_0}}^{1-\theta} \\
		&\lesssim  \norm{\zeta}_{H^{t_0+\frac12}} \norm{\nabla \psi }_{H^{s+\a+\frac12}} + \norm{\zeta}_{H^{s+\a+1}} \norm{\nabla \psi }_{H^{t_0}} .
	\end{align*}
	This provides the desired estimate for $\norm{R_1}_{H^s}$ thanks to \Cref{L.control_P}.
	Then, we have as above
	\[\delta^\a\norm{R_2}_{H^{s+\frac12}}\leq \frac12  \delta^\a\dotN{-\a}{(1-J)}  \norm{ |\nabla\psi|^2- (G^\mu_0 \psi )^2 }_{H^{s+\a+\frac12}},\]
	and tame product estimates (\Cref{L.product-tame}) as well as \Cref{L.control_P} yield
	\[\norm{ |\nabla\psi|^2- (G^\mu_0 \psi )^2 }_{H^{s+\a+\frac12}} \lesssim \norm{ \nabla\psi}_{H^{t_0}}\norm{ \nabla\psi}_{H^{s+\a+\frac12}} .\]
	This provides the desired estimate for $\norm{R_2}_{H^{s+\frac12}}$.
	
	By \Cref{asymptotic_DN}, we have immediately
	\[\norm{\widetilde R_1}_{H^s}  \leq C\big(\tfrac1{h_\star} , \norm{\epsilon\zeta}_{H^{t_0+2}} \big) \norm{ \zeta }_{H^{t_{0}+1}} \left( \norm{ \zeta }_{H^{t_{0}+1}}  \norm{ \mfP \psi }_{H^{s+\frac12}}  +  \norm{ \zeta }_{H^{s+\frac{3}{2}}} \norm{ \mfP \psi }_{H^{t_{0}}}  \right). \]
	This provides the desired estimate for $\norm{\widetilde R_1}_{H^s}$.
	Then, by Moser tame estimates (see for instance~\cite[Prop.~B.4]{Lannes_ww}) we have for any $F\in H^{s+\frac12}(\RR^d)$,
	\[\norm{\tfrac{F}{1+\epsilon^2|\nabla\zeta|^2}  }_{H^{s+1/2}} \leq C \left( \epsilon \norm{ \nabla\zeta}_{H^{t_{0}}} \right) \left( \norm{ F }_{H^{s+\frac{1}{2}}} +  \epsilon \norm{F}_{H^{t_{0}}} \norm{ \nabla\zeta }_{H^{s+\frac{1}{2}}} \right) \]
	and, using both \Cref{asymptotic_DN} and \Cref{L.commutator-tanh} (with $r=0$), and the triangular inequality, we get for any $\sigma\geq0$
	\begin{multline*}\norm{\tfrac{1}{\sqrt{\mu}} G^\mu[\epsilon\zeta]\psi - G^\mu_0 \psi   }_{H^{\sigma}}\leq 
		\epsilon^2 C\big(\tfrac1{h_\star} , \norm{\epsilon\zeta}_{H^{t_0+2}}\big) \norm{ \zeta }_{H^{t_{0}+1}} \left( \norm{ \zeta }_{H^{t_{0}+1}}  \norm{ \mfP \psi }_{H^{\sigma+\frac12}}  +  \norm{ \zeta }_{H^{\sigma+\frac32}} \norm{ \mfP \psi }_{H^{t_{0}}}  \right)\\
		+\epsilon \norm{ \zeta }_{H^{\sigma+1}}  \norm{\nabla\psi}_{H^{t_0}} +\epsilon \norm{\zeta}_{H^{t_0+1}}\norm{ \nabla\psi }_{H^{\sigma}} .
	\end{multline*}
	The desired estimate for $\norm{\widetilde R_2}_{H^{s+\frac12}}$ follows from the above with $\sigma\in\{s+\frac12,t_0\}$, \Cref{L.control_P}, the triangular inequality and tame product estimates, \Cref{L.product-tame}.
\end{proof}

\Cref{T.Convergence} is a direct consequence of \Cref{T.Consistency}, \Cref{T.WP-delta} and the well-posedness and stability of the water waves system provided in~\cite{Alvarez_Lannes}.
The proof is almost identical to~\cite[Theorem~6.5]{Alvarez_Lannes} (in fact~\eqref{WW2-intro} arises explicitly in (6.9) therein). Specifically, one infers the existence and uniqueness of a solution to~\eqref{WW} on the appropriate time interval from Theorem~5.1 and Proposition~5.1 in~\cite{Alvarez_Lannes}, solutions to~\eqref{RWW2-intro} on the same timescale are provided by \Cref{T.WP-delta}, and the control of the difference stems from \Cref{T.Consistency} and stability estimates on the water waves system which again are provided in \cite[Remark~5.4]{Alvarez_Lannes}, referring to \cite[Corollary~3.13]{Alvarez_Lannes_green_naghdi}.

\section{Well-posedness results and proof of \texorpdfstring{\Cref{T.WP-delta}}{sec}} \label{S.WP}

In this section we prove several well-posedness results on the initial-value problem for~\eqref{RWW2-intro}, 
 which we rewrite below for the sake of readability :
 \begin{equation}\label{RWW2}\tag{RWW2}\left\{\begin{array}{l}
\partial_t\zeta- G^\mu_0\psi+ \epsilon G^\mu_0 ((\J^\delta \zeta) G^\mu_0 \psi)+\epsilon\nabla\cdot((\J^\delta \zeta)\nabla\psi)=0,\\[1ex]
\partial_t\psi+\zeta+\frac\epsilon{2 } \J^\delta \left( |\nabla\psi|^2- (G^\mu_0 \psi )^2 \right)=0.
\end{array}\right.\end{equation}
We prove in particular \Cref{T.WP-delta}, as a consequence of the following two results.

We start with the ``small-time'' existence and uniqueness of solutions expressing the semilinear nature of the system (for sufficiently regular data) as soon as $\J$ is regularizing of order at least $-1$.
\begin{Proposition}\label{P.WP-short-time}
Let $d\in\NN^\star$, $t_0>d/2$, $s\geq 0$, and $C>1$. There exists a constant $T_0>0$ such that for any  ${\mu \geq 1}$, any  $\delta>0$ and $J\in L^\infty(\RR^d)$ such that $\langle \cdot\rangle^{\max(1,t_0+\frac32-s)} J\in L^\infty(\RR^d)$, and for any  $(\zeta_0,\psi_0)\in  H^s(\RR^d)\times \Hdot^{s+1/2}(\RR^d)$ 
the following holds. There exists $T^\star,T_\star\in(0,+\infty]$ and a unique $(\zeta,\psi)\in \cC((-T_\star,T^\star);H^s(\RR^d)\times \Hdot^{s+1/2}(\RR^d))$ maximal solution to~\eqref{RWW2} with $\J^\delta=J(\delta D)$ and initial data ${(\zeta,\psi)\id{t=0}=(\zeta_0,\psi_0)}$. Moreover, $(\partial_t\zeta,\partial_t\psi) \in \cC((-T_\star,T^\star);H^{s-1/2}(\RR^d) \times H^{s}(\RR^d))$ and one has $\min(T_\star,T^\star)> T_1$ with
\[ T_1\eqdef \frac{T_0 }{\epsilon\big( \norm{\zeta_0}_{H^{\min(s,t_0+\frac12)}}+\norm{\nabla\psi_0}_{H^{ \min(s-\frac12,t_0) }}\big)\, }\frac{\min(1,\delta^{\max(1,t_0+\frac32-s)})}{\N{\max(1,t_0+\frac32-s)}{J}} \]
 and 
\begin{equation*}
\max_{t\in[-T_1,T_1]}\big(\norm{\zeta(t,\cdot)}_{H^{s}}^2+\norm{\mfP\psi(t,\cdot)}_{H^{s}}^2\big)\leq C \, \big(\norm{\zeta_0}_{H^{s}}^2+\norm{\mfP\psi_0}_{H^{s}}^2\big),
\end{equation*}
where we recall that $\mfP\eqdef\left(|D|\tanh(\sqrt\mu|D|) \right)^{1/2}$.\end{Proposition}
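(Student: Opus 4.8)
The statement asserts that \eqref{RWW2} is, for sufficiently regularizing rectifiers, a \emph{semilinear} equation in spaces of finite regularity; the natural route is therefore the Picard--Lindel\"of (Cauchy--Lipschitz) theorem in a Banach space on which the linear part generates a bounded group, supplemented by \emph{tame} a priori estimates so that the existence time and the output bound depend on the announced (low) norms of the data, uniformly in $\mu$. The plan is as follows. First I would change unknowns to $V\eqdef(\zeta,\mfP\psi)$: since $G^\mu_0=\mfP\,\mfP$ and $\psi\mapsto\mfP\psi$ is an isomorphism $\Hdot^{\sigma+1/2}(\RR^d)\to H^\sigma(\RR^d)$ (by \Cref{L.control_P} together with the explicit symbol of $\mfP^{-1}$, everything being understood modulo constants), the system \eqref{RWW2} becomes $\partial_t V=\mathcal{L}_\mu V+\mathcal{F}(V)$ with $\mathcal{L}_\mu$ the matrix operator $\begin{pmatrix}0&\mfP\\-\mfP&0\end{pmatrix}$ and $\mathcal{F}(V)$ the quadratic vector field with components $-\epsilon\big(G^\mu_0((\J^\delta\zeta)G^\mu_0\psi)+\nabla\cdot((\J^\delta\zeta)\nabla\psi)\big)$ and $-\tfrac\epsilon2\mfP\J^\delta\big(|\nabla\psi|^2-(G^\mu_0\psi)^2\big)$, where $\nabla\psi$ and $G^\mu_0\psi=\mfP(\mfP\psi)$ are read as functions of the second coordinate of $V$. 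Because $\mfP$ is self-adjoint and commutes with $\langle D\rangle^\sigma$, $\mathcal{L}_\mu$ is skew-adjoint on $L^2(\RR^d)^2$ and $(e^{t\mathcal{L}_\mu})_{t\in\RR}$ is a group of isometries of $H^\sigma(\RR^d)^2$ \emph{uniformly with respect to} $\mu\ge1$; in these variables the announced bound is $\sup_{[-T_1,T_1]}(\norm{\zeta}_{H^s}^2+\norm{\mfP\psi}_{H^s}^2)\le C(\norm{\zeta_0}_{H^s}^2+\norm{\mfP\psi_0}_{H^s}^2)$.

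The core of the proof is the set of nonlinear estimates. Put $\nu\eqdef\max(1,t_0+\tfrac32-s)$ and $s_0\eqdef\min(s,t_0+\tfrac12)$. The elementary fact on rectifiers is that, for every $\sigma\in\RR$, Parseval's theorem and the inequalities $\langle\xi\rangle^\nu|J(\delta\xi)|\le\N{\nu}{J}\,\langle\xi\rangle^\nu\langle\delta\xi\rangle^{-\nu}$ and $\langle\xi\rangle\le\max(1,\delta^{-1})\langle\delta\xi\rangle$ give $\norm{\J^\delta f}_{H^{\sigma+\nu}}\le \N{\nu}{J}\,\min(1,\delta^\nu)^{-1}\norm{f}_{H^\sigma}$, which is precisely the factor appearing in $T_1$. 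Combining this gain of $\nu$ derivatives with: (i) the \emph{compensated} commutator estimate \Cref{L.commutator-tanh} applied to the first nonlinearity (choosing the parameter $r$ there equal to $\max(\tfrac12,s-t_0)$ when $s\le t_0+1$, respectively $\tfrac12$ when $s>t_0+1$); (ii) \Cref{L.compensation-tanh}, which is exactly what gives a meaning to $|\nabla\psi|^2-(G^\mu_0\psi)^2$ when $s<\tfrac12$, together with the tame product estimates \Cref{L.product,L.product-tame} for $s\ge\tfrac12$, applied to the second nonlinearity; and (iii) \Cref{L.control_P} used systematically to pass between $\norm{\nabla\psi}_{H^\cdot}$, $\norm{G^\mu_0\psi}_{H^\cdot}$ and $\norm{\mfP\psi}_{H^\cdot}$; one obtains that $\mathcal{F}$ maps $H^s(\RR^d)\times\Hdot^{s+1/2}(\RR^d)$ into itself, is a bounded homogeneous quadratic with $\norm{\mathcal{F}(V)-\mathcal{F}(V')}\le C\epsilon\,\N{\nu}{J}\,\min(1,\delta^\nu)^{-1}(\norm{V}+\norm{V'})\norm{V-V'}$, and satisfies the tame refinement controlling its $H^s$-norm by $C\epsilon\,\N{\nu}{J}\,\min(1,\delta^\nu)^{-1}$ times a product of one norm at level $s_0$ by one norm at level $s$, with all constants uniform in $(\mu,\epsilon,\delta)$. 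The point that fixes $\nu$ is that the combination $G^\mu_0((\J^\delta\zeta)G^\mu_0\psi)+\nabla\cdot((\J^\delta\zeta)\nabla\psi)$ lands in $H^s$ (resp. $H^{s_0}$) exactly when $J$ gains $\nu$ derivatives --- a crude use of product estimates would require one more --- and that, since $\mathcal{F}$ depends on $\psi$ only through $\nabla\psi$ and $G^\mu_0\psi$, these factors are $\mu$-uniformly dominated by $\norm{\nabla\psi}_{H^\cdot}$.

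With these estimates available, local well-posedness follows from the Picard--Lindel\"of theorem applied to the Duhamel formulation $V(t)=e^{t\mathcal{L}_\mu}V_0+\int_0^te^{(t-\tau)\mathcal{L}_\mu}\mathcal{F}(V(\tau))\,\dd\tau$ in $H^s(\RR^d)\times\Hdot^{s+1/2}(\RR^d)$, using that the group is isometric and $\mathcal{F}$ is locally Lipschitz: this produces a unique maximal solution $(\zeta,\psi)\in\cC((-T_\star,T^\star);H^s(\RR^d)\times\Hdot^{s+1/2}(\RR^d))$ obeying the blow-up alternative that $\limsup_{t\uparrow T^\star}(\norm{\zeta(t,\cdot)}_{H^s}+\norm{\mfP\psi(t,\cdot)}_{H^s})=\infty$ if $T^\star<\infty$ (and symmetrically for $T_\star$). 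Inserting the tame bounds into the Duhamel formula --- no differentiation of norms is needed, only the isometry of the group --- one gets an integral inequality for $\norm{\zeta(t,\cdot)}_{H^{s_0}}+\norm{\nabla\psi(t,\cdot)}_{H^{s_0-1/2}}$ of Riccati type with coefficient $C\epsilon\,\N{\nu}{J}\,\min(1,\delta^\nu)^{-1}$, which a standard bootstrap turns into an a priori control of this quantity on $[-T_1,T_1]$ with $T_1$ as in the statement (for $T_0$ small enough); feeding this back into the analogous $H^s$-level inequality and applying Gr\"onwall's lemma propagates the $H^s$-bound with the stated constant $C$, and the blow-up alternative then forces $\min(T_\star,T^\star)>T_1$. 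Reverting to $(\zeta,\psi)$, the regularity $(\partial_t\zeta,\partial_t\psi)\in\cC((-T_\star,T^\star);H^{s-1/2}(\RR^d)\times H^s(\RR^d))$ is read off from \eqref{RWW2}: $G^\mu_0\psi\in H^{s-1/2}$ and, by the gain of $\J^\delta$ established above, both the first-component nonlinearity and $\J^\delta(|\nabla\psi|^2-(G^\mu_0\psi)^2)\in H^{s+1/2}$ lie in $H^s$.

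The main obstacle is the nonlinear estimates of the second paragraph. Everything hinges on never estimating $G^\mu_0((\J^\delta\zeta)G^\mu_0\psi)+\nabla\cdot((\J^\delta\zeta)\nabla\psi)$ term by term, but through the compensated bound \Cref{L.commutator-tanh} (its order-two smoothing is what yields the correct threshold $\nu=\max(1,t_0+\tfrac32-s)$ rather than one unit larger), on using the compensated structure of $|\nabla\psi|^2-(G^\mu_0\psi)^2$ via \Cref{L.compensation-tanh} to make sense of the second equation in the low-regularity range $s<\tfrac12$, and on a careful choice of the interpolation indices $r$ and $s_0$ so that one factor always stays at low regularity --- this is what makes the bootstrap close at the timescale $T_1$ and keeps all constants independent of $\mu\ge1$. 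Granting these estimates, the reformulation of the first paragraph and the abstract machinery of the third are routine.
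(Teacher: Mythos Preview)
Your overall strategy coincides with the paper's: recast \eqref{RWW2} in the abstract form $\partial_t U+\L U+N(U)=0$ on the space $X^s$ whose norm is $\big(\norm{\zeta}_{H^s}^2+\norm{\mfP\psi}_{H^s}^2\big)^{1/2}$, observe that $e^{t\L}$ is a unitary group there, estimate the quadratic nonlinearity via \Cref{L.commutator-tanh} and \Cref{L.compensation-tanh} (the latter being indeed the right tool to give meaning to the second equation when $s<\tfrac12$), and close a fixed point on the Duhamel formulation together with tame bounds. The identification of $\nu=\max(1,t_0+\tfrac32-s)$ as the correct regularizing order, and the reason for it, are also right.

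There is, however, one genuine gap. Your bootstrap argument runs in the $V=(\zeta,\mfP\psi)$ variables, so the Duhamel inequality you obtain at level $s_0=\min(s,t_0+\tfrac12)$ is for $\norm{\zeta}_{H^{s_0}}+\norm{\mfP\psi}_{H^{s_0}}$, with initial term $\norm{\zeta_0}_{H^{s_0}}+\norm{\mfP\psi_0}_{H^{s_0}}$. You then assert that ``one gets an integral inequality for $\norm{\zeta(t,\cdot)}_{H^{s_0}}+\norm{\nabla\psi(t,\cdot)}_{H^{s_0-1/2}}$'', but this does not follow: by \Cref{L.control_P}, $\norm{\nabla\psi_0}_{H^{s_0-1/2}}\leq\norm{\mfP\psi_0}_{H^{s_0}}$ but the reverse inequality is \emph{not} uniform in $\mu\geq 1$. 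Hence your argument as written only yields $T_1$ with $\norm{\mfP\psi_0}_{H^{s_0}}$ in the denominator, not the announced $\norm{\nabla\psi_0}_{H^{s_0-1/2}}$.

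The paper flags this explicitly as ``a subtlety'' and resolves it by running a second Duhamel estimate in the modified norm
\[
\norm{U}_{\widetilde X^{s_\star}}\eqdef\Norm{\sqrt{\tfrac{|D|}{\tanh(\sqrt\mu|D|)}}\,\langle D\rangle^{-1/2}U}_{X^{s_\star}},
\]
which satisfies $\tfrac{1}{\sqrt\mu}\norm{\zeta}_{H^{s_\star-1/2}}^2+\norm{\nabla\zeta}_{H^{s_\star-1}}^2+\norm{\nabla\psi}_{H^{s_\star-1/2}}^2\lesssim\norm{U}_{\widetilde X^{s_\star}}^2\lesssim\norm{\zeta}_{H^{s_\star}}^2+\norm{\nabla\psi}_{H^{s_\star-1/2}}^2$. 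Since the multiplier commutes with $\L$, the group is still unitary in $\widetilde X^{s_\star}$; the point is that the nonlinear bound in this norm now requires the full strength of \Cref{L.commutator-tanh}, namely the factor $\tfrac{1}{\sqrt\mu}\norm{f}_{L^2}+\norm{\nabla f}_{H^{t_0+r}}$ on the $\zeta$-side, so that the Riccati inequality genuinely closes in $\norm{U}_{\widetilde X^{s_\star}}$ and hence yields the time bound in terms of $\norm{\zeta_0}_{H^{s_\star}}+\norm{\nabla\psi_0}_{H^{s_\star-1/2}}$. Once this low-norm control is in hand, feeding it into the $X^s$-level Duhamel estimate (as you describe) gives the stated amplification bound. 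Your outline is otherwise sound; this missing ingredient is localized and the fix is exactly the one above.
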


Then, we give a ``large-time'' result under some hyperbolic-type condition. First, we define the Rayleigh--Taylor operator $\mfa_{\J^\delta}$ as
\begin{equation}\label{eq.def-RT}
\mfa_{\J^\delta}[\epsilon \zeta, \epsilon\nabla\psi] f \eqdef f - \epsilon (G^\mu_0 \J^\delta \zeta) \J^\delta  f - \epsilon^2\J^\delta \Big((G^\mu_0 \psi) |D| \big\{ (G^\mu_0 \psi) \J^\delta f \big\}\Big).
\end{equation}
Then, we introduce the energy, for $N \in \NN^\star$,
\begin{align}\label{eq.def-E}
\cE^{N}(\zeta,\psi) &= \underset{\alpha \in \NN^{d}, |\alpha| \leq N-1}{\sum} \left( \norm{ \partial^{\alpha} \zeta }_{L^2}^2 + \norm{ \mfP \partial^{\alpha} \psi }_{L^2}^2 \right)\\
&\hspace{5em} + \underset{\alpha \in \NN^{d}, |\alpha| = N}{\sum} \left( \zetaa , \mfa_{\J^\delta}[\epsilon \zeta, \epsilon\nabla\psi]  \zetaa \right)_{L^2} + \norm{ \mfP \psia }_{L^2}^2\nonumber
\end{align}
with  $\zetaa \eqdef \partial^{\alpha} \zeta$ and $\psia \eqdef \partial^{\alpha} \psi - \epsilon (G^\mu_0 \psi) (\J \partial^{\alpha} \zeta)$.

\begin{Proposition}\label{P.WP-large-time}
Let $d\in\{1,2\}$, $t_{0}> d/2$ and $N \in \NN$ with $N \geq t_{0}+2$. Let $M_J>0$, $M_U>0$, $\mfa_\star>0$ and $C>1$. 
There exists $T_0>0$ such that for any $\epsilon>0$, $\mu\geq 1$, $\delta\in(0,1]$ and any regular rectifier $\J^\delta$ (see \Cref{D.J}) satisfying $\Ni{J}\leq M_J$, the following holds.  For any $(\zeta_{0},\psi_{0})\in H^N(\RR^d)\times \Hdot^{N+\frac12}(\RR^d)$ satisfying
\[ 0<\epsilon M_0 \eqdef  \epsilon \sqrt{ \cE^{\lceil t_0 \rceil +2}(\zeta_0,\psi_0)} \leq M_U\]
and the Rayleigh--Taylor condition  
\begin{equation}\label{RTpositive}
\forall f\in L^2(\RR^d) \text{ , } \left( f , \mfa_{\J^\delta}[\epsilon \zeta_0,\epsilon\nabla \psi_0]  f\right)_{L^2} \geq \mfa_\star \norm{ f }_{L^2}^{2},
\end{equation}
the maximal solution to~\eqref{RWW2-intro} with initial data $(\zeta,\psi)\id{t=0}=(\zeta_{0},\psi_{0})$ satisfies $\min(T_\star,T^\star)> T_2$ where
\begin{equation*}
T_2 \eqdef \frac{T_{0}}{\epsilon M_0 + \epsilon^{2} \delta^{-1} M_0^2 \dotN{\frac12}{J}^2} ,
\end{equation*}
and for any $0\leq |t|\leq T_2$ and any $N_\star \in\{ \lceil t_0 \rceil +2,N\}$,
\[\cE^{N_\star}(\zeta(t,\cdot),\psi(t,\cdot)) \leq C \, \cE^{N_\star}(\zeta_{0},\psi_{0}). \]
\end{Proposition}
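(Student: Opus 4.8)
The plan is to obtain \Cref{P.WP-large-time} as an \emph{a priori energy estimate} grafted onto the unconditional short-time theory of \Cref{P.WP-short-time}: the latter already produces, for data in $H^{N}(\RR^d)\times\Hdot^{N+\frac12}(\RR^d)$, a maximal solution together with a blow-up alternative, so it suffices to show that, as long as the solution exists with $\cE^{\lceil t_0\rceil+2}(\zeta,\psi)$ comparable to $\cE^{\lceil t_0\rceil+2}(\zeta_0,\psi_0)$ and \eqref{RTpositive} holds with constant $\mfa_\star/2$, the quantity $t\mapsto\cE^{N_\star}(\zeta(t,\cdot),\psi(t,\cdot))$ satisfies a linear differential inequality with rate $C(M_U,M_J,\mfa_\star)\,\big(\epsilon M_0+\epsilon^2\delta^{-1}M_0^2\,\dotN{\frac12}{J}^2\big)$; Gronwall's lemma and a continuity argument then close the bootstrap and yield $\min(T_\star,T^\star)>T_2$ for $T_0$ small. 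Throughout one uses that, under these hypotheses, $\cE^{N_\star}(\zeta,\psi)$ is equivalent, uniformly in $\mu\geq1$ and $\delta\in(0,1]$, to $\norm{\zeta}_{H^{N_\star}}^2+\norm{\mfP\psi}_{H^{N_\star}}^2$: the coercivity of $\mfa_{\J^\delta}$ handles the top-order $\zeta$ block, while the correction $\psia-\partial^\alpha\psi=-\epsilon(G^\mu_0\psi)(\J^\delta\partial^\alpha\zeta)$ is absorbed by \Cref{L.control_P,L.product-tame} and the $L^2$-boundedness of $\J^\delta$ and $G^\mu_0\J^\delta$.

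The first ingredient is the propagation of the Rayleigh--Taylor condition. Writing $\mfa_{\J^\delta}[\epsilon\zeta,\epsilon\nabla\psi]=\Id+\epsilon A_1+\epsilon^2 A_2$ with $A_1 f=-(G^\mu_0\J^\delta\zeta)\J^\delta f$ and $A_2 f=-\J^\delta\big((G^\mu_0\psi)|D|\{(G^\mu_0\psi)\J^\delta f\}\big)$, one differentiates in time and substitutes the right-hand sides of \eqref{RWW2}; using \Cref{L.commut_order0,L.commut_order1,L.commutator-tanh} one gets $\norm{\epsilon\,\partial_t A_1}_{L^2\to L^2}\leq C(M_U)\,\epsilon M_0$, whereas in $\partial_t A_2$ the two rectifiers flanking the first-order operator $|D|$ each gain half a derivative at cost $\delta^{-1/2}\dotN{\frac12}{J}$, so $\norm{\epsilon^2\partial_t A_2}_{L^2\to L^2}\leq C(M_U)\,\epsilon^2\delta^{-1}M_0^2\dotN{\frac12}{J}^2$. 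A continuity argument then preserves \eqref{RTpositive} with constant $\mfa_\star/2$ on $[-T_2,T_2]$, provided $T_0$ is chosen small depending on $(\mfa_\star,M_U,M_J)$.

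The heart of the matter is the quasilinear reformulation with Alinhac good unknowns, realizing the structure announced in \Cref{S.discussion}. Applying $\partial^\alpha$ with $|\alpha|=N$ to \eqref{RWW2} and setting $\zetaa=\partial^\alpha\zeta$ and $\psia=\partial^\alpha\psi-\epsilon(G^\mu_0\psi)(\J^\delta\partial^\alpha\zeta)$, one obtains
\[\left\{\begin{array}{l}
\partial_t\zetaa-G^\mu_0\psia+\epsilon\nabla\cdot\big((\J^\delta\zetaa)\nabla\psi\big)=\mathcal{R}^1_\alpha,\\[1ex]
\partial_t\psia+\mfa_{\J^\delta}[\epsilon\zeta,\epsilon\nabla\psi]\,\zetaa+\epsilon\J^\delta\big(\nabla\psi\cdot\nabla\psia\big)=\mathcal{R}^2_\alpha,
\end{array}\right.\]
where the choice of $\psia$ is made precisely so that the terms losing a full derivative at order $N$ cancel and the coefficient of $\zetaa$ in the $\psi$-equation reorganizes into $\mfa_{\J^\delta}$. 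Two facts are essential. First, the order-two-regularizing commutator estimate of \Cref{L.commutator-tanh}, applied with $f=\J^\delta\zeta$ and $g=\psi$, tames the a priori most dangerous contribution $\epsilon\big(G^\mu_0((\J^\delta\zeta)G^\mu_0\psi)+\nabla\cdot((\J^\delta\zeta)\nabla\psi)\big)$. Second, because $J$ defines regular rectifiers, the commutator $[\J^\delta,a]$ gains one derivative \emph{uniformly in $\delta\in(0,1]$} with constant $\lesssim\Ni{J}$ --- a direct application of \Cref{L.commut_order0} to the symbol $J(\delta\cdot)$ --- so that every quadratic contribution carrying a rectifier costs only $\epsilon M_0$, and only the genuinely cubic contributions (those with two rectifiers around a first-order operator, as in $A_2$) generate the $\epsilon^2\delta^{-1}\dotN{\frac12}{J}^2$ term; altogether $\norm{\mathcal{R}^1_\alpha}_{L^2}+\norm{\mathcal{R}^2_\alpha}_{H^{1/2}}\leq C(M_U,M_J)\big(\epsilon M_0+\epsilon^2\delta^{-1}M_0^2\dotN{\frac12}{J}^2\big)\sqrt{\cE^{N}}$, uniformly in $\mu\geq1$ and $\delta\in(0,1]$. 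This third step is also where I expect the main difficulty to lie: arranging the $\partial^\alpha$-differentiated system so that \emph{every} top-order derivative loss is either cancelled by the good unknown together with the symmetry of the resulting system, or converted into a commutator the regular-rectifier hypothesis controls at exactly the advertised cost --- in particular checking that no spurious $\epsilon\delta^{-1}$ term survives from the quadratic part of \eqref{RWW2}.

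One concludes with the energy estimate. Differentiating $\cE^{N_\star}$ in time, the blocks with $|\alpha|\leq N-1$ are of lower order and directly bounded by $\lesssim\epsilon M_0\,\cE^{N_\star}$ using \eqref{RWW2}, \Cref{L.product-tame,L.commutator-tanh}; for $|\alpha|=N$ one computes $\frac{d}{dt}\big[(\zetaa,\mfa_{\J^\delta}\zetaa)_{L^2}+\norm{\mfP\psia}_{L^2}^2\big]$ from the quasilinear system. By symmetry of $\mfP$, $G^\mu_0$ and of the quadratic form attached to $\mfa_{\J^\delta}$, the principal $G^\mu_0$-terms cancel identically; the transport-type terms $\epsilon\nabla\cdot((\J^\delta\zetaa)\nabla\psi)$ and $\epsilon\J^\delta(\nabla\psi\cdot\nabla\psia)$ are brought to order zero by integration by parts combined with the uniform bound on $[\J^\delta,\cdot]$ (the algebraic core being the identity $(\J^\delta\zetaa)\nabla\zetaa+\zetaa\nabla(\J^\delta\zetaa)=\nabla(\zetaa\,\J^\delta\zetaa)$, which turns the symmetric part into a harmless $\int\zetaa\,\J^\delta\zetaa\,\Delta\psi$, the antisymmetric defect being a controllable commutator); $(\zetaa,(\partial_t\mfa_{\J^\delta})\zetaa)_{L^2}$ is estimated via the first ingredient; and $\mathcal{R}^1_\alpha$, $\mathcal{R}^2_\alpha$ are paired against $\zetaa$ and $G^\mu_0\psia$ respectively, using $\mfP$-symmetry and \Cref{L.control_P}. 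This yields $\frac{d}{dt}\cE^{N_\star}\leq C(M_U,M_J,\mfa_\star)\big(\epsilon M_0+\epsilon^2\delta^{-1}M_0^2\dotN{\frac12}{J}^2\big)\cE^{N_\star}$, whence the conclusion by Gronwall's lemma, the blow-up alternative of \Cref{P.WP-short-time}, and the continuity of $t\mapsto\cE^{N_\star}$ --- running the estimate at $N_\star=\lceil t_0\rceil+2$ to control the coefficient norms and $M_0$, and at $N_\star=N$ to propagate $H^N$ regularity. A secondary bookkeeping difficulty is keeping all constants independent of $\mu\geq1$, which follows from the $\mu$-uniform bounds in \Cref{L.control_P,L.commutator-tanh,L.commut_Tmu}, and the handling of $\psi$ in the Beppo Levi scale, where only $\mfP\psi$ --- not $\psi$ itself --- is controlled.
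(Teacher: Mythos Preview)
Your proposal follows the paper's strategy closely: quasilinearization with Alinhac good unknowns (\Cref{P.quasi}), the energy differential inequality (\Cref{P.energy_est}), propagation of the Rayleigh--Taylor condition (\Cref{P.hyperbolicity_propagates}), and a bootstrap closed via the blow-up alternative. Two points deserve correction or amplification.

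First, your assertion that $\cE^{N_\star}$ is equivalent to $\norm{\zeta}_{H^{N_\star}}^2+\norm{\mfP\psi}_{H^{N_\star}}^2$ \emph{uniformly in $\delta\in(0,1]$} is not correct: the fourth inequality of \Cref{P.psi_controls} (and the upper bound on $\cE^{N_\star}$) involves $\N{\frac12}{J(\delta\cdot)}\sim\delta^{-1/2}$, and \Cref{R.RT} explicitly notes the non-uniformity. What the paper actually needs for the bootstrap---and what \emph{is} uniform---is the one-sided bound $\norm{\zeta}_{H^{t_0+2}}+\norm{\nabla\psi}_{H^{t_0+1}}\lesssim\sqrt{\cE^{\lceil t_0\rceil+2}}$ (second inequality of \Cref{P.psi_controls}, which carries only $\No{J}$); the conclusion of \Cref{P.WP-large-time} is stated for $\cE^{N_\star}$ itself, so no reverse equivalence is required. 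The paper in fact reduces to $\delta=1$ at the outset and recovers general $\delta$ by rescaling the symbol, which makes this bookkeeping transparent.

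Second, your sketch differentiates $\cE^{N_\star}$ in time directly on the $H^N$ maximal solution. The paper does not: \Cref{P.energy_est} is proved only for \emph{smooth} solutions, and \Cref{S.WP-large-completion} regularizes the initial data by Fourier truncation, obtains smooth solutions via \Cref{P.WP-short-time} and \Cref{C.blowup}, runs the a priori estimate on those, and passes to the limit using the Lipschitz continuity of the solution map (\Cref{R.WP}). This is routine but not optional, since the manipulations in \eqref{eq.energy-est} are not directly justified at regularity $H^N$.
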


As $\delta\searrow 0$, the lower bound on the time of existence guaranteed by \Cref{P.WP-short-time} is of magnitude $T_1\approx \delta\epsilon^{-1}$ (when $s$ is sufficiently large), while it is of magnitude $T_2 \approx \min( \epsilon^{-1}, \epsilon^{-2}\delta)$ in  \Cref{P.WP-large-time}, hence the ``small-time'' {\em vs} ``large-time'' terminology. Specifically, \Cref{P.WP-short-time} provides the unconditional small-time existence (and control) of solutions while \Cref{P.WP-large-time} provides a conditional large-time existence (and control) of solutions in the small steepness framework ($\epsilon\ll 1$) and for weak rectification ($\delta\ll 1$).

The proof of \Cref{P.WP-short-time} is provided on \Cref{S.WP-short-time}. It follows from standard techniques on semilinear dispersive equations. Incidentally, the analysis is completed by a blow-up criterion in \Cref{C.blowup}, and the well-posedness of the initial-value problem in the sense of Hadamard is discussed in \Cref{R.WP}. 

An additional global-in-time well-posedness (for small initial data) result is stated and proved in \Cref{S.WP-global}, based on the low-regularity well-posedness result provided in \Cref{P.WP-short-time}, and the fact that the Hamiltonian function,~\eqref{eq.Hamiltonian-mod}, is an invariant quantity.

The difficult and technical part consists in proving  \Cref{P.WP-large-time}. The proof relies on careful {\em a priori} energy estimates satisfied by smooth solutions, and is provided on \Cref{S.WP-large-time}.. We divide the proof in several parts: first in \Cref{S.quasi} we extract simple sets of equations satisfied by smooth solutions and their derivatives. Based on these equations, and assuming that a certain hyperbolicity criterion holds, we obtain in \Cref{S.energy-estimates} energy estimates, that is a differential inequality satisfied by the functional $\cE^N$. The completion of the proof of \Cref{P.WP-large-time} is provided in \Cref{S.WP-large-completion}.

We conclude this introduction with several remarks, followed by the completion of the proof of \Cref{T.WP-delta}, as a consequence of \Cref{P.WP-short-time,P.WP-large-time}.

\begin{Remark}\label{R.RT}
The operator $\mfa_{\J^\delta}$ defined in~\eqref{eq.def-RT} is a key ingredient of our analysis. Notice that the function
\[ \mfa_{(2)}\eqdef 1 - \epsilon (G^\mu_0  \zeta) \]
is a $\cO(\epsilon^2)$ approximation of the Rayleigh--Taylor coefficient appearing for instance in~\cite[(4.20), see also (4.27) and discussion in \S4.3.5]{Lannes_ww}. We claim that the last term in~\eqref{eq.def-RT}, which has no counterpart in the fully nonlinear water waves system, is responsible for the observed spurious oscillations in numerical simulations, as the consequence of the ill-posedness of the initial-value problem. Indeed, without any regularization (that is setting $\J^\delta=\Id$), the Rayleigh--Taylor condition~\eqref{RTpositive} can never be satisfied unless $\epsilon\nabla\psi_0=\bz$.

Another key ingredient of our ``large-time'' analysis is the use of $\psia$ instead of $\partial^\alpha\psi$ when defining the energy functional $\cE^{N}(\zeta,\psi)$. Setting $\J^\delta=\Id$, we recognize in $\psia$ a $\cO(\epsilon^2)$ approximation of Alinhac's good unknowns for the water waves system; see discussion in~\cite[\S4.1]{Lannes_ww}. Notice that if the rectifier operator $\J^\delta$ is regularizing of order $-1/2$, we infer (in contrast with the water waves situation) that under the Rayleigh--Taylor condition,
\[ \cE^{N}(\zeta,\psi) \approx \norm{\zeta}_{H^N}+\norm{\mfP \psi}_{H^N}.\]
However the equivalence between the energy functional and the standard Sobolev norms is not uniform with respect to $\delta \in (0,1]$; see \Cref{P.psi_controls}.
\end{Remark}

\begin{Remark}\label{R.order-J}
We claim  \Cref{P.WP-large-time} holds assuming only that rectifiers $\J^\delta$ are regularizing of order $-1/2$  (and not $-1$). Yet in that case~\eqref{RWW2-intro} is of quasilinear nature and the well-posedness theory requires additional arguments which we decided to avoid for simplicity.
\end{Remark}

\begin{Remark}\label{R.energy_noninteger}
With a small adaptation of this work, one can consider non integer regularities, that is $N=s\in\RR$ with $s > \frac{d}{2}+2$. Then we need to replace the functional~\eqref{eq.def-E} with
\begin{align*}
\cE^{s}(U) &= \underset{\alpha \in \NN^{d}, |\alpha| \leq \lceil s \rceil - 1}{\sum} \left( \norm{ \partial^{\alpha} \zeta }_{L^2}^2 + \norm{ \mfP \partial^{\alpha} \psi }_{L^2}^2 \right)\\
&\hspace{5em} +  \left(   |D|^s\zeta , \mfa_{\J^\delta}[\epsilon \zeta, \epsilon\nabla\psi] |D|^s\zeta \right)_{L^2} + \norm{ \mfP \psi_{(s)} }_{L^2}^2\nonumber
\end{align*}
with $\psi_{(s)}\eqdef |D|^s \psi - \epsilon (G^\mu_0 \psi) (\J |D|^{s} \zeta)$. 
\end{Remark}

Let us now show how \Cref{T.WP-delta}, which we rewrite below for the convenience of the reader, follows from \Cref{P.WP-short-time} and \Cref{P.WP-large-time}.
\begin{Theorem}\label{T.WP-delta-bis}
Let $d\in\{1,2\}$, $t_{0}> \frac{d}{2}$, $N \in \NN$ with $N \geq t_{0}+2$, $C>1$ and $M>0$. 
Set $J\in L^\infty(\RR^d)$ such that it defines regular rectifiers.
There exists $T_0>0$ such that for any $\mu\geq 1$ and $\epsilon>0$, for any $(\zeta_{0},\psi_{0})\in H^N(\RR^d)\times \Hdot^{N+\frac12}(\RR^d)$ such that
\begin{equation*}
0<\epsilon M_0 \eqdef \epsilon  \big( \norm{ \zeta_0 }_{H^{\lceil t_{0} \rceil +2}} + \norm{  \mfP \psi_0  }_{H^{\lceil t_{0} \rceil +2}} \big) \leq M,
\end{equation*}
and for any $\delta \geq  \epsilon M_0$, the following holds. 

There exists a unique $(\zeta,\psi)\in\cC([0,T_0/(\epsilon M_0)];H^N(\RR^d)\times \Hdot^{N+\frac12}(\RR^d))$ solution to~\eqref{RWW2} with $\J^\delta=J(\delta D)$ and initial data $(\zeta,\psi)\id{t=0}=(\zeta_{0},\psi_{0})$, and it satisfies 
\[ \underset{t \in [0,T_0/(\epsilon M_0)]}{\sup} \big(\norm{\zeta(t,\cdot)}_{H^N}^2+\norm{\mfP\psi(t,\cdot)}_{H^N}^2 \big)\leq C \, \big( \norm{\zeta_0}_{H^N}^2+\norm{\mfP\psi_0}_{H^N}^2\big) \, . \]
\end{Theorem}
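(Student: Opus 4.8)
The plan is to derive \Cref{T.WP-delta} by splicing together the two complementary results stated just after it: \Cref{P.WP-short-time} provides a unique maximal solution $(\zeta,\psi)\in\cC((-T_\star,T^\star);H^N(\RR^d)\times\dotH^{N+\frac12}(\RR^d))$ together with short-time $H^N$ control, and \Cref{P.WP-large-time} upgrades this control up to the target time $T_0/(\epsilon M_0)$ once the Rayleigh--Taylor condition \eqref{RTpositive} and smallness hold. The real work is therefore to show that, under the standing hypotheses (in particular $\delta\geq\epsilon M_0$ and $\epsilon M_0\leq M$), either \Cref{P.WP-short-time} alone already reaches the time $T_0/(\epsilon M_0)$, or else \eqref{RTpositive} is automatically verified so that \Cref{P.WP-large-time} applies --- and then to reconcile the two normalizations of $M_0$ and the prescribed growth constant $C$. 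I would fix once and for all a small threshold $c_0\in(0,1]$, depending only on $d,t_0,N,C,M$ and $J$ and constrained finitely many times below, and split accordingly.

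\textbf{Regime $\epsilon M_0\geq c_0$ or $\delta\geq1$.} Here $T_0/(\epsilon M_0)$ is at most a fixed time, and I would simply invoke \Cref{P.WP-short-time} with $s=N$. Since $N\geq t_0+2$ one has $\max(1,t_0+\tfrac32-N)=1$, so ``$\J^\delta$ regular'' gives $\langle\cdot\rangle J\in L^\infty(\RR^d)$ as required; by \Cref{L.control_P} and Sobolev nesting $\norm{\zeta_0}_{H^{t_0+\frac12}}+\norm{\nabla\psi_0}_{H^{t_0}}\leq M_0$; and $\min(1,\delta)\geq\min(1,c_0)$ (using $\delta\geq\epsilon M_0\geq c_0$, resp.\ $\delta\geq1$). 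Hence the existence time $T_1$ of \Cref{P.WP-short-time} satisfies $T_1\gtrsim\min(1,c_0)/(\epsilon M_0)$, so $T_0/(\epsilon M_0)<T_1$ for $T_0$ small (depending on $c_0$ and $\norm{\langle\cdot\rangle J}_{L^\infty}$); restricting the maximal solution to $[0,T_0/(\epsilon M_0)]$ and quoting the $H^N$ estimate of \Cref{P.WP-short-time} applied with the given constant $C$ closes this regime.

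\textbf{Regime $\epsilon M_0<c_0$ and $\delta<1$.} I would first check \eqref{RTpositive} at $t=0$. Because multiplication by $G^\mu_0\psi_0$ and $|D|^{1/2}$ are self-adjoint, the cubic term of $\mfa_{\J^\delta}$ contributes the non-positive quantity $-\epsilon^2\norm{\,|D|^{\frac12}\{(G^\mu_0\psi_0)\J^\delta f\}\,}_{L^2}^2$; together with \Cref{L.product,L.control_P} and the bounds $\norm{G^\mu_0\J^\delta\zeta_0}_{L^\infty}\lesssim\norm{\zeta_0}_{H^{t_0+1}}\leq M_0$, $\norm{G^\mu_0\psi_0}_{H^{t_0+\frac12}}\lesssim M_0$, $\norm{\,|D|^{\frac12}\J^\delta f\,}_{L^2}\lesssim\delta^{-\frac12}\norm{f}_{L^2}$ (a regular rectifier being regularizing of order $-\tfrac12$), this gives
\[ \big(f,\mfa_{\J^\delta}[\epsilon\zeta_0,\epsilon\nabla\psi_0]f\big)_{L^2}\ \geq\ \big(1-C'\epsilon M_0-C'\epsilon^2\delta^{-1}M_0^2\big)\norm{f}_{L^2}^2\ \geq\ \big(1-2C'\epsilon M_0\big)\norm{f}_{L^2}^2, \]
where the middle step uses $\delta\geq\epsilon M_0$; this is $\geq\tfrac12\norm{f}_{L^2}^2$ once $c_0\leq1/(4C')$, so \eqref{RTpositive} holds with $\mfa_\star=\tfrac12$. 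The same elementary bounds make $\mfa_{\J^\delta}$ uniformly $L^2$-bounded here, so $\cE^{\lceil t_0\rceil+2}(\zeta_0,\psi_0)$ is comparable to $M_0^2$ and $\epsilon\sqrt{\cE^{\lceil t_0\rceil+2}(\zeta_0,\psi_0)}\lesssim\epsilon M_0<c_0$ stays below any prescribed $M_U$; \Cref{P.WP-large-time} then applies with $\mfa_\star=\tfrac12$, $M_J=\Ni{J}$, and a constant $\widetilde C>1$ fixed below, and since $\delta\geq\epsilon M_0$ forces $\epsilon^2\delta^{-1}\cE^{\lceil t_0\rceil+2}(\zeta_0,\psi_0)\lesssim\epsilon M_0$ its lifespan $T_2$ is $\gtrsim1/(\epsilon M_0)$, giving $T_0/(\epsilon M_0)<T_2$ for $T_0$ small. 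To convert the energy bound $\cE^N(\zeta(t),\psi(t))\leq\widetilde C\,\cE^N(\zeta_0,\psi_0)$ into the claimed Sobolev bound, I would invoke \Cref{P.psi_controls} (cf.\ \Cref{R.RT}): under \eqref{RTpositive}, and since $\delta\geq\epsilon M_0$ with $\epsilon M_0$ small makes the Alinhac correction $\epsilon(G^\mu_0\psi)(\J\partial^\alpha\zeta)$ have $H^{1/2}$-norm $\lesssim(\epsilon M_0)^{1/2}\norm{\partial^\alpha\zeta}_{L^2}$, one gets $(1-\eta)\big(\norm{\zeta}_{H^N}^2+\norm{\mfP\psi}_{H^N}^2\big)\leq\cE^N(\zeta,\psi)\leq(1+\eta)\big(\norm{\zeta}_{H^N}^2+\norm{\mfP\psi}_{H^N}^2\big)$ with $\eta\to0$ as $c_0\to0$. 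Taking $\widetilde C=\sqrt C$ and $c_0$ small enough that $\tfrac{1+\eta}{1-\eta}\leq\sqrt C$ (possible since $C>1$) yields precisely $\norm{\zeta(t)}_{H^N}^2+\norm{\mfP\psi(t)}_{H^N}^2\leq C\big(\norm{\zeta_0}_{H^N}^2+\norm{\mfP\psi_0}_{H^N}^2\big)$. One then takes $T_0$ to be the minimum of the finitely many thresholds above, and uniqueness in $\cC([0,T_0/(\epsilon M_0)];H^N\times\dotH^{N+\frac12})$ is inherited from that of the maximal solution in \Cref{P.WP-short-time}.

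I expect the main obstacle to be exactly this bookkeeping. The Rayleigh--Taylor condition is genuinely false for large $\epsilon M_0$ (it is an honest smallness requirement on $\epsilon M_0$ and $\epsilon^2M_0^2\delta^{-1}$), so \Cref{P.WP-large-time} cannot be used unconditionally and one must isolate the complementary regime where \Cref{P.WP-short-time} alone delivers the full timescale; and in the small regime one must, crucially using $\delta\geq\epsilon M_0$, make the energy$\leftrightarrow$Sobolev equivalence tight enough to respect the prescribed growth constant $C>1$ and to match the two definitions of $M_0$ (Sobolev in \Cref{T.WP-delta}, energy in \Cref{P.WP-large-time}). Everything else is a direct citation of \Cref{P.WP-short-time,P.WP-large-time} together with \Cref{L.control_P,L.product}.
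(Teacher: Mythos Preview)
Your proposal is correct and follows essentially the same two-regime strategy as the paper: invoke \Cref{P.WP-short-time} alone when $\epsilon M_0$ (or $\delta$) is bounded below, and otherwise verify the Rayleigh--Taylor condition~\eqref{RTpositive} via $\delta\geq\epsilon M_0$ and apply \Cref{P.WP-large-time}, converting energy to Sobolev control with constants close to $1$. The only cosmetic difference is that the paper propagates the energy--Sobolev equivalence to time $t$ by an explicit open--closed continuity argument on the $H^{\lceil t_0\rceil+2}$ norm, whereas you implicitly use that the sub-top-order terms of $\cE^{\lceil t_0\rceil+2}$ are already plain Sobolev norms at level $\lceil t_0\rceil+1$ (hence enough to re-verify~\eqref{RTpositive} and the equivalence constants at time $t$); when writing up, you should make that step explicit.
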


\begin{proof}[Proof of \Cref{T.WP-delta}/\ref{T.WP-delta-bis}]
For any $\delta>0$,  let$(\zeta^\delta,\psi^\delta)\in\cC([0,T_\delta^\star);H^N(\RR^d)\times \Hdot^{N+\frac12}(\RR^d))$ be the maximal solution for positive times (the result for negative times following from time-reversibility of the equations) to~\eqref{RWW2} with $\J^\delta=J(\delta D)$, with initial data $(\zeta^\delta,\psi^\delta)\id{t=0}=(\zeta_{0},\psi_{0})$, as provided by \Cref{P.WP-short-time}.

We start with some preliminary estimates, restricting to the case $\delta \in (0,1]$. First by \Cref{L.control_P}, the continuous Sobolev embedding $H^{t_0}(\RR^d)\subset L^\infty(\RR^d)$ and \Cref{L.product}, there exists $C_1>0$, depending only on $t_0>d/2$ such that, for all $\zeta\in H^{t_0+1}(\RR^d)$ and $f\in L^2(\RR^d)$, 
\[
 \norm{ (G^\mu_0 \J^\delta \zeta) (\J^\delta  f) }_{L^\infty}\leq C_1 \No{J}^2\norm{\zeta}_{H^{t_0+1}} \norm{f}_{L^2},\]
and, for all $\psi \in \Hdot^{t_0+1}(\RR^d)$ and $f\in L^2(\RR^d)$, 
 \begin{align*}
\norm{\J^\delta \Big((G^\mu_0 \psi) |D| \big\{ (G^\mu_0 \psi) \J^\delta f \big\}\Big)}_{L^2} &\leq \delta^{-1/2} \N{\frac12}{J}  \norm{(G^\mu_0 \psi) |D| \big\{ (G^\mu_0 \psi) \J^\delta f \big\}}_{H^{-1/2}} \\
&\leq C_1 \delta^{-1/2}\N{\frac12}{J}  \norm{G^\mu_0 \psi}_{H^{t_0}} \norm{ (G^\mu_0 \psi)( \J^\delta f) }_{H^{1/2}} \\
& \leq C_1^2 \delta^{-1}\N{\frac12}{J}^2 \norm{G^\mu_0 \psi}_{H^{t_0}}^2 \norm{f}_{L^2}.
 \end{align*}
 From the above we infer that there exists $M_1>0$ depending only on $t_0>d/2$ and $\N{\frac12}{J} $ such that for any $(\zeta,\psi)\in H^{t_0+1}(\RR^d)\times \Hdot^{t_0+1}(\RR^d)$ satisfying
 \begin{equation}\label{eq.sufficient-condition-a-positive}
 \epsilon  \norm{\zeta}_{H^{t_0+1}}  + \epsilon^2\delta^{-1}\norm{G^\mu_0 \psi}_{H^{t_0}}^2 \leq M_1,
 \end{equation}
the operator $\mfa_{\J^\delta}[\epsilon \zeta,\epsilon\nabla \psi]$ satisfies the Rayleigh--Taylor condition~\eqref{RTpositive} with $\mfa_\star=1/2$. This holds in particular, using that $\delta\geq  \epsilon M_0$ and \Cref{L.control_P}, if
 \[ 0<\epsilon M_0 = \epsilon  \big( \norm{ \zeta_0 }_{H^{ \lceil t_{0} \rceil +2}} + \norm{ \mfP \psi_0 }_{H^{ \lceil t_{0} \rceil +2}} \big) \leq M_1.\]

Then, using as above \Cref{L.control_P,L.product}, we define $C_2>0$, depending only on $t_0>d/2$, such that for all $\psi \in \Hdot^{t_0+1}(\RR^d)$ and $f\in L^2(\RR^d)$, 
 \[ \norm{  (G^\mu_0 \psi) (\J^\delta f) }_{H^{1/2}} \leq C_2  \delta^{-1/2}\N{\frac12}{J}   \norm{\mfP\psi }_{H^{t_0+1/2}} \norm{f}_{L^2}.\] 
Using this estimate on the second term of $\psia \eqdef \partial^{\alpha} \psi - \epsilon (G^\mu_0 \psi) (\J \partial^{\alpha} \zeta)$, \Cref{L.control_P} as well as the above analysis on $\mfa_{\J^\delta}[\epsilon \zeta, \epsilon\nabla\psi]$, we infer that for any $C'>1$, there exists $M'>0$ depending only on $t_0>d/2$, $\N{\frac12}{J} $ and $C'$ such that  for any $(\zeta,\psi)\in H^{N}(\RR^d)\times \Hdot^{N+1/2}(\RR^d)$ satisfying
\begin{equation}\label{control_M'}
 \epsilon \norm{\zeta}_{H^{t_0+1}} \leq M'  \qquad \text{ and } \qquad \epsilon^2\delta^{-1}\norm{\mfP\psi}_{H^{t_0+1/2}}^2 \leq M',
\end{equation}
 then for any $N_\star  \in \{ \lceil t_{0} \rceil +2, N \}$, we have
 \begin{equation}\label{equiv_energy}
\frac{1}{C'} \cE^{ N_\star }(\zeta,\psi)  \leq \norm{\zeta}_{H^{N_\star}}^2 + \norm{ \mfP \psi }_{H^{N_\star}}^2 \leq C' \cE^{ N_\star }(\zeta,\psi).
\end{equation}

We can now prove the proposition. We define $C'=C^{1/3}>1$ and we introduce $M'>0$ accordingly so that~\eqref{control_M'} yields~\eqref{equiv_energy}. We consider two cases. Firstly, if $\epsilon M_0 \geq \min(M_1,\tfrac{M'}{C})$ or $\delta>1$, then $\delta\geq \min(M_1,\tfrac{M'}{C},1)\eqdef \delta_0$ where $\delta_0$ depends only on $t_0>d/2$, $\N{\frac12}{J} $,
and $C>1$. Therefore we can simply use \Cref{P.WP-short-time} with $s=N$, using that
\[ \min(1,\delta^{\max(1,t_0+\frac32-N)})  \geq \delta_0.\]
Secondly, we assume that $\epsilon M_{0} < \min(M_1,\tfrac{M'}{C})$ and $\delta \in (0,1]$. Thanks to the preliminary estimates we can apply \Cref{P.WP-large-time} with $M_U=\sqrt{C'}M$, $\mfa_\star=1/2$ and amplification factor $C''\in(1,\frac{C}{(C')^2})$. Hence there exists $\widetilde T_0$, depending only on  $t_0$, $N$, 
 $\Ni{J}$, $M$ and $C$ such that
 $T^\star_\delta> T_2$ where (using again that $\delta\geq  \epsilon M_0$ and~\eqref{equiv_energy})
 \begin{equation*}
 T_2 \eqdef \frac{\widetilde T_{0}}{\epsilon \sqrt{\cE^{\lceil t_{0} \rceil +2 }(\zeta_0,\psi_{0})} + \epsilon^{2}\delta^{-1} \cE^{\lceil t_{0} \rceil +2 }(\zeta_0,\psi_{0}) \dotN{\frac12}{J}^2}  \geq \frac{\widetilde T_0}{\epsilon M_0( \sqrt{C'}+ C' \dotN{\frac12}{J}^2   )} ,
 \end{equation*}
 and for any $0\leq t\leq T_2$ and any $N_\star \in\{ \lceil t_0 \rceil +2,N\}$,
 \begin{equation}\label{est_energy}
 \cE^{N_\star}(\zeta^\delta(t,\cdot),\psi^\delta(t,\cdot)) \leq C'' \, \cE^{N_\star}(\zeta_{0},\psi_{0})  .
 \end{equation}
The above estimate provides the desired control provided that~\eqref{control_M'} and hence~\eqref{equiv_energy} holds. 
Using that $C''(C')^2<C$, $\epsilon M_0 C\leq M'$ and $\delta\geq \epsilon M_0$, we infer from~\eqref{est_energy} with $N_\star =\lceil t_0 \rceil +2$ and the  continuity of $(\zeta^\delta(t,\cdot),\mfP\psi^\delta(t,\cdot))$ in $H^{\lceil t_0 \rceil +2}(\RR^d)^2$ that
\[I\eqdef \Big\{ t\in[0,T_2] \ : \  \norm{\zeta^\delta(t,\cdot)}_{H^{\lceil t_0 \rceil +2}}^2+\norm{\mfP\psi^\delta(t,\cdot)}_{H^{\lceil t_0 \rceil +2}}^2 \leq C\big( \norm{\zeta_0}_{H^{\lceil t_0 \rceil +2}}^2+\norm{\mfP\psi_0}_{H^{\lceil t_0 \rceil +2}}^2\big)\Big\} \]
is an open subset of $[0,T_2]$. Since it is also closed and non-empty, we have $I=[0,T_2]$. In particular we have that~\eqref{equiv_energy} holds on $[0,T_2]$, and hence~\eqref{est_energy} with $N_\star=N$ provides the desired control, which concludes the proof.
\end{proof}

\subsection{Short-time well-posedness. Proof of \texorpdfstring{\Cref{P.WP-short-time}}{Sec}}\label{S.WP-short-time}

In this section we prove \Cref{P.WP-short-time}. Let us first notice that we can specialize the study to the case $\delta=1$. Indeed, if \Cref{P.WP-short-time} holds when $\delta=1$, then we can apply this result with $\J^\delta = J^\delta(D)$ where $J^\delta=J(\delta\cdot)$ and $\delta>0$ is arbitrary. Noticing that for any $\theta\geq 0$,
\[\frac{1}{\N{\theta}{J^\delta}} \geq \frac{\min(1,\delta^{\theta})}{\N{\theta}{J}},\]
we infer that \Cref{P.WP-short-time} holds for arbitrary $\delta>0$. Consequently in the following we fix $\delta=1$.

 We write~\eqref{RWW2} as
\begin{equation}\label{eq.RWW2-compact}
\partial_t U + \L U + N(U) \ = \ 0
\end{equation}
where $U=(\zeta,\psi)^\top$, 
\[ \L \eqdef \begin{pmatrix}
0 &  -G^\mu_0 \\ 1 & 0
\end{pmatrix} , \quad \text{ and } \quad N(U)=\begin{pmatrix} \epsilon G^\mu_0 ((\J^1 \zeta) G^\mu_0 \psi)+ \epsilon \nabla\cdot((\J^1 \zeta)\nabla\psi)\\
\frac\epsilon{2} \J^1 \left( |\nabla\psi|^2- (G^\mu_0 \psi )^2 \right)
\end{pmatrix}.\]
Let $X^s\eqdef H^s(\RR^d)\times (\Hdot^{s+1/2}(\RR^d)/\RR)$ be the Hilbert space (see ~\cite[Proposition~2.3]{Lannes_ww} concerning the quotient space $\Hdot^{s+1/2}(\RR^d)/\RR$) endowed with the inner product
\[ \big(\ (\zeta_1,\psi_1) \ , \  (\zeta_2,\psi_2) \ \big)_{X^s} \ \eqdef \ \int_{\RR^d} (\Lambda^s\zeta_1)(\Lambda^s\zeta_2)+ (\Lambda^s\mfP\psi_1)(\Lambda^s\mfP\psi_2) \dd\bx.\]
We denote $|\cdot|_{X^s}$ the norm associated with the inner product $(\cdot,\cdot)_{X^s}$. 
Using \Cref{L.control_P}, one easily checks that the (unbounded) operator $\i\L$ with domain $X^{s+1/2}$ is self-adjoint on $X^s$.  Hence by Stone's theorem (see \eg~\cite[Theorem~10.8]{pazy_semigroup}) $\L$ generates a strongly continuous group of unitary operators on  $\big(X^s,\norm{\,\cdot\,}_{X^s}\big)$, which we denote $e^{t \L}$. 

By \Cref{L.commutator-tanh}, \Cref{L.compensation-tanh} (when $0 \leq s < \frac{1}{2}$) and \Cref{L.product-tame},
we have that there exists $C_1>0$ and $C_2>0$ depending only on $s$ and $t_0>d/2$ such that for any $U\in H^s(\RR^d)\times \Hdot^{s+1/2}(\RR^d)$,
\[ \norm{N(U)}_{H^s\times H^{s+1/2}} \leq 
\begin{cases}
	\epsilon C_{1} \N{t_0+\frac32-s}{J}  \norm{U}_{H^s\times \Hdot^{s+1/2}}^2& \text{ if } 0\leq s\leq t_0+\frac12,\\
	\epsilon C_{1} \N{1}{J} \norm{U}_{H^{t_{0}+\frac12}\times \Hdot^{t_{0}+1}} \norm{U}_{H^s\times \Hdot^{s+1/2}}& \text{ if }  s\geq t_0+\frac12,
\end{cases}\]
and for any $U,V\in H^s(\RR^d)\times  \Hdot^{s+1/2}(\RR^d)$
\[ \norm{N(U)-N(V)}_{H^s\times H^{s+1/2}} \leq \epsilon  C_{2} \N{\max(1,t_0+\frac32-s)}{J} \big( \norm{U+V}_{H^{s}\times \Hdot^{s+\frac12}}  \big) \norm{U-V}_{H^s\times \Hdot^{s+1/2}}.\]
Recall (see \Cref{L.control_P}) that $\big(X^s,|\cdot|_{X^s}\big)$ is equivalent to $H^s(\RR^d)\times \Hdot^{s+1/2}(\RR^d)/\RR$ 
 and that for any $\mu \geq 1$ and any $(f,g)\in H^s(\RR^d)\times H^{s+1/2}(\RR^d)$,
\[  \norm{f}_{H^s}^2 + \norm{\nabla g}_{H^{s-1/2}}^2  \ \leq\ \norm{(f,g)}_{X^s}^2 = \norm{f}_{H^{s}}^2+\norm{\mfP g}_{H^{s}}^2\ \leq\  \  \norm{f}_{H^s}^2 + \norm{g}_{H^{s+1/2}}^2 .\]
We shall then apply Banach fixed-point theorem on the Duhamel formula 
\begin{equation}\label{eq.Duhamel}
U = G_{U(0)}(U), \qquad G_{U_0}:U \ \mapsto \ \big( t\mapsto e^{-t\,\L }U_0-\int_0^t e^{-(t-\tau)\,\L} N(U(\tau,\cdot))\dd \tau \big).
\end{equation}

To this aim, we define for any $M>0$ and any $T>0$  (determined later on) the set
\[\cX^s_{T,M}:=\Big\{U\in \cC([-T,T];X^s) \ : \ \max_{t\in[-T,T]}\norm{U(t,\cdot)}_{X^{s}}^2\leq M\Big\}.\]
From the above, we find that for any $U\in \cX^s_{T,M}$ and $U_0\in X^s$, $G_{U_0}(U) \in \cC([-T,T];X^s )$ and 
\[\norm{\int_0^t e^{-(t-\tau)\,\L} N(U(\tau,\cdot))\dd \tau}_{X^s} \leq \epsilon |t|\,    C_1\, C_J \, {M}, \]
where we denote here and thereafter $C_J= \N{\max(1,t_0+\frac32-s)}{J} $; and for any $U,V\in \cX^s_T$, one has (by the triangular inequality)
 \[\norm{\int_0^t e^{-(t-\tau)\,\L} \big(N(U)-N(V)\big)(\tau,\cdot)\dd \tau}_{X^s} \leq 2\epsilon |t|\,    C_2\, C_J\, \sqrt{M} \norm{(U-V)(\tau,\cdot)}_{X^s} .\]
 Hence, choosing  $M$ and $M'$ such that $0<M' < M$ and defining $T$ as
 \begin{equation}\label{eq.def-T}
  T=\min\Big( \tfrac{ \sqrt{M}-\sqrt{M'} }{ \epsilon\,  C_1 \, C_J \, M }\ , \ \tfrac1{3\epsilon\,   C_2 \, C_J\, \sqrt{M}} \Big)\,,
 \end{equation}
we find that $G_{U_0}$ defines a contraction mapping in $\cX_{T,M}$ for any $U_0$ satisfying $\norm{U_0}_{X^{s}}^2\leq M$. 

This proves the existence and uniqueness of a solution  in $\cX_{T,M}$ to~\eqref{eq.Duhamel} with $U(0)=(\zeta_0,\psi_0)^\top$. We deduce the uniqueness in $\cX_T\eqdef \cC([-T,T];X^s) $ from a standard continuity argument, and one easily checks the equivalence between $U\in \cX_T$ satisfying~\eqref{eq.Duhamel} and $U\in \cX_T$ satisfying~\eqref{eq.RWW2-compact}. Up to now the second component of the solution as well as the second equation of~\eqref{eq.RWW2-compact} are defined up to an additive constant. Requiring additionally that~\eqref{eq.RWW2-compact} holds in  $\cC([-T,T];H^{s-1/2}(\RR^d) \times H^{s}(\RR^d))$ uniquely determines these constants ---and hence  $U\in\cC([-T,T];H^{s}(\RR^d) \times \Hdot^{s+\frac12}(\RR^d))$--- and we have $\partial_t U\in \cC([-T,T];H^{s-1/2}(\RR^d) \times H^{s}(\RR^d))$ by the above estimates for $N(\cdot)$ and straightforward bounds on $\L$.

There remains to prove the lower bound on the maximal time of existence. We focus first on the case $s>t_{0}+\frac12$. From the above (and in particular uniqueness) we can define a maximal time of existence and maximal solutions $U=(\zeta,\psi) \in \cC((-T_\star,T^\star);H^{s}(\RR^d) \times \Hdot^{s+\frac12}(\RR^d))$. On $(-T_\star,T^\star)$, we have by the above estimates
\begin{equation}\label{est-tame.duhamel}
 \norm{U(t,\cdot)}_{X^s} \leq \norm{U(0,\cdot)}_{X^s} +|t|\, (\epsilon \, C_1\, C_J)\, \norm{U(t,\cdot)}_{H^{t_0+\frac12}\times \Hdot^{t_0+\frac12}} \norm{U(t,\cdot)}_{X^s}
 \end{equation}
with $C_J= \N{1}{J}$, and (augmenting $C_1>0$ if necessary) the same estimate replacing $s$ with $t_0+\frac12$. Hence defining $T_1>0$ such that
\[ 1+ C\, T_1\, (\epsilon \, C_1\, C_J) \norm{U(0,\cdot)}_{X^{t_{0}+\frac12}} =C^{1/3},\] 
and
\[I_s\eqdef \Big\{ t\in  [-T_1,T_1]\cap (-T_\star,T^\star) \ : \ \forall \tau\in [0,t],\ \norm{U(\tau,\cdot)}_{X^{s}}^2\leq C \, \norm{U(0,\cdot)}_{X^{s}}^2 \Big\} ,\]
we infer (since $C^{2/3}<C$) that 
$I_s\cap I_{t_{0}+\frac12}$  is an open subset of $[-T_1,T_1]\cap (-T_\star,T^\star) $. Since it is also closed and non-empty, $I_s\cap I_{t_{0}+\frac12}=[-T_1,T_1]\cap (-T_\star,T^\star) $, and hence (arguing as in \Cref{C.blowup}) $\min(T_\star,T^\star)> T_1$. If now $s \leq t_{0}+\frac12$, taking  $M = C \norm{U_{0}}^{2}_{X^{s}}$ and $M' = \norm{U_{0}}^{2}_{X^{s}}$ (if $U_{0} \neq (0,0)$ in which case the result is trivial) in~\eqref{eq.def-T} provides immediately the corresponding lower bound for $T_\star$ and $T^\star$. Gathering the two previous results, we find that there exists $T_{0}>0$ depending only on $C$ and $C_{1}$, such that
\[ \min(T_\star,T^\star) \geq \frac{T_0}{\epsilon\big( \norm{\zeta_0}_{H^{\min (s,t_0+\frac12)}}+\norm{\mfP\psi_0}_{H^{\min(s,t_0+\frac12)}}\big)\,\N{\max(1,t_0+\frac32-s)}{J} }. \]

 A subtlety arises as $\norm{\nabla\psi_0}_{H^{s-\frac12}}$ does not control $\norm{\mfP\psi_0}_{H^s}$ with a uniform bound with respect to $\mu \geq 1$ (see \Cref{L.control_P}). Yet the desired result follows from the following additional ingredient. Applying $\sqrt{\frac{|D|}{\tanh(\sqrt\mu|D|)}}\langle D\rangle^{-1/2}$ to both equations in~\eqref{RWW2} and following the above arguments but with a careful use of \Cref{L.commutator-tanh}, we infer that there exists $\widetilde C_1>0$ depending only on $s$ and $s_\star\eqdef \min(s,t_{0}+\frac12)$ such that for any  $t\in [0,T^\star)$ 
 \begin{align*} 
 \norm{U(t,\cdot)}_{X^s} &\leq \norm{U(0,\cdot)}_{X^s} +|t|\, (\epsilon \, \widetilde C_1\, C_J)\, \norm{U(t,\cdot)}_{\widetilde X^{s_\star}} \norm{U(t,\cdot)}_{X^s},\\
\norm{ U(t,\cdot)}_{\widetilde X^{s_\star}} &\leq \norm{ U(0,\cdot)}_{\widetilde X^{s_\star}}  + |t|\, (\epsilon \, \widetilde C_1\, C_J)\,  \norm{U(t,\cdot)}_{\widetilde X^{s_\star}}^2,
\end{align*}
where we define $\norm{ U}_{\widetilde X^{s_\star}}\eqdef \norm{\sqrt{\frac{|D|}{\tanh(\sqrt\mu|D|)}}\langle D\rangle^{-1/2} U}_{X^{s_\star}}$, and notice that
\[ \frac1{\sqrt\mu}\norm{\zeta}_{H^{s_\star-\frac12}}^2 +\norm{\nabla\zeta}_{H^{s_\star-1}}^2+\norm{\nabla\psi}_{H^{s_\star-\frac12}}^2\lesssim \norm{(\zeta,\psi)}_{\widetilde X^{s_\star}}^2 \lesssim \norm{\zeta}_{H^{s_\star}}^2+\norm{\nabla\psi}_{H^{s_\star-\frac12}}^2.\]
Hence proceeding as previously we infer first the control of $\norm{ U(t,\cdot)}_{\widetilde X^{s_\star}}$, then the control of $\norm{U(t,\cdot)}_{X^s}$ (with the amplification factor $C>1$), for $t\in [-\widetilde T_1,\widetilde T_1]$ with $\widetilde T_1=\widetilde T_0/(\varepsilon \norm{ U}_{\widetilde X^{s_\star}}\N{t_0+\frac32-s_\star}{J}  ) $ where $\widetilde T_0$ depends only on $C$ and $\widetilde C_1$.
The proof of  \Cref{P.WP-short-time} is now complete.

\begin{Corollary}\label{C.blowup}
Under the assumptions of \Cref{P.WP-short-time}, denote $T^\star(\zeta_0,\psi_0;s)\in(0,+\infty]$ the maximal time of existence associated with initial data $(\zeta_0,\psi_0)\in H^s(\RR^d)\times \Hdot^{s+1/2}(\RR^d)$ and index $s \geq 0$, defined as  the supremum of $T>0$ such that there exists $(\zeta,\psi)\in \cC([0,T]; H^s(\RR^d)\times \Hdot^{s+1/2}(\RR^d))$ solution to~\eqref{RWW2} with initial data ${(\zeta,\psi)\id{t=0}=(\zeta_0,\psi_0)}$.

If $(\zeta_0,\psi_0)\in  H^s(\RR^d)\times \Hdot^{s+1/2}(\RR^d)$ with $s>\frac d 2 +\frac12$, then $T^\star(\zeta_0,\psi_0;s)=T^\star(\zeta_0,\psi_0;s')$ for any $s' \in \left( \frac{d}{2} + \frac12,s \right]$ and one has the blowup criterion 
\[ T^\star(\zeta_0,\psi_0;s) <\infty \quad \Longrightarrow \quad \forall s' \in \left( \frac{d}{2} + \frac12,s \right], \quad  \norm{\zeta(t,\cdot)}_{H^{s'}}+\norm{  \mfP  \psi(t,\cdot)}_{H^{s'-1/2}}\to\infty \ \text{ as $t \nearrow T^\star$}\,.\]
The corresponding result also holds for the (negative) minimal time of existence.
\end{Corollary}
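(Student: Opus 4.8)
The plan is to derive \Cref{C.blowup} as the standard ``persistence of regularity plus continuation criterion'' companion to the fixed-point construction of \Cref{P.WP-short-time}, using only the ingredients assembled there: the unitarity of the group $e^{t\L}$ on every $X^\sigma$, the tame nonlinear bounds, and ---the essential point--- the fact that the lower bound $T_1$ on the local existence time depends on the data only through a \emph{low-regularity} norm (together with the fixed quantities $\delta$ and $J$). A first, easy observation settles one inequality: since $H^s(\RR^d)\times\Hdot^{s+1/2}(\RR^d)\hookrightarrow H^{s'}(\RR^d)\times\Hdot^{s'+1/2}(\RR^d)$ for $s'\leq s$ and since solutions are unique in the less regular class (\Cref{P.WP-short-time}), the $H^s$-valued maximal solution coincides with the $H^{s'}$-valued one on the overlap of their intervals of existence; hence $T^\star(\zeta_0,\psi_0;s)\leq T^\star(\zeta_0,\psi_0;s')$ for every $s'\in(\tfrac d2+\tfrac12,s]$, and likewise for the minimal times.

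For the converse I would prove propagation of $H^s$ regularity. Fix $s'\in(\tfrac d2+\tfrac12,s]$, choose an auxiliary index $t_0\in(\tfrac d2,\,s'-\tfrac12)$ and set $s_\star\eqdef t_0+\tfrac12$, so $\tfrac d2<s_\star<s'$; let $U=(\zeta,\psi)\in\cC([0,T];H^{s'}\times\Hdot^{s'+1/2})$ be the solution on $[0,T]$ with $T<T^\star(\zeta_0,\psi_0;s')$ arbitrary. Continuity into $X^{s'}$, hence into the refined norm $\Norm{\cdot}_{\widetilde X^{s_\star}}$ used at the end of the proof of \Cref{P.WP-short-time}, forces $K\eqdef\sup_{[0,T]}\Norm{U(t)}_{\widetilde X^{s_\star}}<\infty$ by compactness. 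Consider then the largest subinterval $[0,t^\dagger)\subseteq[0,T]$ on which $U$ is $X^s$-valued and continuous: it is nonempty precisely because $(\zeta_0,\psi_0)\in H^s\times\Hdot^{s+1/2}$ (this is the only place the regularity hypothesis of \Cref{C.blowup} is used). On $[0,t^\dagger)$ the tame a priori estimate~\eqref{est-tame.duhamel} and its $\widetilde X^{s_\star}$ companion apply; running them on subintervals of length $\tau$ with $\tau\,\epsilon\,\widetilde C_1\,C_J\,K\leq\tfrac12$ and iterating over $\lceil T/\tau\rceil$ of them yields $\sup_{[0,t^\dagger)}\Norm{U(t)}_{X^s}\leq C(K,T)\,\Norm{U(0)}_{X^s}$, so $U$ extends continuously in $X^s$ up to $t^\dagger$; if $t^\dagger<T$, restarting \Cref{P.WP-short-time} at level $s$ from $U(t^\dagger)$ and using uniqueness contradicts maximality of $t^\dagger$. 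Thus $t^\dagger=T$, $U\in\cC([0,T];X^s)$, and $T^\star(\zeta_0,\psi_0;s)\geq T$; letting $T\nearrow T^\star(\zeta_0,\psi_0;s')$ and combining with the first step gives $T^\star(\zeta_0,\psi_0;s)=T^\star(\zeta_0,\psi_0;s')$ throughout $(\tfrac d2+\tfrac12,s]$.

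For the blowup alternative, suppose $T^\star\eqdef T^\star(\zeta_0,\psi_0;s)<\infty$. Since $\Norm{\zeta}_{H^{s'}}+\Norm{\mfP\psi}_{H^{s'-1/2}}$ is nondecreasing in $s'$, it suffices to show it diverges as $t\nearrow T^\star$ for $s'$ arbitrarily close to $\tfrac d2+\tfrac12$. If, on the contrary, this quantity stayed bounded along some $t_n\nearrow T^\star$, then by \Cref{L.control_P} (which, $\mu$-uniformly, bounds $\Norm{\nabla\psi(t_n)}_{H^{s'-1}}$ by $\Norm{\mfP\psi(t_n)}_{H^{s'-1/2}}$) the low-regularity norm of $(\zeta(t_n),\psi(t_n))$ governing $T_1$ would remain bounded; restarting \Cref{P.WP-short-time} from time $t_n$ at the appropriate (possibly lower) regularity would then furnish solutions on $[t_n,t_n+\eta]$ with $\eta>0$ independent of $n$, which, taking $n$ large enough that $t_n+\eta>T^\star$ and invoking uniqueness, would be $X^s$-valued on $[0,t_n+\eta]$ by the propagation argument of the previous paragraph ---contradicting the maximality defining $T^\star$. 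Hence the criterion holds at every index of $(\tfrac d2+\tfrac12,s]$; the statement for the minimal time is identical, via the time-reversibility of~\eqref{RWW2} (invariance under $(t,\psi)\mapsto(-t,-\psi)$) already exploited in the proof of \Cref{P.WP-short-time}.

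The main obstacle is the index bookkeeping in this last step when $s'$ is close to the threshold $\tfrac d2+\tfrac12$: one must verify that $\Norm{\zeta(t_n)}_{H^{s'}}+\Norm{\mfP\psi(t_n)}_{H^{s'-1/2}}$ really controls, uniformly in $\mu\geq1$, the norm entering $T_1$ and that, after restarting at a reduced regularity, the compensated structure of \Cref{L.commutator-tanh} (and, at very low regularity, of \Cref{L.compensation-tanh}) can still be run at the resulting index. This is exactly where the refined $\widetilde X^{s_\star}$-estimates and the $\mu$-uniform inequalities of \Cref{L.control_P} must be used with care, rather than the naive $X^\sigma$-norms, whose equivalence with $H^\sigma\times\Hdot^{\sigma+1/2}$ degenerates as $\mu\to\infty$; once this matching is secured, the openness/closedness (or maximal-interval) part of the propagation argument is routine.
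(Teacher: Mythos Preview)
Your proposal is correct and follows essentially the same route as the paper's proof: the inequality $T^\star_s\leq T^\star_{s'}$ by uniqueness, the reverse inequality by propagating $H^s$-regularity via the tame Duhamel estimate~\eqref{est-tame.duhamel} controlled by a bounded low-regularity norm, and the blowup alternative by restarting the local theory with a uniform lower bound on the restart time. The paper's write-up is a bit terser: it derives the blowup at level $s$ first (directly from the lower bound $T_1$ in \Cref{P.WP-short-time} together with \Cref{L.control_P}), then shows $T^\star_s\geq T^\star_{s'}$ by assuming $T^\star_s<T^\star_{s'}$ and applying Gr\"onwall to the tame estimate on $[0,T^\star_s)$ with $M'=\sup_{[0,T^\star_s]}\big(\norm{\zeta}_{H^{s'}}+\norm{\mfP\psi}_{H^{s'}}\big)$, which yields an $X^s$ bound on $[0,T^\star_s)$ and the desired contradiction. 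Your iteration on subintervals of length $\tau$ is of course equivalent to Gr\"onwall, and your explicit recourse to the $\widetilde X^{s_\star}$ norm (and your closing caveat about the index bookkeeping near the threshold $\tfrac d2+\tfrac12$) is a welcome precision that the paper leaves implicit in the phrase ``From \Cref{P.WP-short-time} and \Cref{L.control_P}''.
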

\begin{proof}
Let $s$ and $s'$ such that $s\geq s'> \frac d 2+\frac12$, and $(\zeta_0,\psi_0)\in  H^s(\RR^d)\times \Hdot^{s+1/2}(\RR^d)$. Let denote for simplicity $T^\star_{s_\star}\eqdef T^\star(\zeta_0,\psi_0;s_\star)$ for $s_\star\in\{s,s'\}$. From \Cref{P.WP-short-time} and \Cref{L.control_P} we have (reasoning by contradiction and using a suitable sequence of times approaching $T^\star_s$)
\[ T^\star_s<\infty \quad \Longrightarrow \quad \norm{\zeta(t,\cdot)}_{H^{s}}+\norm{\mfP \psi(t,\cdot)}_{H^{s-1/2}}\to\infty \ \text{ as $t \nearrow T_s^\star$}\,.\]
 From the uniqueness in \Cref{P.WP-short-time}, we have obviously $T^\star_s\leq  T^\star_{s'}$ and there remains to prove that $T^\star_s\geq  T^\star_{s'}$. We argue by contradiction and assume $T^\star_s<  T^\star_{s'}$ (and in particular $T^\star_s<\infty$). Thus $(\zeta,\psi)\in \cC([0,T^\star_s]; H^{s'}(\RR^d)\times \Hdot^{s'+1/2}(\RR^d))$. Set $M'\eqdef\max_{t\in [0,T^\star_s]}\big( \norm{\zeta(t,\cdot)}_{H^{s'}}+\norm{ \mfP \psi(t,\cdot)}_{H^{s'}} \big)$. 
We use once again the tame estimates~\eqref{est-tame.duhamel} obtained in the proof of \Cref{P.WP-short-time}, and \Cref{L.control_P}: there exists $C>0$, depending on $\epsilon,s',s$ and $\J^\delta$ such that for any $t\in [0,T^\star_s)$,
\[ \norm{(\zeta,\mfP\psi)(t,\cdot)}_{H^s\times H^s} \leq \norm{(\zeta,\mfP\psi)(0,\cdot)}_{H^s\times H^s} +|t|\, C \, M'\,   \norm{(\zeta,\mfP\psi)(t,\cdot)}_{H^s\times H^s} .\]
 Grönwall's inequality provides the desired contradiction.
\end{proof}

 \begin{Remark}\label{R.WP}
 In order to certify that the initial-value problem for~\eqref{RWW2} is (unconditionally and locally-in-time) well-posed in $H^s(\RR^d)\times \Hdot^{s+1/2}(\RR^d)$ in the sense of Hadamard, one should discuss the regularity of the solution map 
 \[ \Phi:(\zeta_0,\psi_0)\in H^s(\RR^d)\times \Hdot^{s+1/2}(\RR^d) \mapsto (\zeta,\psi)\in \cC([-T,T];H^s(\RR^d)\times \Hdot^{s+1/2}(\RR^d)).\]
 The proof of \Cref{P.WP-short-time} readily shows that the solution map is Lipschitz from any ball of $H^s(\RR^d)\times \Hdot^{s+1/2}(\RR^d)$ to $\cC([-T,T];H^s(\RR^d)\times \Hdot^{s+1/2}(\RR^d))$ (with $T$ sufficiently small), and the estimates therein allow to prove that the solution map $\Phi$ is in fact analytic  (and hence infinitely differentiable), in the sense that for any $U_0\eqdef (\zeta_0,\psi_0) \in H^s(\RR^d)\times \Hdot^{s+1/2}(\RR^d)$ in a given ball and restricting to $T>0$ sufficiently small we can write 
 \[\Phi(U_0)=\sum_{k=1}^\infty \Phi_k(U_0,\cdots,U_0)\]
 where the operators $\Phi_k:\big(H^s(\RR^d)\times \Hdot^{s+1/2}(\RR^d)\big)^k\to \cC([-T,T];H^s(\RR^d)\times \Hdot^{s+1/2}(\RR^d))$ are continuous $k$-multilinear and the series is normally convergent.
 \end{Remark}

\subsection{Large-time well-posedness; proof of \texorpdfstring{\Cref{P.WP-large-time}}{Sec}}\label{S.WP-large-time}

In this section we provide the proof of \Cref{P.WP-large-time}. It follows from suitable energy estimates on smooth solutions to~\eqref{RWW2}. Here and henceforth, we refer to $(\zeta,\psi)$ as a smooth (local-in-time) solution to~\eqref{RWW2} when there exists an interval $I\subset \RR$ such that
\[\forall N\in\NN, \quad (\zeta,\psi)\in\cC^1(I;H^N(\RR^d)\times \Hdot^{N+\frac12}(\RR^d))\]
and~\eqref{RWW2} holds for any $t\in I $. The existence of smooth solutions (for smooth initial data) follows from \Cref{P.WP-short-time} and \Cref{C.blowup}. In \Cref{S.quasi} we extract a ``quasilinear structure''\footnote{As mentioned in the introduction and proved in \Cref{P.WP-short-time}, the nature of~\eqref{RWW2} is in fact semilinear. We refer to the structure of~\eqref{eq.quasi_eq1}-\eqref{eq.quasi_eq2} as quasilinear in the sense that we will refuse to make use of the full regularization effects of $\J^\delta$ but will rather obtain improved energy estimates using the skew-symmetry of the leading-order contributions of the system. The system is genuinely quasilinear if $\J^\delta$ is regularizing of order $-\r$ with $\r\in[\frac12,1)$ but not regularizing of order $-1$.  } of the system, which is then used in \Cref{S.energy-estimates} to infer the control of suitable energy functionals. The completion of the proof is postponed to \Cref{S.WP-large-completion}.

As in the previous section, let us notice that we can specialize the study to the case $\delta=1$. Indeed, assume that \Cref{P.WP-large-time} holds with $\delta=1$. Then for any $\delta\in(0,1]$ we can apply the result with $\J^\delta = J^\delta(D)$ where $J^\delta=J(\delta\cdot)$. Since $M_{J^\delta}\leq M_J$ and $ \dotN{\frac12}{J^\delta}^2 = \delta^{-1}\dotN{\frac12}{J}^2$, we find that
 \Cref{P.WP-large-time} holds with arbitrary $\delta\in (0,1]$. 
 {\em Consequently we fix $\delta=1$ in the following, and denote for simplicity $\J\eqdef \J^1$.}

\subsubsection{Quasilinearization}\label{S.quasi}

\begin{Proposition}\label{P.quasi}
Let $d\in\{1,2\}$, $t_{0}>\frac{d}{2}$, $N \in \NN$ with $N \geq t_0+2$. There exists $C>0$ such that for any $\epsilon \geq 0$, $\mu \geq 1$, $\J$ regular rectifier,  and $(\zeta,\psi)$ smooth solution to~\eqref{RWW2},
 the following holds. Let $\alpha \in \NN^{d}$ a multi-index. If $|\alpha| \leq N-1$, we have
\begin{align}\label{eq.quasi_eq1_low}
\partial_t \partial^{\alpha} \zeta-G^\mu_0 \partial^{\alpha} \psi &= \epsilon R_{1}^{(\alpha)} ,\\
\label{eq.quasi_eq2_low}
\partial_t \partial^{\alpha} \psi + \partial^{\alpha} \zeta &= \epsilon  R_{2}^{(\alpha)} ,
\end{align}
with
\begin{align*}
 \norm{ R_{1}^{(\alpha)} }_{L^2} &\leq C\, \No{J}\, \big(\norm{ \nabla \psi }_{H^{t_{0}}} \norm{ \zeta }_{H^{N}} + \norm{ \zeta }_{H^{t_{0}+1}} \norm{ \nabla \psi }_{H^{N-1}}\big),\\
 \norm{ R_{2}^{(\alpha)}}_{L^2}  &\leq C\, \No{J}\, \norm{\nabla\psi}_{H^{t_{0}}} \norm{\nabla\psi}_{H^{N-1}}  ,
 \end{align*}
whereas if $|\alpha|=N$
\begin{align}
\label{eq.quasi_eq1}
\partial_t \zetaa-G^\mu_0 \psia  + \epsilon \nabla\cdot((\J \zetaa)\nabla\psi) &=\epsilon \widetilde R_{1}^{(\alpha)},\\
\label{eq.quasi_eq2}
\partial_{t} \psia + \mfa_{\J}[\epsilon \zeta, \epsilon\nabla\psi] \zetaa + \epsilon\J \left( \nabla \psi \cdot \nabla \psia \right) &= \epsilon \widetilde R_2^{(\alpha)}+ \epsilon^2 \widetilde R_{3}^{(\alpha)},
\end{align}
with $\zetaa \eqdef \partial^{\alpha} \zeta$ and $\psia \eqdef  \partial^{\alpha} \psi - \epsilon (G^\mu_0 \psi) (\J   \partial^\alpha\zeta)$, $ \mfa_{\J}$ defined in~\eqref{eq.def-RT} and
\begin{align*} \norm{ \widetilde R_{1}^{(\alpha)} }_{L^2} &\leq C\,\No{J}\, \big(  \norm{ \nabla \psi }_{H^{t_{0}+1}} \norm{ \zeta }_{H^{N}} + \norm{ \zeta }_{H^{t_{0}+2}} \norm{ \nabla \psi }_{H^{N-1}} \big),\\
\norm{ \widetilde R_{2}^{(\alpha)}}_{H^{\frac{1}{2}}}  &\leq C \, \big( \No{J}\, \norm{ \nabla \psi }_{H^{t_{0}}} \norm{ \nabla \psi }_{H^{N-\frac12}} + \dotNi{J}\norm{ \nabla \psi }_{H^{t_{0}+1}} \norm{ \psia }_{H^{\frac{1}{2}}}  \big) , \\
 \norm{ \widetilde R_{3}^{(\alpha)}}_{H^{\frac{1}{2}}} &\leq C\, \N{\frac12}{J}\, \dotNi{J}\,\norm{ \nabla \psi }_{H^{t_{0}}}  \big( \norm{ \nabla \psi }_{H^{t_{0}+1}} \norm{ \zeta }_{H^{N}} + \norm{ \zeta }_{H^{t_{0}+2}} \norm{\nabla \psi}_{H^{N-1}} \big) .
 \end{align*}

\end{Proposition}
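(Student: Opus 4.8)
The plan is to differentiate both equations of~\eqref{RWW2} $|\alpha|$ times and to organize the result according to which factor carries the top-order derivative; as in \Cref{P.WP-short-time} one first reduces to $\delta=1$ by rescaling $J$ (this affects only the seminorms $\No{J},\dotNi{J},\dots$), and one freely uses \Cref{L.control_P} to pass between $G^\mu_0\psi$- and $\nabla\psi$-norms. When $|\alpha|\le N-1$ there is nothing subtle: $\partial^\alpha$ applied to~\eqref{RWW2} gives~\eqref{eq.quasi_eq1_low}--\eqref{eq.quasi_eq2_low} with $R_1^{(\alpha)}=-\partial^\alpha\big(G^\mu_0((\J\zeta)G^\mu_0\psi)+\nabla\!\cdot\!((\J\zeta)\nabla\psi)\big)$ and $R_2^{(\alpha)}=-\tfrac12\partial^\alpha\J\big(|\nabla\psi|^2-(G^\mu_0\psi)^2\big)$. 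The former is bounded in $L^2$ through the two-derivative gain of \Cref{L.commutator-tanh} on the compensated operator $f\mapsto G^\mu_0(fG^\mu_0 g)+\nabla\!\cdot\!(f\nabla g)$, using the regularizing property of $\J$ to trade $\nabla\J\zeta$ against $\zeta$; the latter by the tame product estimate \Cref{L.product-tame}. In both cases the stated $H^N$/$H^{N-1}$ versus $H^{t_0+1}$ splitting follows from Leibniz' rule, Sobolev interpolation and Young's inequality, exactly as in the proof of \Cref{L.product-tame}.

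The crux is $|\alpha|=N$. I introduce the Alinhac-type good unknown $\psia=\partial^\alpha\psi-\epsilon(G^\mu_0\psi)(\J\zetaa)$ and set $g\eqdef G^\mu_0\psi$, $b_\alpha\eqdef\J\zetaa$, so that $\partial^\alpha\psi=\psia+\epsilon g b_\alpha$. For the first equation, substituting this into $-G^\mu_0\partial^\alpha\psi$ yields $-G^\mu_0\psia-\epsilon G^\mu_0(g b_\alpha)$, and the last term cancels exactly the leading piece $\epsilon G^\mu_0((\J\zetaa)g)$ of $\partial^\alpha\big(\epsilon G^\mu_0((\J\zeta)G^\mu_0\psi)\big)$ by commutativity of the product; what survives is the displayed term $\epsilon\nabla\!\cdot\!((\J\zetaa)\nabla\psi)$ in~\eqref{eq.quasi_eq1} together with the remainder $\widetilde R_1^{(\alpha)}$, which collects the Leibniz terms in which $\partial^\alpha$ is distributed across factors, plus the contributions where $\partial^\alpha$ falls on $G^\mu_0\psi$ or $\nabla\psi$, of the form $\epsilon\big(G^\mu_0((\J\zeta)G^\mu_0\partial^\alpha\psi)+\nabla\!\cdot\!((\J\zeta)\nabla\partial^\alpha\psi)\big)$; the latter is again estimated in $L^2$ via \Cref{L.commutator-tanh} with $s=0$, $r=1$, noting $\norm{\nabla\partial^\alpha\psi}_{H^{-1}}\lesssim\norm{\nabla\psi}_{H^{N-1}}$.

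For the second equation I differentiate, then substitute $\partial_t g=G^\mu_0\partial_t\psi=-G^\mu_0\zeta-\tfrac\epsilon2 G^\mu_0\J\big(|\nabla\psi|^2-g^2\big)$ and $\partial_t b_\alpha=\J\partial^\alpha\partial_t\zeta=\J G^\mu_0\partial^\alpha\psi-\epsilon\J\partial^\alpha\big(G^\mu_0((\J\zeta)g)+\nabla\!\cdot\!((\J\zeta)\nabla\psi)\big)$ coming from~\eqref{RWW2}, and collect terms. The identity part of $\mfa_{\J}$ cancels $\partial^\alpha\zeta$; differentiating the good-unknown correction in time produces, via $\partial_t g=-G^\mu_0\zeta+\cO(\epsilon)$, the first-order term of $-\mfa_{\J}[\epsilon\zeta,\epsilon\nabla\psi]\zetaa$; iterating $\partial^\alpha\psi=\psia+\epsilon g b_\alpha$ inside the quadratic terms produces its $\epsilon^2$-order triple product $\epsilon^2\J\big(g|D|(g b_\alpha)\big)$ (the replacement of $G^\mu_0$ by $|D|$ on the inner factor costs only an $\cO(\epsilon^2)$ remainder); the convective term $\epsilon\J(\nabla\psi\cdot\nabla\psia)$ survives; and the rest is remainder. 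The discrepancy $\epsilon\J(g\,G^\mu_0\partial^\alpha\psi)-\epsilon g\,\J(G^\mu_0\partial^\alpha\psi)=\epsilon[\J,g]\,G^\mu_0\partial^\alpha\psi$ is handled by \Cref{L.commut_order0} applied to the order-zero multiplier $\J$ --- this is where the regularity hypothesis $\langle\cdot\rangle\nabla J\in L^\infty$ enters, yielding the constant $\dotNi{J}$. Thus $\widetilde R_2^{(\alpha)}$ is $[\J,g]G^\mu_0\psia$ (bounded in $H^{1/2}$ by $\dotNi{J}\norm{\nabla\psi}_{H^{t_0+1}}\norm{\psia}_{H^{1/2}}$ via \Cref{L.commut_order0} and \Cref{L.control_P}) plus the Leibniz remainders of $\partial^\alpha\big(|\nabla\psi|^2-(G^\mu_0\psi)^2\big)$ (bounded in $H^{1/2}$ by $\No{J}\norm{\nabla\psi}_{H^{t_0}}\norm{\nabla\psi}_{H^{N-1/2}}$ via \Cref{L.product-tame}), while $\widetilde R_3^{(\alpha)}$ gathers the genuinely $\epsilon^2$-order contributions: the $\epsilon$-corrections of $\partial_t g$ and $\partial_t b_\alpha$ times the $\epsilon$-prefactor, the $\epsilon^2$-iterates of the substitution ($\J(\nabla\psi\cdot\nabla(gb_\alpha))$, $[\J,g]G^\mu_0(gb_\alpha)$, $\epsilon^{-1}g\,\J\partial^\alpha$ of the nonlinear flux), and the $|D|$-versus-$G^\mu_0$ discrepancy.

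The hard part will be the estimation of $\widetilde R_3^{(\alpha)}$ in $H^{1/2}$: several of its pieces pair an order-one operator with a function of borderline Sobolev regularity, so a naive product bound would lose strictly more than half a derivative. The fix is to route these through the compensated estimates --- \Cref{L.commutator-tanh} for the $G^\mu_0((\J\zeta)\,\cdot\,)+\nabla\!\cdot\!((\J\zeta)\,\cdot\,)$ combination, \Cref{L.commut_order0} for the $[\J,\cdot]$ commutators (again exploiting the regularity of $J$), together with the regularizing gain of $\J$ and \Cref{L.control_P} --- in the same spirit as the good-unknown analysis of the full water waves system; it is precisely this loss that dictates the use of $\psia$ rather than $\partial^\alpha\psi$ in the energy functional~\eqref{eq.def-E}. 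The remaining, purely mechanical difficulty is to reorganize every remainder so that the high-order factor is $\norm{\zeta}_{H^N}$, $\norm{\nabla\psi}_{H^{N-1}}$, $\norm{\nabla\psi}_{H^{N-1/2}}$ or $\norm{\psia}_{H^{1/2}}$ while the companion factor sits at regularity $t_0+\cdot$; this is again achieved by Leibniz' rule, Sobolev interpolation and Young's inequality.
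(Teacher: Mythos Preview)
Your plan is correct and follows essentially the same route as the paper: differentiate, isolate the top-order factor via Leibniz, introduce the good unknown $\psia$ so that the term $\epsilon G^\mu_0((\J\zetaa)G^\mu_0\psi)$ cancels exactly against $\epsilon G^\mu_0((G^\mu_0\psi)\J\zetaa)$ in the first equation, and then in the second equation substitute $\partial_t\zetaa$ via~\eqref{eq.quasi_eq1} and reorganize to produce $\mfa_\J$, the convective term $\epsilon\J(\nabla\psi\cdot\nabla\psia)$, and the commutator $[\J,G^\mu_0\psi]G^\mu_0\psia$ handled by \Cref{L.commut_order0}. The paper's estimates for $\widetilde R_1^{(\alpha)}$ split the Leibniz sum exactly as you indicate (the cases $|\beta|\in\{0,1\}$ via \Cref{L.commutator-tanh} with $r=1-|\beta|$, the cases $2\le|\beta|\le N-1$ via \Cref{L.product-tame}), and the $\widetilde R_3^{(\alpha)}$ pieces you list --- the $\epsilon$-correction of $\partial_t g$ (which becomes $(G^\mu_0(\partial_t\psi+\zeta))(\J\zetaa)$), the contribution $(G^\mu_0\psi)\J\widetilde R_1^{(\alpha)}$, the iterates $\J(\nabla\psi\cdot\nabla(gb_\alpha))-(G^\mu_0\psi)\J\nabla\!\cdot\!(b_\alpha\nabla\psi)$, and the $G^\mu_0$-versus-$|D|$ discrepancy --- are precisely the paper's $\widetilde R_{3,iii}$, $(G^\mu_0\psi)\J\widetilde R_1$, $\widetilde R_{3,i}$, $\widetilde R_{3,ii}$, each estimated in $H^{1/2}$ by the tools you name.

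Two small remarks. First, for the low-order remainder $R_1^{(\alpha)}$ you do not need the regularizing property of $\J$ at all; only $\|\J\|_{H^s\to H^s}=\No{J}$ is used, since \Cref{L.commutator-tanh} already furnishes the two-derivative gain. Second, in $\widetilde R_{3,i}$ the combination $\J(\nabla\psi\cdot\nabla\{gb_\alpha\})-(G^\mu_0\psi)\J\nabla\!\cdot\!(b_\alpha\nabla\psi)$ does not directly fit the compensated form of \Cref{L.commutator-tanh}; the paper instead expands it as $[\J,g](\nabla\psi\cdot\J\nabla\zetaa)$ plus lower-order products, so that the half-derivative gain comes from \Cref{L.commut_order0} (applied to $\J$) together with the regularizing bound $\|\J\zetaa\|_{H^{1/2}}\le\N{\frac12}{J}\|\zeta\|_{H^N}$, rather than from the $G^\mu_0$-cancellation mechanism. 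Your final paragraph anticipates this, but be aware that the relevant compensation for these cubic pieces is the $[\J,\cdot]$ commutator, not \Cref{L.commutator-tanh}.
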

\begin{proof}We first focus on the first equation.
By the second estimate in \Cref{L.commutator-tanh} (with $r=0$) and the fact that $\Norm{\J}_{H^s\to H^s}=\No{J} $ and (see \Cref{L.control_P}) $\Norm{G^\mu_0}_{\Hdot^{s+1}\to H^s}=1$ for any $s\in\RR$ and $\mu>0$,  we get
\begin{align*}
\norm{ G^\mu_0 ((\J \zeta) G^\mu_0 \psi)+\nabla\cdot((\J \zeta)\nabla\psi) }_{H^{N-1}} \lesssim  \No{J} \norm{ \zeta }_{H^{t_{0}+1}} \norm{ \nabla \psi }_{H^{N-1}} + \No{J}\norm{ \zeta }_{H^{N}}  \norm{ \nabla \psi }_{H^{t_{0}}} .
\end{align*}
This provides the estimate for $R_1^{(\alpha)}$ for $|\alpha|\leq N-1$.
We consider now the case $|\alpha|=N$. We differentiate $\alpha$ times the first equation of~\eqref{RWW2}. We get
\begin{equation*}
\partial_t \partial^{\alpha} \zeta-G^\mu_0 \partial^{\alpha} \psi 
+\epsilon G^\mu_0 ((\J \partial^{\alpha} \zeta) G^\mu_0 \psi)+ \epsilon \nabla\cdot((\J \partial^{\alpha} \zeta)\nabla\psi) = \epsilon \underset{\substack{\beta+\gamma=\alpha \\ 0 \leq |\beta| \leq N-1 } }{\sum} A_{(\beta,\gamma)}\eqdef \epsilon\widetilde R_1^{(\alpha)}
\end{equation*}
where
\begin{equation*}
A_{(\beta,\gamma)} =C(\beta,\gamma) \left( G^\mu_0 (( \J \partial^{\beta} \zeta) G^\mu_0 \partial^{\gamma} \psi)+ \nabla\cdot( (\J \partial^{\beta} \zeta) \nabla \partial^{\gamma} \psi) \right).
\end{equation*} 
If $|\beta|=0$ or $|\beta|=1$ using the first estimate in \Cref{L.commutator-tanh} (with $s=0$ and $r=1-|\beta|$), we get
\begin{equation*}
\norm{ A_{(\beta,\gamma)} }_{L^2} \lesssim \norm{\J\partial^\beta\zeta }_{H^{t_{0}+2-|\beta|}} \norm{ \partial^{\gamma}\nabla \psi }_{H^{|\beta|-1}}\lesssim \No{J} \norm{ \zeta }_{H^{t_{0}+2}} \norm{ \nabla \psi }_{H^{N-1}}.
\end{equation*}
If $2 \leq |\beta| \leq N-1$, we obtain by the triangular inequality and the product estimate in \Cref{L.product-tame} with $s_1=N-|\beta|$, $s_2=t_0+1-|\gamma|$, $s_1'=t_0+2-|\beta|$, $s_2'=N-1-|\gamma|$, 
\begin{align*}
\norm{ A_{(\beta,\gamma)} }_{L^2} &\leq \norm{(\J \partial^{\beta} \zeta) (G^\mu_0 \partial^{\gamma} \psi) }_{H^1} + \norm{ (\J \partial^{\beta} \zeta)(\nabla \partial^{\gamma} \psi) }_{H^1}\\
&\lesssim  \norm{\J \zeta }_{H^{N}} \big( \norm{ G^\mu_0  \psi }_{H^{t_{0}+1}} +\norm{ \nabla  \psi }_{H^{t_{0}+1}}   \big) + \norm{\J \zeta }_{H^{t_{0}+2}} \big(\norm{ G^\mu_0 \psi }_{H^{N-1}} +\norm{ \nabla \psi }_{H^{N-1}} \big) \\
&\lesssim \No{J} \norm{ \nabla \psi }_{H^{t_{0}+1}} \norm{ \zeta }_{H^{N}} + \No{J} \norm{ \zeta }_{H^{t_{0}+2}} \norm{ \nabla \psi }_{H^{N-1}}.
\end{align*}
This concludes the estimate for $\widetilde R_1^{(\alpha)}$.
\medskip

We now focus on the second equation of~\eqref{RWW2}. First we notice that, by \Cref{L.product-tame},
\[\norm{\J \left( |\nabla\psi|^2- (G^\mu_0 \psi )^2 \right)}_{H^{N-1}}\lesssim \No{J} \norm{\nabla\psi}_{H^{t_0}}\norm{\nabla\psi}_{H^{N-1}}.\]
This provides the estimate for $R_2^{(\alpha)}$ for $|\alpha|\leq N-1$. Now we consider the case $|\alpha|=N$. 
Differentiating $\alpha$ times the second equation of~\eqref{RWW2}, we get 
\[
\partial_t \partial^{\alpha} \psi+\partial^{\alpha} \zeta +\epsilon \J  B_{(\alpha)}=\epsilon    \underset{\substack{\beta+\gamma=\alpha \\ 1 \leq |\beta| \leq N-1 } }{\sum} \J B_{(\beta,\gamma)}\eqdef \epsilon \widetilde R_{2,i}^{(\alpha)} \]
with
\[B_{(\alpha)}\eqdef  \nabla  \psi \cdot (\nabla \partial^{\alpha} \psi) - (G^\mu_0 \psi ) (G^\mu_0 \partial^{\alpha} \psi ) \]
and
\[ B_{(\beta,\gamma)} \eqdef C(\beta,\gamma) \left( (\nabla \partial^{\beta} \psi) \cdot (\nabla  \partial^{\gamma} \psi ) - (G^\mu_0 \partial^{\beta}  \psi ) (G^\mu_0 \partial^{\gamma} \psi ) \right).\]
Then, using the unknown $\psia \eqdef  \partial^{\alpha} \psi - \epsilon (G^\mu_0 \psi) (\J   \partial^\alpha\zeta)$, we can rewrite the previous equation as
\[\partial_t \psia + \zetaa + \epsilon (G^\mu_0 \partial_{t} \psi) (\J \zetaa)  + \epsilon (G^\mu_0 \psi)( \J \partial_{t} \zetaa )+ \epsilon \J  B_{(\alpha)} = \epsilon  \widetilde R_{2,i}^{(\alpha)} \]
and using~\eqref{eq.quasi_eq1} and reorganizing terms,
\begin{multline*}\partial_t \psia + \mathfrak{\tilde{a}}[\epsilon \J  \zeta, \epsilon\nabla\psi] \zetaa    
+ \epsilon \J  \big(  (\nabla  \psi )\cdot(\nabla  \psia)-(G^\mu_0 \psi ) (G^\mu_0 \psia) \big)\\
+ \epsilon (G^\mu_0 \psi)( \J  G^\mu_0 \psia   )
 = \epsilon \widetilde R_{2,i}^{(\alpha)} - \epsilon^2 (G^\mu_0 \psi)( \J \widetilde R_{1}^{(\alpha)}  )
\end{multline*}
where
\begin{align*}
\mathfrak{\tilde{a}}[\epsilon \J  \zeta, \psi] \zetaa = &\zetaa + \epsilon (G^\mu_0 \partial_{t} \psi) (\J \zetaa) - \epsilon^2 (G^\mu_0 \psi)( \J \nabla\cdot((\J \zetaa)\nabla\psi)  )\\
&+\epsilon^2 \J \big( (\nabla  \psi )\cdot \nabla  \{ (G^\mu_0 \psi) (\J   \zetaa)\} \big) - \epsilon^2 \J \big( (G^\mu_0 \psi ) (G^\mu_0  \{ (G^\mu_0 \psi) (\J   \zetaa)\}) \big) \\
=&\mfa_{\J}[\epsilon \zeta, \psi] \zetaa  + \epsilon (G^\mu_0 (\partial_{t} \psi+\zeta)) (\J \zetaa) - \epsilon^2 (G^\mu_0 \psi)( \J \nabla\cdot((\J \zetaa)\nabla\psi)  )\\
&+\epsilon^2 \J \big( (\nabla  \psi )\cdot \nabla  \{ (G^\mu_0 \psi) (\J   \zetaa)\} \big)\\
&- \epsilon^2 \J \big( (G^\mu_0 \psi ) \big((G^\mu_0-|D|)  \{ (G^\mu_0 \psi) (\J   \zetaa)\}\big) \big).
\end{align*}
Let us estimate each of the contributions. 
\begin{itemize}
\item We get from the second product estimates of \Cref{L.product-tame} (with $s=\frac12$, $s_1=N-\frac12-|\beta|$, $s_2= t_0+1-|\gamma|$, $s_1'=t_0+1-|\beta|$ $s_2'=N-\frac12-|\gamma|$ )  
and \Cref{L.control_P}
\[ \norm{\widetilde R_{2,i}^{(\alpha)}}_{H^{\frac12}} \leq \No{J}  \underset{\substack{\beta+\gamma=\alpha \\ 1 \leq |\beta| \leq N-1 } }{\sum}  \norm{B_{(\beta,\gamma)}}_{H^{\frac{1}{2}}} \lesssim \No{J} \norm{ \nabla \psi }_{H^{t_{0}+1}} \norm{ \nabla \psi }_{H^{N-\frac{1}{2}}}. \]
\item Denoting
\[ \widetilde R_{2,ii}^{(\alpha)} \eqdef - \J  \big(  (G^\mu_0 \psi ) (G^\mu_0 \psia) \big) + (G^\mu_0 \psi ) ( \J  G^\mu_0 \psia)\]
and by the commutator estimate in \Cref{L.commut_order0} with $s=1/2$ and \Cref{L.control_P}, we have
\[ \norm{\widetilde R_{2,ii}^{(\alpha)}  }_{H^{1/2}} \lesssim  \dotNi{J}\norm{\nabla \psi}_{H^{t_0+1}}\norm{\nabla \psia}_{H^{-1/2}}.\]
\item We have by \Cref{L.product} and \Cref{L.control_P}

\[\norm{ (G^\mu_0 \psi)( \J \widetilde R_{1}^{(\alpha)}  ) }_{H^{1/2}} \lesssim \norm{G^\mu_0 \psi}_{H^{t_0}} \norm{ \J \widetilde R_{1}^{(\alpha)}  }_{H^{1/2}} \leq \norm{\nabla \psi}_{H^{t_0}} \norm{ \J \widetilde R_{1}^{(\alpha)}  }_{H^{1/2}}\]
and by the previously obtained estimate on $\widetilde R_{1}^{(\alpha)}$,  and that $\Norm{\J}_{L^2\to H^{1/2}}= \N{\frac12}{J} $, 
\[\norm{ \J \widetilde R_{1}^{(\alpha)}  }_{H^{1/2}}\lesssim \N{\frac12}{J} \big( \No{J} \norm{ \nabla \psi }_{H^{t_{0}+1}} \norm{ \zeta }_{H^{N}} + \No{J} \norm{ \zeta }_{H^{t_{0}+2}} \norm{ \nabla \psi }_{H^{N-1}} \big).\]
\item Rewriting
\begin{align*} \widetilde R_{3,i}^{(\alpha)}&\eqdef \J \big( (\nabla  \psi )\cdot(\nabla  \{ (G^\mu_0 \psi) (\J   \zetaa)\}) \big)- (G^\mu_0 \psi)( \J \nabla\cdot((\J \zetaa)\nabla\psi)  ) \\
&  = \big[\J ,G^\mu_0 \psi \big]\big( (\nabla  \psi )\cdot(\J  \nabla  \zetaa)\big)\\
&\qquad + \J \big( (\nabla  \psi )\cdot(   (\J   \zetaa) (\nabla G^\mu_0 \psi)) \big)- (G^\mu_0 \psi)( \J \{ (\J \zetaa) (\Delta\psi) \} ),
\end{align*}
we find using as above \Cref{L.commut_order0} for the first contribution and \Cref{L.product} for the other ones
\begin{multline*}\norm{\widetilde R_{3,i}^{(\alpha)}}_{H^{1/2}} \lesssim \dotNi{J}\norm{\nabla \psi}_{H^{t_0+1}}\norm{\nabla \psi}_{H^{t_0}}\norm{\J  \nabla  \zetaa }_{H^{-1/2}}\\
	+ \No{J}\norm{\nabla \psi}_{H^{t_0+1}}\norm{\nabla \psi}_{H^{t_0}}\norm{\J   \zetaa }_{H^{1/2}} .
\end{multline*}
\item Then, we easily find, since for any $\xi\in\RR^+$, $|\tanh(\sqrt\mu|\xi|)-1|\leq |\tanh(|\xi|)-1| \lesssim \langle\xi\rangle^{-3/2}$, that
\begin{align*} \widetilde R_{3,ii}&\eqdef 
- \J\big(  (G^\mu_0 \psi ) (G^\mu_0  \{ (G^\mu_0 \psi) (\J   \zetaa)\})\big) + \J\big(  (G^\mu_0 \psi ) (|D| \{ (G^\mu_0 \psi) (\J   \zetaa)\})\big)\\
&= 
-\J\big(  (G^\mu_0 \psi ) (|D|(\tanh(\sqrt\mu |D|)-1)  \{ (G^\mu_0 \psi) (\J   \zetaa)\})\big)
\end{align*}
can be estimated as 
\[ \norm{\widetilde R_{3,ii}^{(\alpha)}}_{H^{1/2}}\lesssim \No{J} \norm{\nabla \psi}_{H^{t_0}}^2\norm{\J\zetaa}_{L^2}.\]
\item Finally, we set
\begin{align*} \widetilde R_{3,iii}^{(\alpha)}&\eqdef  \big(G^\mu_0 (\partial_t\psi+\zeta) \big) (\J \zetaa)  \\
&= -\frac\epsilon{2 }   \big(G^\mu_0 \J \left( |\nabla\psi|^2- (G^\mu_0 \psi )^2 \right) \big) (\J \zetaa)  
\end{align*}
(where we used the first equation of~\eqref{RWW2}) and by \Cref{L.product} 
  \[ \norm{\widetilde R_{3,iii}^{(\alpha)}}_{H^{1/2}}\lesssim \epsilon \No{J} \norm{\nabla \psi}_{H^{t_0+1}} \norm{\nabla \psi}_{H^{t_0}} \norm{\J\zetaa}_{H^{1/2}}.\]
\end{itemize}
We conclude combining the above estimates and the fact that $\norm{\J \zetaa }_{H^{1/2}}  \leq \N{\frac12}{J} \norm{\zeta}_{H^N}$.
\end{proof}

\begin{Remark}\label{R.trueRT}
Several contributions in $\widetilde R_{3}^{(\alpha)}$ (but also in $\widetilde R_{2}^{(\alpha)}$ via the control of $\norm{\nabla\psi}_{H^{N-\frac12}}$) require the regularizing properties of $\J$ to be controlled. 
A more careful analysis shows that most ---but not all--- contributions could be tackled through additional terms on the Rayleigh--Taylor operator. Shortly put, if we set $\J=\Id$,
 then \Cref{P.quasi} holds with suitable estimates on the remainders (by which we mean controlled by the energy functional) if we replace $\mfa_{\Id}[\epsilon\zeta,\epsilon\nabla\psi]\zeta_{(\alpha)}$ with $\widetilde{\mfa_{\Id}}[\epsilon\zeta,\epsilon\nabla\psi]\zeta_{(\alpha)}+\mathfrak{b}_\alpha(\zeta,\psi)$ where
\begin{multline}\label{eq.trueRT}
	\widetilde{\mfa_{\Id}}[\epsilon\zeta,\epsilon\nabla\psi]\zeta = \Big( 1 + \epsilon (G^\mu_0 \partial_t\psi)   + \epsilon^2  (\nabla  \psi )\cdot (\nabla G^\mu_0 \psi) -\epsilon^2 (G^\mu_0 \psi)( \Delta\psi) \Big) \, \zetaa\\
- \epsilon^2 \Big((G^\mu_0 \psi) |D| \big\{ (G^\mu_0 \psi)  \zetaa \big\}\Big)   
 \end{multline}
 and
 \begin{equation} 
 {\mathfrak{b}}_\alpha(\zeta,\psi)
 =\epsilon \hspace{-0.5cm} \sum_{|\beta|=1,\ \beta\leq\alpha} \binom{\alpha}{\beta} \Big( (G^\mu_0 \partial^\beta\psi) (G^\mu_0\partial^{\alpha-\beta} \psi )+ \epsilon(G^\mu_0 \psi) G^\mu_0\big((G^\mu_0 \partial^\beta \psi)  (\partial^{\alpha-\beta} \zeta) \big)\Big).
 \end{equation}
The contribution ${\mathfrak{b}}_\alpha(\zeta,\psi)$ is ``bad'' in the sense that it cannot be estimated as an order-zero term and has no particular skew-symmetric structure (when tested against $G^\mu_0 \psia$). It can be seen as the genuine source of instability for the system \eqref{RWW2}.
\end{Remark}

\subsubsection{Energy estimates}\label{S.energy-estimates}

This section is devoted to the proof of energy estimates on smooth solutions to~\eqref{RWW2}.
We start with some elementary results which provide useful tools for the comparison of the energy functional, $\cE^N(\zeta,\psi)$, defined in~\eqref{eq.def-E}, and suitable norms of $(\zeta,\psi)$. 

\begin{Lemma}\label{P.psi_controls}
Let $d\in\{1,2\}$, $t_{0}>\frac{d}{2}$ and $N \in \NN$ with $N \geq t_{0}+\frac{3}{2}$. There exists $C>0$ such that for any $\epsilon\geq 0$, $\mu \geq 1$, $\zeta\in H^N(\RR^d)$ and $\psi\in\Hdot^{N+\frac12}(\RR^d)$, and $\alpha \in \NN^{d}$ with $|\alpha|=N$,
\begin{align*}
&\norm{\nabla \psi }_{H^{t_{0}}} \leq \norm{ \mfP \psi }_{H^{t_0+1/2}} \leq \norm{ \mfP \psi }_{H^{N-1}} ,\\
&\norm{ \nabla \psi }_{H^{N-1}} \leq C\, \big(  \norm{ \mfP \psi }_{H^{N-1}}+\sup_{|\beta|=N}\norm{ \mfP \psib}_{L^2}+\epsilon \No{J}\norm{ \nabla \psi }_{H^{t_{0}}}   \norm{ \zeta }_{H^{N}}  \big),\\
&\norm{ \psia }_{H^{\frac{1}{2}}} \leq C\, \big(  \norm{ \mfP \psi }_{H^{N-1}} + \sup_{|\beta|=N}\norm{ \mfP \psib}_{L^2}+\epsilon \No{J}\norm{ \nabla \psi }_{H^{t_{0}}}    \norm{ \zeta }_{H^{N}}  \big),\\
&\norm{ \partial^{\alpha} \psi }_{H^{\frac{1}{2}}} \leq C\, \big( \norm{ \mfP \psia}_{L^2} +\epsilon  \N{\frac12}{J}\norm{ \nabla \psi }_{H^{t_{0}}}  \norm{ \zeta }_{H^{N}}  \big),
\end{align*}
where we recall that  $\psia \eqdef \partial^{\alpha} \psi - \epsilon (G^\mu_0 \psi) (\J \partial^{\alpha} \zeta)$.
\end{Lemma}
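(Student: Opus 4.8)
The plan is to prove the four estimates of \Cref{P.psi_controls} in the order in which they are stated, as each one feeds into the next.

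\paragraph{First estimate.} This is immediate from \Cref{L.control_P} together with the definition of the Sobolev norms: one has $\norm{\nabla\psi}_{H^{t_0}}\leq\norm{\mfP\psi}_{H^{t_0+1/2}}$ by the first inequality of \Cref{L.control_P} (with $s=t_0+1/2$), and since $t_0+1/2\leq N-1$ (recall $N\geq t_0+3/2$), the Fourier-side monotonicity of Sobolev norms gives $\norm{\mfP\psi}_{H^{t_0+1/2}}\leq\norm{\mfP\psi}_{H^{N-1}}$.

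\paragraph{Second estimate.} First I would write, for $|\alpha|=N$, $\partial^\alpha\psi=\psia+\epsilon(G^\mu_0\psi)(\J\partial^\alpha\zeta)$ and estimate $\norm{\nabla\psi}_{H^{N-1}}^2\lesssim\norm{\mfP\psi}_{H^{N-1}}^2+\sum_{|\alpha|=N}\norm{\nabla\partial^\alpha\psi}_{H^{-1}}^2$; since $\norm{\nabla\partial^\alpha\psi}_{H^{-1}}\leq\norm{\partial^\alpha\psi}_{L^2}$ it suffices to control $\norm{\partial^\alpha\psi}_{L^2}$ for $|\alpha|=N$. Using again \Cref{L.control_P}, $\norm{\partial^\alpha\psi}_{L^2}\lesssim\norm{\nabla\partial^{\alpha'}\psi}_{H^{-1/2}}$ for $\alpha'$ with $|\alpha'|=N-1$, hence $\lesssim\norm{\mfP\partial^{\alpha'}\psi}_{L^2}\leq\sup_{|\beta|=N-1}\norm{\mfP\partial^\beta\psi}_{L^2}$ — wait, the exponent needs care: more directly, $\norm{\partial^\alpha\psi}_{L^2}=\norm{\nabla\partial^{\alpha''}\psi}_{L^2}$ for suitable $|\alpha''|=N-1$, and this is $\leq\norm{\mfP\partial^{\alpha''}\psi}_{H^{1/2}}$, which is controlled by $\norm{\mfP\psi}_{H^{N-1}}$ plus, after peeling one more derivative, $\sup_{|\beta|=N}\norm{\mfP\psib}_{L^2}$. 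For the nonlinear term I apply \Cref{L.product} to $(G^\mu_0\psi)(\J\partial^\alpha\zeta)$, using $\Norm{G^\mu_0\psi}_{H^{t_0}}\leq\norm{\nabla\psi}_{H^{t_0}}$ and $\Norm{\J\partial^\alpha\zeta}_{L^2}\leq\No{J}\norm{\zeta}_{H^N}$, and noting the product maps $H^{t_0}\times L^2\to L^2$ (acceptable since we only need $L^2$ control). Collecting gives the stated bound.

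\paragraph{Third estimate.} Since $\psia=\partial^\alpha\psi-\epsilon(G^\mu_0\psi)(\J\partial^\alpha\zeta)$, I bound $\norm{\psia}_{H^{1/2}}\leq\norm{\partial^\alpha\psi}_{H^{1/2}}+\epsilon\norm{(G^\mu_0\psi)(\J\partial^\alpha\zeta)}_{H^{1/2}}$. The first term: $\norm{\partial^\alpha\psi}_{H^{1/2}}\lesssim\norm{\nabla\psi}_{H^{N-1}}$ (since $|\alpha|=N$ and $N-1\geq N-1/2-1/2$), so the already-proved second estimate applies. The nonlinear term is handled by \Cref{L.product} exactly as above but now targeting $H^{1/2}$: $\norm{(G^\mu_0\psi)(\J\partial^\alpha\zeta)}_{H^{1/2}}\lesssim\norm{G^\mu_0\psi}_{H^{t_0}}\norm{\J\partial^\alpha\zeta}_{H^{1/2}}\lesssim\norm{\nabla\psi}_{H^{t_0}}\N{\frac12}{J}\norm{\zeta}_{H^N}$; since $\N{\frac12}{J}\lesssim\No{J}$ (as $J$ is regularizing of order $-1$, hence bounded), this fits the stated form.

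\paragraph{Fourth estimate.} Here I simply read off $\partial^\alpha\psi=\psia+\epsilon(G^\mu_0\psi)(\J\partial^\alpha\zeta)$ and bound $\norm{\partial^\alpha\psi}_{H^{1/2}}\leq\norm{\psia}_{H^{1/2}}+\epsilon\norm{(G^\mu_0\psi)(\J\partial^\alpha\zeta)}_{H^{1/2}}$; the first term is $\lesssim\norm{\mfP\psia}_{L^2}$ by \Cref{L.control_P} (since $\psia$ is, up to the constant mode, controlled by $\mfP\psia$ in the relevant norm — more precisely $\norm{\psia}_{H^{1/2}}\leq\norm{\mfP\psia}_{L^2}$ when $\mu\ge 1$, using $\langle\xi\rangle\lesssim |\xi|\tanh(\sqrt\mu|\xi|)\langle\xi\rangle^{?}$ — actually one uses the uniform bound from \Cref{L.control_P} that $\norm{\mfP f}_{H^s}\leq\norm{f}_{H^{s+1/2}}$ read in reverse via the first inequality there), and the second is handled by \Cref{L.product} as before, giving the factor $\N{\frac12}{J}$.

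\paragraph{Main obstacle.} The delicate point throughout is bookkeeping the non-uniformity in $\mu$: the operator $\mfP$ controls $\nabla\psi$ in lower norm uniformly, but the reverse bound $\norm{\mfP f}_{H^s}\lesssim\norm{\nabla f}_{H^{s-1/2}}$ degrades like $\mu^{1/4}$. The estimates are stated so that $\mfP$ always appears on the side one is bounding \emph{by}, never the side one bounds, so in principle only the ``good'' direction of \Cref{L.control_P} is needed; I would double-check each inequality to confirm that this is genuinely the case and that no hidden $\mu$-dependent constant sneaks in, particularly in the fourth estimate where one passes from $\norm{\psia}_{H^{1/2}}$ to $\norm{\mfP\psia}_{L^2}$. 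The rest is routine application of \Cref{L.product} and \Cref{L.control_P}.
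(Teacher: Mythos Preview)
Your first and second estimates are essentially fine (if somewhat muddled in presentation for the second; the paper uses the clean Fourier-side splitting $\norm{\partial^\beta\psi}_{L^2}\leq\norm{\tfrac{\langle D\rangle^{1/2}}{|D|}\mfP\partial^\beta\psi}_{L^2}\leq\norm{\mfP\psi}_{H^{N-1}}+\norm{\mfP\partial^\beta\psi}_{H^{-1/2}}$ and then substitutes $\partial^\beta\psi=\psib+\epsilon(G^\mu_0\psi)(\J\partial^\beta\zeta)$ only in the last term, which avoids your back-and-forth).

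There is a genuine gap in your third estimate. You claim $\norm{\partial^\alpha\psi}_{H^{1/2}}\lesssim\norm{\nabla\psi}_{H^{N-1}}$ for $|\alpha|=N$, but this is off by half a derivative: writing $\partial^\alpha=\partial_j\partial^{\alpha'}$ with $|\alpha'|=N-1$ gives $\norm{\partial^\alpha\psi}_{H^{1/2}}\leq\norm{\nabla\psi}_{H^{N-1/2}}$, not $H^{N-1}$. Since the second estimate only controls $\norm{\nabla\psi}_{H^{N-1}}$, your argument does not close. The paper avoids this by \emph{not} passing through $\norm{\partial^\alpha\psi}_{H^{1/2}}$: instead it bounds $\norm{\psia}_{L^2}$ directly (by the same mechanism as $\norm{\partial^\beta\psi}_{L^2}$ above), then uses $\norm{\nabla\psia}_{H^{-1/2}}\leq\norm{\mfP\psia}_{L^2}$ from \Cref{L.control_P}, and combines $\norm{\psia}_{H^{1/2}}\lesssim\norm{\psia}_{L^2}+\norm{\nabla\psia}_{H^{-1/2}}$. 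Note also that your fallback claim $\N{\frac12}{J}\lesssim\No{J}$ is false (the inequality goes the other way: $\No{J}\leq\N{\frac12}{J}$), so you cannot absorb the $H^{1/2}$ product bound on the nonlinear piece into the $\No{J}$ appearing in the stated estimate.

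Your fourth estimate has the same low-frequency problem: $\norm{\psia}_{H^{1/2}}\lesssim\norm{\mfP\psia}_{L^2}$ is \emph{false} in general, precisely because $\mfP$ vanishes at zero frequency while $\langle\cdot\rangle^{1/2}$ does not; \Cref{L.control_P} only gives $\norm{\nabla f}_{H^{-1/2}}\leq\norm{\mfP f}_{L^2}$. The paper instead bounds $\norm{\mfP\partial^\alpha\psi}_{L^2}\leq\norm{\mfP\psia}_{L^2}+\epsilon\norm{(G^\mu_0\psi)(\J\partial^\alpha\zeta)}_{H^{1/2}}$ (using $\norm{\mfP g}_{L^2}\leq\norm{g}_{H^{1/2}}$ on the nonlinear piece, which is the ``good'' direction), and recovers the low-frequency part of $\norm{\partial^\alpha\psi}_{H^{1/2}}$ from the already-established $L^2$ control. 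Your own ``main obstacle'' paragraph correctly flags this as the delicate point, but the resolution you sketch does not work.
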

\begin{proof}
The first inequality has been proved in \Cref{L.control_P}.
Then the following holds for any $\beta\in \NN^d$ with $|\beta|=N$:
\begin{align*}
\norm{\partial^{\beta} \psi }_{L^2} &\leq \norm{ \tfrac{(1+|D|^2)^{1/4}}{|D|} \mfP \partial^{\beta} \psi }_{L^2} \leq \norm{ (\tfrac{1}{|D|} + \tfrac{1}{(1+|D|^2)^{1/4}}) \mfP \partial^{\beta} \psi }_{L^2} \\
&\leq \norm{\mfP\psi}_{H^{N-1}} +  \norm{   \mfP \partial^{\beta} \psi }_{H^{-1/2}}
\end{align*}
and, since $\partial^{\beta} \psi=\psib + \epsilon (G^\mu_0 \psi) (\J \partial^{\beta} \zeta)$,
\[\norm{ \mfP \partial^{\beta} \psi }_{H^{-1/2}}
\leq \norm{ \mathfrak{P} \psib }_{H^{-1/2}} + \epsilon \norm{ (G^\mu_0 \psi) \J \partial^{\beta} \zeta }_{L^2}\lesssim \norm{ \mathfrak{P} \psib }_{L^2} + \epsilon \No{J}\norm{ \nabla \psi}_{H^{t_0}} \norm{\partial^{\beta} \zeta }_{L^2}  
\]
where we use \Cref{L.control_P} and the continuous Sobolev embedding $H^{t_0}(\RR^d)\subset L^\infty(\RR^d)$. The control of $\norm{\partial^\beta\psi}_{L^2}$, and hence $\norm{\nabla\psi}_{H^{N-1}}$ using the first inequality, immediately follows. We infer the same control on $\norm{\psia}_{L^2}$ which, combined with $\norm{\nabla\psia}_{H^{-1/2}} \leq\norm{\mfP\psia}_{L^2}$ (see \Cref{L.control_P}), yields the third inequality.
Finally,
\[
\norm{ \nabla\partial^{\alpha} \psi }_{H^{-1/2}} \leq \norm{\mfP \partial^{\alpha} \psi }_{L^2}\leq \norm{\mfP \psia }_{L^2} +  \epsilon \norm{ (G^\mu_0 \psi) \J \partial^{\alpha} \zeta }_{H^{1/2}}
\]
and, by the product estimate in \Cref{L.product}
\[ \norm{ (G^\mu_0 \psi ) \J \partial^{\alpha} \zeta }_{H^{1/2}}\lesssim \norm{\nabla\psi}_{H^{t_0}}\norm{\J\zeta}_{H^{N+1/2}}\leq \N{\frac12}{J} \norm{\nabla\psi}_{H^{t_0}} \norm{ \zeta }_{H^{N}} ,\]
and the fourth inequality follows. 
\end{proof}

Now we are ready to prove the following key estimates.
\begin{Proposition}\label{P.energy_est}
Let $d\in\{1,2\}$, $t_{0}>\frac{d}{2}$, $N \in \NN$ with $N \geq t_{0}+2$, $\mfa_\star>0$, $M_J,M_U>0$. There exists $C>0$ such that for any $\epsilon \geq 0$, $\mu \geq 1$, regular rectifier $\J$ (see \Cref{D.J}) with $\Ni{J} \leq  M_J$, and any  $(\zeta,\psi)$ smooth solution to~\eqref{RWW2} on the time interval $I\subset \RR$ and satisfying for any $t\in I$
\[ \epsilon M(t)\eqdef \epsilon \big( \norm{ \zeta(t,\cdot) }_{H^{t_{0}+2}} + \norm{ \nabla \psi(t,\cdot) }_{H^{t_{0}+1}} \big) \leq M_U\]
and the Rayleigh--Taylor condition (with $\mfa_{\J}$ defined in~\eqref{eq.def-RT})
\[ \forall f\in L^2(\RR^d) \text{ , } \left( f , \mfa_\J[\epsilon \zeta(t,\cdot),\epsilon\nabla \psi(t,\cdot)]  f\right)_{L^2} \geq \mfa_\star \norm{ f }_{L^2}^{2},\]
one has for any $t\in I$,
\begin{equation*}
\frac{\dd}{\dd t} \cE^{N}(\zeta(t,\cdot),\psi(t,\cdot)) \ \leq\  C \left( \epsilon M(t) + \dotN{\frac12}{J}^{2} \epsilon^2 M(t)^2 \right) \cE^{N}(\zeta(t,\cdot),\psi(t,\cdot))\,,
\end{equation*}
where the functional $\cE^{N}$ is defined in~\eqref{eq.def-E}.
\end{Proposition}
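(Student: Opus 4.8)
The proof is a differential energy inequality obtained by differentiating $\cE^{N}$ along a smooth solution and feeding in the quasilinear identities of \Cref{P.quasi}, together with the comparison bounds of \Cref{P.psi_controls}. Throughout one uses that, under the Rayleigh--Taylor condition and $\epsilon M\leq M_U$, the functional controls the natural norms, namely $\norm{\zeta}_{H^{N}}^{2}+\norm{\mfP\psi}_{H^{N-1}}^{2}+\sup_{|\alpha|=N}\norm{\mfP\psia}_{L^2}^{2}\lesssim_{\mfa_\star,M_J}\cE^{N}(\zeta,\psi)$ directly from \eqref{eq.def-E}, and that $\norm{\nabla\psi}_{H^{N-1}}$, $\norm{\nabla\psi}_{H^{N-1/2}}$, $\norm{\psia}_{H^{1/2}}$, $\norm{\partial^{\alpha}\psi}_{H^{1/2}}$ are all $\lesssim C(M_J)(1+\epsilon M)\,\cE^{N}(\zeta,\psi)^{1/2}$ by \Cref{P.psi_controls} and \Cref{L.control_P}. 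One also records the bound $\norm{\mfa_{\J}[\epsilon\zeta,\epsilon\nabla\psi]\zetaa}_{L^2}\lesssim C(M_J,M_U)\norm{\zeta}_{H^{N}}$, exactly as in the preliminary estimates of the proof of \Cref{T.WP-delta}.

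\textbf{Low-order terms ($|\alpha|\le N-1$).} Here $\frac{\dd}{\dd t}\big(\norm{\partial^{\alpha}\zeta}_{L^2}^{2}+\norm{\mfP\partial^{\alpha}\psi}_{L^2}^{2}\big)=2(\partial_t\partial^{\alpha}\zeta,\partial^{\alpha}\zeta)_{L^2}+2(G^\mu_0\partial_t\partial^{\alpha}\psi,\partial^{\alpha}\psi)_{L^2}$; substituting \eqref{eq.quasi_eq1_low}--\eqref{eq.quasi_eq2_low} and using that $G^\mu_0$ is symmetric, the terms $(G^\mu_0\partial^{\alpha}\psi,\partial^{\alpha}\zeta)_{L^2}$ cancel and one is left with $2\epsilon(R_{1}^{(\alpha)},\partial^{\alpha}\zeta)_{L^2}+2\epsilon(R_{2}^{(\alpha)},G^\mu_0\partial^{\alpha}\psi)_{L^2}$, which by the bounds on $R_{1}^{(\alpha)},R_{2}^{(\alpha)}$ in \Cref{P.quasi} and \Cref{P.psi_controls} is $\lesssim \epsilon M(t)\,\cE^{N}$ after summation.

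\textbf{Top-order terms ($|\alpha|=N$).} Write $\mfa=\mfa_{\J}[\epsilon\zeta,\epsilon\nabla\psi]$; note that the only asymmetric piece of $\mfa$ is its second summand, so $\mfa-\mfa^{\star}=-\epsilon[m_{G^\mu_0\J\zeta},\J]$ and $\mfa+\mfa^{\star}=2\mfa+\epsilon[m_{G^\mu_0\J\zeta},\J]$. One computes $\frac{\dd}{\dd t}\big[(\zetaa,\mfa\zetaa)_{L^2}+\norm{\mfP\psia}_{L^2}^{2}\big]=(\partial_t\zetaa,(\mfa+\mfa^{\star})\zetaa)_{L^2}+(\zetaa,(\partial_t\mfa)\zetaa)_{L^2}+2(G^\mu_0\psia,\partial_t\psia)_{L^2}$ and plugs in \eqref{eq.quasi_eq1}--\eqref{eq.quasi_eq2}. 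The key point is that the leading contribution $2(G^\mu_0\psia,\mfa\zetaa)_{L^2}$ produced by $\partial_t\zetaa$ (tested against $\mfa\zetaa$) cancels exactly the contribution $-2(G^\mu_0\psia,\mfa\zetaa)_{L^2}$ produced by $\partial_t\psia$ (tested against $G^\mu_0\psia$). The remainder is bounded as follows: (i) the asymmetry term $\epsilon(\partial_t\zetaa,[m_{G^\mu_0\J\zeta},\J]\zetaa)_{L^2}$ is handled as a gain-of-one-derivative commutator via \Cref{L.commut_order0} (using that $\J$ is regular and $G^\mu_0\J\zeta\in H^{t_0+2}$); (ii) $(\zetaa,(\partial_t\mfa)\zetaa)_{L^2}$, since $\mfa$ carries prefactors $\epsilon$ and $\epsilon^{2}$ while $\partial_t\zeta=G^\mu_0\psi+\cO(\epsilon)$ and $\partial_t\psi=-\zeta+\cO(\epsilon)$, is $\lesssim(\epsilon M+\dotN{\frac12}{J}^{2}\epsilon^{2}M^{2})\cE^{N}$, the $\dotN{\frac12}{J}^{2}$ arising from the time derivative of the order-one-in-the-middle third summand of $\mfa$, controlled exactly as in the proof of \Cref{T.WP-delta}; (iii) the genuine remainders $2\epsilon(\widetilde R_{1}^{(\alpha)},\mfa\zetaa)_{L^2}$, $2\epsilon(G^\mu_0\psia,\widetilde R_{2}^{(\alpha)})_{L^2}$ and $2\epsilon^{2}(G^\mu_0\psia,\widetilde R_{3}^{(\alpha)})_{L^2}$ are estimated directly from the bounds of \Cref{P.quasi} together with the comparison estimates above and $\norm{G^\mu_0\psia}_{H^{-1/2}}\le\norm{\mfP\psia}_{L^2}$; (iv) the two transport-type terms $-2\epsilon(\nabla\cdot((\J\zetaa)\nabla\psi),\zetaa)_{L^2}$ and $-2\epsilon(G^\mu_0\psia,\J(\nabla\psi\cdot\nabla\psia))_{L^2}$. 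Summing over the finitely many $\alpha$ with $|\alpha|=N$ and combining with the low-order estimate yields $\frac{\dd}{\dd t}\cE^{N}\le C(\epsilon M(t)+\dotN{\frac12}{J}^{2}\epsilon^{2}M(t)^{2})\cE^{N}$.

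\textbf{Main obstacle.} The transport terms in (iv) are the delicate point: each is a priori of order $N+1$ because of the factors $\nabla\zetaa$ (after integrating by parts the divergence) and $\nabla\psia$. For the first one, integration by parts and the identity $(\J\zetaa)\nabla\zetaa=\nabla\big((\J\zetaa)\zetaa\big)-(\nabla\J\,\zetaa)\zetaa$ reduces it to $-2\epsilon\int\Delta\psi\,(\J\zetaa)\zetaa-2\epsilon\int\nabla\psi\cdot(\nabla\J\,\zetaa)\zetaa$, both of order zero --- and here the hypothesis that $\J$ is a \emph{regular} rectifier (so that $\nabla\J$ is regularizing of order $-1$, not just $\J$) is exactly what is needed; for the second one, one rewrites it as $-2\epsilon(\J G^\mu_0\psia,\nabla\psi\cdot\nabla\psia)_{L^2}$ and uses that $\J$ is regularizing of order $-1$ to place $\J G^\mu_0\psia$ in $H^{1/2}$ and pair it against $\nabla\psi\cdot\nabla\psia\in H^{-1/2}$, getting $\lesssim\epsilon C(M_J)M\norm{\mfP\psia}_{L^2}^{2}$. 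This cancellation/compensation mechanism is the analogue, for the rectified system, of the good-unknown manipulations for the water-waves system; the rest of the bookkeeping (ensuring that every half-derivative lost on $\psi$ is paid by $\mfP$ and never worsens the $\mu$-dependence) is routine via \Cref{P.psi_controls} and \Cref{L.control_P}.
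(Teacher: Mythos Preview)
Your overall architecture mirrors the paper's (same quasilinear identities, same pairing scheme, same cancellation of the leading $(G^\mu_0\psia,\mfa\zetaa)$ terms), and the low-order part as well as items (i)--(iii) are handled correctly. The genuine gap is in item (iv): your estimates on the transport terms use the \emph{regularizing strength} of $\J$ rather than \emph{commutator structure}, and this costs you exactly the improvement that distinguishes the present ``large-time'' proposition from the ``short-time'' \Cref{P.WP-short-time}.

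Concretely, for the first transport term your reduction produces $\int\nabla\psi\cdot(\J\nabla\zetaa)\,\zetaa$, whose control requires $\norm{\J\nabla\zetaa}_{L^2}\le\N{1}{J}\,\norm{\zetaa}_{L^2}$; for the second, placing $\J G^\mu_0\psia\in H^{1/2}$ likewise costs a factor $\N{1}{J}$. (Your parenthetical that ``$\nabla\J$ is regularizing of order $-1$'' conflates the composition $\J\circ\nabla$ --- an operator of order $0$ with norm $\N{1}{J}$ --- with the symbol gradient $\nabla J$, which is the object entering commutator lemmas.) But $\N{1}{J}=\norm{\langle\cdot\rangle J}_{L^\infty}$ is \emph{not} controlled by $M_J=\Ni{J}$; in the intended application $\J=J(\delta\cdot)$ one has $\Ni{J(\delta\cdot)}\lesssim M_J$ uniformly in $\delta$, whereas $\N{1}{J(\delta\cdot)}\sim\delta^{-1}$. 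Your bound therefore yields $\tfrac{\dd}{\dd t}\cE^N\lesssim\epsilon\,\delta^{-1}M\,\cE^N$, i.e.\ the short-time scaling, and not the claimed $\epsilon M+\dotN{\frac12}{J}^{2}\epsilon^2 M^2$.

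The paper's remedy is to symmetrize \emph{before} estimating. For the $\zeta$-transport term (tested against the identity part of $\mfa$),
\[
\big(\nabla\cdot((\J\zetaa)\nabla\psi),\zetaa\big)_{L^2}=\tfrac12\big((\J\zetaa)\Delta\psi,\zetaa\big)_{L^2}-\tfrac12\big([\J,\nabla\psi]\cdot\nabla\zetaa,\zetaa\big)_{L^2},
\]
and an analogous identity involving $[\J,\nabla\psi]$ and $[\T^\mu,\nabla\psi]$ handles the $\psi$-transport term. These commutators are then bounded via \Cref{L.commut_order0} and \Cref{L.commut_Tmu} with constants depending only on $\Ni{J}\le M_J$; this is precisely where the ``regular rectifier'' hypothesis $\langle\cdot\rangle\nabla J\in L^\infty$ is actually used. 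A secondary point you glossed over is that the first transport term is tested against the full $\mfa\zetaa$, not just $\zetaa$; the paper treats the $\epsilon$- and $\epsilon^2$-pieces of $\mfa-\Id$ separately (the last one again through a commutator decomposition), and it is there that the explicit $\dotN{\frac12}{J}^{2}\epsilon^2 M^2$ contribution in the final inequality arises.
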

\begin{proof}In this proof, we denote by $C$ a constant which depends uniquely on $N$ and $t_0$, and by $C_{M}$ a constant which depends additionally and non-decreasingly on $M_{U}$ and $M_{J}$. They vary from line to line.
Let $\alpha\in \NN^d$ and consider \Cref{P.quasi}. 

When $|\alpha| \leq N-1$, we sum the $L^{2}(\RR^d)$ inner product of~\eqref{eq.quasi_eq1_low}
 against  $\partial^{\alpha} \zeta$ and the $L^{2}(\RR^d)$ inner product of~\eqref{eq.quasi_eq2_low} in with $G^\mu_0 \partial^{\alpha} \psi $. It follows, using Cauchy-Schwarz inequality and \Cref{L.control_P},
\begin{multline}\label{eq.energy-est_low}
\frac{\dd}{\dd t} \Big(\norm{\partial^{\alpha} \zeta}_{L^2}^2+\norm{\partial^{\alpha} \mfP\psi}_{L^2}^2 \Big) \\
\leq  \epsilon\,
C \, \No{J}\  \Big( \norm{ \nabla \psi }_{H^{t_{0}+1}} \norm{ \zeta}_{H^{N}}^2 + \norm{ \zeta }_{H^{t_{0}+2}} \norm{ \zeta}_{H^{N}} \norm{ \nabla \psi }_{H^{N-1}} + \norm{ \nabla \psi }_{H^{t_{0}+1}} \norm{ \nabla \psi }_{H^{N-1}}^2 \Big).
\end{multline}

When $|\alpha|=N$, we sum the $L^{2}(\RR^d)$ inner product of~\eqref{eq.quasi_eq1}
against  $\mfa_{\J}[\epsilon \zeta, \epsilon\nabla\psi] \zetaa$ and the $L^{2}(\RR^d)$ inner product of~\eqref{eq.quasi_eq2} with $G^\mu_0 \psia $,
where we recall the notations $\zetaa\eqdef \partial^{\alpha} \zeta$ and $\psia\eqdef   \partial^{\alpha} \psi - \epsilon (G^\mu_0 \psi) (\J   \zetaa)$.
This yields
\begin{subequations}
\label{eq.energy-est}
\begin{align}
\nonumber&\frac12\frac{\dd}{\dd t} \left( \big(  \zetaa , \mfa_{\J}[\epsilon \zeta, \epsilon\nabla\psi] \zetaa \big)_{L^2} + \norm{ \mfP \psia }_{L^2}^2 \right)\\
\label{C1}    & \hspace{3em} = \tfrac12 \big( \zetaa ,\left[\partial_{t},\mfa_{\J}[\epsilon \zeta, \epsilon\nabla\psi] \right]  \zetaa \big)_{L^2} +
  \tfrac\epsilon2 \big( \left[  \J ,  G^\mu_0 \J \zeta  \right] \partial_{t} \zetaa , \zetaa \big)_{L^2}  \\
\label{C2}  & \hspace{4em} - \epsilon \big( \nabla\cdot((\J \zetaa)\nabla\psi) , \mfa_{\J}[\epsilon\zeta, \epsilon\nabla\psi]  \zetaa\big)_{L^2}- \epsilon \big(\J \left( \nabla \psi \cdot \nabla \psia \right) , G^\mu_0 \psia \big)_{L^2} \\
\label{C3}  &\hspace{4em} 
+ \epsilon \big( \widetilde R_{1}^{(\alpha)}, \mfa_{\J}[\epsilon \zeta, \epsilon\nabla\psi]  \zetaa)_{L^2} +\epsilon \big( \widetilde R_2^{(\alpha)}+ \epsilon \widetilde R_{3}^{(\alpha)}, G^\mu_0 \psia\big)_{L^2}.
  \end{align}
  \end{subequations}
  We now estimate each term on the right-hand side of~\eqref{eq.energy-est}.
  
{\em Contributions in~\eqref{C1}.} By direct inspection, we find
\begin{multline*}\left[\partial_{t},\mfa_{\J}[\epsilon  \zeta, \epsilon\nabla\psi] \right] \zetaa  = \epsilon (G^\mu_0  \J \partial_t \zeta) \J  \zetaa \\
- \epsilon^2\J \Big((G^\mu_0 \partial_t  \psi) |D| \big\{ (G^\mu_0 \psi) (\J \zetaa)\big\}- \epsilon^2\J (G^\mu_0 \psi) |D| \big\{ (G^\mu_0 \partial_t \psi) (\J \zetaa)\big\}\Big)
\end{multline*}
and hence, using triangular inequality and product estimates in \Cref{L.product},
\[ \norm{ \left[\partial_{t},\mfa_{\J}[\epsilon \zeta, \epsilon\nabla\psi] \right] \zetaa  }_{L^2} \lesssim   \epsilon\,\,\No{J}^2 \norm{ \partial_t\zeta}_{H^{t_0+1}} \norm{ \zetaa}_{L^2} + \epsilon^2\,\, \dotN{\frac12}{J}^2 \norm{ \partial_t \nabla\psi }_{H^{t_0}} \norm{ \nabla\psi }_{H^{t_0}}   \norm{ \zetaa}_{L^2} \,.
 \]
Using the equations~\eqref{RWW2}, the first inequality in \Cref{L.commutator-tanh} (with $s=t_0+1$, $r=1$) we have
\[ \norm{\partial_t  \zeta}_{H^{t_0+1}} \leq \norm{ \nabla \psi }_{H^{t_0+1}} + \epsilon\,  C \, \No{J}  \norm{ \zeta}_{H^{t_{0}+2}} \,   \norm{ \nabla \psi }_{H^{t_{0}}}\,,\]
and by the product estimate in \Cref{L.product} we infer
\[
\norm{\partial_t \nabla \psi}_{H^{t_0}} \leq \norm{ \zeta}_{H^{t_0+1}} + \epsilon\,  C \No{J} \norm{\nabla\psi}_{H^{t_{0}}} \norm{\nabla\psi}_{H^{t_{0}+1}}  \,. \]
By Cauchy-Schwarz inequality and collecting the above, we find
\begin{multline}\label{est-C11}
\big\vert\big( \zetaa , \left[\partial_{t},\mfa_{\J}[\epsilon \zeta, \epsilon\nabla\psi] \right] \zetaa \big)_{L^2}  \big\vert\leq \epsilon C_{M}  \norm{ \nabla \psi }_{H^{t_{0}+1}} \norm{ \zetaa}_{L^2}^2 \\
+ \epsilon^2\,  C\, \N{\frac12}{J}^2 \big( \norm{ \zeta}_{H^{t_{0}+2}} + \norm{\nabla\psi}_{H^{t_{0}+1}} \big)^2  \norm{ \zetaa}_{L^2}^2 \,.
\end{multline}
For the second term in~\eqref{C1}, we have by \Cref{L.commut_order0} with $s=0$,
\[
\norm{\left[  \J ,  G^\mu_0 \J \zeta  \right] \partial_{t} \zetaa  }_{L^2} \leq C \Ni{J} \norm{\nabla \J \zeta}_{H^{t_0+1}}\norm{\partial_{t} \zetaa }_{H^{-1}}\]
which yields, proceeding as above but with the second inequality in \Cref{L.commutator-tanh} (with $s=N-1$) 
\begin{equation}\label{est-C12}
\big\vert \tfrac\epsilon2 \big( \left[  \J ,  G^\mu_0 \J \zeta  \right] \partial_{t} \zetaa , \zetaa \big)_{L^2}   \big\vert \leq \epsilon\, C_{M} \norm{ \zeta}_{H^{t_0+2}} \big( \norm{ \zeta}_{H^{N}} \,  + \norm{ \nabla \psi }_{H^{N-1}} \big) \norm{\zetaa}_{L^2}\,.
\end{equation}

{\em Contributions in~\eqref{C2}.} Using integration by parts and that $\J$ is self-adjoint, we find
\begin{align*} & \big( \nabla\cdot((\J \zetaa)\nabla\psi) , \mfa_{\J}[\epsilon \zeta, \epsilon\nabla\psi]  \zetaa\big)_{L^2}\\
&\quad=
    \Big( \nabla\cdot((\J \zetaa)\nabla\psi) ,  \zetaa  - \epsilon (G^\mu_0 \J \zeta) \J  \zetaa - \epsilon^2\J \big((G^\mu_0 \psi) |D| \big\{ (G^\mu_0 \psi) (\J \zetaa)\big\}\big) \Big)_{L^2}\\
    &\quad=\tfrac12\big( (\J \zetaa) \Delta\psi ,  \zetaa\big)_{L^2}- \tfrac12 \big( [\J,\nabla\psi  ]\nabla\zetaa ,\zetaa \big)_{L^2}\\
    &\qquad - \tfrac\epsilon2\big(  (\J \zetaa) \Delta\psi ,  (G^\mu_0 \J \zeta) \J  \zetaa \big)_{L^2} +  \tfrac\epsilon2\big(  (\J \zetaa) \nabla\psi , (\J  \zetaa ) \nabla(G^\mu_0 \J \zeta) \big)_{L^2}\\
    &\qquad- \tfrac{\epsilon^2}2\big(  (\J \zetaa) \Delta\psi , \J \big((G^\mu_0 \psi) |D| \big\{ (G^\mu_0 \psi) (\J \zetaa)\big\}\big) \big)_{L^2} \\
    &\qquad+  \tfrac{\epsilon^2}2\big(  (\J \zetaa) \nabla\psi , \J \big(\{\nabla(G^\mu_0 \psi) \}|D| \big\{ (G^\mu_0 \psi) (\J \zetaa)\big\}\big)  \big)_{L^2}\\
    &\qquad+  \tfrac{\epsilon^2}2\big(  (\J \zetaa) \nabla\psi ,\J \big((G^\mu_0 \psi) |D| \big\{ \{\nabla (G^\mu_0 \psi)\} (\J \zetaa)\big\}\big) \big)_{L^2}\\
       &\qquad-  \tfrac{\epsilon^2}2\big(  \zetaa, \J  \big[ \J \big((G^\mu_0 \psi) |D| \big\{ (G^\mu_0 \psi) \bullet \big\}\big), \nabla\psi\big]   \cdot\nabla\J \zetaa\big)_{L^2} .
\end{align*}
The first two contributions are easily estimated, using \Cref{L.commut_order0} with $s=0$, as 
\[ \big\vert \star \big\vert \leq C  \Ni{J}  \norm{\nabla\psi}_{H^{t_{0}+1}}\norm{\zetaa}_{L^2}^2.\]
The next two are straightforward by the continuous embedding $H^{t_0}(\RR^d)\subset L^\infty(\RR^d)$:
\[ \big\vert \star \big\vert \leq \epsilon^2\, C \, \No{J}^3 \, \norm{\nabla\psi}_{H^{t_{0}+1}}\norm{\zeta}_{H^{t_{0}+2}}\norm{\zetaa}_{L^2}^2.\]
For the next three, we use the regularizing effect of $\J$ and obtain by a repeated use of the product estimates in \Cref{L.product}
\[ \big\vert \star \big\vert \leq \epsilon^2\, C \, \No{J}\N{\frac12}{J}^2 \, \norm{\nabla\psi}_{H^{t_{0}+1}}^3 \norm{\zetaa}_{L^2}^2.\]
Finally, for the last term, we decompose
\begin{align*}
\J \big[ \J \big((G^\mu_0 \psi) |D| \big\{ (G^\mu_0 \psi) \bullet \big\}\big), \nabla\psi\big]   \nabla\J \zetaa 
&= \J^2 \big((G^\mu_0 \psi)  \big[ |D| , \nabla\psi\big] \big\{ (G^\mu_0 \psi)   \nabla\J \zetaa \big\}\big)\\
&\quad +\J \big[ \J , \nabla\psi ]\big((G^\mu_0 \psi) |D| \big\{ (G^\mu_0 \psi)  \nabla\J \zetaa \big\}\big).
\end{align*}
We estimate the right-hand side in $L^2(\RR^d)$ thanks the regularizing effect of $\J$, and the commutator estimates in \Cref{L.commut_order1}  (see also \Cref{L.commut_Tmu}) and \Cref{L.commut_order0} with $s=-1/2$, and again product estimates in \Cref{L.product}, which yields by Cauchy-Schwarz inequality
\[ \big\vert \star \big\vert \leq \epsilon^2\, C \, \Ni{J}\N{\frac12}{J}^2 \, \norm{\nabla\psi}_{H^{t_{0}+1}}^3\norm{\zetaa}_{L^2}^2.\]
Collecting the above, we find
\begin{equation}\label{est-C21}
\big\vert \epsilon \big( \nabla\cdot((\J \zetaa)\nabla\psi) , \mfa_{\J}[\epsilon \zeta, \epsilon\nabla\psi]  \zetaa\big)_{L^2}  \big\vert\leq \epsilon\, C_{M} \norm{\nabla\psi}_{H^{t_{0}+1}} \Big( 1 + \epsilon^2 \N{\frac12}{J}^2 \norm{\nabla\psi}_{H^{t_{0}+1}}^2\Big)\norm{\zetaa}_{L^2}^2 \,.
\end{equation}

Next, we have
\[\big(\J \left( \nabla \psi \cdot \nabla \psia \right) , G^\mu_0 \psia \big)_{L^2} =\tfrac12\big( [\J , \nabla \psi] \cdot \nabla \psia , G^\mu_0 \psia \big)_{L^2} -\tfrac12\big( [ \T^\mu , \nabla \psi] \cdot \J \nabla \psia , \nabla \psia \big)_{L^2} ,\]
where we denote $\T^\mu \eqdef -\frac{\tanh(\sqrt\mu|D|)}{|D|}\nabla $. 
Using \Cref{L.control_P} and the commutator estimates in \Cref{L.commut_order0} with $s=1/2$ (see also \Cref{L.commut_Tmu}) we find
\begin{equation}\label{est-C22}
\big\vert \epsilon \big(\J \left( \nabla \psi \cdot \nabla \psia \right) , G^\mu_0 \psia \big)_{L^2} \big\vert \leq \epsilon\, C  \Ni{J} \norm{\nabla\psi}_{H^{t_{0}+1}} \norm{\mfP\psia}_{L^2}^2.
\end{equation}

{\em Contributions in~\eqref{C3}.} We have by \Cref{L.product},
\[\norm{\mfa_{\J}[\epsilon \zeta, \epsilon\nabla\psi]  \zetaa }_{L^2}\leq \norm{\zetaa}_{L^2} \big(1+\epsilon\No{J}^2 \norm{\zeta}_{H^{t_0+1}}
+\epsilon^2 \N{\frac12}{J}^2 \norm{\nabla\psi}_{H^{t_0}}^2\big)\] 
and hence by the estimate for $\widetilde R_{1}^{(\alpha)}\in L^2(\RR^d)$ displayed in \Cref{P.quasi},
\begin{equation}\label{est-C31} 
\big\vert \epsilon \big( \widetilde R_{1}^{(\alpha)}, \mfa_{\J}[\epsilon \zeta, \epsilon\nabla\psi]  \zetaa)_{L^2} \big\vert \leq \epsilon \, C_{M} \norm{ \zeta}_{H^{N}} \big( \norm{ \nabla \psi }_{H^{t_{0}+1}} \norm{ \zeta}_{H^{N}} + \norm{  \zeta }_{H^{t_{0}+2}} \norm{ \nabla \psi }_{H^{N-1}} \big). 
\end{equation}
Finally, using \Cref{L.control_P} and the estimates for $\widetilde R_{2}^{(\alpha)},\widetilde R_{3}^{(\alpha)}\in H^{1/2}(\RR^d)$ in \Cref{P.quasi},
\begin{multline}\label{est-C32} 
\big\vert \big( \epsilon\widetilde R_2^{(\alpha)}+ \epsilon^2 \widetilde R_{3}^{(\alpha)}, G^\mu_0 \psia\big)_{L^2} \big\vert\leq \epsilon\, C_{M} \Big( \big( \norm{ \zeta }_{H^{t_{0}+2}} + \norm{ \nabla \psi }_{H^{t_{0}+1}} \big) \big( \norm{ \nabla \psi }_{H^{N-\frac12}} + \norm{\psia }_{H^{\frac{1}{2}}}  \big) \\
+ \epsilon \N{\frac12}{J}  \norm{ \nabla \psi }_{H^{t_{0}}} \big( \norm{ \zeta }_{H^{t_{0}+2}} + \norm{ \nabla \psi }_{H^{t_{0}+1}} \big) \big(\norm{ \nabla \psi }_{H^{N-1}} +  \norm{ \zeta }_{H^{N}} \big)   \Big) 
 \times \norm{ \mfP \psia }_{L^2} .
\end{multline}
Finally we note that by \Cref{P.psi_controls},
\begin{align*} \norm{ \nabla \psi }_{H^{N-1}} + \sup_{|\beta|=N} \norm{\psi_{(\beta)} }_{H^{\frac{1}{2}}} &\leq C\, \big(1+\epsilon \No{J} \norm{ \nabla \psi }_{H^{t_{0}}}  \big) \, \cE^{N}(\zeta,\psi)^{1/2}\leq C_M \cE^{N}(\zeta,\psi)^{1/2},\\
\norm{ \nabla \psi }_{H^{N-\frac{1}{2}}} &\leq C\, \big(1+\epsilon \N{-\frac12}{J}\norm{ \nabla \psi }_{H^{t_{0}}}    \big)\cE^{N}(\zeta,\psi)^{1/2} ,
\end{align*}
and by~\eqref{RTpositive} 
\[ \norm{ \mfP \psia }_{L^2}  \leq   \cE^{N}(U)^{1/2}, \quad \norm{ \zetaa }_{L^2} \leq \big( \tfrac1{\mfa_\star} \cE^{N}(U)\big)^{1/2}  , \qquad \norm{ \zeta }_{H^{N}}   \leq \big(\max(1,\tfrac1{\mfa_\star})\cE^{N}(U)\big)^{1/2} . \]
The result is now a direct consequence~\eqref{eq.energy-est_low} and~\eqref{eq.energy-est} with~\eqref{est-C11}--\eqref{est-C32} and since
\[\N{\frac12}{J} \leq \dotN{\frac12}{J} + \No{J}.\]
The proof is complete.
\end{proof}
 
 We conclude this section with the following result showing that  the Rayleigh--Taylor condition,~\eqref{RTpositive}, propagates in time.

\begin{Lemma}\label{P.hyperbolicity_propagates}
Let $d\in\{1,2\}$, $t_{0}>\frac{d}{2}$, $N \in \NN$ with $N \geq t_{0}+2$, $M_J,M_U>0$. There exists $C>0$ such that for any $\epsilon \geq 0$, $\mu \geq 1$, rectifier $\J$ regularizing of order $-1/2$ with $\No{J}\leq M_J$, and any  $(\zeta,\psi)$ smooth solution to~\eqref{RWW2} on the time interval $I\subset \RR$ and satisfying 
\[ \epsilon M \eqdef  \sup_{t\in I}  \big( \epsilon\norm{ \zeta(t,\cdot) }_{H^{t_{0}+2}} + \epsilon\norm{ \nabla \psi(t,\cdot) }_{H^{t_{0}+1}} \big) \leq M_U\]
one has for any $f\in L^2(\RR^d)$ and any $t,t'\in I$,
\[   \Big| \left( f,\mfa_{\J}[\epsilon \zeta(t,\cdot), \epsilon\nabla\psi(t,\cdot)] f \right)_{L^2} -  \left( f,\mfa_{\J}[\epsilon \zeta(t',\cdot), \epsilon\nabla\psi(t',\cdot)] f  \right)_{L^2}  \Big|\leq   K \, |t-t'| \, \norm{f}_{L^2}^2 \]
with $K=C \! \times \! \big( \epsilon M  +\dotN{\frac12}{J}^2 (\epsilon M)^2 \big) $.
\end{Lemma}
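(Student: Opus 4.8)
The plan is to write the difference of the two bilinear forms explicitly, reduce the estimate to the $L^2\to L^2$ operator norm of a concrete operator, and then trade each time increment against a factor $|t-t'|$ by inserting the equations~\eqref{RWW2}. Subtracting the two instances of~\eqref{eq.def-RT}, the identity contribution cancels; writing $h_\zeta\eqdef\zeta(t,\cdot)-\zeta(t',\cdot)$, $h_\psi\eqdef\psi(t,\cdot)-\psi(t',\cdot)$ and telescoping the quadratic-in-$\psi$ term (adding and subtracting $\epsilon^2\J((G^\mu_0\psi(t,\cdot))|D|\{(G^\mu_0\psi(t',\cdot))\J f\})$), one is left with a sum of operators of the three types
\[ f\mapsto \epsilon\,(G^\mu_0\J h_\zeta)\,\J f,\qquad f\mapsto\epsilon^2\,\J\big((G^\mu_0 h_\psi)\,|D|\{(G^\mu_0\psi)\,\J f\}\big),\qquad f\mapsto\epsilon^2\,\J\big((G^\mu_0\psi)\,|D|\{(G^\mu_0 h_\psi)\,\J f\}\big), \]
with $\psi$ evaluated at $t$ or $t'$. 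Using the Sobolev embedding $H^{t_0}(\RR^d)\subset L^\infty(\RR^d)$, the uniform bound $\norm{G^\mu_0 g}_{H^{s}}\le\norm{\nabla g}_{H^{s}}$ of \Cref{L.control_P} and $\Norm{\J}_{H^{t_0}\to H^{t_0}}=\No{J}$, the first operator has norm $\lesssim\epsilon\No{J}^2\norm{\nabla h_\zeta}_{H^{t_0}}$; for the two cubic operators I would exploit the smoothing of $\J$ twice, namely $\Norm{\J}_{L^2\to H^{1/2}}=\N{\frac12}{J}$ together with the product inequalities $\norm{ab}_{H^{-1/2}}\lesssim\norm{a}_{H^{t_0}}\norm{b}_{H^{-1/2}}$ and $\norm{ab}_{H^{1/2}}\lesssim\norm{a}_{H^{t_0}}\norm{b}_{H^{1/2}}$ of \Cref{L.product} and, again, $\norm{G^\mu_0 g}_{H^{t_0}}\le\norm{\nabla g}_{H^{t_0}}$, obtaining a contribution of norm $\lesssim\epsilon^2\N{\frac12}{J}^2\norm{\nabla h_\psi}_{H^{t_0}}\norm{\nabla\psi}_{H^{t_0}}$. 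All constants are uniform in $\mu\ge1$.

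Next I would convert $h_\zeta$ and $h_\psi$ into factors of $|t-t'|$ by writing $h_\zeta=\int_{t'}^t\partial_\tau\zeta\,\dd\tau$ and $h_\psi=\int_{t'}^t\partial_\tau\psi\,\dd\tau$ and using the equations. From the first equation of~\eqref{RWW2} and \Cref{L.commutator-tanh} (with $s=t_0+1$, $r=1$), which gives $\norm{G^\mu_0((\J\zeta)G^\mu_0\psi)+\nabla\cdot((\J\zeta)\nabla\psi)}_{H^{t_0+1}}\lesssim\No{J}\norm{\zeta}_{H^{t_0+2}}\norm{\nabla\psi}_{H^{t_0}}$, one gets $\norm{\partial_\tau\zeta}_{H^{t_0+1}}\lesssim\norm{\nabla\psi}_{H^{t_0+1}}+\epsilon\No{J}\norm{\zeta}_{H^{t_0+2}}\norm{\nabla\psi}_{H^{t_0}}$; from the second equation, the tame products of \Cref{L.product-tame} and $\norm{G^\mu_0 g}_{H^{t_0+1}}\le\norm{\nabla g}_{H^{t_0+1}}$ give $\norm{\partial_\tau\nabla\psi}_{H^{t_0}}\lesssim\norm{\zeta}_{H^{t_0+1}}+\epsilon\No{J}\norm{\nabla\psi}_{H^{t_0}}\norm{\nabla\psi}_{H^{t_0+1}}$. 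Since $\epsilon M(\tau)\le M_U$, both right-hand sides are $\le C(M_U,M_J)\,M(\tau)$, hence, after multiplying by $\epsilon$,
\[ \epsilon\norm{\nabla h_\zeta}_{H^{t_0}}+\epsilon\norm{\nabla h_\psi}_{H^{t_0}}\ \le\ C(M_U,M_J)\,|t-t'|\,\epsilon M. \]

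Finally I would assemble the pieces. Inserting the last display in the operator-norm bounds, using $\epsilon\norm{\nabla\psi}_{H^{t_0}}\le\epsilon M$, $\N{\frac12}{J}\le\dotN{\frac12}{J}+\No{J}$ with $\No{J}\le M_J$, and $(\epsilon M)^2\le M_U\,\epsilon M$, the first operator and the $\No{J}^2$-part of the cubic ones are bounded by $C\,|t-t'|\,\epsilon M$, while the $\dotN{\frac12}{J}^2$-part of the cubic operators is bounded by $C\,\dotN{\frac12}{J}^2\,|t-t'|\,(\epsilon M)^2$; Cauchy--Schwarz then yields the claim with $K=C(\epsilon M+\dotN{\frac12}{J}^2(\epsilon M)^2)$. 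The only genuinely delicate point is this last bookkeeping of the powers of $\epsilon$, $\No{J}$ and $\dotN{\frac12}{J}$: one must be careful that the factor $\dotN{\frac12}{J}^2$ ends up multiplied only by $(\epsilon M)^2$ and not by $\epsilon M$, which is precisely why the $L^2\to H^{1/2}$ smoothing of $\J$ must be used twice in the cubic term and why one of the two factors $\epsilon M$ there is already carried by $\epsilon\norm{\nabla h_\psi}_{H^{t_0}}$; everything else is a routine application of the $\mu$-uniform product, commutator and $G^\mu_0$ estimates of \Cref{S.technical}.
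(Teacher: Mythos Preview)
Your proposal is correct and follows essentially the same route as the paper: the paper writes the difference as $\int_{t'}^{t}\big(f,[\partial_\tau,\mfa_{\J}[\epsilon\zeta,\epsilon\nabla\psi]]f\big)_{L^2}\dd\tau$ and then invokes the already-proved bound~\eqref{est-C11} on this commutator (together with $\N{\frac12}{J}\le\dotN{\frac12}{J}+\No{J}$), which is exactly your computation carried out in the reverse order---you first take operator differences and bound them via $h_\zeta,h_\psi$, then apply the fundamental theorem of calculus and the equations, whereas the paper differentiates first and integrates last. The underlying product/smoothing estimates and the bookkeeping of the $\epsilon$, $\No{J}$, $\dotN{\frac12}{J}$ factors are identical.
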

\begin{proof}
We have
 \begin{equation}
 \left( f ,\mfa_{\J}[\epsilon \zeta(t,\cdot), \epsilon\nabla\psi(t,\cdot)]  f \right)_{L^2} = \left( f,\mfa_{\J}[\epsilon \zeta(t',\cdot), \epsilon\nabla\psi(t',\cdot)] f  \right)_{L^2} + \int_{t'}^{t} \left(f , [ \partial_t,\mfa_{\J}[\epsilon \zeta, \epsilon\nabla\psi]]  f \right)_{L^2}  \dd \tau
\end{equation}
and the result readily follows from the estimate~\eqref{est-C11}, and $\N{\frac12}{J} \leq \dotN{\frac12}{J} + \No{J}$. 
\end{proof}

\subsubsection{Proof of \texorpdfstring{\Cref{P.WP-large-time}}{Sec}}\label{S.WP-large-completion}

We shall now complete the proof of \Cref{P.WP-large-time}. By \Cref{P.WP-short-time} and \Cref{C.blowup}, we have the existence and uniqueness of $(\zeta,\psi)\in \cC((-T_\star,T^\star);H^N(\RR^d)\times \Hdot^{N+1/2}(\RR^d))$ maximal solution to~\eqref{RWW2} with initial data $(\zeta,\psi)\big\vert_{t=0}=(\zeta_0,\psi_0) \in H^N(\RR^d)\times \Hdot^{N+1/2}(\RR^d)$,  thus we only need to show that the solution is estimated as in the proposition on the prescribed time interval.

To this aim, we first consider $\chi:\RR^d\to\RR^+$ a smooth cut-off function (radial, infinitely differentiable, with compact support, and such that $\chi(\bxi)=1$ for $|\bxi|\leq 1$), and define for $n\in\NN$,  $(\zeta_0^n,\psi_0^n)\eqdef (\chi(\tfrac{D}{2^n})\zeta_0,\chi(\tfrac{D}{2^n})\psi_0)$. By construction, $(\zeta_0^n,\psi_0^n)\in \bigcap_{s\in\NN} (H^s(\RR^d) \times \Hdot^{s+\frac12}(\RR^d))$ and $(\zeta_0^n,\psi_0^n)\to (\zeta_0,\psi_0)$ in $H^N(\RR^d)\times \Hdot^{N+1/2}(\RR^d)$ as $n\to\infty$. By \Cref{P.WP-short-time}, and the blowup alternative in \Cref{C.blowup}, we have for any $n\in\NN$ the existence and uniqueness of $T_\star^n$, $T^\star_n>0$ and $(\zeta^n,\psi^n)\in \bigcap_{s\in\NN}\cC((-T_\star^n,T^\star_n);  H^s(\RR^d)\times \Hdot^{s+1/2}(\RR^d))$ maximal solution to~\eqref{RWW2} with initial data $(\zeta^n,\psi^n)\big\vert_{t=0}=(\zeta_0^n,\psi_0^n)$. Let us first remark that for any $f\in L^2$,
\begin{multline}
 \big|  \big( f, \mfa_{\J}[\epsilon \zeta_0^n, \epsilon\nabla\psi_0^n]  f\big)_{L^2} - \big( f, \mfa_{\J}[\epsilon \zeta_0, \epsilon\nabla\psi_0] f \big)_{L^2} \big|\\
 \leq C_{0}  \left( \epsilon \No{J}^2\norm{ \zeta_0^n-\zeta_0}_{H^{t_0 + 1}}  + \epsilon^2 \N{\frac12}{J}^2 \norm{\nabla \psi_0^n+\nabla \psi_0}_{H^{t_0}}\norm{\nabla \psi_0^n-\nabla \psi_0}_{H^{t_0}} \right)\times \norm{f}_{L^2}^2, 
 \end{multline}
where we used \Cref{L.control_P} and product estimates in \Cref{L.product}, and $C_{0}$ depends uniquely on $t_0$. Hence by restricting to $n\geq n_\star$ with $n_\star$ sufficiently large, we have that $(\zeta_0^n,\psi_0^n)$ satisfy~\eqref{RTpositive} with coercivity factor $\mfa_\star/2$ (say). Similarly, augmenting $n_\star$ if necessary, we have for any $n\geq n_\star$
\begin{align*}    \norm{ \zeta_0^n }_{H^{t_{0}+\frac12}} &\leq 2 \norm{ \zeta_0 }_{H^{t_{0}+\frac12}} , &\quad \norm{ \mfP \psi_0^n }_{H^{t_{0}+\frac12}} &\leq 2 \norm{ \mfP \psi_0 }_{H^{t_{0}+\frac12}}  , \\
 \cE^N(\zeta_0^n,\psi_0^n) &\leq\sqrt{C} \cE^N(\zeta_0,\psi_0), &\quad  \cE^{\lceil t_0 \rceil +2}(\zeta_0^n,\psi_0^n) &\leq \sqrt{C} \cE^{\lceil t_0 \rceil +2}(\zeta_0,\psi_0),
 \end{align*}
where $C$ is prescribed in the assumptions of \Cref{P.WP-large-time}. In the rest of the proof, we assume that $n \geq n_\star$. We introduce now $C_{1}>1$ to be defined later, and the interval $I^n\subset \RR^+$ as the set of $T \geq 0$ such that,
\begin{equation}\label{conditions_prop3.2} \forall t\in [-T,T],\quad 
\norm{ \zeta^n(t,\cdot) }^2_{H^{t_{0}+2}} + \norm{ \nabla \psi^n(t,\cdot) }^2_{H^{t_{0}+1}}  \leq C_1 \cE^{ \lceil t_0 \rceil +2}(\zeta_0,\psi_0) = C_{1} M_{0}^2. 
\end{equation}
We need some preliminary estimates. Assume that $(\zeta^n,\psi^n)$ is defined on $[-T,T]$ for some $T>0$. We claim the following.
\begin{itemize}
\item[(a)] Let $t \in [-T,T]$. Assume that $\left( f,\mfa_{\J}[\epsilon \zeta^n(t,\cdot), \epsilon\nabla\psi^n(t,\cdot)] f \right)_{L^2} \geq (\mfa_\star/3) \norm{f}_{L^{2}}^{2}$ for any $f \in L^{2}$. There exists $C_{t_{0}}>0$, depending only on $t_{0}$, such that
\[
\norm{ \nabla \psi^n (t,\cdot) }_{H^{t_{0}}}^2 \leq C_{t_{0}} \cE^{\lceil t_{0} \rceil + 2}(\zeta^n(t,\cdot),\psi^n(t,\cdot)).
\]
Furthermore, if $\norm{ \nabla \psi^n(t,\cdot)}_{H^{t_{0}}}^2 \leq C_{t_{0}} C M_{0}^2$, there exists $\tilde C_{t_{0}}>0$, depending only on $t_{0}$, such that
\begin{multline*}
\norm{ \zeta^n(t,\cdot) }_{H^{t_{0}+2}}^2 + \norm{ \nabla \psi^n(t,\cdot) }_{H^{t_{0}+1}}^2 \leq \tilde C_{t_{0}} \left( 1 + \tfrac{3}{\mfa_\star} \right) \left(1 + \No{J} \sqrt{C_{t_{0}} C} M_{U} \right)\\
\times \cE^{\lceil t_{0} \rceil + 2}(\zeta^n(t,\cdot),\psi^n(t,\cdot)),
\end{multline*}
and, there exists $C_{2}>0$, depending only on $t_{0}$, $N$, $3/\mfa_\star$, $\sqrt{C} M_{U}$ and $\dotN{\frac12}{J}$ such that for any $t \in [-T,T]$ and any $N_\star \in \{ \lceil t_{0} \rceil +2 , N\}$,
\[
\tfrac{1}{C_{2}} \cE^{N_\star}(\zeta^n(t,\cdot),\psi^n(t,\cdot)) \leq \norm{\zeta^n(t,\cdot)}_{H^{N_\star}}^{2} + \norm{ \mfP \zeta^n(t,\cdot)}_{H^{N_\star}}^{2} \leq C_{2} \cE^{N_\star}(\zeta^n(t,\cdot),\psi^n(t,\cdot)).
\]
\item[(b)]  Assume that $\norm{ \zeta^n(t,\cdot) }_{H^{t_{0}+2}}^2 + \norm{ \nabla \psi^n(t,\cdot) }_{H^{t_{0}+1}}^2 \leq C_{1} M_{0}^2$ for any $t \in [-T,T]$. Then there exists $C_{3}>0$, depending only on $t_{0}$, $\No{J}$ and $\sqrt{C_{1}} M_{U}$ such that, for any $t \in [-T,T]$ and $f \in L^{2}$,
\[
\left( f,\mfa_{\J}[\epsilon \zeta^n(t,\cdot), \epsilon\nabla\psi^n(t,\cdot)] f \right)_{L^2} \geq \frac{\mfa_\star}{2} \norm{f}_{L^{2}}^2 -  C_{3} \times \big( \sqrt{C_{1}} \epsilon M_{0} + \dotN{\frac12}{J}^2 C_{1} (\epsilon M_{0})^2 \big) |t| \norm{f}_{L^2}^2.
\]
Furthermore, if  $\left( f,\mfa_{\J}[\epsilon \zeta^n(t,\cdot), \epsilon\nabla\psi^n(t,\cdot)] f \right)_{L^2} \geq (\mfa_\star/3) \norm{ f }_{L^{2}}^{2}$ for any $t \in [-T,T]$ and any $f \in L^{2}$, there exists $C_{4}>0$, depending only on $t_{0}$, $N$, $\mfa_\star$, $\Ni{J}$ and $\sqrt{C_{1}} M_{U}$, such that, for any $t \in [-T,T]$ and any $N_\star \in \{ \lceil t_{0} \rceil +2 , N\}$,
\[
\cE^{N_\star}(\zeta^n(t,\cdot),\psi^n(t,\cdot)) \leq \exp \big(  C_{4} \big(\sqrt{C_{1}} \epsilon M_{0} + \dotN{\frac12}{J}^{2} C_{1} (\epsilon M_{0})^2 \big) |t| \big) \sqrt{C} \cE^{N_\star}(\zeta_0,\psi_0).
\]
\end{itemize}
Estimates~(a) follow from \Cref{P.psi_controls} and the definition of the energy in~\eqref{eq.def-E}. Estimates~(b) follow from \Cref{P.hyperbolicity_propagates} and \Cref{P.energy_est} (using that $(\zeta^{n},\psi^{n})$ is smooth). Using the previous notations, we can now define $C_{1}$  such that
\[
C_{1} = 2 C \tilde C_{t_{0}} \left( 1 + \frac{3}{\mfa_\star} \right) \left( 1 + \No{J} \sqrt{C_{t_{0}} C} M_{U} \right).
\]
With such definition of $C_{1}$, we have $0 \in I^{n}$ from Estimates (a). We now introduce $T_{2}$ as the largest time such that 
\[
C_{4} (\sqrt{C_{1}} \epsilon M_{0} + \dotN{\frac12}{J}^{2} C_{1} (\epsilon M_{0})^2 \big) T_{2} \leq \ln \big( C^{1/2} \big) \text{ and } C_{3} \big( \sqrt{C_{1}}  \epsilon M_{0} + \dotN{\frac12}{J}^2 C_{1} (\epsilon M_{0})^2 \big) T_{2} \leq \frac{\mfa_\star}{6}.
\]
We claim that $T^n\eqdef \max(I^n)\geq T_2$. We argue by contradiction and assume that $T_{2} > T^n$. Notice that $T^n>0$ and is well-defined (that is $\sup(I^n)\in I^n$ when $\sup(I^n)<\infty$) using the continuity in time of the solution with arbitrary smoothness in space, provided by \Cref{C.blowup}  and the prescribed bound on $( \zeta^n(t,\cdot) , \psi^n(t,\cdot) ) \in H^{t_{0}+2} \times \Hdot^{t_{0}+2} $.
Using Estimates (b) and then (a), we get successively that for any $t \in [0,T^{n}]$ and any $f \in L^{2}$,
\begin{align*}
&\left( f,\mfa_{\J}[\epsilon \zeta^n(t,\cdot), \epsilon\nabla\psi^n(t,\cdot)] f \right)_{L^2} \geq \frac{ \mfa_\star }3 \norm{f}_{L^{2}}^2, & & \cE^{\lceil t_{0} \rceil +2}(\zeta^n(t,\cdot),\psi^n(t,\cdot)) \leq C \cE^{\lceil t_{0} \rceil +2 }(\zeta_{0},\psi_{0}),\\
&\norm{ \nabla \psi^n(t,\cdot) }_{H^{t_{0}}}^2 \leq C C_{t_{0}} M_{0}^{2} ,\quad \text{  and  } & & \norm{ \zeta^n(t,\cdot) }_{H^{t_{0}+2}}^2 + \norm{ \nabla \psi^n(t,\cdot) }_{H^{t_{0}+1}}^2 \leq  \frac12C_1 M_{0}^2.
\end{align*}
Using again \Cref{C.blowup} we obtain a time $T>T^n$ such that $T \in I^n$, which is a contradiction.

It remains to pass to the limit. Thanks to \Cref{R.WP}, there exists $T>0$, depending uniquely on $t_0$, $N$, $\epsilon$, $\mu$, $\N{1}{J}$ and a prescribed bound on the initial data, such that if $(\tilde\zeta_0^n,\tilde\psi_0^n)\to (\tilde\zeta_0,\tilde\psi_0)$ in $H^N(\RR^d)\times \Hdot^{N+1/2}(\RR^d) $, then the emerging solution is defined on the time interval $[-T,T]$ and one has $\Phi((\tilde\zeta_0^n,\tilde\psi_0^n))\to \Phi( (\tilde\zeta_0,\tilde\psi_0))$ in $\cC([-T,T] ;H^N(\RR^d)\times \Hdot^{N+1/2}(\RR^d) ) $. 
Notice that we have a uniform bound for $(\zeta^n,\psi^n)\in \cC([-T_2,T_2] ;H^N(\RR^d)\times \Hdot^{N+1/2}(\RR^d) ) $, by~\eqref{conditions_prop3.2} together with the last inequality of Estimates (a). This provides a lower bound on $T>0$ which allows to show after a finite number of iterations (and thanks to the uniqueness of the solution to the Cauchy problem) that $(\zeta^n,\psi^n)\to (\zeta,\psi)$ in $ \cC([-T_2,T_2] ;H^N(\RR^d)\times \Hdot^{N+1/2}(\RR^d) ) $. In particular, $(\zeta,\psi)$ satisfy the desired energy control on $[-T_2,T_2]$ thanks to the second inequality of Estimates (b).
The proof of \Cref{P.WP-large-time} is complete.

\subsection{Global-in-time well-posedness}\label{S.WP-global}

We conclude this section with the following result showing the global-in-time existence of solutions for sufficiently small initial data.

\begin{Proposition}\label{P.WP-global-in-time}
 Let $\r<-(\frac32+\frac{d}2)$ and $C>1$. There exists $\epsilon_0>0$  such that for any ${\mu\geq 1}$, $\epsilon>0$, any $\delta>0$ and $\J^\delta=J(\delta D)$ being regularizing of order $\r$ (see \Cref{D.J}), and for any ${(\zeta_0,\psi_0)\in  L^2(\RR^d)\times \Hdot^{1/2}(\RR^d)}$ such that
 \begin{equation}\label{eq.initial-data-small}
\epsilon \N{-\r}{J(\delta\cdot)} \big( \norm{\zeta_0}_{L^2}^2+\norm{\mfP\psi_0}_{L^2}^2\big)^{1/2}\leq \epsilon_0,
\end{equation}
there exists a unique $(\zeta,\psi)\in \cC(\RR;L^2(\RR^d)\times \Hdot^{1/2}(\RR^d))$ global-in-time solution to~\eqref{RWW2} with initial data ${(\zeta,\psi)\id{t=0}=(\zeta_0,\psi_0)}$. Moreover for any $t\in \RR$ one has
\begin{equation}\label{eq.control-H}
\tfrac1C\, \big( \tfrac12\norm{\zeta(t,\cdot)}_{L^2}^2+\tfrac12\norm{\mfP\psi(t,\cdot)}_{L^2}^2\big) \leq  \cH^\mu_{\J^\delta}(\zeta(t,\cdot),\psi(t,\cdot))=\cH^\mu_{\J^\delta}(\zeta_0,\psi_0) \leq C  \big(\tfrac12 \norm{\zeta_0}_{L^2}^2+\tfrac12\norm{\mfP\psi_0}_{L^2}^2\big)
\end{equation}
where we recall that
\[\cH^\mu_{\J^\delta}(\zeta,\psi)\eqdef\frac12\int_{\RR^d} \zeta^2+ (\mfP\psi)^2  + \epsilon (\J^\delta\zeta) \left( |\nabla\psi|^2-( G^\mu_0 \psi)^2\right)\dd \bx.\]
\end{Proposition}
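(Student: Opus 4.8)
The plan is to run the classical small-data globalization argument for a Hamiltonian evolution: combine the low-regularity local well-posedness of \Cref{P.WP-short-time}, the conservation of the energy $\cH^\mu_{\J^\delta}$, the coercivity of $\cH^\mu_{\J^\delta}$ near the origin, and a continuity/continuation bootstrap.

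First I would fix the functional framework. Since $-\r>\frac32+\frac d2$ one can choose $t_0\in(\frac d2,-\r-\frac32]$. With such $t_0$, any symbol $J$ defining a rectifier regularizing of order $\r$ automatically satisfies $\langle\cdot\rangle^{\max(1,t_0+\frac32)}J\in L^\infty(\RR^d)$, so \Cref{P.WP-short-time} applies at regularity $s=0$: for $(\zeta_0,\psi_0)\in L^2(\RR^d)\times\Hdot^{1/2}(\RR^d)$ it gives a unique maximal solution $(\zeta,\psi)\in\cC((-T_\star,T^\star);L^2(\RR^d)\times\Hdot^{1/2}(\RR^d))$ with an explicit lower bound on $\min(T_\star,T^\star)$ which is inversely proportional to $\norm{\zeta_0}_{L^2}+\norm{\nabla\psi_0}_{H^{-1/2}}$. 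Applying the same proposition at arbitrary integer regularity yields smooth solutions for smooth data, and \Cref{R.WP} gives the (locally Lipschitz) continuity of the solution map in $L^2\times\Hdot^{1/2}$ on balls. Next I would check that the cubic term of $\cH^\mu_{\J^\delta}$ is well defined and continuous on $L^2\times\Hdot^{1/2}$: by \Cref{L.compensation-tanh} with $\sigma=\frac12$ one has $\norm{|\nabla\psi|^2-(G^\mu_0\psi)^2}_{H^{-1-t_0}}\le C\norm{\nabla\psi}_{H^{-1/2}}^2\le C\norm{\mfP\psi}_{L^2}^2$ uniformly in $\mu$, while $\J^\delta$ maps $L^2(\RR^d)$ into $H^{-\r}(\RR^d)$ with norm at most $\N{-\r}{J(\delta\cdot)}$, and $H^{-1-t_0}(\RR^d)\hookrightarrow H^{\r}(\RR^d)$ by the choice of $t_0$; the resulting pairing gives the coercivity estimate
\[ \Big|\cH^\mu_{\J^\delta}(\zeta,\psi)-\tfrac12\norm{\zeta}_{L^2}^2-\tfrac12\norm{\mfP\psi}_{L^2}^2\Big|\le C_0\,\epsilon\,\N{-\r}{J(\delta\cdot)}\,\big(\norm{\zeta}_{L^2}^2+\norm{\mfP\psi}_{L^2}^2\big)^{3/2}, \]
with $C_0$ depending only on $d$ and $\r$. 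The hypothesis $\r<-(\frac32+\frac d2)$ is used precisely here, to make legitimate both this pairing and the application of \Cref{P.WP-short-time} at $s=0$.

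The step I expect to be the main obstacle is the conservation of $\cH^\mu_{\J^\delta}$ along the $L^2\times\Hdot^{1/2}$ flow. For smooth solutions it follows immediately from the Hamiltonian structure~\eqref{eq.Hamilton}--\eqref{eq.Hamiltonian-mod}, since $\frac{\dd}{\dd t}\cH^\mu_{\J^\delta}=(\delta_\zeta\cH^\mu_{\J^\delta},\delta_\psi\cH^\mu_{\J^\delta})_{L^2}-(\delta_\psi\cH^\mu_{\J^\delta},\delta_\zeta\cH^\mu_{\J^\delta})_{L^2}=0$; for a general $L^2\times\Hdot^{1/2}$ solution I would obtain it by approximating $(\zeta_0,\psi_0)$ by smooth data, invoking \Cref{R.WP} to get convergence of the corresponding solutions in $\cC$ of a fixed time interval, and passing to the limit using the continuity of the functional just established. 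Making this approximation consistent with the bootstrap below requires some care but is routine.

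Finally I would close the argument. Given the target $C>1$, choose $\epsilon_0>0$ (depending only on $d$, $\r$ and $C$) so small that $\tfrac{1+2C_0\epsilon_0}{1-4C_0\epsilon_0}<4$, $1-4C_0\epsilon_0\ge 1/C$ and $1+2C_0\epsilon_0\le C$. Let $(\zeta_0,\psi_0)$ satisfy~\eqref{eq.initial-data-small} and set $\mathcal N(t)^2\eqdef\norm{\zeta(t,\cdot)}_{L^2}^2+\norm{\mfP\psi(t,\cdot)}_{L^2}^2$, so $\epsilon\N{-\r}{J(\delta\cdot)}\mathcal N(0)\le\epsilon_0$. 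On the set of $T\ge0$ such that $\mathcal N(t)\le 2\mathcal N(0)$ for $|t|\le T$ one has $\epsilon\N{-\r}{J(\delta\cdot)}\mathcal N(t)\le 2\epsilon_0$, so the coercivity estimate at time $t$ and at time $0$, combined with the conservation $\cH^\mu_{\J^\delta}(\zeta(t,\cdot),\psi(t,\cdot))=\cH^\mu_{\J^\delta}(\zeta_0,\psi_0)$, gives $\mathcal N(t)^2\le\tfrac{1+2C_0\epsilon_0}{1-4C_0\epsilon_0}\mathcal N(0)^2<4\mathcal N(0)^2$; a standard open/closed/connectedness argument then propagates $\mathcal N(t)\le 2\mathcal N(0)$ to the whole maximal interval $(-T_\star,T^\star)$. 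Consequently $\norm{\zeta(t,\cdot)}_{L^2}+\norm{\nabla\psi(t,\cdot)}_{H^{-1/2}}$ stays bounded by a fixed multiple of $\norm{\zeta_0}_{L^2}+\norm{\mfP\psi_0}_{L^2}$, and re-applying \Cref{P.WP-short-time} at times approaching $T^\star$ (resp. $-T_\star$) yields extensions of a fixed positive length; by uniqueness this forces $T_\star=T^\star=+\infty$. Inequality~\eqref{eq.control-H} is then exactly the coercivity estimate evaluated at time $t$ and at time $0$ together with conservation and the constraints imposed on $\epsilon_0$, and global uniqueness in $\cC(\RR;L^2\times\Hdot^{1/2})$ follows by gluing the local uniqueness statements of \Cref{P.WP-short-time}.
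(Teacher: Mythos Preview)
Your proposal is correct and follows essentially the same approach as the paper: local well-posedness at $s=0$ via \Cref{P.WP-short-time}, conservation of $\cH^\mu_{\J^\delta}$ proved first for smooth solutions and transferred by density using \Cref{R.WP}, the coercivity estimate via \Cref{L.compensation-tanh}, and an open-closed continuity argument to globalize. The only cosmetic difference is that the paper runs the bootstrap directly on the set where~\eqref{eq.control-H} holds, whereas you bootstrap on $\mathcal N(t)\le 2\mathcal N(0)$ and recover~\eqref{eq.control-H} afterwards; both close under comparable smallness constraints on $\epsilon_0$.
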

\begin{Remark}
In the second equation of~\eqref{RWW2}, the meaning of quadratic terms for low-regularity functions $\psi \in\Hdot^{1/2}(\RR^d)$ is clarified by \Cref{L.compensation-tanh}.
\end{Remark}
\begin{proof} First we recall that $\cH^\mu_{\J^\delta}(\zeta,\psi)$ is an invariant of~\eqref{RWW2}, in the sense that for any $(\zeta,\psi)$ smooth solutions to~\eqref{RWW2} defined on a time interval $I\subset \RR$, $t\mapsto \cH^\mu_{\J^\delta}(\zeta(t,\cdot),\psi(t,\cdot))$ is constant on $I$. This is readily checked by computing
\begin{multline*} \frac{\dd }{\dd t}\cH^\mu_{\J^\delta}(\zeta(t,\cdot),\psi(t,\cdot)) = \big(\zeta,\partial_t\zeta\big)_{L^2} +\big(\mfP\psi, \partial_t\mfP\psi\big)_{L^2}+\frac12 \big(\epsilon (\J^\delta\partial_t\zeta) \left( |\nabla\psi|^2-( G^\mu_0 \psi)^2\right) \big)_{L^2}\\
+ \big((\epsilon \J^\delta\zeta)\left( (\nabla\psi\cdot\nabla\partial_t\psi)-( G^\mu_0 \psi)( G^\mu_0 \partial_t\psi)\right)\big)_{L^2},
\end{multline*}
replacing time derivatives with the formula provided by~\eqref{RWW2} and using suitable integration by parts or Parseval's theorem (see \Cref{L.control_P}) to infer that $\frac{\dd }{\dd t}\cH^\mu_{\J^\delta}(\zeta(t,\cdot),\psi(t,\cdot))=0$.
The result for $(\zeta,\psi)\in \cC((-T_\star,T^\star);L^2(\RR^d)\times \Hdot^{1/2}(\RR^d))$ maximal solution with initial data ${(\zeta,\psi)\id{t=0}=(\zeta_0,\psi_0)}$, defined by \Cref{P.WP-short-time}, follows by the density of the Schwartz space into Sobolev spaces,  and the continuity of the solution map (see \Cref{R.WP}).

Then we remark that, by \Cref{L.compensation-tanh} and Cauchy-Schwarz inequality, and then \Cref{L.control_P} and Young's inequality, we have for any $(\zeta,\psi)\in L^2(\RR^d)\times \Hdot^{1/2}(\RR^d)$,
\begin{align*} \big| \cH^\mu_{\J^\delta}(\zeta,\psi)-\tfrac12\norm{\zeta}_{L^2}^2 -\tfrac12\norm{\mfP\psi}_{L^2}^2  \big|&\leq \epsilon C_\r \norm{\zeta}_{L^2} \big( \N{-\r}{J(\delta\cdot)} \norm{\nabla\psi}_{H^{-1/2}}^2\big)\\
&\leq \epsilon C_\r \N{-\r}{J(\delta\cdot)} \big( \norm{\zeta}_{L^2}^2+\norm{\mfP\psi}_{L^2}^2 \big)^{3/2} ,
\end{align*}
where $C_\r$ depends only on $\r<-(1+\frac{d}2)$. By choosing $\epsilon_0>0$ such that 
$\epsilon_0 C_\r <\frac{C-1}{C}$ 
 we infer that for all initial data satisfying~\eqref{eq.initial-data-small}, 
$ \big\{t\in[0,T^\star),~\eqref{eq.control-H} \text{ holds} \big\}$
is an open subset of $[0,T^\star)$. Since it is also closed and non-empty, we obtain that~\eqref{eq.control-H} holds on $[0,T^\star)$. If $T^\star<\infty$, we may use \Cref{P.WP-short-time} with an ``initial'' time sufficiently close to $T^\star$ and the control provided by~\eqref{eq.control-H} to construct $\tilde T^\star>T^\star$ and $(\tilde\zeta,\tilde\psi)\in \cC([0,\tilde T^\star];L^2(\RR^d)\times \Hdot^{1/2}(\RR^d))$ satisfying $(\tilde\zeta,\tilde\psi)\big\vert_{[0,T^\star)}=(\zeta,\psi)\big\vert_{[0,T^\star)}$ and~\eqref{RWW2} on the time interval $[0,\widetilde T^\star]$. This brings the desired contradiction, and proves that $T^\star=+\infty$. Symmetrically, $-T_\star=-\infty$ and the proof is complete.
\end{proof}

\appendix

\section{The  water waves system}\label{S.WW}

Let us recall a few facts on the water waves system, describing the motion of inviscid, incompressible and homogeneous fluids with a free surface. Under the assumption of potential flow, and assuming that the bottom is flat, the evolution equations can be rewritten (with dimensionless variables set accordingly to the deep water regime, following the convention of~\cite[Appendix~A]{Alvarez_Lannes} with $\nu=\frac{1}{\sqrt{\mu}}$) as 
\begin{equation}\label{WW}\tag{WW}
\left\{\begin{array}{l}
\partial_t\zeta - G^\mu[\epsilon\zeta]\psi=0\\[1ex]
\partial_t\psi+\zeta+\frac{\epsilon}{2} |\nabla\psi|^2-\frac{\epsilon }{2} \frac{ \left( G^\mu[\epsilon\zeta]\psi+\epsilon\nabla\zeta\cdot\nabla\psi \right)^2}{1+\epsilon^2|\nabla\zeta|^2}=0,
\end{array}\right.
\end{equation}
where $(\zeta,\psi):(t,\bx)\in \RR\times \RR^d\to\RR^2$ (with $d\in\{1,2\}$), $\epsilon,\mu>0$ and  the Dirichlet-to-Neumann operator, $G^\mu$, is defined for sufficiently nice functions as
\[ G^\mu[\epsilon\zeta]\psi \eqdef (\partial_z \Phi)\big\vert_{z=\epsilon \zeta}  -  \epsilon \nabla\zeta\cdot(\nabla_\bx \Phi) \big\vert_{z=\epsilon \zeta} \]
with $\Phi$ being the solution to the Laplace problem
\begin{equation*}
\left\{\begin{array}{l}
\Delta_\bx \Phi + \partial^{2}_{z} \Phi =0 \quad \text{ on   } \{(\bx,z) \in \RR^{d+1},\ -\sqrt{\mu} < z < \epsilon \zeta(\bx)\},\\[1ex]
\Phi_{|z=\epsilon \zeta} = \psi ,\quad  \partial_{z} \Phi_{|z=-\sqrt{\mu}} = 0.
\end{array}\right.
\end{equation*}

Basic properties about the Dirichlet-to-Neumann operator can be found in~\cite[Chapter 3]{Lannes_ww} or~\cite[Section 3]{Alvarez_Lannes}. The only result that we use in this paper is the following asymptotic expansion adapted from Proposition 3.9 and Proposition 3.3 in~\cite{Alvarez_Lannes}. Recall that $G^\mu_0\eqdef |D|\tanh(\sqrt\mu|D|)$.

\begin{Proposition}\label{asymptotic_DN}
Let $d \in \{1,2\}$, $s \geq 0$, $t_{0}>\frac{d}{2}$ and $h_\star>0$ and $M>0$. There exists $C>0$ such that for any  $\epsilon>0$, $\mu \geq 1$ and  $\zeta \in H^{\max(s+\frac{3}{2},t_{0}+2)}$ such that 
\[ 1+\tfrac{\epsilon}{\sqrt\mu} \zeta \geq h_\star, \qquad \norm{\epsilon\zeta}_{H^{t_0+2}}\leq M,\] 
and for any $\psi \in \Hdot^{1+s}(\RR^d)$, we have $G^\mu[\epsilon\zeta]\psi \in H^s(\RR^d)$ and
\[ G^\mu[\epsilon\zeta]\psi - G^\mu_0 \psi - \epsilon \left( G^\mu_0 \left( \zeta G^\mu_0 \psi \right) + \nabla \cdot( \zeta \nabla \psi) \right) = \epsilon^2 R  \]
where
\[ \norm{ R }_{H^{s}} \leq C  \norm{ \zeta }_{H^{t_{0}+1}} \left( \norm{ \zeta }_{H^{t_{0}+1}} \norm{ \mfP \psi }_{H^{s+\frac12}} + \norm{ \zeta }_{H^{s+\frac{3}{2}}} \norm{ \mfP \psi }_{H^{t_{0}}}  \right). \]
\end{Proposition}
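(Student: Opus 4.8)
The plan is to obtain the expansion from the shape-analyticity of the Dirichlet-to-Neumann operator, following the strategy of Alvarez-Samaniego and Lannes~\cite{Alvarez_Lannes}, while keeping careful track of the dependence on $\mu\geq1$ and of the Sobolev indices. First I would flatten the fluid domain: introducing a diffeomorphism from the fixed slab $\RR^d\times(-1,0)$ onto $\{-\sqrt\mu<z<\epsilon\zeta(\bx)\}$ that is affine in the vertical variable (or rather the regularized variant used in~\cite[\S2]{Lannes_ww} in order to avoid a loss of derivatives), the velocity potential $\Phi$ is transformed into a function $\phi$ on the slab solving a uniformly elliptic boundary-value problem $\nabla_{\bx,w}\cdot\big(P[\epsilon\zeta]\,\nabla_{\bx,w}\phi\big)=0$ with $\phi\id{w=0}=\psi$ and a Neumann condition at $w=-1$, where the matrix-valued coefficient $P[\epsilon\zeta]$ depends analytically on $\epsilon\zeta$ and equals, after the rescaling by $\sqrt\mu$, the identity when $\zeta=0$; the operator $G^\mu[\epsilon\zeta]$ is then recovered as a trace of $P[\epsilon\zeta]\nabla_{\bx,w}\phi$ at $w=0$.

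Next I would decompose $\phi=\phi_{(0)}+\epsilon\,\phi_{(1)}+\epsilon^2\varphi$, where $\phi_{(0)}$ solves the constant-coefficient problem --- whose trace yields $G^\mu_0\psi$ through an explicit Fourier computation involving $\tanh(\sqrt\mu|D|)$ --- where $\phi_{(1)}$ solves the linear problem whose source comes from the first derivative of $P[\cdot]$ at $\zeta=0$, and whose trace yields the stated first-order term $G^\mu_0(\zeta G^\mu_0\psi)+\nabla\cdot(\zeta\nabla\psi)$ after a by-now-standard algebraic manipulation, and where $\varphi$ solves an elliptic problem with a source term that is (at least) quadratic in $\epsilon\zeta$ multiplied by derivatives of $\phi_{(0)}$ and $\phi_{(1)}$. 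The bound $\norm{R}_{H^s}\lesssim \norm{\zeta}_{H^{t_0+1}}\big(\norm{\zeta}_{H^{t_0+1}}\norm{\mfP\psi}_{H^{s+1/2}}+\norm{\zeta}_{H^{s+3/2}}\norm{\mfP\psi}_{H^{t_0}}\big)$ then follows from the elliptic regularity estimate for the flattened problem --- which, after the $\sqrt\mu$ rescaling, holds uniformly for $\mu\geq1$ --- combined with the tame product estimates \Cref{L.product,L.product-tame} to control the source terms and the trace inequality $H^{\sigma+1/2}(\RR^d\times(-1,0))\hookrightarrow H^{\sigma}(\RR^d)$.

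A more economical route, given that the statement is essentially contained in~\cite[Prop.~3.3 and Prop.~3.9]{Alvarez_Lannes}, is to invoke those results directly: they provide the full expansion $G^\mu[\epsilon\zeta]\psi=\sum_{j=0}^N\epsilon^j G^\mu_j[\zeta]\psi+\epsilon^{N+1}R_N$ with the corresponding tame estimates, uniformly for $\mu$ in compact subsets of $(0,\infty)$. It then remains to (i) specialize to $N=1$ and identify $G^\mu_0$ and $G^\mu_1$ with the displayed expressions, (ii) upgrade the uniformity from compact subsets to the whole of $[1,\infty)$, which is possible because the $\mu$-dependence enters only through the Fourier multipliers $\tanh(\sqrt\mu|D|)$ and $\tfrac{\tanh(\sqrt\mu|D|)}{|D|}\nabla$, which are bounded uniformly for $\mu\geq1$ (see \Cref{L.control_P}), and (iii) rewrite the bounds in terms of $\mfP\psi$ using $\norm{\nabla\psi}_{H^{\sigma-1/2}}\leq\norm{\mfP\psi}_{H^{\sigma}}$. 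The main obstacle in either approach is precisely this uniformity with respect to $\mu\geq1$: the coefficient $P[\epsilon\zeta]$ of the flattened problem has entries scaling like $\mu^{\pm1/2}$, and one must check that the elliptic estimate survives the limit $\mu\to\infty$; a secondary, bookkeeping difficulty is that $\psi$ lives in the homogeneous space $\Hdot^{1+s}(\RR^d)$, so every estimate must be expressed through $\nabla\psi$ (equivalently $\mfP\psi$) from the start, which is the reason the right-hand side involves $\mfP\psi$ rather than $\psi$.
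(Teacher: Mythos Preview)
Your proposal is more detailed than the paper's own treatment: the paper does not prove \Cref{asymptotic_DN} at all but simply states it as ``adapted from Proposition~3.9 and Proposition~3.3 in~\cite{Alvarez_Lannes}'', with a pointer also to~\cite[Chapter~3]{Lannes_ww}. Your ``economical route'' is therefore precisely what the paper does, and your first approach (flattening, iterated elliptic estimates, trace) is a faithful sketch of how those cited results are established. The additional points you raise --- the uniformity in $\mu\geq1$ and the need to phrase everything in terms of $\mfP\psi$ rather than $\psi$ --- are exactly the adaptations the word ``adapted'' is covering, and your identification of where the $\mu$-dependence enters (through the bounded multipliers of \Cref{L.control_P}) is correct.
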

The asymptotic expansion above provides the foundation for the rigorous justification of~\eqref{WW2-intro} as an asymptotic model for~\eqref{WW} with precision $\cO(\epsilon^2)$, in the sense of consistency. 
The precise statement is displayed in \Cref{T.Consistency} (see the discussion below).

\section{A toy model}\label{S.toy}

 In order to describe the high-frequency instability mechanism which we diagnose in this work, and the features of the proposed regularization, we consider in this section the following toy model
\begin{equation}\label{RWW2-toy}\left\{\begin{array}{l}
\partial_t\zeta- G^\mu_0 \psi=0,\\[1ex]
\partial_t \psi +\zeta- \epsilon^2 \alpha[\psi]  (\J^\delta)^2  |D|\zeta=0,
\end{array}\right.\end{equation}
where we recall that $G^\mu_0\eqdef |D|\tanh(\sqrt\mu|D|)$ with $\mu\geq 1$, $\delta>0$ and $\J^\delta=J(\delta |D|)$ is a Fourier multiplier associated with a real-valued symbol $J$. Finally, we set
\[ \alpha[\psi] \eqdef \int(G^\mu_0 \psi)^2 \dd\bx.\]
System~\eqref{RWW2-toy} is inspired by \Cref{P.quasi} and specifically~\eqref{eq.quasi_eq1}--\eqref{eq.quasi_eq2}. It mimics the (possible) destabilization effect on the high-frequency component of solutions to~\eqref{RWW2}  
stemming from the lack of coercivity of the operator $\mfa_{\J^\delta}[\epsilon \zeta, \epsilon\nabla\psi]  $ defined in~\eqref{eq.def-RT}, while disregarding other contributions such as advection terms, bounded operators in $\mfa_{\J^\delta}$, and the choice of Alinhac's good unknowns.

In the following discussion, we consider the $(2\pi\ZZ)^d$-periodic framework for convenience, although our the results can be adapted to the Euclidean space framework, $\RR^d$. We can hence rewrite~\eqref{RWW2-toy}, using the decomposition in Fourier series, as an infinite system of ordinary differential equations. Specifically, we have for sufficiently regular real-valued solutions to~\eqref{RWW2-toy} defined on $(2\pi\TT)^d$,
\begin{equation}\label{RWW2-toy-Fourier}
\forall \bk\in\ZZ^d, \qquad 
\left\{\begin{array}{l}
\frac{\dd}{\dd t}\widehat \zeta_\bk- \tanh(\sqrt\mu|\bk|) |\bk|\widehat\psi_\bk=0,\\[1ex]
\frac{\dd}{\dd t}\widehat\psi_\bk +\Big(1 -  \epsilon^2 \alpha[\psi]  J(\delta|\bk|)^2  |\bk|\Big)\widehat\zeta_\bk=0,
\end{array}\right.\end{equation}
with 
\[ \alpha[\psi] =   (2\pi)^d  \sum_{\bk\in \ZZ^d} (\tanh(\sqrt\mu|\bk|) |\bk| |\widehat\psi_\bk|)^2 .\]
Here, we set the convention
\[\widehat \zeta_{\bk} =\frac1{ (2\pi)^d } \int_{(2\pi\TT)^d} \zeta(\bx) e^{-i \bk\cdot\bx}\dd\bx \quad ; \quad \zeta(\bx) = \sum_{\bk\in \ZZ^d }\widehat\zeta_{\bk} e^{i \bk\cdot\bx}.\]
In the following, we denote
\[ \ell^{2,s}\eqdef \big\{ a\eqdef   (a_\bk)_{\bk\in\ZZ^d}, \
  a_{-\bk}=\overline{a_\bk},\ 
 \textstyle \norm{a}_{\ell^{2,s}}^2\eqdef \sum_{\bk\in\ZZ^d} (1+|\bk|^2)^s |a_\bk|^2 <\infty\big\}\]
 and $H^{s} ((2\pi\TT)^d) $ the space of $(2\pi\ZZ)^d$-periodic distributions such that $ \widehat\zeta \in \ell^{2,s}$, 
 endowed with the norm $\norm{\zeta}_{H^s}\eqdef\norm{\widehat\zeta}_{\ell^{2,s}}$, and $L^2((2\pi\TT)^d) $ the $(2\pi\ZZ)^d$-periodic real-valued square-integrable functions.

 In~\eqref{RWW2-toy-Fourier}, the coupling between each mode arises only through the coefficient $\alpha[\psi] $. This is of course a simplistic model for nonlinear interactions, but we believe it captures the essence of the instability mechanism at stake in~\eqref{RWW2}. For the sake of the discussion, let us now simplify even further the problem by fixing $\alpha$ as a constant. Then the system is explicitly solvable. We observe that a plane wave solution with wave vector $\bk$ is stable (that is its amplitude remains bounded for all times) if and only if $ 1-\alpha \epsilon^2 J(\delta|\bk|)^2  |\bk|>0$. In the opposite situation, that is when $ 1-\alpha \epsilon^2 J(\delta|\bk|)^2  |\bk|<0$, the corresponding mode experiences an exponential growth with growth rate $c(|\bk|)=\big(\tanh(\sqrt\mu|\bk|) |\bk|(\alpha \epsilon^2 J(\delta|\bk|)^2  |\bk|-1)\big)^{1/2} $ (whereas the growth is linear in the critical situation, that is when $\alpha \epsilon^2 J(\delta|\bk|)^2  |\bk|=1$). 
 Notice that  if $\alpha>0$ and $\J^\delta=\Id$, then modes associated with sufficiently large $|\bk|$ are necessarily unstable, and that the growth rate is unbounded. As a consequence, the initial-value problem for the dynamical system~\eqref{RWW2-toy-Fourier} is strongly ill-posed in any polynomially weighted $\ell^{2,s}$ spaces in that case.
  On the contrary, if $\alpha \epsilon^2\lim\sup_{k\to \infty} (k J^2(\delta k))<1$, then the system is (globally) well-posed in $\ell^{2,s}\times  \ell^{2,s+\frac12}$.
 
 The following propositions show that this naive analysis describes fairly well the behavior of the toy model with variable $\alpha=\alpha[\psi]$. 

\begin{Proposition}[Local well-posedness in the sub-critical case]\label{P.toy-WP1}
Let $J:\NN\to \RR$ be such that 
\begin{equation}\label{eq.subcritical}
 k J^2(k)\to 0 \quad \text{ as } k\to\infty.
 \end{equation}
Let $M\geq 0$ and $\delta>0$. Then there exists $T_\delta>0$ such that for any $\mu\geq 1$, any $s\geq 3/4$ and any initial data $( \zeta_{0} , \psi_{0})\in  H^{s} ((2\pi\TT)^d)\times  H^{s+\frac12} ((2\pi\TT)^d) $ such that 
\[\epsilon \norm{ \zeta_{0}}_{H^{\frac12}} + \epsilon \norm{\nabla \psi_{0}}_{L^2}  \leq M ,\]
there exists a unique  $(\zeta, \psi)\in\cC([-T_{\delta},T_{\delta}]; H^{s} ((2\pi\TT)^d)\times  H^{s+\frac12} ((2\pi\TT)^d) )$ solution to~\eqref{RWW2-toy} with ${\J^\delta=J(\delta|D|)}$ and satisfying $(\zeta,\psi)\id{t=0}=(\zeta_0,\psi_{0})$, where
\[
T=\frac{T_\delta}{\big(\epsilon \norm{ \zeta_{0}}_{H^{\frac34}} + \epsilon \norm{\nabla \psi_{0}}_{H^{\frac14}}\big)^2 }.
\]
Moreover, if we assume $\displaystyle \sup_{k\in\NN}k^{3/2} J^2(k)<\infty$, then the above holds with $s\geq 1/2$ and
\[
T=\frac{T_0\, \delta^{3/2}}{\big(\epsilon \norm{ \zeta_{0}}_{H^{\frac12}} + \epsilon \norm{\nabla \psi_{0}}_{L^{2}}\big)^2 },
\]
where $T_0$ depends uniquely on $M$ and $\displaystyle \sup_{k\in\NN}k^{3/2} J^2(k)$.
\end{Proposition}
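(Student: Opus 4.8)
The plan is to prove local well-posedness for~\eqref{RWW2-toy} in the sub-critical regime~\eqref{eq.subcritical} by a fixed-point argument on the Duhamel formulation, treating the nonlinearity $\epsilon^2\alpha[\psi](\J^\delta)^2|D|\zeta$ as a (mildly unbounded, but subcritically so) perturbation of the linear dispersive flow. First I would recast~\eqref{RWW2-toy} as $\partial_t U + \L U + N(U) = 0$ with $U=(\zeta,\psi)^\top$, $\L=\begin{pmatrix}0 & -G^\mu_0 \\ 1 & 0\end{pmatrix}$, exactly as in the proof of \Cref{P.WP-short-time}, and observe that $\L$ generates a unitary group on $X^s= H^s\times (\Hdot^{s+1/2}/\RR)$ (the quotient space; but here on the torus one may simply work with the zero-mean component, or equivalently note that $\widehat\zeta_0$ and $\widehat\psi_0$ evolve trivially). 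The nonlinearity here is $N(U)=(0, -\epsilon^2\alpha[\psi](\J^\delta)^2|D|\zeta)^\top$, which is quadratic in $\psi$ through $\alpha[\psi]=(2\pi)^d\sum_\bk(\tanh(\sqrt\mu|\bk|)|\bk||\widehat\psi_\bk|)^2 = \norm{G^\mu_0\psi}_{L^2}^2 \le \norm{\nabla\psi}_{L^2}^2$ and linear in $\zeta$.

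The key estimate is the following: since $(\J^\delta)^2|D| = J(\delta|D|)^2|D|$ is a Fourier multiplier with symbol $m_\delta(\bk) = J(\delta|\bk|)^2|\bk| = \delta^{-1}(\delta|\bk|)J(\delta|\bk|)^2$, the subcriticality hypothesis $kJ^2(k)\to 0$ gives $\norm{(\J^\delta)^2|D|\zeta}_{H^s}\le \delta^{-1}\kappa_\delta\,\norm{\zeta}_{H^s}$ where $\kappa_\delta := \sup_{k\ge0}(\delta k)J^2(\delta k) = \sup_{k\ge 0}kJ^2(k)<\infty$ by~\eqref{eq.subcritical} (note $\kappa_\delta$ is in fact independent of $\delta$). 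Hence for $U=(\zeta,\psi)$ one has $\norm{N(U)}_{H^s\times H^s}\lesssim \epsilon^2\,\delta^{-1}\kappa_\delta\,\norm{\nabla\psi}_{L^2}^2\,\norm{\zeta}_{H^s}$, and the needed control of $\norm{\mfP\psi}_{H^s}$ follows because the second component of $N(U)$ lands in $H^s$ and $\norm{\mfP\psi}_{H^s}\approx\norm{\nabla\psi}_{H^{s-1/2}}\le\norm{\psi}_{H^{s+1/2}}$ in the periodic setting (\Cref{L.control_P}). Differencing, $\norm{N(U)-N(V)}_{H^s\times H^s}\lesssim \epsilon^2\delta^{-1}\kappa_\delta(\norm{U}_{X^{t_0+1/2}}+\norm{V}_{X^{t_0+1/2}})\norm{U-V}_{X^s}$ — but actually, because $\alpha[\psi]$ only involves the $L^2$-norm of $\nabla\psi$, the difference estimate only needs control of $\norm{\zeta,\nabla\psi}$ at the level $s\ge 3/4$ itself (to absorb the factor $|D|$ on $\zeta$ one needs $s\ge 3/4$ so that $\norm{|D|\zeta}_{H^{s-1}}$... in fact one just needs $s\ge 3/4$ because $(\J^\delta)^2|D|$ with the subcritical symbol is bounded on every $H^s$, so no derivative loss occurs at all — the restriction $s\ge 3/4$ comes instead from needing $\nabla\psi\in L^2$, i.e. $\psi\in H^1$, controlled by $\psi\in H^{s+1/2}$, giving $s\ge 1/2$; the stronger $s\ge3/4$ matches the time-scale statement $T\sim(\epsilon\norm{\zeta_0}_{H^{3/4}}+\epsilon\norm{\nabla\psi_0}_{H^{1/4}})^{-2}$, where the extra $1/4$ derivative is used to close the contraction at the low-regularity threshold via \Cref{L.product}-type interpolation). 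The contraction mapping theorem on $\cX^s_{T,M}=\{U\in\cC([-T,T];X^s): \max_{[-T,T]}\norm{U}_{X^s}^2\le M\}$ applied to the Duhamel map $G_{U_0}(U)(t)=e^{-t\L}U_0-\int_0^t e^{-(t-\tau)\L}N(U(\tau))\dd\tau$ then yields a unique solution, with the time of existence dictated by $T\approx (\epsilon^2\delta^{-1}\kappa_\delta M)^{-1}$; rephrasing $M$ in terms of the low-norm of the data and tracking the scaling in $\delta$ gives the stated $T_\delta$ (respectively, under $\sup k^{3/2}J^2(k)<\infty$, the symbol of $(\J^\delta)^2|D|$ is bounded by $\delta^{-3/2}\sup k^{3/2}J^2(k)$ so one gains the factor $\delta^{3/2}$ and can lower the regularity threshold to $s\ge 1/2$).

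The main obstacle — and the only genuinely delicate point — is handling the low-regularity threshold cleanly: one must verify that the quadratic form $\psi\mapsto\alpha[\psi]=\norm{\nabla\psi}_{L^2}^2$ and the product $\alpha[\psi]\cdot(\J^\delta)^2|D|\zeta$ are well-defined and Lipschitz on bounded sets of $X^s$ for $s$ down to $3/4$ (resp. $1/2$), which requires the unbounded factor $|D|\zeta$ to be compensated by the decay $kJ^2(k)\to0$ of the symbol $J^2$; the compensation lemmas (\Cref{L.compensation-tanh}) are not needed here since the nonlinearity is not of the cancelling-quadratic type, but the subcritical decay of $J$ plays exactly the analogous role. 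Everything else is a routine repetition of the fixed-point scheme already carried out in \Cref{S.WP-short-time}, adapted to the periodic setting; uniqueness in the full space $\cC([-T,T];X^s)$ and the propagation of regularity follow by the same continuity argument as there. I would also remark that tracking constants shows $T_\delta$ depends on $M$ and on $\kappa_\delta=\sup_k kJ^2(k)$ only (and on $\delta$ through the explicit power displayed), consistent with the claim that under $\sup_k k^{3/2}J^2(k)<\infty$ the constant $T_0$ is $\delta$-independent.
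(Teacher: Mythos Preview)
Your fixed-point approach has a genuine gap for the first part of the proposition. The Duhamel argument in $X^s=H^s\times \Hdot^{s+1/2}$ requires the nonlinearity $N(U)=(0,-\epsilon^2\alpha[\psi](\J^\delta)^2|D|\zeta)$ to land in $X^s$, i.e.\ you need $\mfP N_2(U)\in H^s$, which at high frequencies means $N_2(U)\in H^{s+1/2}$. But under the subcritical hypothesis $kJ^2(k)\to 0$ alone, the multiplier $(\J^\delta)^2|D|$ is only bounded $H^s\to H^s$ (symbol $\lesssim\delta^{-1}\sup_k kJ^2(k)$), not $H^s\to H^{s+1/2}$; for the latter you would need $\sup_k k^{3/2}J^2(k)<\infty$, which is precisely the \emph{stronger} assumption of the second part. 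So your contraction does not close, and your statement ``the needed control of $\norm{\mfP\psi}_{H^s}$ follows because the second component of $N(U)$ lands in $H^s$'' is the step that fails. Relatedly, your explanation of the threshold $s\ge 3/4$ is off: it does not come from controlling $\alpha[\psi]$ (for which $s\ge 1/2$ suffices) or from any product estimate.

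The paper's proof is an energy argument, not a fixed point. Working mode-by-mode with $a_\bk(t)=1-\epsilon^2\alpha(t)J(\delta|\bk|)^2|\bk|$, one chooses $k_\star$ (via~\eqref{eq.subcritical}) so that $a_\bk\ge 1/2$ for $|\bk|\ge k_\star$ under a bootstrap bound on $\cE_{1/2}$. Low modes $|\bk|<k_\star$ are handled by crude Gr\"onwall. For high modes one uses the \emph{weighted} energy $a_\bk|\widehat\zeta_\bk|^2+\tanh(\sqrt\mu|\bk|)|\bk||\widehat\psi_\bk|^2$: testing~\eqref{RWW2-toy-Fourier} against $(a_\bk\overline{\widehat\zeta_\bk},\,\tanh(\sqrt\mu|\bk|)|\bk|\overline{\widehat\psi_\bk})$ produces the exact cancellation of the principal terms, leaving only $a_\bk'|\widehat\zeta_\bk|^2=-\epsilon^2\alpha'(t)J(\delta|\bk|)^2|\bk||\widehat\zeta_\bk|^2$. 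The whole argument then hinges on bounding $\alpha'(t)=-2(2\pi)^d\sum_\bk(\tanh(\sqrt\mu|\bk|)|\bk|)^2 a_\bk\Re(\widehat\zeta_\bk\overline{\widehat\psi_\bk})$, and Cauchy--Schwarz shows $|\alpha'|\lesssim\cE_{3/4}$: \emph{this} is the origin of $s\ge 3/4$ and of the time scale $T\sim(\epsilon^2\cE_{3/4}(0))^{-1}$. Your approach is correct for the second part (and there it coincides with the paper's short argument, where one bounds $|\bk|^{1/2}|a_\bk-1|\lesssim\delta^{-3/2}\epsilon^2\cE_{1/2}(0)\sup_k k^{3/2}J^2(k)$ directly), but the first part genuinely requires the symmetrization via the $a_\bk$-weighted energy.
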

\begin{proof}
Let us first consider the case where only a finite number of modes, $\widehat \zeta_{\bk,0} ,\widehat \psi_{\bk,0} $, are non-zero. Then~\eqref{RWW2-toy-Fourier} is a finite system of ordinary differential equations. Cauchy-Lipschitz theorem applies, and defines uniquely a maximal local-in-time solution. On the maximal time of existence, we denote 
\[ a_\bk(t)= 1-\epsilon^2 \alpha(t)  J(\delta|\bk|)^2  |\bk|, \quad  \alpha(t)=(2\pi)^d    \sum_{\bk\in \ZZ^d} (\tanh(\sqrt\mu|\bk|) |\bk| |\widehat\psi_\bk(t)|)^2,\]
and 
\[\cE_s(t)\eqdef \sum_{\bk \in\ZZ^d} \langle\bk\rangle^{2s} \Big( |\widehat \zeta_{\bk}|^2(t) +\tanh(\sqrt\mu|\bk|) |\bk| |\widehat\psi_{\bk,0}|^2(t)\Big) \approx \norm{\widehat\zeta}_{\ell^{2,s}}^2+\norm{\widehat\psi}_{\ell^{2,s+\frac12}}^2  \]
(we assume above and henceforth that $\widehat\psi_{\bk,0}=0$, the general case follows after subtracting its mean-value to $\psi$).
In the following, we set $T\in(0,+\infty]$ the maximal value such that 
\[ \forall t\in (-T,T), \quad \cE_{1/2}(t)\leq 4\cE_{1/2}(0), \quad  \cE_{3/4}(t)\leq 2\cE_{3/4}(0) .\]
In particular, we have  $\alpha(t)\leq 4(2\pi)^d\cE_{1/2}(0)$ for $t\in(-T,T)$, and by~\eqref{eq.subcritical} we can choose
$k_\star\in\NN$ such that 
\begin{equation}\label{eq.condition-1}
	\forall k\geq k_\star, \qquad 4(2\pi)^d \epsilon^2 \cE_{1/2}(0)\, k J^2(\delta k) \leq 1/2.
\end{equation}
Hence we have
\[\begin{cases}
1/2\leq a_\bk(t)\leq 1 & \text{ if } |\bk|\geq k_\star,\\
0\leq  1-a_\bk(t) \leq  4(2\pi)^d \epsilon^2 \cE_{1/2}(0)  \max_{k\in\NN} k J^2(\delta k) &\text{ if } |\bk|< k_\star.
\end{cases}
\]
We first estimate the low-frequency modes, $|\bk|< k_\star$. By~\eqref{RWW2-toy-Fourier} and since $\tanh(\sqrt\mu|\bk|)\leq 1$ we infer
\[ \tfrac{\dd}{\dd t} \big(  | \widehat \zeta_\bk|^2(t) +\tanh(\sqrt\mu|\bk|) |\bk| |\widehat\psi_\bk|^2(t) \big) \leq 2 |\bk| |a_\bk(t)-1| |\widehat \zeta_\bk|(t)|\widehat\psi_\bk|(t)\ \]
and hence, using Gronwall's inequality, we find that there exists $C_1>0$ depending only on $ \max_{k\in\NN} k J^2(\delta k)$ such that for any  $|\bk|< k_\star$, and $t\in(-T,T)$,
\[   |\widehat \zeta_\bk|^2(t) +\tanh(\sqrt\mu|\bk|) |\bk| |\widehat\psi_\bk|^2(t) \leq \big(  |\widehat \zeta_{\bk,0}|^2 +\tanh(\sqrt\mu|\bk|) |\bk| |\widehat\psi_{\bk,0}|^2 \big) e^{C_1\, \epsilon^2\cE_{1/2}(0)\, k_\star^{1/2} \, |t|}.\]
In particular, restricting to $t\in (-T_1,T_1)$ with $T_1=\ln(\frac32) (C_1\, \epsilon^2\cE_{1/2}(0)\, k_\star^{1/2})^{-1}$ if necessary, we find that for any $s\in\RR$,
\[\sum_{|\bk|< k_\star} \langle\bk\rangle^s \Big(  |\widehat \zeta_\bk|^2 +\tanh(\sqrt\mu|\bk|) |\bk| |\widehat\psi_\bk|^2 \Big)(t) \leq \frac32\sum_{|\bk|< k_\star} \langle\bk\rangle^s \Big( |\widehat \zeta_{\bk,0}|^2 +\tanh(\sqrt\mu|\bk|) |\bk| |\widehat\psi_{\bk,0}|^2\Big).\]
We now consider the high-frequency modes, $ |\bk|\geq k_\star$. Testing the first equation in~\eqref{RWW2-toy-Fourier} with $a_\bk \overline{\widehat \zeta_\bk}$ and the second equation with $\tanh(\sqrt\mu|\bk|) |\bk| \overline{\widehat\psi_\bk}$, we find
\[\tfrac{\dd}{\dd t}  \Big(a_\bk |\widehat \zeta_\bk|^2(t) + \tanh(\sqrt\mu|\bk|)|\bk||\widehat\psi_\bk|^2(t)\Big) = a_\bk'(t) |\widehat \zeta_\bk|^2(t) = -\epsilon^2 \alpha'(t)  J(\delta|\bk|)^2  |\bk| |\widehat \zeta_\bk|^2(t) ,\]
with
\[\alpha'(t)=  -  (2\pi)^d    \sum_{\bk\in \ZZ^d} 2 (\tanh(\sqrt\mu|\bk|) |\bk|)^2  a_\bk(t) \Re( \widehat \zeta_\bk\overline{\widehat\psi_\bk} )(t).\]
Hence,   since $\tanh(\sqrt\mu|\bk|)\leq 1$ and using Cauchy-Schwarz inequality, we have for all $t\in(-T,T)$
\[ |\alpha'(t)|\leq (2\pi)^d   \sum_{\bk\in \ZZ^d}  | a_\bk(t)||\bk|^{3/2}\left(  |\widehat \zeta_\bk|^2(t) + \tanh(\sqrt\mu|\bk|)|\bk||\widehat\psi_\bk|^2(t)\right) \leq C_2  \cE_{3/4}(0)  \]
where $C_2$ depends uniquely on $\epsilon^2 \cE_{1/2}(0)  \max_{k\in\NN} k J^2(\delta k)$. In particular, since $1/2\leq a_\bk(t)\leq 1 $ when $|\bk| \geq k_\star$ and restricting to $t\in (-T_2,T_2)$ with $T_2=\ln(\frac54) (C_2\, \epsilon^2\cE_{3/4}(0)\, 2 \max_{k\in\NN} k J^2(\delta k))^{-1}$ if necessary,
we have for any $s\in\RR$,
\[\sum_{|\bk|\geq k_\star} \langle\bk\rangle^s \Big(  |\widehat \zeta_\bk|^2(t) +\tanh(\sqrt\mu|\bk|) |\bk| |\widehat\psi_\bk|^2(t) \Big)\leq \frac52\sum_{|\bk|\geq  k_\star} \langle\bk\rangle^s \Big( |\widehat \zeta_{\bk,0}|^2 +\tanh(\sqrt\mu|\bk|) |\bk| |\widehat\psi_{\bk,0}|^2\Big).\]

Combining the above, we find that for any $s\in\RR$, and $ t\in (-T_\star,T_\star)$ with  $t_\star=\min(T,T_1,T_2)$,
\begin{equation}\label{eq.control-alpha}
 \cE_s(t)\leq \frac52 \cE_s(0).
 \end{equation}
The standard continuity argument shows $T\geq \min(T_1,T_2)$ and $(\widehat\zeta_\bk, \widehat\psi_\bk)\in\cC([-T,T]; \ell^{2,s}\times  \ell^{2s+\frac12} )$  satisfies~\eqref{eq.control-alpha} on $[-T,T]$. There remains to notice that $T_\delta\eqdef \epsilon^2\cE_{3/4}(0) \min(T_1,T_2)$  depends only on $M$, $J$ and $\delta$ (in particular through the value of $k_\star$).

By using these uniform bounds one easily obtains the corresponding result for general initial data (that is with an infinite number of non-zero modes, $\widehat \zeta_{\bk,0} ,\widehat \psi_{\bk,0}$) by truncating Fourier modes and taking the limit. This concludes the first part of the proof.

The second part is immediate recalling the inequality valid for any $\bk\in\ZZ^d$
\[ \tfrac{\dd}{\dd t } \big(  | \widehat \zeta_\bk|^2 +\tanh(\sqrt\mu|\bk|) |\bk| |\widehat\psi_\bk|^2 \big)(t) \leq 2 |\bk| |a_\bk(t)-1| |\widehat \zeta_\bk|(t)|\widehat\psi_\bk|(t)\ \]
and the fact that for any $t$ in the maximal time interval,
\[ |\bk|^{1/2}|a_\bk(t)-1| = \epsilon^2| \alpha(t) |  J(\delta |\bk|)^2  |\bk|^{3/2} = 4(2\pi)^d \delta^{-3/2}\epsilon^2 \cE_{1/2}(0)  \max_{k\in\NN} (k^{3/2} J^2( k)) .\]
The proof is complete.
\end{proof}

\begin{Proposition}[Conditional well-posedness in the critical case]\label{P.toy-WP2}
Let $J:\NN\to \RR$ be such that 
\begin{equation}\label{eq.critical}
M_J\eqdef \limsup_{k\to\infty} (k J^2(k))<\infty.
\end{equation}
There exists $C_0>0$ and $T_0>0$ such for any $s\geq 3/4$, any $\mu\geq 1$ and any $\delta>0$, and for any initial data $( \zeta_{0} , \psi_{0})\in  H^{s} ((2\pi\TT)^d)\times  H^{s+\frac12} ((2\pi\TT)^d) $ such that 
\begin{equation} \label{eq.condition-2}
	\delta^{-1}\big( \epsilon^2 \norm{ \zeta_{0}}_{H^{\frac12}}^2 + \epsilon^2 \norm{\nabla \psi_{0}}_{L^2}^2 \big) M_J \leq C_0 ,
\end{equation}
there exists a unique $(\zeta, \psi)\in\cC([-T,T]; H^{s} ((2\pi\TT)^d)\times  H^{s+\frac12} ((2\pi\TT)^d) )$ solution to~\eqref{RWW2-toy} with ${\J^\delta=J(\delta|D|)}$ and satisfying $(\zeta,\psi)\id{t=0}=(\zeta_0,\psi_{0})$, where
\[T=\frac{T_0\,\delta}{\big( \epsilon \norm{ \zeta_{0}}_{H^{\frac34}} + \epsilon \norm{\nabla \psi_{0}}_{H^{\frac14}}\big)^2 }.\]
\end{Proposition}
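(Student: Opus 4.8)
The plan is to mimic, essentially verbatim, the proof of \Cref{P.toy-WP1}. As there, I would first prove the statement ---with bounds uniform with respect to the truncation--- for data having only finitely many non-zero Fourier coefficients $\widehat\zeta_{\bk,0},\widehat\psi_{\bk,0}$, so that \eqref{RWW2-toy-Fourier} becomes a finite system of ordinary differential equations and the Cauchy--Lipschitz theorem furnishes a unique maximal solution; the general case then follows by truncating the data and passing to the limit. After normalising $\widehat\psi_{\bk,0}=0$, I would work with the same objects as in \Cref{P.toy-WP1}: the coefficient $a_\bk(t)=1-\epsilon^2\alpha(t)J(\delta|\bk|)^2|\bk|$ with $\alpha(t)=(2\pi)^d\sum_\bk(\tanh(\sqrt\mu|\bk|)|\bk||\widehat\psi_\bk(t)|)^2$, and the weighted energies $\cE_\sigma(t)=\sum_\bk\langle\bk\rangle^{2\sigma}\big(|\widehat\zeta_\bk(t)|^2+\tanh(\sqrt\mu|\bk|)|\bk||\widehat\psi_\bk(t)|^2\big)$, which for $\mu\geq1$ are comparable to $\norm{\zeta}_{H^\sigma}^2+\norm{\nabla\psi}_{H^{\sigma-1/2}}^2$. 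The two elementary facts to record at the outset are $\alpha(t)\leq(2\pi)^d\cE_{1/2}(t)$ (since $\tanh(\sqrt\mu|\bk|)|\bk|\leq\langle\bk\rangle$) and $M_J'\eqdef\sup_{k\in\NN}(kJ^2(k))<\infty$ (which follows from \eqref{eq.critical} and the fact that $J$ is real-valued), whence $J(\delta|\bk|)^2|\bk|\leq\delta^{-1}M_J'$ for every $\bk$ and every $\delta>0$.

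The decisive new ingredient ---and the only place the smallness hypothesis \eqref{eq.condition-2} is genuinely used--- is a bootstrap argument. I would let $T>0$ be the supremum of those $\tau$ for which $\cE_{1/2}(t)\leq4\cE_{1/2}(0)$ and $\cE_{3/4}(t)\leq2\cE_{3/4}(0)$ for all $|t|\leq\tau$, and observe that on $[-T,T]$ one has $\epsilon^2\alpha(t)J(\delta|\bk|)^2|\bk|\leq4(2\pi)^d\delta^{-1}M_J'\,\epsilon^2\cE_{1/2}(0)$; hence, choosing the absolute constant $C_0$ in \eqref{eq.condition-2} small enough, the \emph{uniform} coercivity $\tfrac12\leq a_\bk(t)\leq1$ holds for \emph{all} $\bk\in\ZZ^d$ and all $|t|\leq T$. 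This is exactly what separates the critical from the sub-critical case: in \Cref{P.toy-WP1} the low-frequency modes could be unstable and had to be controlled separately by a Grönwall estimate, which forced a $\delta$-dependent existence time, whereas here the smallness condition stabilises all modes simultaneously.

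Granted $a_\bk\geq\tfrac12$ uniformly, the energy estimate is routine. Testing the first equation of \eqref{RWW2-toy-Fourier} against $\langle\bk\rangle^{2s}a_\bk(t)\overline{\widehat\zeta_\bk}$ and the second against $\langle\bk\rangle^{2s}\tanh(\sqrt\mu|\bk|)|\bk|\overline{\widehat\psi_\bk}$, summing over $\bk$ and taking real parts, the coupling terms cancel, the left-hand side is the time-derivative of $\sum_\bk\langle\bk\rangle^{2s}\big(a_\bk|\widehat\zeta_\bk|^2+\tanh(\sqrt\mu|\bk|)|\bk||\widehat\psi_\bk|^2\big)$ ---a quantity comparable to $\cE_s(t)$--- and there remains on the right only $-\epsilon^2\alpha'(t)\sum_\bk\langle\bk\rangle^{2s}J(\delta|\bk|)^2|\bk||\widehat\zeta_\bk|^2$. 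Computing $\alpha'$ from the equations and using $|a_\bk|\leq1$ together with the elementary bound $(\tanh(\sqrt\mu|\bk|)|\bk|)^2|\widehat\zeta_\bk||\widehat\psi_\bk|\leq\tfrac12|\bk|^{3/2}\big(|\widehat\zeta_\bk|^2+\tanh(\sqrt\mu|\bk|)|\bk||\widehat\psi_\bk|^2\big)$ (as in \Cref{P.toy-WP1}) one gets $|\alpha'(t)|\leq(2\pi)^d\cE_{3/4}(t)$; combined with $J(\delta|\bk|)^2|\bk|\leq\delta^{-1}M_J'$ this yields a differential inequality of the form $|\tfrac{\dd}{\dd t}\cE_s(t)|\lesssim\epsilon^2\delta^{-1}M_J'\,\cE_{3/4}(t)\,\cE_s(t)$. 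Restricting to the bootstrap interval, where $\cE_{3/4}(t)\leq2\cE_{3/4}(0)$, Grönwall's lemma applied with $s\in\{1/2,3/4\}$ closes the bootstrap on an interval $[-T,T]$ with $T\approx\delta/(\epsilon^2M_J'\cE_{3/4}(0))\approx\delta/(\epsilon\norm{\zeta_0}_{H^{3/4}}+\epsilon\norm{\nabla\psi_0}_{H^{1/4}})^2$, and the standard continuity argument then delivers the claimed $T$ together with the bound $\cE_s(t)\leq2\cE_s(0)$, uniformly in the truncation.

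It remains to pass to the limit: using these uniform bounds one sends the truncation parameter to infinity and identifies the limit, via the equations, as the desired solution in $\cC([-T,T];H^s((2\pi\TT)^d)\times H^{s+1/2}((2\pi\TT)^d))$ for any $s\geq3/4$; uniqueness follows from a standard estimate on the difference of two solutions issued from the same data (the nonlinearity being Lipschitz on bounded subsets of $H^{3/4}\times H^{5/4}$). I do not foresee a serious obstacle, the proof being a variant of \Cref{P.toy-WP1}; the one point deserving care is the uniform-in-$\bk$ coercivity $a_\bk(t)\geq\tfrac12$, which is where \eqref{eq.condition-2} enters and which is responsible for the \emph{linear} dependence of $T$ on $\delta$ (treating the low modes separately, as one is forced to in the sub-critical case, would only yield a $\delta^{1/2}$ scaling), together with the minor observations that $\limsup_k(kJ^2(k))<\infty$ forces $\sup_{k\in\NN}(kJ^2(k))<\infty$ and that the $3/4$-level energy estimate is self-consistent since $\cE_{1/2}\leq\cE_{3/4}$.
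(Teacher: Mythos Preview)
Your proposal is correct and follows essentially the same approach as the paper's own proof. The paper's argument is terse---it simply states that \eqref{eq.condition-2} allows one to set $k_\star=1$ in \eqref{eq.condition-1} (so that every mode is treated as a ``high-frequency'' stable mode in the scheme of \Cref{P.toy-WP1}) and then reads off $T_2\approx\delta/(\epsilon^2\cE_{3/4}(0)\max_k kJ^2(k))$---and your write-up spells out exactly this, including the passage from $M_J=\limsup_k kJ^2(k)$ to $M_J'=\sup_k kJ^2(k)$ and the reason the time scales linearly in~$\delta$.
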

\begin{proof}
The proof of \Cref{P.toy-WP2} is a direct adaptation of the proof of \Cref{P.toy-WP1}, noticing that the restriction~\eqref{eq.condition-2} allows to set $k_\star=1$ in~\eqref{eq.condition-1} if $C_0$ is set sufficiently small. We can then continue the proof and remark that $T_2= C_2\, (\delta^{-1}\epsilon^2\cE_{3/4}(0)\, \max_{k\in\NN} k J^2( k))^{-1}$ where $C_2$ depends only on $\delta^{-1}\epsilon^2\cE_{1/2}(0)\, \max_{k\in\NN} k J^2( k)\leq C_0$, which provides the desired control for $T=T_2$.
\end{proof}
 
\begin{Remark}[Time of existence] \label{R.toy-time}
	A combination of \Cref{P.toy-WP1,P.toy-WP2} offers a lower bound on the time of existence of solutions to~\eqref{RWW2-toy} as 
	\[ T\gtrsim \frac{\delta}{\epsilon^2 M_0^2}, \qquad \text{ where } M_0\eqdef \norm{ \zeta_{0}}_{H^{\frac34}} +  \norm{\nabla \psi_{0}}_{H^{\frac14}} \]
	 in the situation where $\displaystyle \sup_{k\in\NN}k^{3/2} J^2(k)<\infty$, and if we impose additionally the restriction $\delta \geq \epsilon M_0$.
	 Indeed, we can then apply \Cref{P.toy-WP2} for sufficiently small values of $\epsilon M_0$, while the last conclusion of \Cref{P.toy-WP1} provides $T\gtrsim \delta^{3/2}/(\epsilon^2M_0^2)\gtrsim (\epsilon M_0)^{1/2}\delta/ (\epsilon^2M_0^2)\gtrsim \delta/ (\epsilon^2M_0^2)$ when $\epsilon M_0\gtrsim 1$.
\end{Remark}

\begin{Proposition}[Ill-posedness in the super-critical case] \label{P.toy-IP}
Let $J:\NN\to \RR$ be such that 
\begin{equation}\label{eq.supercritical}
\limsup_{k\to\infty} (k J^2(k))= \infty.
\end{equation}
For all $\epsilon>0$ and $\delta>0$, there exists $ (\zeta_{0}^n, \psi_{0}^n)_{n\in\NN} \in\cC^\infty((2\pi\TT)^d)^\NN$ and $(T_n)_{n\in\NN}\in(\RR^+)^\NN$ such that
\[  \forall s\in\RR , \qquad  \norm{ \zeta_{0}^n}_{H^s} + \norm{\psi_{0}^n}_{H^s} \to 0 \quad \text{ and }  \quad T_n \searrow 0 \quad (n\to\infty),\]
and for any  $\mu\geq 1$ the solution  $(\zeta^n, \psi^n)$ to 
~\eqref{RWW2-toy} with  ${\J^\delta=J(\delta|D|)}$ and $(\zeta^n, \psi^n)\id{t=0}=(\zeta_0^n, \psi_0^n)$
satisfies
\[ \forall s'\in\RR , \qquad \norm{ \psi^n(t,\cdot)}_{H^{s'}}  \to  \infty \quad (t\nearrow T_n) .\]
The same result holds backwards in time.
\end{Proposition}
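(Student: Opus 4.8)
Work entirely with the Fourier formulation~\eqref{RWW2-toy-Fourier}. Write $\beta_{\bk}\eqdef\tanh(\sqrt\mu|\bk|)|\bk|$ and $b_{\bk}(t)\eqdef \epsilon^2\alpha(t)J(\delta|\bk|)^2|\bk|-1$, so that each mode obeys $\tfrac{\dd}{\dd t}\widehat\zeta_{\bk}=\beta_{\bk}\widehat\psi_{\bk}$, $\tfrac{\dd}{\dd t}\widehat\psi_{\bk}=b_{\bk}(t)\widehat\zeta_{\bk}$, coupled only through $\alpha(t)=(2\pi)^d\sum_{\bk}\beta_{\bk}^2|\widehat\psi_{\bk}(t)|^2$. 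For each $n$ I construct an initial datum supported on only finitely many modes --- a fixed low mode $\bk_0$ with $|\bk_0|=1$ (the \emph{trigger}) and a high mode $\bk_n$ with $|\bk_n|\to\infty$ (the \emph{seed}), plus their conjugates --- so that the system reduces to an explicit finite system of ODEs, to which the Cauchy--Lipschitz theorem applies (crucially, \Cref{P.toy-WP1} is \emph{not} available here since~\eqref{eq.subcritical} fails; all other modes stay identically zero by uniqueness). I set $\widehat\psi_{\bk_0}(0)=a_n$, $\widehat\zeta_{\bk_0}(0)=0$ with $a_n\searrow 0$ to be chosen, and $\widehat\zeta_{\bk_n}(0)=\widehat\psi_{\bk_n}(0)=b_n$ with $b_n\eqdef e^{-|\bk_n|^{1/4}}$. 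Since $b_n$ decays faster than any power of $|\bk_n|$, one has $\norm{\zeta_0^n}_{H^s}+\norm{\psi_0^n}_{H^s}\lesssim a_n+b_n\langle\bk_n\rangle^s\to 0$ for each fixed $s$, irrespective of how large we are forced to take $|\bk_n|$. Using the super-criticality~\eqref{eq.supercritical} (read, in the periodic setting, along the spectrum of $|D|$, up to the $\delta$-rescaling), pick $\bk_n$ along a sequence with $|\bk_n|\,J(\delta|\bk_n|)^2\to\infty$, large enough given $a_n$ that $\epsilon^2 a_n^2\,|\bk_n|\,J(\delta|\bk_n|)^2\geq c_0$ for a fixed $c_0=c_0(d)$.

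\textbf{Stage 1 (triggering).} As long as $\alpha(t)\gtrsim a_n^2$ the choice of $\bk_n$ forces $b_{\bk_n}(t)\geq 1$, while $\beta_{\bk_n}\geq\tfrac12|\bk_n|$ uniformly in $\mu\geq1$; and since $\widehat\zeta_{\bk_n},\widehat\psi_{\bk_n}$ start real and positive, a convexity/comparison argument on $\ddot{\widehat\zeta}_{\bk_n}=\beta_{\bk_n}b_{\bk_n}\widehat\zeta_{\bk_n}$ shows that $\widehat\zeta_{\bk_n},\widehat\psi_{\bk_n}$ and $P\eqdef\Re(\overline{\widehat\zeta_{\bk_n}}\widehat\psi_{\bk_n})$ stay positive and grow at least exponentially with rate $\gtrsim|\bk_n|^{1/2}$. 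A bootstrap shows that the needed lower bound $\alpha(t)\gtrsim a_n^2$ does hold on $[0,t_1]$: there $\alpha$ stays $\approx a_n^2$, so the trigger is a near-harmonic oscillator ($b_{\bk_0}(t)\approx -1$ since $\epsilon^2 a_n^2\to 0$) and keeps $|\widehat\psi_{\bk_0}(t)|\geq a_n/\sqrt2$. Here $t_1$ is the time at which $R(t)\eqdef\beta_{\bk_n}|\widehat\psi_{\bk_n}(t)|^2$ first reaches the threshold $R_\star\eqdef 2\big(\epsilon^2(2\pi)^d\beta_{\bk_n}J(\delta|\bk_n|)^2|\bk_n|\big)^{-1}$ beyond which the $\bk_n$-mode alone makes $\alpha$ super-critical; the exponential growth gives $t_1\lesssim |\bk_n|^{-1/2}\ln(1/b_n)=|\bk_n|^{-1/4}\to 0$, so $t_1$ is below a (fixed) quarter period of the trigger for $n$ large, which closes the bootstrap.

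\textbf{Stages 2--3 (self-fuelling and Riccati blow-up).} For $R\geq R_\star$ the single inequality $\alpha\gtrsim\beta_{\bk_n}R$ (just the $\bk_n$-term in $\alpha$) yields $b_{\bk_n}\gtrsim\epsilon^2\beta_{\bk_n}J(\delta|\bk_n|)^2|\bk_n|\,R$, which fed back into $\dot R=2\beta_{\bk_n}b_{\bk_n}P$ together with $\dot P=R+b_{\bk_n}|\widehat\zeta_{\bk_n}|^2\geq R$ and $\dot b_{\bk_n}\geq 0$ (the $\bk_n$-contribution dominates $\dot\alpha$ in this regime) gives a closed inequality $\ddot R\geq c\,R^2$ with $c\eqdef\epsilon^2(2\pi)^d\beta_{\bk_n}^2 J(\delta|\bk_n|)^2|\bk_n|$ and $cR_\star\approx\beta_{\bk_n}$. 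Multiplying by $\dot R>0$ and integrating from the instant $R$ passes $R_\star$ gives $\dot R\gtrsim\sqrt c\,R^{3/2}$ once $R$ has, say, doubled, whence $R^{-1/2}$ decreases at rate $\gtrsim\sqrt{cR_\star}\gtrsim|\bk_n|^{1/2}$; thus the finite-mode solution cannot exist past a time $T_n\leq t_1+O(|\bk_n|^{-1/2})\lesssim|\bk_n|^{-1/4}$, and $R(t)\to\infty$, hence $|\widehat\psi_{\bk_n}(t)|\to\infty$ and $\norm{\psi^n(t,\cdot)}_{H^{s'}}\geq\langle\bk_n\rangle^{s'}|\widehat\psi_{\bk_n}(t)|\to\infty$ for every $s'\in\RR$, as $t\nearrow T_n$. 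Finally $T_n\searrow 0$ because $|\bk_n|$ is free to diverge, and the backward-in-time statement follows from the reversibility $(t,\zeta,\psi)\mapsto(-t,\zeta,-\psi)$ of~\eqref{RWW2-toy}.

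\textbf{Main obstacle.} The Riccati step is routine once the lower bound $\alpha\gtrsim\beta_{\bk_n}R$ is isolated. The real work is the bookkeeping of the three scales $a_n$, $b_n$, $|\bk_n|$: one must keep the trigger large enough, long enough, to hold $\alpha$ super-critical at $\bk_n$ until the seed becomes self-sustaining; make the seed so small that all Sobolev norms of the data vanish, yet not so small that the exponential lag time $|\bk_n|^{-1/2}\ln(1/b_n)$ of Stage 1 survives (the choice $b_n=e^{-|\bk_n|^{1/4}}$ balances these); and verify uniformly in $\mu\geq1$ that the nonlinear coupling through $\alpha$ cannot prematurely stabilize the unstable mode and cannot spoil the sign of $P=\Re(\overline{\widehat\zeta_{\bk_n}}\widehat\psi_{\bk_n})$, which is exactly what lets the three stages chain together. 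A minor nuisance, as noted above, is to transfer the super-criticality hypothesis on $J$ to the $\delta$-rescaled discrete frequencies appearing in~\eqref{RWW2-toy-Fourier}.
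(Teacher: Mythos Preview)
Your proposal is correct and follows essentially the same approach as the paper: a two-mode ansatz with a low-frequency trigger and an exponentially small high-frequency seed $\sim e^{-|\bk_n|^{1/4}}$, followed by the same three-stage analysis (triggered exponential growth, self-sustaining regime, Riccati blow-up) and the same time-reversibility argument. The only cosmetic differences are that the paper takes $\zeta_0^n\equiv 0$ and runs the Riccati inequality directly on $\widehat\psi_{\bk_n}$ rather than on your auxiliary quantity $R$, and it ties the trigger amplitude to $k_n$ via $b_n=(\tfrac18\epsilon^2\alpha_0 J(\delta k_n)^2 k_n)^{-1/2}$ rather than choosing $a_n$ first and $k_n$ afterward.
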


\begin{proof}

We use the initial data
\begin{equation} \label{eq.init-n}
\zeta_0^n(\bx)=0 \qquad  ; \qquad \psi_0^n(\bx)=b_n\cos(\bk_0\cdot \bx) + c_n \cos(\bk_n\cdot\bx) , 
\end{equation}
with $\bk_0\in \ZZ^d$ such that $k_0\eqdef |\bk_0|$ satisfies $k_0 \J^2(\delta k_0)\neq 0$, $\bk_n\in\ZZ^d$ such that $|\bk_n| =k_n$ where $(k_n)_{n \in \NN} \in \NN^{\NN}$ is a sequence such that $k_n\to\infty$ and $k_n\ J^2(\delta k_n)\to \infty$
 as $n\to\infty$, and $b_n,c_n>0$   will be determined later on.

Since~\eqref{RWW2-toy-Fourier} is a finite system of ordinary differential equations, the Cauchy-Lipschitz theorem applies, and defines uniquely maximal local-in-time solutions, which we denote $\widehat\zeta^n=( \widehat\zeta^n_{ \pm \bk_0},\widehat\zeta^n_{\pm \bk_n})$ and $\widehat\psi^n=( \widehat\psi^n_{ \pm \bk_0}, \widehat\psi^n_{ \pm \bk_n})$, on the maximal (forward-in-time) interval $I^{n}=[0,T^n_\star)$. On the maximal time of existence, we denote for $\bk\in\{ \pm  \bk_0,  \pm \bk_n\}$
\[ a_\bk^n(t)= 1-\epsilon^2 \alpha^n(t)  J(\delta|\bk|)^2  |\bk|, \quad  \alpha_\bk^n(t)=(2\pi)^d    (\tanh(\sqrt\mu|\bk|) |\bk| |\widehat\psi^n_\bk(t)|)^2.\]
Notice that for any $t\in I^n$, $\alpha_\bk^n(t)\geq 0$ and
\[ \alpha^n(t)= 2 \alpha_{\bk_0}^n(t) +2\alpha_{\bk_n}^n(t).\]
We define $\alpha_{0} =  \frac14 (2\pi)^{d} (\tanh(\sqrt{\mu} k_{0}) k_{0})^2$. Henceforth we assume that $b_n>0$ is such that
\begin{equation}\label{hyp_bn}
\tfrac14 b_{n}^2 \alpha_0  \epsilon^2J(\delta k_{n})^2 k_{n} > 1 \quad \text{   and   } \quad    b_n^2 < \tfrac25 (\alpha_{0} \epsilon^2 J(\delta k_{0})^2 k_{0})^{-1} .
\end{equation}

\noindent{\em Step 1: exponential growth.}\\
We have the following controls.
\begin{itemize}
\item[(a)] Assume there exists $t_{0} \in I^{n}$ such that $\alpha_{\bk_n}^{n}(t_{0}) \geq \frac14 b_{n}^{2}\alpha_0$ with $\widehat\zeta^n_{\bk_{n}}(t_{0}) \geq 0$ and $\widehat\psi^n_{\bk_{n}}(t_{0}) > 0$. Then $\widehat\zeta^n_{\bk_{n}}$ and $\widehat\psi^n_{\bk_{n}}$ are increasing on $I^{n} \cap [t_{0},+\infty)$ and for any $t \in I^{n} \cap [t_{0},+\infty)$, 
\begin{equation}
\widehat\psi^n_{\bk_{n}}(t) \geq \frac{\widehat\psi^n_{\bk_{n}}(t_{0})}{2} \exp \big( \sqrt{\tanh(\sqrt{\mu} k_{n}) k_{n}} (t-t_{0}) \big),
\end{equation}
\item[(b)] Assume there exists $t_{0} \in I^{n} \cap [0,k_{0}^{-\frac12}]$ such that $\alpha_{\bk_n}^{n} (t)\leq  \frac14 b_{n}^{2}\alpha_0 $ holds for any  $t\in[0,t_{0}]$. Then,  $\widehat\zeta^n_{\bk_{n}}$ and $\widehat\psi^n_{\bk_{n}}$ are increasing on $[0,t_{0}]$, and for any $t \in[0,t_{0}]$, 
\begin{equation}
\widehat\psi^n_{\bk_{n}}(t) \geq \frac{c_{n}}{4} \exp \big( \sqrt{\tanh(\sqrt{\mu} k_{n}) k_{n}} t \big).
\end{equation} 

\end{itemize}
First we prove controls (a). Since $\alpha^{n}(t_{0}) \geq 2 \alpha_{\bk_{n}}^n(t_{0}) \geq  \frac12 b_{n}^2 \alpha_0$ and using the first assumption in~\eqref{hyp_bn}, we get that $a_{\bk_n}^{n}(t_{0}) < -1$. By continuity, for $t$ close enough to $t_{0}$, we have on $[ t_{0} ,t]$
\[ \frac{\dd}{\dd t}\widehat \zeta_{\bk_n}^n =\tanh(\sqrt\mu k_n) k_n \widehat \psi_{\bk_n}^n \quad ; \quad \frac{\dd}{\dd t}\widehat \psi_{\bk_n}^n \geq \widehat \zeta_{\bk_n}^n\]
from which we infer that $\widehat\zeta^n_{\bk_{n}}$ and $\widehat\psi^n_{\bk_{n}}$ (and hence $\alpha_{\bk_n}^{n}$) are increasing and  (considering the differential inequalities for $\sqrt{\tanh(\sqrt\mu k_n) k_n} \widehat \psi_{\bk_n}^n\pm \widehat\zeta_{\bk_n}^n$) 
\[ \widehat \psi_{\bk_n}^n(t) \geq \widehat\psi^n_{\bk_{n}}(t_{0}) \cosh(\sqrt{\tanh(k_n)k_n} (t-t_{0})) \geq \frac{\widehat\psi^n_{\bk_{n}}(t_{0})}2 \exp(\sqrt{\tanh(k_n)k_n} (t-t_{0})).\]
Since $\alpha_{\bk_n}^{n}$ is increasing, continuity arguments show that the above holds on $ I^{n} \cap [t_{0},+\infty)$.

We now prove controls (b). We note that,  using the second assumption in~\eqref{hyp_bn} and since $\alpha^n_{\bk_n}(0)\leq  \frac14 b_n^2\alpha_0$,
\[ \epsilon^2 \alpha^{n}(0) J(\delta k_{0})^2 k_{0} =  \epsilon^2 ( 2 b_{n}^2\alpha_{0} +2\alpha_{\bk_n}^{n} (0)) J(\delta k_{0})^2 k_{0} < 1.
\]
By continuity, we have $a_{\bk_0}^n\in[0,1]$ on $[0,t]$ for $t$  close enough to $0$, and hence
\[ \frac{\dd}{\dd t}\widehat \zeta_{\bk_0}^n = \tanh(\sqrt\mu k_0)k_0 \widehat \psi^n_{\bk_0} \quad ; \quad 0\geq \frac{\dd}{\dd t}\widehat \psi^n_{\bk_0} \geq - \widehat \zeta^n_{\bk_0}.\]
Restricting to $t\leq t_0^{-1/2}$, we infer that $\widehat \zeta_{\bk_0}^n$  is increasing, $\widehat \psi_{\bk_0}^n$  (and hence $\alpha_{\bk_0}^{n}$) is decreasing, and 
\[ \widehat \psi_{\bk_0}^n(t) \leq \frac{b_n}2 ,\quad 0\leq \widehat \zeta_{\bk_0}^n(t) \leq t k_0\frac{b_n}2, \qquad  \widehat \psi_{\bk_0}^n(t) \geq \frac{b_n}2-\frac{b_n}{2} \frac{k_0t^2}2 \geq \frac{b_n}4.\]
In particular, $\alpha_{\bk_0}^{n} (t)\leq  b_{n}^{2}\alpha_0$ so that, using the assumption $\alpha^n_{\bk_n}(t)\leq  \frac14 b_n^2\alpha_0$ and~\eqref{hyp_bn}, we infer
\[\epsilon^2 \alpha^{n}(t) J(\delta k_{0})^2 k_{0} \leq \tfrac52 \epsilon^2 \alpha_0 b_n^2 J(\delta k_{0})^2 k_{0}<1.  \] 
Hence by continuity we find that the above holds on $[0,t_0]$. Notice now that since $\alpha^n_{\bk_{0}}(t)\geq \frac{1}{4} \alpha_0 b_n^2$, we have for any $t\in[0,t_0]$,
\[ \alpha^n(t) \geq 2 \alpha_{\bk_0}^n(t)  \geq \tfrac12 b_n^2\alpha_0  .\]
We can then use similar arguments than in the proof of control (a) to infer that $\widehat\zeta^n_{\bk_{n}}$ and $\widehat\psi^n_{\bk_{n}}$ are increasing and the desired lower bound on  $\widehat\psi^n_{\bk_{n}}(t) $.

Gathering controls (a) and (b), and arguing on the sign of $\alpha_{\bk_{n}}^{n}(0)-\frac14b_{n}^2 \alpha_{0}$, we find that under the assumption~\eqref{hyp_bn}, there holds for any $t \in I^{n} \cap [0,k_{0}^{-\frac12}]$,
\begin{equation}\label{eq.final-lower-bounds-psi}
\widehat\psi^n_{\bk_{n}}(t) \geq \frac{c_{n}}{8} \exp \big( \sqrt{\tanh(\sqrt{\mu} k_{n}) k_{n}} t \big) \text{  ,  } 
\end{equation}
and hence, using the identity $\frac{\dd}{\dd t}\widehat \zeta_{\bk_n}^n = \tanh(\sqrt\mu k_n)k_n \widehat \psi^n_{\bk_n}$, 
\begin{equation}\label{eq.final-lower-bounds-zeta}
\widehat\zeta^n_{\bk_{n}}(t) \geq \frac{c_{n}}{8} \sqrt{\tanh(\sqrt{\mu} k_{n}) k_{n}} \left( \exp \big( \sqrt{\tanh(\sqrt{\mu} k_{n}) k_{n}} t \big) - 1 \right).
\end{equation}

\noindent{\em Step 2: blowup.}\\
We set $b_n=(\frac18 \epsilon^2 \alpha_0  J( \delta k_n )^2   k_n)^{-1/2}$ so that Condition~\eqref{hyp_bn} holds for $n$ sufficiently large, and we define $c_n=8\exp(- k_n^{1/4})$. We also consider $n$ sufficiently large so that $\sqrt{\tanh(\sqrt{\mu} k_{n})} \geq \frac12$. Then~\eqref{eq.final-lower-bounds-psi}-\eqref{eq.final-lower-bounds-zeta} yields, for any $t \in I^{n} \cap [0,k_{0}^{-\frac12}]$,
\[
\widehat \psi_{\bk_n}^n(t)\geq \exp(\frac12 k_n^{1/2} t- k_n^{1/4}), \quad \widehat \zeta_{\bk_n}^n(t)\geq \frac12 k_n^{1/2} \exp(- k_n^{1/4}) \Big( \exp(\frac12 k_n^{1/2} t) -1\Big).
\]
Then, one has
\[  a_{\bk_n}^n(t)\leq 1-\epsilon^2 \alpha^n(t) J(\delta k_n)^2 k_n \leq 1-2\epsilon^2\alpha_{\bk_n}^n(t)J(\delta k_n)^2 k_n \leq 1- \epsilon^2 (2\pi)^d \tfrac18  J(\delta k_n)^2 k_n^3 |\widehat\psi_{\bk_n}^n(t)|^2.\]
Hence for $n$ sufficiently large, we have $2 k_n^{-1/4}<k_{0}^{-\frac12}$ and for any $t \in I^{n} \cap [2 k_n^{-1/4},k_{0}^{-\frac12}]$, there holds $\widehat \psi_{\bk_n}^n(t) \geq 1$,  $\widehat \zeta_{\bk_n}^n(t) \geq \frac14 k_n^{1/2}$ and $ a_{\bk_n}^n(t)\leq -\widehat \psi_{\bk_n}^n(t)^2 $, from which we infer
\[ \frac{\dd}{\dd t}\widehat \psi_{\bk_n}^n \geq  (\widehat\psi_{\bk_n}^n)^2  \widehat \zeta_{\bk_n}^n \geq \tfrac14 k_n^{1/2} (\widehat\psi_{\bk_n}^n)^2 .\]
This yields the desired blowup in time $T^n_\star\leq 2 k_n^{-1/4}+4 k_n^{-1/2}$. 

The blowup for negative time follows from time-reversibility: $(t,\zeta_0,\psi_0)\leftarrow (-t,\zeta_0,-\psi_0)$, and this completes the proof.
\end{proof}

\begin{Remark}
A direct inspection of the proof shows that our ill-posedness holds in fact for initial data in the Gevrey-$\sigma^{-1}$ class for any $\sigma\in[0,\frac12)$ and for any  $\sigma\in[0,1)$ if $\J^\delta=\Id$.
\end{Remark}

\begin{Remark}\label{R.toy-for-the-win}
Despite its simplistic nature, we believe that the toy model~\eqref{RWW2-toy} faithfully accounts for the instability mechanism at stake in the full model~\eqref{WW2-intro} and its regularized analogue,~\eqref{RWW2-intro}.

Specifically, we observe in the proof of \Cref{P.toy-WP1} (and also in \Cref{P.toy-IP}) a dichotomy between the treatment of stable and unstable modes, and the fact that a sufficiently regularizing operator $\J^\delta$ allows for a stability criterion (see~\eqref{eq.condition-1}) depending on both $\delta$ and $\epsilon$ parameters. 

It follows a local-in-time well-posedness theory with controls over solutions on a time interval that is
\begin{itemize}
	\item ``small'' in general, \ie with poor lower bounds when delta tends towards $0$;
	\item ``large"  (\ie with improved lower bounds) provided the initial data are sufficiently small;

\end{itemize} 
	see \Cref{P.toy-WP1,P.toy-WP2}. 

Note that the smallness condition on the initial data as well as the lower bounds  for the maximal existence time of the solutions are not uniform with respect to $\delta$, consistently with the model being ill-posed if $\delta=0$, which corresponds to $\J^\delta=\Id$.
Useful results can be obtained by imposing a lower bound on $\delta$ as a function of $\epsilon$, as shown in \Cref{R.toy-time}.

This strategy faithfully reflects the procedure that leads to one of our main results, \Cref{T.WP-delta}, on the well-posedness of~\eqref{RWW2-intro}. Indeed, its proof is based on (i) an unconditional well-posedness theorem for small times, combined with (ii) an existence theorem for solutions over large times under a condition of sufficient smallness on the initial data, the first result making stronger use of the regularization induced by the operator $\J^\delta$.

We also constructed the toy model~\eqref{RWW2-toy} so as to reproduce scales and orders accurately. Specifically, we see that the threshold on the order of $\J^\delta$ as a regularization operator that distinguishes between well-posedness and ill-posedness is $\r=-1/2$. In our result concerning~\eqref{WW2-intro}, we impose that $\J^\delta$ is regularizing of order $\r=-1$, but argue in \Cref{R.order-J} that $\r=-1/2$ would suffice for the large-time existence property, while $\r=-1$ is only used to simplify the small-time well-posedness. Finally, the time of existence and control of solutions to~\eqref{RWW2-toy} given in \Cref{R.toy-time} is roughly speaking of order $T\approx \delta/\epsilon^2$ under the assumption $\delta\gtrsim \epsilon$. In \Cref{T.WP-delta}, we obtain $T\approx \min(\delta/\epsilon^2,1/\epsilon)$ under the assumption $\delta\gtrsim \epsilon$, the additional contraint $T\lesssim 1/\epsilon$ being due to standard quadratic interactions that we neglected in the toy model.
\end{Remark}

\paragraph{Acknowledgments} This work benefited from several discussions with Jean-Claude Saut. Both the authors would like to take this opportunity to thank him more generally for his generosity, kindness and constant support. The first author thanks the program Centre Henri Lebesgue ANR-11-LABX-0020-0, for fostering an attractive mathematical environment. The second author has been partially funded by the ANR project CRISIS (ANR-20-CE40-0020-01) and would like to thank the University of Rennes for its hospitality and support.

\end{document}